\documentclass[a4paper,10pt]{amsart}
\usepackage{amsfonts,amsmath,amssymb,amsthm,graphicx,mathrsfs,tikz,xcolor,hyperref}
\usepackage{chngcntr}
\usepackage[a4paper, total={6in, 10in}]{geometry}
\hypersetup{
	colorlinks   = true, 
	urlcolor     = blue, 
	linkcolor    = green, 
	citecolor   = red 
}
\tikzset{Bullet/.style={fill=black,draw,color=#1,circle,minimum size=2pt,scale=0.45}}
\usetikzlibrary{shapes.geometric}
\usetikzlibrary{positioning, tikzmark}
\usepackage[all]{xy}
\usepackage[utf8]{inputenc}
\usepackage{multirow,array}
\usepackage{tabularx}
\newcolumntype{P}[1]{>{\centering\arraybackslash}p{#1}}
\theoremstyle{plain}

\newtheorem{thm}{Theorem}[section]
\newtheorem{lem}[thm]{Lemma}

\theoremstyle{definition}
\newtheorem{defn}[thm]{Definition}
\newtheorem{obv}[thm]{Observation}
\newtheorem{rem}[thm]{Remark}
\newtheorem{qs}[thm]{Question}
\newtheorem{cor}[thm]{Corollary}
\newtheorem{exm}[thm]{Example}

\numberwithin{equation}{section}

\begin{document}
	\Large{\title{Some familiar graphs on the rings of measurable functions}}
		\author[P. Nandi]{Pratip Nandi}
		\address{Department of Pure Mathematics, University of Calcutta, 35, Ballygunge Circular Road, Kolkata 700019, West Bengal, India}
		\email{pratipnandi10@gmail.com}
		\author[A. Deb Ray]{Atasi Deb Ray}
		\address{Department of Pure Mathematics, University of Calcutta, 35, Ballygunge Circular Road, Kolkata 700019, West Bengal, India}
		\email{debrayatasi@gmail.com}
		\author[S. K. Acharyya]{Sudip Kumar Acharyya}
		\address{Department of Pure Mathematics, University of Calcutta, 35, Ballygunge Circular Road, Kolkata 700019, West Bengal, India}
		\email{sdpacharyya@gmail.com}

		\thanks{The first author thanks the CSIR, New Delhi – 110001, India, for financial support.}
		
		\begin{abstract}
			 In this paper, replacing `equality' by 'equality almost everywhere' we modify several terms associated with the ring of measurable functions defined on a measure space $(X, \mathcal{A}, \mu)$ and thereby study the graph theoretic features of the modified comaximal graph, annihilator graph and the weakly zero-divisor graph of the said ring. The study reveals a structural analogy between the modified versions of the comaximal and the zero-divisor graphs, which prompted us to investigate whether these two graphs are isomorphic. Introducing a quotient-like concept, we find certain subgraphs of the comaximal graph and the zero-divisor graph of $\mathcal{M}(X, \mathcal{A})$ and show that these two subgraphs are always isomorphic. Choosing $\mu$ as a counting measure, we prove that even if these two induced graphs are isomorphic, the parent graphs may not be so. However, in case of Lebesgue measure space on $\mathbb{R}$, we establish that the comaximal and the zero-divisor graphs are isomorphic. Observing that both of the comaximal and the zero-divisor graphs of the ring $\mathcal{M}(X, \mathcal{A})$ are subgraphs of the annihilator graph of the said ring, we find equivalent conditions for their equalities in terms of the partitioning of $X$ into two atoms. Moreover, the non-atomicity of the underlying measure space $X$ is characterized through graph theoretic phenomena of the comaximal and the annihilator graph of  $\mathcal{M}(X, \mathcal{A})$.
		\end{abstract}
	\subjclass[2020]{Primary 13A70; Secondary 05C60, 05C63, 05C90}
	\keywords{Rings of measurable functions, Zero-divisor graph, Co-maximal graph, Annihilator graph, Complemented graph, Bipartite graph, dominating number, Atomic measure, Lebesgue measure}
	\maketitle
	\section{Introduction}
	Our starting point is a triplet $(X,\mathcal{A},\mu)$, where $X$ is a non-empty set, $\mathcal{A}$, a $\sigma$-algebra of subsets of $X$ and $\mu:\mathcal{A}\to[0,\infty]$ is a measure. Such a triplet is often called a measure space and members of $\mathcal{A}$ are called $\mathcal{A}$-measurable sets or simply measurable sets, if there is no chance of confusion. To make matters non-trivial, we further assume that $\mu(X)>0$. A function $f:X\to\mathbb{R}$ is called measurable or $\mathcal{A}$-measurable if for any $r\in\mathbb{R}$, $\{x\in X:f(x)<r\}$ is a measurable set. The family $\mathcal{M}(X,\mathcal{A})$ of all measurable functions is a commutative lattice ordered ring with identity if the relevant operations are defined pointwise on $X$. Rings of measurable functions are being investigated in the recent times. The articles \cite{Acharyya2020}, \cite{Acharyya2020.1}, \cite{Acharyya2021} are referred to in this connection. Study on several aspects of different well-known graph structures associated with the ring $\mathcal{M}(X,\mathcal{A})$ sounds quite fascinating. In \cite{Hejazipour2022}, the authors take up the study on some relevant properties linked to the zero-divisor graph of the ring $\mathcal{M}(X,\mathcal{A})$, where the zero-divisor graph has been redefined via the measure $\mu$. Incidentally, only one paper \cite{Hejazipour2022} appears in the literature, addressing problems of this kind. In that paper, the authors determined conditions under which this graph becomes triangulated and also realise that it is never hypertriangulated. Being inspired by these findings, in the present article we endeavour to study the comaximal graph, the annihilator graph and the weakly zero-divisor graph of the ring $\mathcal{M}(X,\mathcal{A})$, redefining each of the graphs via the measure $\mu$. These three graphs are generalized versions of the comaximal graph, annihilator graph and the weakly zero-divisor respectively on $\mathcal{M}(X,\mathcal{A})$.\\ 
	To make the article self-contained and reader friendly, we recall in section \ref{Sec2} a few technical terms related to the graphs in general. A few of the ring theoretic terms related to the parent ring $\mathcal{M}(X,\mathcal{A})$ demand natural modifications, when `equality' is replaced by `equality almost everywhere' in the presence of a measure $\mu$ on $(X,\mathcal{A})$. In the last part of this section, we incorporate these changes thereby preparing our measure theoretic preliminaries, necessary for any study on the contemplated graphs on the ring $\mathcal{M}(X,\mathcal{A})$ mentioned above in the subsequent sections.\\ 
	In section \ref{Sec3}, we take up the actual problem of studying the comaximal graph $\Gamma'_2(\mathcal{M}(X,\mathcal{A}))$ of the ring $\mathcal{M}(X,\mathcal{A})$. We compute the familiar graph parameters, viz. the eccentricities, diameter and girth of this graph. This leads to a necessary and sufficient condition for the graph $\Gamma'_2(\mathcal{M}(X,\mathcal{A}))$ to be a complete bipartite one [Theorem~\ref{Th3.13}]. Incidentally it is also established that $\Gamma'_2(\mathcal{M}(X,\mathcal{A}))$ is a triangulated graph if and only if the measure `$\mu$' is non-atomic in the sense that each measurable subset of $X$ with positive measure is expressible as a disjoint union of two measurable sets each with positive measure. Thus in particular if $X=\mathbb{R}$, $\mu\equiv$ Lebesgue or Borel measure, then $\Gamma'_2(\mathcal{M}(X,\mathcal{A}))$ is triangulated. It is further realized that $\Gamma'_2(\mathcal{M}(X,\mathcal{A}))$ is never hypertriangulated [Corollary~\ref{NotHyp}]. Thus outwardly a lot of similarities are observed between the properties of the zero-divisor graph $\Gamma(\mathcal{M}(X,\mathcal{A}))$ as established in \cite{Hejazipour2022} and the corresponding properties of $\Gamma'_2(\mathcal{M}(X,\mathcal{A}))$ as realized in the present paper. It is therefore natural to ask, whether these two graphs, viz. $\Gamma(\mathcal{M}(X,\mathcal{A}))$ and $\Gamma'_2(\mathcal{M}(X,\mathcal{A}))$, become isomorphic in the sense that there is a bijection between the sets of vertices of these two graphs which preserves the adjacency relation. Being motivated by this query, we construct an induced subgraph $G_2$ of $\Gamma'_2(\mathcal{M}(X,\mathcal{A}))$ by identifying some vertices and making a quotient like construction. Analogously, we take into our consideration the induced subgraph of $\Gamma(\mathcal{M}(X,\mathcal{A}))$ and denote it by $G$. It is realized that the graphs $G_2$ and $G$ are isomorphic. From this it is deduced by imposing certain condition on the nature of the characteristic function of measurable sets in $(X,\mathcal{A})$ that $\Gamma'_2(\mathcal{M}(X,\mathcal{A}))$ and $\Gamma(\mathcal{M}(X,\mathcal{A}))$ are isomorphic [Theorem~\ref{Iso}].\\
	In section \ref{Sec5}, we take up our study on the annihilator graph $AG(\mathcal{M}(X,\mathcal{A}))$ of the ring $\mathcal{M}(X,\mathcal{A})$. The vertices of this later graph are the same as the vertices of $\Gamma(\mathcal{M}(X,\mathcal{A}))$. Amongst several properties enjoyed by $AG(\mathcal{M}(X,\mathcal{A}))$ we would like to mention the one which characterizes the non-atomicity of the measure $\mu$. Indeed this reads: $AG(\mathcal{M}(X,\mathcal{A}))$ is hypertriangulated if and only if $\mu$ is non-atomic. This in turn leads to the fact that $AG(\mathcal{M}(X,\mathcal{A}))$ is hypertriangulated if and only if $\Gamma'_2(\mathcal{M}(X,\mathcal{A}))$ is triangulated.\\
	We make our formal study on the weakly zero-divisor graph $W\Gamma(\mathcal{M}(X,\mathcal{A}))$ of the ring $\mathcal{M}(X,\mathcal{A})$ in Section \ref{Sec6}. The set of vertices of this graph are the zero-divisors $f$ so that $Z(f)$ is an atom.\\
	In the concluding section \ref{Sec7} of this article, we illustrate various concepts and facts that we achieve in the preceding sections about three types of graphs mentioned above with the aid of two well-known specific example of measure spaces. The first case is so called Counting measure on the measurable space $(X,\mathcal{P}(X))$. The second one is the Lebesgue measure on the set $\mathbb{R}$ equipped with the $\sigma$-algebra of Lebesgue measurable sets in $\mathbb{R}$. It is realized that the zero-divisor graph and the comaximal graph of the ring of all Lebesgue measurable functions on $\mathbb{R}$ become isomorphic as graphs [Theorem~\ref{Th7.2.7}]. However it is not known to us, whether more generally for a non-atomic measure $\mu$ on $(X,\mathcal{A})$, $\Gamma'_2(\mathcal{M}(X,\mathcal{A}))$ and $\Gamma(\mathcal{M}(X,\mathcal{A}))$ are isomorphic.

	\section{\bf Preliminaries}\label{Sec2}
	\vskip 0.4 true cm

	Let $G$ be a graph with $V$ as the set of vertices of $G$. $G$ is said to be a simple graph if $G$ contains no self-loops and no parallel edges. $G$ is a connected graph if every pair of vertices is connected by a path in $G$. A stable set in $G$ is a subset of $V$ in which no two vertices are adjacent. For a cardinal number $\alpha$, $G$ is called $\alpha$-partite graph if $G$ contains $\alpha$-many disjoint stable sets whose union is $V$. An $\alpha$-partite graph is called a complete $\alpha$-partite graph if any two vertices from two different stable sets are adjacent. If $\alpha=2$, we call these graphs bipartite and complete bipartite respectively. The distance between two vertices $u,v$ in $G$ is the length of the shortest path joining $u,v$, denoted by $d(u,v)$. The eccentricity of a vertex $v$ is defined by $ecc(v)=min\{d(u,v):u\in V\}$. The diameter of $G$ is $diam(G)=max\{d(u,v):u,v\in V\}$ and the girth $gr(G)$ of $G$ is the length of the smallest cycle in $G$. $G$ is triangulated or hypertriangulated according as every vertex is a vertex of a triangle or every edge is an edge of a triangle in $G$. For two vertices $u,v\in V$, $c(u,v)$ denotes the length of the smallest cycle in $G$ containing $u,v$. Two vertices $u,v$ are said to be orthogonal in $G$, denoted by $u\perp v$, if $u,v$ are adjacent and $c(u,v)>3$. $G$ is called a complemented graph if for every $u\in V$, there exists $v\in V$ such that $u\perp v$. A complemented graph is called uniquely complemented if whenever $u\perp v$ and $u\perp w$ in $G$ for $u,v,w\in V$, then $v,w$ are adjacent to the same set of vertices in $G$. The clique number $cl(G)$ of $G$ is the maximum cardinality of complete subgraphs of $G$. A subset $V_1$ of $V$ is a dominating set in $G$ if for every $u\in V\setminus V_1$, $u,v$ are adjacent in $G$ for some $v\in V_1$ and the dominating number $dt(G)=min\{|V_1|:V_1\text{ is a dominating set in }G\}$. A subset $V_1$ of $V$ is a total dominating set in $G$ if for every $u\in V$, $u,v$ are adjacent in $G$ for some $v\in V_1$ and the total dominating number $dt_t(G)=min\{|V_1|:V_1\text{ is a total dominating set in }G\}$. For a cardinal number $\alpha$, $G$ is said to be $\alpha$-colorable if there is a map $\psi:V \to[0,\alpha]$ such that $\psi(u)\neq\psi(v)$ whenever two $u,v$ are adjacent in $G$. The chromatic number $\chi(G)$ of $G$ is $min\{\alpha:G\text{ is }\alpha\text{-colorable}\}$. For any subset $V'$ of $V$, the induced subgraph $G'$ of $G$ induced by $V'$ is the graph whose vertex set is $V'$ and two vertices in $G'$ are adjacent if they are adjacent in $G$.\\
	Let $G_1,G_2$ be two graphs having $V_1,V_2$ as set of vertices respectively. A bijection map $\psi:V_1\to V_2$ is called a graph isomorphism between two graphs $G_1,G_2$, if it preserves the adjacency relations; i.e., $a,b\in V_1$ are adjacent in $G_1$ if and only if $\psi(a),\psi(b)$ are adjacent in $G_2$. For more graph related terms we refer to \cite{Diestel}.\\
	Let $(X,\mathcal{A},\mu)$ be a measure space and $\mathcal{M}(X,\mathcal{A})$ be the corresponding ring of real-valued measurable functions on $X$. It is clear that $Z(f),X\setminus Z(f)\in\mathcal{A}$ for all $f\in\mathcal{M}(X,\mathcal{A})$, here $Z(f)=\{x\in X: f(x)=0\}$ is the zero set of $f$. For any subset $A$ of $X$, the characteristic function of $A$ is denoted by $1_A$ and is given by 
	$$1_A(x)=\begin{cases}
		1&\text{ if }x\in A\\
		0&\text{ otherwise}
	\end{cases}$$
	Therefore, $1_A\in\mathcal{M}(X,\mathcal{A})$ if and only if $A\in\mathcal{A}$. An element $A\in\mathcal{A}$ is called an atom in $X$ if $\mu(A)>0$ and $A$ can not be written as a union of two disjoint measurable sets, each with positive measure, i.e., $A$ can not be written as $B\sqcup C$, where $B,C\in\mathcal{A}$ with $\mu(B),\mu(C)>0$, here `$\sqcup$' denotes the disjoint union of two sets. Suppose that $A\in\mathcal{A}$ is an atom in $X$. Then for any $B\in\mathcal{A}$ with $\mu(B)=0$, it can easily be proved that, $A\setminus B$ and $A\cup B$ are both atoms in $X$. The measure $\mu$ is said to be an atomic measure if every measurable set with positive measure contains an atom. On the other hand, $\mu$ is said to be non-atomic if $\mathcal{A}$ does not contain any atom.\\
	Let $R$ be a commutative ring. There are several graphs, viz. the zero-divisor graph, the comaximal graph, etc., defined on $R$ to study the interaction between the properties of the ring and the properties of the respective graphs. The zero-divisor graph $\Gamma(R)$ of $R$ has its vertices as the set of non-zero zero-divisors of $R$ with two distinct vertices $x,y$ declared adjacent if and only if $x.y=0$ [\cite{Anderson1999}, \cite{Beck1988}]. The comaximal graph of $R$, defined in \cite{Sharma1995}, with vertices as elements of $R$, where two distinct vertices $x$ and $y$ are adjacent if and only if the sum of the principal ideals generated by $x,y$ is $R$. Later in \cite{Badie2016}, the vertex set of the comaximal graph of $R$ was redefined as $R\setminus [U(R)\cup J(R)]$, where $U(R)$ and $J(R)$ denote the set of all units in $R$ and the Jacobson radical of $R$ respectively. This graph is denoted by $\Gamma'_2(R)$. The annihilator graph $AG(R)$ of $R$, introduced in \cite{Badawi2014}, is a supergraph of $\Gamma(R)$ having the same set of vertices and two distinct vertices $x,y$ are adjacent if and only if $ann(x)\cup ann(y)\subsetneqq ann(x.y)$, where $ann(a)=\{r\in R:a.r=0\}$ is the annihilator ideal of $a$ in $R$. The weakly zero-divisor graph $W\Gamma(R)$ of $R$ is also a supergraph of $\Gamma(R)$ with the same set of vertices where two distinct vertices $x,y$ are adjacent if and only if there exists $a\in ann(x)\setminus\{0\}$ and $b\in ann(y)\setminus\{0\}$ such that $a.b=0$, introduced in \cite{Nikmehr2021}.\\
	As proposed in the introduction of this paper, we study the behaviour of the well-known graphs, viz. comaximal, annihilator and weakly zero-divisor graphs, redefining them via $\mu$, of the ring of measurable functions defined over a measure space $(X, \mathcal{A}, \mu)$. In this context, a few of the ring theoretic terms also demand natural modifications when `equality' is replaced by `equality almost everywhere', in the presence of a measure $\mu$ on $(X,\mathcal{A})$. We now describe this modification and redefine the terms, retaining their nomenclature in most of the cases.\\
	Two functions $f,g\in\mathcal{M}(X,\mathcal{A})$ are said to be equal almost everywhere on $X$ with respect to the measure $\mu$ if $\mu(\{x\in X:f(x)\neq g(x)\})=0$ and in this case we write $f\equiv g$ a.e. on $X$.  If $f\in\mathcal{M}(X,\mathcal{A})$ is such that $\mu(Z(f))=0$, then $f$ is said to be non-zero almost everywhere on $X$ (in short, non-zero a.e. on $X$).
	\begin{defn}\hspace{3cm} 
		\begin{enumerate}
			\item A function $f\in\mathcal{M}(X,\mathcal{A})$ which is non-zero a.e. on $X$, is called a zero-divisor in $\mathcal{M}(X,\mathcal{A})$ if there exists $g\in\mathcal{M}(X,\mathcal{A})$  such that $g$ is non-zero a.e. on $X$ and $f.g\equiv 0$ a.e. on $X$. The set of all zero-divisors in $\mathcal{M}(X,\mathcal{A})$ is denoted by $\mathscr{D}(\mathcal{M}(X,\mathcal{A}))$ (in short, $\mathscr{D}$).
			\item A function $f\in\mathcal{M}(X,\mathcal{A})$ is called a $\mu$-unit in $\mathcal{M}(X,\mathcal{A})$ if $\mu(Z(f))=0$. $U(\mathcal{M}(X,\mathcal{A}))$ stands for the set of all $\mu$-units in $\mathcal{M}(X,\mathcal{A})$.
			\item An ideal $I$ in $\mathcal{M}(X,\mathcal{A})$ is said to be almost $\mathcal{M}(X,\mathcal{A})$ if $I$ is a principal ideal generated by a $\mu$-unit in $\mathcal{M}(X,\mathcal{A})$.
		\end{enumerate}
	\end{defn}
	Since the Jacobson radical of $\mathcal{M}(X,\mathcal{A})$ is $(0)$, we redefine $J(\mathcal{M}(X,\mathcal{A}))=\{f\in\mathcal{M}(X,\mathcal{A}):f\equiv 0 \text{ a.e. on }  X \}$. For each $f\in\mathcal{M}(X,\mathcal{A})$, the annihilator ideal of $f$ is also redefined as $ann(f)=\{g\in\mathcal{M}(X,\mathcal{A}):f.g\equiv 0\text{ a.e. on }X\}$. We can also write $ann(f)$ as $\{g\in\mathcal{M}(X,\mathcal{A}):\mu(X\setminus Z(f)\cap X\setminus Z(g))=0\}$, where $f\in\mathcal{M}(X,\mathcal{A})$.

	The following theorem completely describes the zero-divisors of $\mathcal{M}(X,\mathcal{A})$ : 
	
	\begin{thm}
		$\mathscr{D}(\mathcal{M}(X,\mathcal{A}))=\{f\in\mathcal{M}(X,\mathcal{A}):\mu(Z(f))>0\text{ and }\mu(X\setminus Z(f))>0\}$.
	\end{thm}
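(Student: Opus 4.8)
The plan is to prove the two inclusions separately, translating the ring-theoretic conditions into statements about measures of zero sets via the dictionary already recorded in the preliminaries: for $f,g\in\mathcal{M}(X,\mathcal{A})$, the relation $f\cdot g\equiv 0$ a.e. on $X$ is equivalent to $\mu\big((X\setminus Z(f))\cap(X\setminus Z(g))\big)=0$, while the requirement that $f$ (resp. $g$) be not a.e. zero amounts to $\mu(X\setminus Z(f))>0$ (resp. $\mu(X\setminus Z(g))>0$). Recall accordingly that $f\in\mathscr{D}$ means $\mu(X\setminus Z(f))>0$ together with the existence of a multiplier $g$ satisfying $\mu(X\setminus Z(g))>0$ and $f\cdot g\equiv 0$ a.e. With this in hand both directions reduce to elementary null-set manipulations.

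For the inclusion $\mathscr{D}(\mathcal{M}(X,\mathcal{A}))\subseteq\{f:\mu(Z(f))>0 \text{ and } \mu(X\setminus Z(f))>0\}$, I would take $f\in\mathscr{D}$. The condition $\mu(X\setminus Z(f))>0$ holds by definition, so the only point requiring argument is $\mu(Z(f))>0$. Let $g$ be the associated multiplier, so that $\mu(X\setminus Z(g))>0$ and $\mu\big((X\setminus Z(f))\cap(X\setminus Z(g))\big)=0$. I would argue by contradiction: writing $X\setminus Z(g)=\big[(X\setminus Z(f))\cap(X\setminus Z(g))\big]\sqcup\big[Z(f)\cap(X\setminus Z(g))\big]$, the first piece is null and the second is a subset of $Z(f)$; hence if $\mu(Z(f))=0$ were to hold, then $\mu(X\setminus Z(g))=0$, contradicting $\mu(X\setminus Z(g))>0$. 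Therefore $\mu(Z(f))>0$.

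For the reverse inclusion I would start from $f$ with $\mu(Z(f))>0$ and $\mu(X\setminus Z(f))>0$ and exhibit an explicit witness. The natural choice is $g=1_{Z(f)}$, which lies in $\mathcal{M}(X,\mathcal{A})$ because $Z(f)\in\mathcal{A}$. Then $X\setminus Z(g)=Z(f)$ has positive measure, so $g$ is not a.e. zero, and $f\cdot g$ vanishes identically on $X$ (it is $0$ on $Z(f)$ since $f=0$ there and $0$ off $Z(f)$ since $g=0$ there), so in particular $f\cdot g\equiv 0$ a.e.; together with $\mu(X\setminus Z(f))>0$ this places $f$ in $\mathscr{D}$.

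I do not anticipate a serious obstacle: the entire content lies in getting the equivalence between ``$\equiv 0$ a.e.'' and positivity of $\mu(X\setminus Z(\cdot))$ right, and in not conflating the two distinct conditions $\mu(Z(f))>0$ and $\mu(X\setminus Z(f))>0$. The only place a concrete construction is needed is the witness $g=1_{Z(f)}$ in the reverse direction, and that choice is essentially forced by the structure of the problem.
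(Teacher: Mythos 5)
Your proposal is correct and follows essentially the same route as the paper: the witness $g=1_{Z(f)}$ for the easy inclusion, and for the converse a decomposition of $X\setminus Z(g)$ into a null piece and a piece contained in $Z(f)$ to force $\mu(Z(f))>0$. The only cosmetic difference is that you invoke the identity $\{x:f(x)g(x)\neq 0\}=(X\setminus Z(f))\cap(X\setminus Z(g))$ up front, whereas the paper works directly with the null exceptional set on whose complement $f\cdot g$ vanishes; the two are interchangeable.
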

	\begin{proof}
		Let $f\in\mathcal{M}(X,\mathcal{A})$ be such that $\mu(Z(f))>0$ and $\mu(X\setminus Z(f))>0$. Then $f$ is non-zero a.e. on $X$ and there exists $g=1_{Z(f)}$, such that $f.g=0$ on $X$. Clearly $g\in\mathcal{M}(X,\mathcal{A})$ and $\mu(X\setminus Z(g))=\mu(Z(f))>0$, i.e., $g$ is non-zero a.e. on $X$. So, $f\in\mathscr{D}$. \\
		Conversely let $f\in\mathscr{D}$. Then by definition of a zero-divisor, $\mu(X\setminus Z(f))>0$ and there exists $g\in\mathcal{M}(X,\mathcal{A})$ with $\mu(X\setminus Z(g))>0$ such that $f.g \equiv 0$ a.e., i.e., there exists $A\in\mathcal{A}$ with $\mu(A)=0$ such that $f.g=0$ on $X\setminus A$. So, $X\setminus Z(g)\cap X\setminus A\subset Z(f)$. Now, $X\setminus Z(g)=(X\setminus Z(g)\cap A)\sqcup(X\setminus Z(g)\cap X\setminus A)$ and $\mu(X\setminus Z(g)\cap A)=0$. This implies that $\mu(X\setminus Z(g)\cap X\setminus A)=\mu(X\setminus Z(g))>0$. Therefore, $\mu(Z(f))>0$. 
	\end{proof}
	
	We observe in the next theorem that the ring $\mathcal{M}(X,\mathcal{A})$ has no divisor of zero is equivalent to a purely measure theoretic phenomenon of the measure space $(X,\mathcal{A},\mu)$. 
	
	\begin{thm}\label{Th1}
		The following statements are equivalent:
		\begin{enumerate}
			\item Every measurable set in $X$ with positive measure is an atom.
			\item $X$ is an atom.
			\item $\mathscr{D}(\mathcal{M}(X,\mathcal{A}))$ is an empty set.
		\end{enumerate}
	\end{thm}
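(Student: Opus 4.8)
The plan is to establish the cyclic chain of implications $(1)\Rightarrow(2)\Rightarrow(3)\Rightarrow(1)$, relying throughout on the explicit description of $\mathscr{D}(\mathcal{M}(X,\mathcal{A}))$ obtained in the preceding theorem: namely, $f\in\mathscr{D}$ precisely when both $\mu(Z(f))>0$ and $\mu(X\setminus Z(f))>0$. The entire argument will rest on the elementary device of passing between a measurable set $A$ and its characteristic function $1_A$, whose zero set is $X\setminus A$.

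The implication $(1)\Rightarrow(2)$ is immediate: we have assumed $\mu(X)>0$, so $X$ is itself a measurable set of positive measure and is therefore an atom by hypothesis. For $(2)\Rightarrow(3)$ I would argue by contraposition. Suppose $\mathscr{D}$ is non-empty and pick $f\in\mathscr{D}$. By the characterization above $\mu(Z(f))>0$ and $\mu(X\setminus Z(f))>0$, so the decomposition $X=Z(f)\sqcup(X\setminus Z(f))$ exhibits $X$ as a disjoint union of two measurable sets of positive measure, whence $X$ is not an atom.

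Finally, for $(3)\Rightarrow(1)$ I would again argue contrapositively. Suppose some $A\in\mathcal{A}$ with $\mu(A)>0$ fails to be an atom, so that $A=B\sqcup C$ with $B,C\in\mathcal{A}$ and $\mu(B),\mu(C)>0$. Taking $f=1_B$, we obtain $Z(f)=X\setminus B\supseteq C$, whence $\mu(Z(f))\geq\mu(C)>0$, while $\mu(X\setminus Z(f))=\mu(B)>0$; therefore $f\in\mathscr{D}$, so that $\mathscr{D}$ is non-empty.

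I do not anticipate a genuine obstacle here: each step reduces to choosing the appropriate characteristic function and reading off the measures of its zero set and the complement. The only point requiring the slightest care is the bookkeeping in $(3)\Rightarrow(1)$, where one must observe that $C$ sits inside $X\setminus B$, so that splitting a possibly proper subset $A$ still produces a bona fide zero-divisor on all of $X$.
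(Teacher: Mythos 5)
Your proposal is correct and follows essentially the same route as the paper: $(1)\Rightarrow(2)$ from $\mu(X)>0$, $(2)\Rightarrow(3)$ by reading the definition of an atom against the characterization of $\mathscr{D}$, and $(3)\Rightarrow(1)$ contrapositively via characteristic functions of the pieces of a non-atom (the paper exhibits both $1_B$ and $1_C$ as zero-divisors, but as you note one suffices). No gaps.
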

	\begin{proof}
		$(1)\implies (2)$ is trivial, as $\mu(X)>0$ and $(2)\implies (3)$ follows directly from the definition of an atom. \\
		$(3)\implies(1)$ : Let $(1)$ be false. Then some $A\in\mathcal{A}$ exists such that $\mu(A)>0$ but $A$ is not an atom. So $A$  is expressible as the disjoint union of some $A_1, A_2 \in \mathcal{A}$ with  $\mu(A_1)>0$ and $\mu(A_2)>0$. In such case, $1_{A_1},1_{A_2}\in\mathscr{D}$, proving $\mathscr{D}\neq \emptyset$.
	\end{proof}
	
	Following are a couple of examples of measure spaces where every measurable set with positive measure is an atom and therefore by Theorem~\ref{Th1}, their corresponding rings of measurable functions possess no divisor of zero.
	
	\begin{exm}
		$(1)$ Let $X$ be a  non-empty set, $\mathcal{A}=\mathscr{P}(X)$ (= the power set of $X$) and $x_0\in X$. Consider the Dirac measure at $x_0$ given by $$\delta_{x_0}(A)=\begin{cases}
			0&\text{ if }x_0\notin A\\
			1&\text{ if }x_0\in A.
		\end{cases}$$
		$(2)$ Let $X$ be an uncountable set, $\mathcal{A}=\{A\subseteqq X:\text{ either }A\text{ or }X\setminus A\text{ is countable}\}$. Consider the co-countable measure given by $$\rho(A)=\begin{cases}
			0&\text{ if }A\text{ is countable}\\
			\infty&\text{ if }X\setminus A\text{ is countable.}
		\end{cases}$$
	\end{exm}
	
	\begin{thm}\label{Lem0.4}
		For any $f,g\in\mathcal{M}(X,\mathcal{A})$, $ann(f)\subset ann(g)$ if and only if $\mu(Z(f)\setminus Z(g))=0$.
	\end{thm}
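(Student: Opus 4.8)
The plan is to work throughout with the measure-theoretic reformulation of the annihilator already supplied in the preliminaries, namely that $g\in ann(f)$ if and only if $\mu\big((X\setminus Z(f))\cap(X\setminus Z(g))\big)=0$. It will also be convenient to record the elementary set identity $Z(f)\setminus Z(g)=Z(f)\cap(X\setminus Z(g))$, so that the right-hand condition of the theorem reads $\mu\big(Z(f)\cap(X\setminus Z(g))\big)=0$. With these two observations, both implications reduce to bookkeeping on measures of intersections of zero/cozero sets.

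For the sufficiency direction ($\Leftarrow$), I would assume $\mu\big(Z(f)\cap(X\setminus Z(g))\big)=0$ and take an arbitrary $h\in ann(f)$, so that $\mu\big((X\setminus Z(f))\cap(X\setminus Z(h))\big)=0$. To show $h\in ann(g)$ I would split the set $(X\setminus Z(g))\cap(X\setminus Z(h))$ as the disjoint union of its intersection with $X\setminus Z(f)$ and its intersection with $Z(f)$. The first piece is contained in $(X\setminus Z(f))\cap(X\setminus Z(h))$, which has measure zero since $h\in ann(f)$; the second piece is contained in $Z(f)\cap(X\setminus Z(g))$, which has measure zero by hypothesis. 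Hence $\mu\big((X\setminus Z(g))\cap(X\setminus Z(h))\big)=0$, i.e.\ $h\in ann(g)$, giving $ann(f)\subset ann(g)$.

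For the necessity direction ($\Rightarrow$), the one genuinely clever move is to feed a well-chosen witness into the inclusion. I would consider $h=1_{Z(f)}$, the characteristic function of the zero set of $f$; this lies in $\mathcal{M}(X,\mathcal{A})$ because $Z(f)\in\mathcal{A}$, and its cozero set is exactly $Z(f)$, so that $(X\setminus Z(f))\cap(X\setminus Z(h))=(X\setminus Z(f))\cap Z(f)=\emptyset$. Thus $1_{Z(f)}\in ann(f)$. Invoking $ann(f)\subset ann(g)$ forces $1_{Z(f)}\in ann(g)$, which unwinds to $\mu\big((X\setminus Z(g))\cap Z(f)\big)=0$, that is precisely $\mu\big(Z(f)\setminus Z(g)\big)=0$.

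I do not anticipate a serious obstacle here: the backward implication is a routine two-piece decomposition, and the forward implication hinges entirely on recognising that $1_{Z(f)}$ is the canonical ``largest'' annihilator of $f$, so that testing the inclusion against this single element already pins down the required null-set condition. The only point demanding care is the consistent translation between the ``product is zero a.e.'' formulation and the cozero-intersection formulation of $ann$, together with keeping the roles of $Z(f)$ and $Z(g)$ straight in the asymmetric condition $\mu(Z(f)\setminus Z(g))=0$.
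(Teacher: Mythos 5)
Your proof is correct, and its overall strategy matches the paper's: the sufficiency direction is the same null-set bookkeeping (the paper phrases it via ``$h.f=0$ off a null set $F$'' and the inclusion $Z(f)\cap(X\setminus E)\subset Z(g)$, while you phrase it via the cozero-intersection description of $ann$ and a two-piece decomposition --- these are the same computation), and the necessity direction in both cases comes down to testing the inclusion against a characteristic-function witness. The one genuine difference is in that second direction: the paper argues contrapositively, assuming $\mu(Z(f)\setminus Z(g))>0$ and exhibiting $1_{Z(f)\setminus Z(g)}\in ann(f)\setminus ann(g)$, whereas you argue directly, observing that $1_{Z(f)}$ always lies in $ann(f)$ and that its membership in $ann(g)$ unwinds exactly to $\mu(Z(f)\setminus Z(g))=0$. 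Your witness is arguably the more natural one (it is the ``maximal'' element of $ann(f)$ and needs no case distinction on whether $E$ has positive measure), and it yields the forward implication in one line; the paper's witness has the mild advantage of producing an explicit element separating the two annihilators, which makes the failure of the inclusion concrete. Both are complete and correct.
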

	\begin{proof}
		Let $E=Z(f)\setminus Z(g)$. If $\mu(E)=0$, then $Z(f)\cap X\setminus E\subset Z(g)$. Now let $h\in ann(f)$, i.e., $h.f=0$ on $X\setminus F$ for some $F\in\mathcal{A}$ with $\mu(F)=0$. Therefore $X\setminus Z(h)\cap X\setminus F\subset Z(f)\implies X\setminus Z(h)\cap X\setminus(E\cup F)\subset Z(g)\implies h.g\equiv 0$ on $X\setminus(E\cup F)$. Since $\mu(E\cup F)=0$, $h.g\equiv 0$ a.e. on $X$ i.e., $h\in ann(g)$. \\
		Conversely let $\mu(E)>0$. Let $h=1_E$. Then $h\in\mathcal{M}(X,\mathcal{A})$ and $X\setminus Z(h)=E=Z(f)\setminus Z(g)$. Therefore $f.h\equiv 0$ on $X$ and $X\setminus Z(h)\cap X\setminus Z(g)=E\implies\mu( X\setminus Z(h)\cap X\setminus Z(g))>0$. Thus $h\in ann(f)\setminus ann(g)$. i.e., $ann(f)\not\subset ann(g)$. 
	\end{proof}
	\begin{cor}\label{Lem6.1}
		For $f,g\in\mathscr{D}(\mathcal{M}(X,\mathcal{A}))$, $\mu(Z(f)\triangle Z(g))=0$ if and only if $ann(f)= ann(g)$.
	\end{cor}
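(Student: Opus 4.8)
The plan is to derive this directly from Theorem~\ref{Lem0.4}, since the corollary is essentially the symmetric version of that result: equality of annihilators is just a pair of inclusions, and the symmetric difference is just a pair of set differences. First I would record the elementary set-theoretic identity
$$Z(f)\triangle Z(g)=\bigl(Z(f)\setminus Z(g)\bigr)\sqcup\bigl(Z(g)\setminus Z(f)\bigr),$$
which is a disjoint union of two measurable sets (each difference lies in $\mathcal{A}$ because every $Z(\cdot)$ does).

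Next I would translate the hypothesis $\mu(Z(f)\triangle Z(g))=0$ into the conjunction of two vanishing conditions. By finite additivity of $\mu$ on the above disjoint union, $\mu(Z(f)\triangle Z(g))=\mu(Z(f)\setminus Z(g))+\mu(Z(g)\setminus Z(f))$, so this sum is $0$ precisely when both summands are $0$; equivalently, by monotonicity one sees that $\mu(Z(f)\triangle Z(g))=0$ holds if and only if $\mu(Z(f)\setminus Z(g))=0$ and $\mu(Z(g)\setminus Z(f))=0$ simultaneously.

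Then I would invoke Theorem~\ref{Lem0.4} twice, once in each direction: it gives $\mu(Z(f)\setminus Z(g))=0\iff ann(f)\subset ann(g)$ and, interchanging the roles of $f$ and $g$, $\mu(Z(g)\setminus Z(f))=0\iff ann(g)\subset ann(f)$. Combining the two equivalences with the conjunction from the previous step yields that $\mu(Z(f)\triangle Z(g))=0$ holds if and only if $ann(f)\subset ann(g)$ and $ann(g)\subset ann(f)$, that is, if and only if $ann(f)=ann(g)$, which is exactly the claim.

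I do not anticipate a genuine obstacle here, as the statement is a formal corollary of Theorem~\ref{Lem0.4}; the only point requiring a moment's care is the measure-theoretic equivalence in the second step, where one must ensure that $\mu$ of the union being null forces each piece to be null (via monotonicity) and conversely (via additivity or subadditivity). I would also note that the hypothesis $f,g\in\mathscr{D}(\mathcal{M}(X,\mathcal{A}))$ is not actually needed for the argument, which holds for arbitrary $f,g\in\mathcal{M}(X,\mathcal{A})$, but I would keep the stated hypothesis to match the intended context of zero-divisors.
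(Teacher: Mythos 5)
Your argument is correct and is precisely the intended derivation: the paper states this as an immediate corollary of Theorem~\ref{Lem0.4} (with no written proof), obtained by splitting $Z(f)\triangle Z(g)$ into the two disjoint differences and applying that theorem in both directions. Your additional observation that the hypothesis $f,g\in\mathscr{D}(\mathcal{M}(X,\mathcal{A}))$ is not actually needed is also accurate, since Theorem~\ref{Lem0.4} is stated for arbitrary elements of $\mathcal{M}(X,\mathcal{A})$.
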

	
	In view of Theorem~\ref{Th1}, throughout this paper, we assume that $X$ is not an atom and therefore, for every atom $A \in \mathcal{A}$, $\mu(X \setminus A)>0$. 
	
	\section{\bf The comaximal graph $\Gamma'_2(\mathcal{M}(X,\mathcal{A}))$ of $\mathcal{M}(X,\mathcal{A})$}\label{Sec3}
	We redefine the comaximal graph of $\mathcal{M}(X,\mathcal{A})$, taking into account the measure defined on the measurable space $(X, \mathcal{A})$ and  investigate in this section its graph features via the properties of the underlying measure space $(X, \mathcal{A}, \mu)$  and the ring $\mathcal{M}(X, \mathcal{A})$. 
	\begin{defn} 
		The comaximal graph $\Gamma'_2(\mathcal{M}(X,\mathcal{A}))$ (in short, $\Gamma'_2$) of $\mathcal{M}(X,\mathcal{A})$ is a graph on the vertex set $\mathcal{M}(X,\mathcal{A})\setminus[U(\mathcal{M}(X,\mathcal{A}))\cup J(\mathcal{M}(X,\mathcal{A}))]$ and the adjacency relation for two vertices $ f,g $ is given by :   $ f,g $ are adjacent if and only if the sum of the principal ideals generated by $f,g$ is almost $\mathcal{M}(X,\mathcal{A})$. In notation, $<f> + <g> = <u>$, where $u$ is a $\mu$-unit in $\mathcal{M}(X,\mathcal{A})$.
	\end{defn}
	Theorem~\ref{Th1} suggests that the vertex set of the comaximal graph $\Gamma'_2(\mathcal{M}(X,\mathcal{A}))$ is  $\mathscr{D}(\mathcal{M}(X,\mathcal{A}))$ and hence it is a non-empty graph under the assumption that $X$ is not an atom. In \cite{Hejazipour2022}, the author defined the zero-divisor graph $\Gamma(\mathcal{M}(X,\mathcal{A}))$ (in short, $\Gamma$) of $\mathcal{M}(X,\mathcal{A})$ whose vertex set is $\mathscr{D}(\mathcal{M}(X,\mathcal{A}))$ and the adjacency relation is given by: $f$ is adjacent to $g$ if and only if $\mu(X\setminus Z(f)\cap X\setminus Z(g))=0$. So, the sets of vertices of $\Gamma'_2(\mathcal{M}(X,\mathcal{A}))$ and $\Gamma(\mathcal{M}(X,\mathcal{A}))$ coincide.\\
	The adjacency relation in $\Gamma'_2(\mathcal{M}(X,\mathcal{A}))$ can be interpreted through the almost disjointness of the corresponding zero sets of the vertices in $\Gamma'_2(\mathcal{M}(X,\mathcal{A}))$, as seen in the next result.
	\begin{thm}\label{thm}
		Let $f,g\in\mathscr{D}(\mathcal{M}(X,\mathcal{A}))$. Then $f,g$ are adjacent in $\Gamma'_2(\mathcal{M}(X,\mathcal{A}))$ if and only if $\mu(Z(f)\cap Z(g))=0$.
	\end{thm}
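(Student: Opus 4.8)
The plan is to reduce every principal ideal occurring here to one generated by a characteristic function and then read off the adjacency condition set-theoretically. First I would establish the description
$$\langle f\rangle=\{h\in\mathcal{M}(X,\mathcal{A}):Z(f)\subseteq Z(h)\}.$$
The inclusion $\subseteq$ is clear, since any multiple $hf$ vanishes wherever $f$ does. For $\supseteq$, given $h$ with $Z(f)\subseteq Z(h)$, I would set $k=h/f$ on $X\setminus Z(f)$ and $k=0$ on $Z(f)$; this $k$ is measurable because $f$ is nonvanishing on $X\setminus Z(f)$, and $kf=h$ everywhere, so $h\in\langle f\rangle$. In particular, writing $e_f=1_{X\setminus Z(f)}$, an idempotent whose zero set is exactly $Z(f)$, the same description yields $\langle f\rangle=\langle e_f\rangle$.

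With this in hand, the second step is a purely formal computation of the sum. Since $e_f,e_g$ are idempotents, $\langle e_f\rangle+\langle e_g\rangle=\langle e_f+e_g-e_fe_g\rangle$, and here $e_f+e_g-e_fe_g=1_{(X\setminus Z(f))\cup(X\setminus Z(g))}=1_S$, where $S=X\setminus(Z(f)\cap Z(g))$. Thus $\langle f\rangle+\langle g\rangle=\langle 1_S\rangle$, an ideal whose generator has zero set exactly $Z(f)\cap Z(g)$. (One may equally argue directly that $1_S=e_f+(e_g-e_fe_g)\in\langle f\rangle+\langle g\rangle$, while conversely every element of $\langle f\rangle+\langle g\rangle$ vanishes on $Z(f)\cap Z(g)$.)

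It then remains to decide when $\langle 1_S\rangle$ is almost $\mathcal{M}(X,\mathcal{A})$, i.e. equals $\langle u\rangle$ for some $\mu$-unit $u$. If $\mu(Z(f)\cap Z(g))=0$, then $1_S$ is itself a $\mu$-unit, as its zero set $Z(f)\cap Z(g)$ is null, so $\langle f\rangle+\langle g\rangle=\langle 1_S\rangle$ is generated by a $\mu$-unit and $f,g$ are adjacent. Conversely, if $\langle 1_S\rangle=\langle u\rangle$ with $\mu(Z(u))=0$, then the description of principal ideals above forces $Z(1_S)=Z(u)$: from $1_S\in\langle u\rangle$ one gets $Z(u)\subseteq Z(1_S)$, and from $u\in\langle 1_S\rangle$ one gets $Z(1_S)\subseteq Z(u)$. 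Hence $\mu(Z(f)\cap Z(g))=\mu(Z(1_S))=\mu(Z(u))=0$, which completes both directions of the equivalence.

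The only genuinely ring-theoretic input is the first step, where the measurability of the pointwise quotient $h/f$ on $X\setminus Z(f)$ is what lets $f$ ``divide'' every function whose zero set contains $Z(f)$; this is the property distinguishing $\mathcal{M}(X,\mathcal{A})$ from, say, rings of continuous functions, and it is the one point deserving care. After that, the passage through the idempotents $e_f,e_g$ and the translation of the $\mu$-unit condition into $\mu(Z(f)\cap Z(g))=0$ are routine.
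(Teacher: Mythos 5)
Your proof is correct, but it reaches the conclusion by a different route than the paper. The paper argues both directions directly from the definition: for the forward implication it writes $ff_1+gg_1=u$ and observes $Z(f)\cap Z(g)\subseteq Z(u)$, and for the converse it exhibits the explicit generator $u=f^2+g^2$ together with the measurable quotient $f/(f^2+g^2)$. You instead prove the structural lemma $\langle f\rangle=\{h:Z(f)\subseteq Z(h)\}=\langle 1_{X\setminus Z(f)}\rangle$ and then compute the sum of ideals exactly, obtaining $\langle f\rangle+\langle g\rangle=\langle 1_{X\setminus(Z(f)\cap Z(g))}\rangle$ via the idempotent identity, after which the $\mu$-unit condition is read off from the zero set of the canonical generator. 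Both arguments ultimately rest on the same ring-theoretic fact that a measurable function divides every measurable function vanishing on its zero set (the measurability of the pointwise quotient), which you rightly single out as the step deserving care. What your approach buys is a reusable and more informative statement -- a complete description of principal ideals and of sums of two of them in $\mathcal{M}(X,\mathcal{A})$, which in particular shows every principal ideal is generated by an idempotent -- at the cost of being somewhat longer; the paper's proof is more economical and needs only the one ad hoc generator $f^2+g^2$.
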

	\begin{proof}
		Let $f,g$ be adjacent in $\Gamma'_2$. Then there exists a $\mu$-unit $u$ in $\mathcal{M}(X,\mathcal{A})$ such that $<f>+<g>=<u>$. i.e., $f.f_1+g.g_1=u$ for some $f_1,g_1\in\mathcal{M}(X,\mathcal{A})$. Therefore, $Z(f)\cap Z(g)\subset Z(u)$. Since $u$ is a $\mu$-unit $\mu(Z(u))=0\implies\mu(Z(f)\cap Z(g))=0$. \\
		Conversely, let $\mu(Z(f)\cap Z(g))=0$. We claim that $<f>+<g>=<u>$, where $u=f^2+g^2$, a $\mu$-unit in $\mathcal{M}(X,\mathcal{A})$. Clearly $u\in<f>+<g>$. Also $f\in<u>$, because $f=u.f_1$, where $$f_1(x)=\begin{cases}
			\frac{f(x)}{f^2(x)+g^2(x)}&\text{ if }x\notin Z(f)\cap Z(g)\\
			0&\text{ if }x\in Z(f)\cap Z(g)
		\end{cases}$$ and similarly, $g\in<u>$. Hence, $<f>+<g>=<u>$ . i.e., $f,g$ are adjacent in $\Gamma'_2$.
	\end{proof}
	The following theorem provides a necessary and sufficient condition under which a pair of vertices of $\Gamma'_2(\mathcal{M}(X,\mathcal{A}))$ admits of a third vertex adjacent to both of them.  
	\begin{thm}\label{Lem2.1}
		Let $f,g\in\mathscr{D}(\mathcal{M}(X,\mathcal{A}))$. Then there is a vertex in $\mathscr{D}(\mathcal{M}(X,\mathcal{A}))$ adjacent to both $f,g$ in $\Gamma'_2(\mathcal{M}(X,\mathcal{A}))$ if and only if $\mu(X\setminus Z(f)\cap X\setminus Z(g))>0$.
	\end{thm}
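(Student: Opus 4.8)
The plan is to reduce everything to Theorem~\ref{thm}, which tells us that a vertex $h$ is adjacent to $f$ (respectively $g$) in $\Gamma'_2$ precisely when $\mu(Z(f)\cap Z(h))=0$ (respectively $\mu(Z(g)\cap Z(h))=0$). Writing $W=(X\setminus Z(f))\cap(X\setminus Z(g))=X\setminus(Z(f)\cup Z(g))$ for the common essential support of $f$ and $g$, the existence of a vertex $h$ adjacent to both thus amounts to producing $h\in\mathscr{D}$ with $\mu(Z(h)\cap(Z(f)\cup Z(g)))=0$, i.e. whose zero set lies, up to a null set, inside $W$. Recognising this reformulation is the whole game; both implications then follow by short measure-theoretic bookkeeping.

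For the forward implication I would suppose such a common neighbour $h$ exists. Since $\mu(Z(f)\cap Z(h))=0$ and $\mu(Z(g)\cap Z(h))=0$, I obtain $\mu(Z(h)\setminus W)=\mu(Z(h)\cap(Z(f)\cup Z(g)))=0$, so $Z(h)$ agrees with $Z(h)\cap W$ up to a null set. Because $h\in\mathscr{D}$ forces $\mu(Z(h))>0$, it follows that $\mu(W)\ge\mu(Z(h)\cap W)=\mu(Z(h))>0$, which is exactly the desired conclusion.

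For the converse I would assume $\mu(W)>0$ and exhibit an explicit witness, namely $h=1_{X\setminus W}$, so that $Z(h)=W$. Measurability of $h$ is clear, since $Z(f),Z(g)\in\mathcal{A}$ give $W\in\mathcal{A}$, and $Z(h)=W$ is by construction disjoint from both $Z(f)$ and $Z(g)$; hence $\mu(Z(f)\cap Z(h))=\mu(Z(g)\cap Z(h))=0$, and Theorem~\ref{thm} makes $h$ adjacent to each of $f,g$.

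The one point that needs care, and the only place the hypothesis $f,g\in\mathscr{D}$ is genuinely used in the converse, is checking that $h$ is itself a vertex, i.e. $h\in\mathscr{D}$, which requires both $\mu(Z(h))>0$ and $\mu(X\setminus Z(h))>0$. The first is exactly the standing assumption $\mu(W)>0$. For the second I note $X\setminus Z(h)=Z(f)\cup Z(g)$, and since $f\in\mathscr{D}$ we have $\mu(Z(f))>0$, so $\mu(X\setminus Z(h))\ge\mu(Z(f))>0$. The same disjointness computation shows that $W$ is essentially disjoint from each of $Z(f),Z(g)$, so $h$ cannot coincide with $f$ or $g$ and the adjacency is between genuinely distinct vertices. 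Apart from this verification the argument is immediate from Theorem~\ref{thm}, so I expect no real obstacle beyond confirming the candidate $h$ is a legitimate zero-divisor.
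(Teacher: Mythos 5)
Your proposal is correct and follows essentially the same route as the paper: the converse uses the identical witness $h=1_{X\setminus W}=1_{Z(f)\cup Z(g)}$ with $Z(h)=(X\setminus Z(f))\cap(X\setminus Z(g))$, and the forward direction uses the same decomposition of $Z(h)$ into its intersections with $Z(f)\cup Z(g)$ and its complement. The only addition is your explicit check that $h$ is a genuine zero-divisor distinct from $f$ and $g$, which the paper leaves implicit.
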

	\begin{proof}
		Let $\mu(X\setminus Z(f)\cap X\setminus Z(g))>0$. Consider $h=1_{Z(f)\cup Z(g)}\in\mathcal{M}(X,\mathcal{A})$. Clearly $Z(h)=X\setminus Z(f)\cap X\setminus Z(g)\implies h\in\mathscr{D}$. Also $Z(f)\cap Z(h)=\emptyset=Z(g)\cap Z(h)$ i.e., $h$ is adjacent to both $f,g$ in $\Gamma'_2$. Conversely, let $h\in\mathscr{D}$ be adjacent to both $f,g$ in $\Gamma'_2$. So $\mu(Z(f)\cap Z(h))=0=\mu(Z(g)\cap Z(h))$ and hence, $\mu(Z(h)\cap[Z(f)\cup Z(g)])=0$. Now $Z(h)=(Z(h)\cap[Z(f)\cup Z(g)])\sqcup(Z(h)\cap X\setminus[Z(f)\cup Z(g)])\implies \mu(Z(h)\cap X\setminus[Z(f)\cup Z(g)])=\mu(Z(h))>0$, as $h\in\mathscr{D}$. Consequently $\mu(X\setminus[Z(f)\cup Z(g)])>0$; i.e., $\mu(X\setminus Z(f)\cap X\setminus Z(g))>0$.
	\end{proof}
	\begin{cor}
		Every edge $f-g$ in $\Gamma'_2(\mathcal{M}(X,\mathcal{A}))$ is either an edge of a triangle or an edge of a square according as $\mu(X\setminus Z(f)\cap X\setminus Z(g))>0$ or $\mu(X\setminus Z(f)\cap X\setminus Z(g))=0$.
	\end{cor}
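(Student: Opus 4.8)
The plan is to split into the two cases dictated by the sign of $\mu(X\setminus Z(f)\cap X\setminus Z(g))$ and to lean on Theorem~\ref{Lem2.1}, which records exactly when two vertices admit a common neighbour. Throughout I use Theorem~\ref{thm}: adjacency of two vertices in $\Gamma'_2$ is the same as almost-disjointness of their zero sets, so the edge $f-g$ means $\mu(Z(f)\cap Z(g))=0$.

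First I would dispose of the case $\mu(X\setminus Z(f)\cap X\setminus Z(g))>0$. Here Theorem~\ref{Lem2.1} furnishes a vertex $h\in\mathscr{D}(\mathcal{M}(X,\mathcal{A}))$ adjacent to both $f$ and $g$. Since $\Gamma'_2$ is simple, a vertex is never adjacent to itself, so $h$ cannot coincide with $f$ or $g$; hence $f,g,h$ are three distinct, pairwise adjacent vertices and the edge $f-g$ lies on the triangle $f-g-h-f$.

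The substantive case is $\mu(X\setminus Z(f)\cap X\setminus Z(g))=0$. By Theorem~\ref{Lem2.1} no vertex is adjacent to both $f$ and $g$, so $f-g$ lies on no triangle, and it remains to exhibit a square through it. I would first record the measure-theoretic skeleton: writing $P=Z(f)\setminus Z(g)$ and $Q=Z(g)\setminus Z(f)$, the hypotheses $\mu(Z(f)\cap Z(g))=0$ and $\mu(X\setminus Z(f)\cap X\setminus Z(g))=0$, together with $f,g\in\mathscr{D}(\mathcal{M}(X,\mathcal{A}))$, give $\mu(P)=\mu(Z(f))>0$, $\mu(Q)=\mu(Z(g))>0$ and $\mu(X\setminus(P\sqcup Q))=0$, so that $X$ is, up to a null set, the disjoint union $P\sqcup Q$. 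I would then take $a,b\in\mathscr{D}(\mathcal{M}(X,\mathcal{A}))$ with $Z(a)=X\setminus Q$ and $Z(b)=X\setminus P$ (for instance suitable nonzero scalar multiples of $1_Q$ and $1_P$) and verify via Theorem~\ref{thm} that $a$ is adjacent to $g$, $b$ to $a$, and $f$ to $b$; since $Z(a)\cap Z(g)$, $Z(a)\cap Z(b)$ and $Z(b)\cap Z(f)$ each reduce to a null subset of $Z(f)\cap Z(g)$ or of $X\setminus(P\cup Q)$, these adjacencies all hold, and $f-g-a-b-f$ is a $4$-cycle.

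The only delicate point is the distinctness of the four vertices. Disjointness and positive measure of $P,Q$ immediately force $a\neq b$ and separate them from $\{f,g\}$ in zero-set terms outside borderline configurations; and since adjacency depends only on the zero set modulo null sets, infinitely many functions realise each prescribed zero set $X\setminus Q$ and $X\setminus P$ with identical adjacencies. Thus I can always choose the scalar outside the finite set of values that would cause a collision, guaranteeing $a,b$ distinct from $f$ and $g$. The measure bookkeeping of the previous paragraph is routine, so this distinctness check is really the only place that needs care.
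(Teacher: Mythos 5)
Your proposal is correct. The triangle half is exactly the paper's argument: when $\mu(X\setminus Z(f)\cap X\setminus Z(g))>0$, Theorem~\ref{Lem2.1} supplies a common neighbour and hence a triangle through $f-g$. For the square half the two proofs diverge in the choice of the extra two vertices. The paper simply takes the $4$-cycle $f-g-2f-2g-f$: since $Z(2f)=Z(f)$ and $Z(2g)=Z(g)$, all the required adjacencies are immediate from Theorem~\ref{thm}, and distinctness of the four vertices is automatic ($f\neq 2f$ because $\mu(X\setminus Z(f))>0$, and $f=2g$ would force $\mu(Z(f)\cap Z(g))=\mu(Z(f))>0$, contradicting the edge). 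You instead build new vertices supported on $P=Z(f)\setminus Z(g)$ and $Q=Z(g)\setminus Z(f)$ and check that $f-g-a-b-f$ closes up because $Z(a)\cap Z(b)=X\setminus(P\cup Q)$ is null under both hypotheses; this is a valid verification, but it obliges you to argue separately that $a,b$ avoid colliding with $f,g$, which you handle by perturbing the scalar. The scalar-multiple trick buys exactly that distinctness for free and reappears later in the paper (in the computation of $c(f,g)$), so it is the more economical choice here; your construction has the mild advantage of making explicit the measure-theoretic picture that, modulo null sets, $X=P\sqcup Q$ in this case. You also explicitly note, via Theorem~\ref{Lem2.1}, that in the null case the edge lies on no triangle, which the paper leaves implicit; that is a small completeness gain on your side.
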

	\begin{proof}
		If $\mu(X\setminus Z(f)\cap X\setminus Z(g))>0$, then by Theorem \ref{Lem2.1}, there exists a vertex adjacent to both $f,g$. If $\mu(X\setminus Z(f)\cap X\setminus Z(g))=0$, then $f-g-2f-2g-f$ forms a square in $\Gamma'_2$.
	\end{proof}
	\begin{cor}
		$gr(\Gamma'_2(\mathcal{M}(X,\mathcal{A})))\leq 4$.
	\end{cor}
	The distance between two vertices in $\Gamma'_2(\mathcal{M}(X,\mathcal{A}))$ is given below.
	\begin{thm}\label{Th2.3}
		Let $f,g\in\mathscr{D}(\mathcal{M}(X,\mathcal{A}))$. Then 
		$$d(f,g)=\begin{cases}
			1&\text{ if }\mu(Z(f)\cap Z(g))=0\\
			2&\text{ if }\mu(Z(f)\cap Z(g))>0\text{ and }\mu(X\setminus Z(f)\cap X\setminus Z(g))>0\\
			3&\text{ if }\mu(Z(f)\cap Z(g))>0\text{ and }\mu(X\setminus Z(f)\cap X\setminus Z(g))=0
		\end{cases}$$
	\end{thm}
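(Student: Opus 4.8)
The plan is to treat the three cases separately, with the first two following immediately from Theorem~\ref{thm} and Theorem~\ref{Lem2.1}, and only the third requiring an explicit construction. Throughout I write $A = Z(f)$ and $B = Z(g)$, and recall that $f,g \in \mathscr{D}$ forces all of $\mu(A)$, $\mu(X\setminus A)$, $\mu(B)$, $\mu(X\setminus B)$ to be positive.

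For the first branch, if $\mu(Z(f)\cap Z(g)) = 0$ then $f$ and $g$ are adjacent by Theorem~\ref{thm}, so $d(f,g) = 1$. In the two remaining branches $\mu(Z(f)\cap Z(g)) > 0$, so Theorem~\ref{thm} tells us $f,g$ are non-adjacent, giving $d(f,g) \geq 2$. For the second branch, the extra hypothesis $\mu\bigl((X\setminus Z(f))\cap(X\setminus Z(g))\bigr) > 0$ lets Theorem~\ref{Lem2.1} produce a common neighbour $h \in \mathscr{D}$, so $f - h - g$ is a path of length two and $d(f,g) = 2$.

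The only case needing real work is the third, where $\mu\bigl((X\setminus Z(f))\cap(X\setminus Z(g))\bigr) = 0$. Here Theorem~\ref{Lem2.1} rules out any common neighbour of $f$ and $g$, so, together with non-adjacency, $d(f,g) \geq 3$; it remains to exhibit a walk of length exactly three. The key observation is that the hypothesis is equivalent to $\mu\bigl(X\setminus(A\cup B)\bigr) = 0$, so the three pairwise-disjoint measurable pieces $A\setminus B$, $B\setminus A$, $A\cap B$ cover $X$ up to a null set, and each has positive measure: indeed $\mu(A\setminus B) = \mu(X\setminus B) > 0$, $\mu(B\setminus A) = \mu(X\setminus A) > 0$, and $\mu(A\cap B) > 0$ by hypothesis.

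With this decomposition in hand I would set $h_1 = 1_{X\setminus(B\setminus A)}$ and $h_2 = 1_{X\setminus(A\setminus B)}$, so that $Z(h_1) = B\setminus A$ and $Z(h_2) = A\setminus B$. A direct check shows $h_1,h_2 \in \mathscr{D}$ (each zero set has positive measure, while the complements contain $A$ and $B$ respectively and so also have positive measure), and that $Z(f)\cap Z(h_1)$, $Z(h_1)\cap Z(h_2)$, and $Z(h_2)\cap Z(g)$ are all literally empty, since the relevant pieces of the decomposition are disjoint. By Theorem~\ref{thm} this makes $f - h_1 - h_2 - g$ a path, whence $d(f,g) \leq 3$ and therefore $d(f,g) = 3$. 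I expect this construction — pinning down the correct zero sets $B\setminus A$ and $A\setminus B$ and verifying they yield vertices in $\mathscr{D}$ — to be the substantive step, while the lower bounds drop out mechanically from the two preceding theorems.
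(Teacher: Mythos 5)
Your proof is correct and follows essentially the same route as the paper's: the first two cases are dispatched by Theorem~\ref{thm} and Theorem~\ref{Lem2.1}, and the third is settled by exhibiting a path $f-h_1-h_2-g$ through two characteristic functions. The paper simply takes $h_1=1_{Z(f)}$ and $h_2=1_{Z(g)}$, so that $\mu(Z(h_1)\cap Z(h_2))=0$ by hypothesis rather than the intersections being literally empty; your $h_1,h_2$ have zero sets differing from these only by the null set $X\setminus\bigl(Z(f)\cup Z(g)\bigr)$, so the two constructions are equivalent.
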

	\begin{proof}
		The conditions for $d(f,g)=1$ or $2$ are straightforward consequences of the adjacency relation and Theorem \ref{Lem2.1}. From these two results, $d(f,g)=3\implies \mu(Z(f)\cap Z(g))>0$ and $\mu(X\setminus Z(f)\cap X\setminus Z(g))=0$. Conversely let $\mu(Z(f)\cap Z(g))>0$ and $\mu(X\setminus Z(f)\cap X\setminus Z(g))=0$. Then certainly, $d(f,g)>2$. Let $h_1=1_{Z(f)}$ and $h_2=1_{Z(g)}$. Then $h_1,h_2\in\mathscr{D}$ and $Z(h_1)=X\setminus Z(f), Z(h_2)=X\setminus Z(g)\implies Z(f)\cap Z(h_1)=\emptyset=Z(h_2)\cap Z(g)$ and also $\mu(Z(h_1)\cap Z(h_2))=0$. Therefore $f-h_1-h_2-g$ is a path in $\Gamma'_2$. Consequently, $d(f,g)=3$.
	\end{proof}
	\begin{cor}
		$\Gamma'_2(\mathcal{M}(X,\mathcal{A}))$ is a connected graph.
	\end{cor}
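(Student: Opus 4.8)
The plan is to read off connectedness directly from Theorem~\ref{Th2.3}, which already computes $d(f,g)$ for every pair of distinct vertices $f,g\in\mathscr{D}(\mathcal{M}(X,\mathcal{A}))$. Recall that a graph is connected precisely when any two vertices are joined by a path, equivalently when $d(u,v)$ is finite for all $u,v$. So the only thing I need to verify is that the distance function supplied by Theorem~\ref{Th2.3} is finite on every pair of vertices.

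The first step is to observe that the three hypotheses appearing in Theorem~\ref{Th2.3} are mutually exclusive and jointly exhaustive for any pair $f,g\in\mathscr{D}$. Indeed, either $\mu(Z(f)\cap Z(g))=0$ or $\mu(Z(f)\cap Z(g))>0$; and in the latter case, either $\mu(X\setminus Z(f)\cap X\setminus Z(g))>0$ or $\mu(X\setminus Z(f)\cap X\setminus Z(g))=0$. These three alternatives exhaust all possibilities, so Theorem~\ref{Th2.3} applies to every pair of vertices.

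The second step is to note that in each of the three cases the value assigned is $1$, $2$, or $3$ respectively, hence $d(f,g)\le 3<\infty$ for all $f,g\in\mathscr{D}(\mathcal{M}(X,\mathcal{A}))$. Therefore every pair of distinct vertices is joined by a path of length at most $3$, and the graph is connected. As a bonus this shows $\operatorname{diam}(\Gamma'_2(\mathcal{M}(X,\mathcal{A})))\le 3$, a sharpening that costs nothing extra.

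There is essentially no obstacle here, since the corollary is an immediate unpacking of the already-proved distance formula. The only point that warrants a sentence of care is confirming the exhaustiveness of the casework in Theorem~\ref{Th2.3}; once that is in place, finiteness of $d(f,g)$ and hence connectedness follow at once.
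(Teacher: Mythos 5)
Your proposal is correct and matches the paper's intent exactly: the corollary is stated as an immediate consequence of Theorem~\ref{Th2.3}, whose case analysis is exhaustive and yields $d(f,g)\le 3$ for every pair of vertices. The extra sentence confirming exhaustiveness of the three cases is a reasonable bit of care, and the observation that this also bounds the diameter is consistent with the corollary the paper proves right afterwards.
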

	\begin{lem}\label{Lem3.13}
		Let $f,g\in\mathscr{D}(\mathcal{M}(X,\mathcal{A}))$.
		\begin{enumerate}
			\item If $\mu(Z(f)\triangle Z(g))=0$, then $f,g$ are not adjacent in $\Gamma'_2(\mathcal{M}(X,\mathcal{A}))$.\label{Lem3.13.1}
			\item If $\mu(Z(f)\setminus A)=0$ and $\mu(Z(g)\cap A)=0$ for some $A\in\mathcal{A}$, then $f,g$ are adjacent in $\Gamma'_2(\mathcal{M}(X,\mathcal{A}))$.\label{Lem3.13.2}
			\item If $f,g$ are adjacent in $\Gamma'_2(\mathcal{M}(X,\mathcal{A}))$, then for any $f_1\in\mathscr{D}(\mathcal{M}(X,\mathcal{A}))$ with $\mu(Z(f_1)\triangle Z(f))=0$ and for any $g_1\in\mathscr{D}(\mathcal{M}(X,\mathcal{A}))$ with $\mu(Z(g_1)\triangle Z(g))=0$, $f_1,g_1$ are adjacent in $\Gamma'_2(\mathcal{M}(X,\mathcal{A}))$.\label{Lem3.13.3}
		\end{enumerate}
	\end{lem}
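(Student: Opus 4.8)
The plan is to reduce all three statements to the adjacency criterion of Theorem~\ref{thm}, namely that $f,g\in\mathscr{D}(\mathcal{M}(X,\mathcal{A}))$ are adjacent in $\Gamma'_2$ precisely when $\mu(Z(f)\cap Z(g))=0$. Each part then becomes a short computation with measures of intersections and differences of zero sets, using nothing beyond monotonicity and finite additivity of $\mu$ together with the fact that $h\in\mathscr{D}$ forces $\mu(Z(h))>0$.

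For part (\ref{Lem3.13.1}), I would decompose $Z(f)$ as the disjoint union $(Z(f)\cap Z(g))\sqcup(Z(f)\setminus Z(g))$. Since $Z(f)\setminus Z(g)\subseteq Z(f)\triangle Z(g)$, the hypothesis $\mu(Z(f)\triangle Z(g))=0$ forces $\mu(Z(f)\setminus Z(g))=0$, so additivity gives $\mu(Z(f)\cap Z(g))=\mu(Z(f))>0$ because $f\in\mathscr{D}$. By Theorem~\ref{thm} the vertices $f,g$ are therefore not adjacent.

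For part (\ref{Lem3.13.2}), I would split $Z(f)\cap Z(g)$ along $A$, writing it as $\big(Z(f)\cap Z(g)\cap A\big)\cup\big((Z(f)\cap Z(g))\setminus A\big)$. The first piece lies inside $Z(g)\cap A$ and the second inside $Z(f)\setminus A$, both null by hypothesis, so monotonicity yields $\mu(Z(f)\cap Z(g))=0$ and Theorem~\ref{thm} gives adjacency of $f,g$.

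For part (\ref{Lem3.13.3}), the cleanest route is the elementary set inclusion $(P\cap Q)\triangle(P'\cap Q')\subseteq(P\triangle P')\cup(Q\triangle Q')$ applied with $P=Z(f_1)$, $Q=Z(g_1)$, $P'=Z(f)$, $Q'=Z(g)$. The right-hand side is null by $\mu(Z(f_1)\triangle Z(f))=0$ and $\mu(Z(g_1)\triangle Z(g))=0$, so $\mu\big((Z(f_1)\cap Z(g_1))\triangle(Z(f)\cap Z(g))\big)=0$ and hence $\mu(Z(f_1)\cap Z(g_1))=\mu(Z(f)\cap Z(g))$; since $f,g$ are adjacent this common value is $0$, and Theorem~\ref{thm} gives adjacency of $f_1,g_1$. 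I do not anticipate a genuine obstacle in any part; the only point requiring slight care is the symmetric-difference inclusion in (\ref{Lem3.13.3}) (or, to avoid it, bounding each of the four cross terms in the expansion of $Z(f_1)\cap Z(g_1)$ over $Z(f)\cup(Z(f_1)\setminus Z(f))$ and $Z(g)\cup(Z(g_1)\setminus Z(g))$ separately), but this is purely formal.
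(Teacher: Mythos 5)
Your proposal is correct and follows essentially the same route as the paper: all three parts are reduced to the criterion $\mu(Z(f)\cap Z(g))=0$ of Theorem~\ref{thm} and handled by elementary set inclusions (the paper proves (1) by contradiction rather than directly, and in (3) expands $Z(f_1)\cap Z(g_1)\subset[Z(f)\cup(Z(f_1)\triangle Z(f))]\cap[Z(g)\cup(Z(g_1)\triangle Z(g))]$, which is exactly the alternative you mention at the end). No gaps.
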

	\begin{proof}
		\hspace*{3cm}
		\begin{enumerate}
			\item Let $\mu(Z(f)\triangle Z(g))=0$. If $f,g$ are adjacent in $\Gamma'_2$, then $\mu(Z(f)\cap Z(g))=0$. Now $Z(f)\subset (Z(f)\cap Z(g))\cup(Z(f)\triangle Z(g))\implies\mu(Z(f))=0$, a contradiction to $f\in\mathscr{D}$. Therefore, $f,g$ are not adjacent in $\Gamma'_2$.
			\item Let $\mu(Z(f)\setminus A)=0$ and $\mu(Z(g)\cap A)=0$ for some $A\in\mathcal{A}$. Now $Z(f)\cap Z(g)\subset [A\cup (Z(f)\setminus A)]\cap [(X\setminus A)\cup (Z(g)\cap A)]\implies\mu(Z(f)\cap Z(g))=0$. Therefore, $f,g$ are adjacent in $\Gamma'_2$.
			\item Let $f,g$ be adjacent in $\Gamma'_2$ and $\mu(Z(f_1)\triangle Z(f))=0=\mu(Z(g_1)\triangle Z(g))$ for some $f_1,g_1\in\mathscr{D}$. Since $f,g$ are adjacent, $\mu(Z(f)\cap Z(g))=0$. Now $Z(f_1)\cap Z(g_1)\subset [Z(f)\cup(Z(f_1)\triangle Z(f))]\cap[Z(g)\cup(Z(g_1)\triangle Z(g))]$. Therefore, $f_1,g_1$ are adjacent in $\Gamma'_2$.
		\end{enumerate}
	\end{proof}
	\begin{lem}\label{Lem3.14}
		A partition of $X$ by two atoms $A,B$ yields a partition of $\mathscr{D}(\mathcal{M}(X,\mathcal{A}))$ by two sets $\{f\in\mathscr{D}(\mathcal{M}(X,\mathcal{A})):\mu(Z(f)\triangle A)=0\}$ and $\{f\in\mathscr{D}(\mathcal{M}(X,\mathcal{A})):\mu(Z(f)\triangle B)=0\}$.
	\end{lem}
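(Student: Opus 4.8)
The plan is to verify the two defining requirements of a partition directly: that the two proposed sets are disjoint, and that together they exhaust $\mathscr{D}(\mathcal{M}(X,\mathcal{A}))$. Throughout I would write $Z = Z(f)$ for brevity and recall from the first theorem of the preliminaries that $f \in \mathscr{D}$ means exactly $\mu(Z) > 0$ and $\mu(X \setminus Z) > 0$. Since $X = A \sqcup B$ with $A,B$ atoms, note that $X \setminus B = A$, so that $Z \setminus B = Z \cap A$ and $Z \setminus A = Z \cap B$; these identities will be used repeatedly.

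The key tool I would isolate first is the following elementary dichotomy for atoms: if $A$ is an atom and $E \in \mathcal{A}$ with $E \subseteq A$, then either $\mu(E) = 0$ or $\mu(A \setminus E) = 0$, since otherwise $A = E \sqcup (A \setminus E)$ would express $A$ as a disjoint union of two measurable sets of positive measure, contradicting that $A$ is an atom. I would apply this twice, to $E = Z \cap A \subseteq A$ and to $E = Z \cap B \subseteq B$, obtaining four combinatorial cases.

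Two of those cases are ruled out immediately by the zero-divisor condition: if $\mu(Z \cap A) = \mu(Z \cap B) = 0$ then $\mu(Z) = 0$, contradicting $\mu(Z) > 0$; and if $\mu(A \setminus Z) = \mu(B \setminus Z) = 0$ then $\mu(X \setminus Z) = 0$, contradicting $\mu(X \setminus Z) > 0$. In each surviving case I would compute the relevant symmetric difference: when $\mu(Z \cap A) = 0$ and $\mu(B \setminus Z) = 0$, the identity $Z \triangle B = (Z \setminus B) \cup (B \setminus Z) = (Z \cap A) \cup (B \setminus Z)$ gives $\mu(Z \triangle B) = 0$, placing $f$ in the second set; the remaining case is symmetric in $A$ and $B$ and places $f$ in the first set. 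Hence every vertex lies in at least one of the two sets.

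Finally, for disjointness I would show that no $f$ can lie in both: if $\mu(Z \triangle A) = 0$ and $\mu(Z \triangle B) = 0$, then by the triangle inequality for the pseudometric $\mu(\,\cdot\,\triangle\,\cdot\,)$ we get $\mu(A \triangle B) \le \mu(A \triangle Z) + \mu(Z \triangle B) = 0$, whereas $A \triangle B = A \cup B = X$ because $A,B$ are disjoint with union $X$, forcing $\mu(X) = 0$ against the standing hypothesis $\mu(X) > 0$. I do not anticipate a genuine obstacle here; the only substantive ingredient is the atom dichotomy for measurable subsets, and once that is in hand the rest is bookkeeping with symmetric differences. If nonemptiness of both parts is also desired, I would remark that $1_A$ and $1_B$ are vertices landing in opposite parts, though the essential assertion is the disjoint-and-exhaustive property established above.
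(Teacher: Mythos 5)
Your proposal is correct and follows essentially the same route as the paper: the atom dichotomy applied to $Z(f)\cap A$ and $Z(f)\cap B$, elimination of the two degenerate cases via $\mu(Z(f))>0$ and $\mu(X\setminus Z(f))>0$, and a symmetric-difference computation in the surviving cases. The only difference is that you also verify disjointness of the two classes explicitly (via the pseudometric triangle inequality and $A\triangle B=X$), a point the paper's proof treats as immediate and omits.
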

	\begin{proof}
		To prove this result, it is enough to show that for each $f\in \mathscr{D}$, either $\mu(Z(f)\triangle A)=0$ or $\mu(Z(f)\triangle B)=0$.
		Let $f\in\mathscr{D}$. Since $A$ is an atom, either $\mu(A\cap Z(f))=0$ or $\mu(A\cap X\setminus Z(f))=0$. At first, let $\mu(A\cap Z(f))=0$. By hypothesis, $A = X \setminus B$ and so, $\mu(X\setminus B\cap Z(f))=0$. If $\mu(B\cap Z(f))=0$, then $\mu(Z(f)) =0$ which contradicts $f\in\mathscr{D}$. Hence, $\mu(B\cap Z(f))>0$  which implies $\mu(B\cap X\setminus Z(f))=0$, as $B$ is also an atom. Consequently, $\mu(B\triangle Z(f))=0$. Analogously, the hypothesis $\mu(A\cap X\setminus Z(f))=0$ would lead to the conclusion $\mu(Z(f)\triangle A)=0$.
	\end{proof}
	
	\begin{thm}\label{Th3.13}
		$\Gamma'_2(\mathcal{M}(X,\mathcal{A}))$ is a complete bipartite graph if and only if $X$ is partitioned into two atoms.
	\end{thm}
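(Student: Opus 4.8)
The plan is to prove both implications, with almost all of the new content concentrated in the necessity half.

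For sufficiency, suppose $X = A \sqcup B$ with $A,B$ atoms. By Lemma~\ref{Lem3.14} the vertex set $\mathscr{D}$ splits as $V_A \sqcup V_B$, where $V_A = \{f \in \mathscr{D} : \mu(Z(f)\triangle A)=0\}$ and $V_B = \{f \in \mathscr{D} : \mu(Z(f)\triangle B)=0\}$. I would first note that both parts are nonempty: $1_B \in V_A$ and $1_A \in V_B$, and both lie in $\mathscr{D}$ since $\mu(A),\mu(B)>0$. Next, each part is stable: for $f,g \in V_A$ one has $Z(f)\triangle Z(g) \subseteq (Z(f)\triangle A)\cup(A\triangle Z(g))$, so $\mu(Z(f)\triangle Z(g))=0$ and Lemma~\ref{Lem3.13}(\ref{Lem3.13.1}) forbids adjacency; the argument for $V_B$ is symmetric. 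Finally, completeness across the parts: for $f \in V_A$ and $g \in V_B$ we have $\mu(Z(f)\setminus A)=0$ (as $Z(f)\setminus A \subseteq Z(f)\triangle A$) and $\mu(Z(g)\cap A)=\mu(Z(g)\setminus B)=0$ (as $A = X\setminus B$ and $Z(g)\setminus B \subseteq Z(g)\triangle B$), so Lemma~\ref{Lem3.13}(\ref{Lem3.13.2}), applied with this $A$, yields adjacency. This half is routine bookkeeping with symmetric differences.

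For necessity, the key observation is that a complete bipartite graph contains no triangle, while a triangle is cheap to manufacture whenever $X$ can be cut into three positive-measure pieces. Precisely, I would show: if $X = P_1 \sqcup P_2 \sqcup P_3$ with $\mu(P_i)>0$ for each $i$, then $f_i := 1_{X\setminus P_i}$ satisfies $Z(f_i)=P_i$, each $f_i \in \mathscr{D}$ (since $\mu(P_i)>0$ and $\mu(X\setminus P_i)>0$), and $\mu(Z(f_i)\cap Z(f_j))=\mu(P_i\cap P_j)=0$ for $i\neq j$; hence by Theorem~\ref{thm} the vertices $f_1,f_2,f_3$ form a triangle, contradicting bipartiteness. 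Therefore no such three-cell partition of $X$ exists. Since by our standing assumption $X$ is not an atom, we may write $X = A \sqcup B$ with $\mu(A),\mu(B)>0$; were $A$ not an atom it would split as $A = A_1 \sqcup A_2$ with both pieces of positive measure, producing the forbidden partition $X = A_1 \sqcup A_2 \sqcup B$. Thus $A$, and symmetrically $B$, must be atoms, i.e.\ $X$ is partitioned into two atoms.

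The main obstacle is the necessity direction, specifically isolating the right consequence of ``complete bipartite'' to exploit. Completeness and bipartiteness together say a great deal, but the economical route is to use only the absence of triangles and translate it into the purely measure-theoretic statement that $X$ admits no partition into three cells of positive measure; the remaining step, upgrading ``not an atom, no three-cell partition'' to ``exactly two atoms'', is then a short argument. I expect the sufficiency direction to be entirely mechanical given Lemmas~\ref{Lem3.13} and~\ref{Lem3.14}.
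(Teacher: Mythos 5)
Your proof is correct. The sufficiency half is essentially the paper's own argument: Lemma~\ref{Lem3.14} for the partition of $\mathscr{D}$, Lemma~\ref{Lem3.13}(\ref{Lem3.13.1}) for stability of each part, and Lemma~\ref{Lem3.13}(\ref{Lem3.13.2}) for adjacency across parts; your explicit check that both parts are nonempty and your symmetric-difference triangle inequality for stability are harmless refinements. The necessity half, however, genuinely diverges from the paper. The paper works inside the given bipartition: it fixes $f\in V_1$, $g\in V_2$, invokes Theorem~\ref{Lem2.1} to get $\mu(X\setminus Z(f)\cap X\setminus Z(g))=0$, runs a case analysis on arbitrary positive-measure sets $A$ (via $1_A$ and $1_{X\setminus A}$) to show $Z(f)$ and $Z(g)$ are atoms, and then exhibits the two atoms explicitly as $Z(f)\setminus Z(g)$ and $Z(g)\cup(X\setminus Z(f))$. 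You instead extract only triangle-freeness from the hypothesis, show that any partition $X=P_1\sqcup P_2\sqcup P_3$ into three positive-measure cells yields the triangle $1_{X\setminus P_1}-1_{X\setminus P_2}-1_{X\setminus P_3}$ via Theorem~\ref{thm}, and then upgrade ``$X$ is not an atom and admits no three-cell positive partition'' to ``$X$ is a union of two atoms'' by splitting a non-atomic piece. Your route is shorter and more elementary (it needs neither Theorem~\ref{Lem2.1} nor the construction of the atoms from $Z(f)$ and $Z(g)$), and it proves slightly more: mere bipartiteness, indeed mere triangle-freeness, of $\Gamma'_2(\mathcal{M}(X,\mathcal{A}))$ already forces the two-atom partition. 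What the paper's approach buys in exchange is the explicit description of the bipartition in terms of $Z(f)$ and $Z(g)$ and the observation that the zero sets of all vertices are atoms, which it reuses in the corollary that follows.
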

	\begin{proof}
		If $\Gamma'_2$ is a complete bipartite graph, then it has a bipartion, say $V_1,V_2$. Since both $V_1,V_2$ are non-empty, we choose and fix $f\in V_1$ and $g\in V_2$. $\Gamma'_2$ being complete bipartite, no $h\in\mathscr{D}$ is adjacent to both $f,g$ and hence by Lemma \ref{Lem2.1}, $\mu(X\setminus Z(f)\cap X\setminus Z(g))=0$. Also $f,g$ are adjacent in $\Gamma'_2\implies\mu(Z(f)\cap Z(g))=0$. We claim that $Z(f)$ is an atom. To prove this, let $A\in\mathcal{A}$ with $\mu(A)>0$. If $\mu(X\setminus A)=0$, then $\mu(X\setminus A\cap Z(f))=0$. Now let $\mu(X\setminus A)>0$. Then $1_A,1_{X\setminus A}\in\mathscr{D}$ and $1_A,1_{X\setminus A}$ are adjacent in $\Gamma'_2$. If $1_A\in V_1$, then $1_{X\setminus A}\in V_2\implies 1_{X\setminus A},f$ are adjacent i.e., $\mu(A\cap Z(f))=0$. If $1_A\in V_2$, then $1_A,f$ are adjacent and so $\mu(X\setminus A\cap Z(f))=0$. In any case either $\mu(A\cap Z(f))=0$ or $\mu(X\setminus A\cap Z(f))=0$. Consequently, $Z(f)$ is an atom. Similarly we can show that $Z(g)$ is an atom. Let $A=Z(f)\setminus Z(g)=Z(f)\setminus[Z(f)\cap Z(g)]$ and $B= Z(g)\cup X\setminus Z(f)=Z(g)\cup[X\setminus Z(f)\cap X\setminus Z(g)]$. Then $X=A\sqcup B$ and $A,B$ are atoms. Conversely let $A,B$ be two atoms in $X$ such that $X=A\sqcup B$. Let $V_1=\{f\in\mathscr{D}:\mu(Z(f)\triangle A)=0\}$ and $V_2=\{f\in\mathscr{D}:\mu(Z(f)\triangle B)=0\}$. By Lemma \ref{Lem3.14}, $\mathscr{D}=V_1\sqcup V_2$. Again by Lemma \ref{Lem3.13}(\ref{Lem3.13.1}), $V_1,V_2$ are stable sets in $\Gamma'_2$. Let $f\in V_1$ and $g\in V_2$. Then $\mu(Z(f)\triangle A)=0=\mu(Z(g)\triangle B)\implies\mu(Z(f)\setminus A)=0=\mu(Z(g)\setminus B)=\mu(Z(g)\cap A)$, as $B=X\setminus A$. By Lemma \ref{Lem3.13}(\ref{Lem3.13.2}), $f,g$ are adjacent in $\Gamma'_2$. Consequently, $\Gamma'_2$ is a complete bipartite graph.
	\end{proof}
	\begin{cor}
		$X$ is partitioned into two atoms if and only if for each $f\in\mathscr{D}(\mathcal{M}(X,\mathcal{A}))$, both $Z(f)$ and $X\setminus Z(f)$ are atoms.
	\end{cor}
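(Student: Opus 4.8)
The plan is to establish the two implications separately, drawing on Lemma~\ref{Lem3.14} for the forward direction and on the standing assumption that $X$ is not an atom (so that the vertex set $\mathscr{D}(\mathcal{M}(X,\mathcal{A}))$ is non-empty) for the reverse direction. I expect the reverse implication to be essentially immediate and the forward one to require only the preliminary observation that modifying an atom by a null set again yields an atom.

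For the reverse implication, suppose that for every $f\in\mathscr{D}(\mathcal{M}(X,\mathcal{A}))$ both $Z(f)$ and $X\setminus Z(f)$ are atoms. Since $X$ is assumed not to be an atom, $\mathscr{D}(\mathcal{M}(X,\mathcal{A}))\neq\emptyset$, so I would simply fix any $f\in\mathscr{D}(\mathcal{M}(X,\mathcal{A}))$. By hypothesis $Z(f)$ and $X\setminus Z(f)$ are atoms, and $X=Z(f)\sqcup(X\setminus Z(f))$ is visibly a disjoint union; hence $X$ is partitioned into two atoms. This direction therefore needs nothing beyond the non-emptiness of the vertex set guaranteed by our standing convention.

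For the forward implication, suppose $X=A\sqcup B$ with $A,B$ atoms, and let $f\in\mathscr{D}(\mathcal{M}(X,\mathcal{A}))$ be arbitrary. By Lemma~\ref{Lem3.14}, one of $\mu(Z(f)\triangle A)=0$ or $\mu(Z(f)\triangle B)=0$ holds; by symmetry (recall $B=X\setminus A$) I may assume $\mu(Z(f)\triangle A)=0$. I would then invoke the preliminary fact that removing or adjoining a null set to an atom yields an atom: writing $Z(f)=(A\cap Z(f))\cup(Z(f)\setminus A)$, the first piece is $A$ with the null set $A\setminus Z(f)$ removed, hence an atom, and the second piece $Z(f)\setminus A$ is null, so $Z(f)$ itself is an atom. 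For the complement, the identity $(X\setminus Z(f))\triangle(X\setminus A)=Z(f)\triangle A$ gives $\mu\bigl((X\setminus Z(f))\triangle B\bigr)=0$, and the same null-modification fact makes $X\setminus Z(f)$ an atom.

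I do not anticipate a genuine obstacle here, as the argument reduces to Lemma~\ref{Lem3.14} together with the stability of atoms under null perturbation. The one point deserving explicit care is the second half of the forward direction: one must verify that $X\setminus Z(f)$, and not merely $Z(f)$, is an atom, which rests on the complement-symmetry $(X\setminus Z(f))\triangle(X\setminus A)=Z(f)\triangle A$ and the relation $B=X\setminus A$. I would record this step explicitly rather than leave it implicit, since it is the only place where the two conditions in the statement genuinely interact.
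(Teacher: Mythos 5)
Your proposal is correct and follows essentially the same route as the paper: both directions rest on Lemma~\ref{Lem3.14} together with the preliminary fact that an atom remains an atom after adjoining or removing a null set, and the converse is just the observation that $X=Z(f)\sqcup(X\setminus Z(f))$ for any $f$ in the (non-empty) vertex set. The only cosmetic difference is that you handle $X\setminus Z(f)$ via the identity $(X\setminus Z(f))\triangle(X\setminus A)=Z(f)\triangle A$, whereas the paper passes to $g=1_{Z(f)}$ and repeats the argument; the content is the same.
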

	\begin{proof}
		Let $X=A\sqcup B$, where $A,B$ are two atoms. Then by Theorem \ref{Th3.13}, $\mu(Z(f)\triangle A)=0$ or $\mu(Z(f)\triangle B)=0$. Without loss of generality let $\mu(Z(f)\triangle A)=0\implies\mu(Z(f)\setminus A)=0$. Now $Z(f)=A\setminus(A\triangle Z(f))\cup Z(f)\setminus A\implies Z(f)$ is an atom. Let $g=1_{Z(f)}$. Then $g\in\mathscr{D}$ and $Z(g)=X\setminus Z(f)$. Similarly we can show that $Z(g)$ i.e., $X\setminus Z(f)$ is an atom. Conversely let $X$ can not be partitioned into two atoms and $f\in\mathscr{D}$. Since $X=Z(f)\sqcup X\setminus Z(f)$, either $Z(f)$ or $X\setminus Z(f)$ is not an atom. 
	\end{proof}
	Replacing $Z(f)$ by $X\setminus Z(f)$ in the proof of Theorem~\ref{Th3.13} and making some suitable modification we can prove that $\Gamma(\mathcal{M}(X,\mathcal{A}))$ is a complete bipartite graph if and only if $X$ is partitioned into two atoms. This observation is recorded in the following theorem. 
	\begin{thm}\label{CompBip}
		The following statements are equivalent:
		\begin{enumerate}
			\item $\Gamma(\mathcal{M}(X,\mathcal{A}))$ is a complete bipartite graph.
			\item $\Gamma'_2(\mathcal{M}(X,\mathcal{A}))$ is a complete bipartite graph.
			\item $X$ is partitioned into two atoms.
		\end{enumerate}
	\end{thm}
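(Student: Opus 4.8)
The plan is to prove the cycle of implications $(3)\implies(1)$, $(3)\implies(2)$, and then the converses, leveraging the structural symmetry between the adjacency conditions of the two graphs. Recall from Theorem~\ref{thm} that $f,g$ are adjacent in $\Gamma'_2$ exactly when $\mu(Z(f)\cap Z(g))=0$, whereas in $\Gamma$ the adjacency condition (stated after the definition of $\Gamma'_2$) is $\mu(X\setminus Z(f)\cap X\setminus Z(g))=0$. The two conditions are formally interchanged under the substitution $Z(f)\mapsto X\setminus Z(f)$. Since $f\in\mathscr{D}$ forces both $\mu(Z(f))>0$ and $\mu(X\setminus Z(f))>0$, this substitution sends zero-divisors to zero-divisors, which is what makes the symmetry usable. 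The equivalence $(2)\Leftrightarrow(3)$ is already Theorem~\ref{Th3.13}, so the genuine new content is $(1)\Leftrightarrow(3)$, and the remark immediately preceding the statement tells us this should follow by mirroring the proof of Theorem~\ref{Th3.13}.

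First I would prove $(3)\implies(1)$. Assume $X=A\sqcup B$ with $A,B$ atoms. By Lemma~\ref{Lem3.14}, $\mathscr{D}=V_1\sqcup V_2$ where $V_1=\{f:\mu(Z(f)\triangle A)=0\}$ and $V_2=\{f:\mu(Z(f)\triangle B)=0\}$; note $\mathscr{D}$ is the common vertex set of both graphs. For $f\in V_1$ the set $X\setminus Z(f)$ satisfies $\mu((X\setminus Z(f))\triangle B)=0$, and likewise for $f\in V_2$ one has $\mu((X\setminus Z(f))\triangle A)=0$. Thus the roles of $A$ and $B$ for the complements $X\setminus Z(f)$ are exactly the roles of $B$ and $A$ for the zero sets $Z(f)$. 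To show each $V_i$ is stable in $\Gamma$, take $f,g\in V_1$: then $\mu((X\setminus Z(f))\triangle(X\setminus Z(g)))=\mu(Z(f)\triangle Z(g))=0$, and a symmetric-difference argument identical to Lemma~\ref{Lem3.13}(\ref{Lem3.13.1}) applied to the complements shows $\mu(X\setminus Z(f)\cap X\setminus Z(g))=0$ would force $\mu(X\setminus Z(f))=0$, contradicting $f\in\mathscr{D}$; hence $f,g$ are not adjacent in $\Gamma$. For $f\in V_1$, $g\in V_2$, one checks $\mu((X\setminus Z(f))\setminus B)=0$ and $\mu((X\setminus Z(g))\cap B)=0$, and the complement-version of Lemma~\ref{Lem3.13}(\ref{Lem3.13.2}) gives $\mu(X\setminus Z(f)\cap X\setminus Z(g))=0$, i.e.\ $f,g$ adjacent in $\Gamma$. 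So $\Gamma$ is complete bipartite with parts $V_1,V_2$.

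For the converse $(1)\implies(3)$, I would follow the architecture of the proof of Theorem~\ref{Th3.13} verbatim with $Z$ replaced by its complement. Suppose $\Gamma$ is complete bipartite with parts $V_1,V_2$; fix $f\in V_1$, $g\in V_2$. Completeness of the bipartite structure means no vertex is adjacent to both $f,g$ in $\Gamma$, which (by the analogue of Theorem~\ref{Lem2.1}, with complements interchanged) yields $\mu(Z(f)\cap Z(g))=0$, while adjacency of $f,g$ gives $\mu(X\setminus Z(f)\cap X\setminus Z(g))=0$. The same case analysis used to show $Z(f)$ is an atom in Theorem~\ref{Th3.13} now shows $X\setminus Z(f)$ is an atom, using the pair $1_A,1_{X\setminus A}$ and their placement in $V_1$ versus $V_2$ under the $\Gamma$-adjacency relation; similarly $X\setminus Z(g)$ is an atom. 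Finally one partitions $X$ as $A'\sqcup B'$ with $A'=(X\setminus Z(f))\setminus(X\setminus Z(g))$ and $B'=(X\setminus Z(g))\cup Z(f)$, both atoms, giving $(3)$. The main obstacle is purely bookkeeping: I must verify that every lemma invoked in the proof of Theorem~\ref{Th3.13} (namely Lemmas~\ref{Lem2.1}, \ref{Lem3.13}, \ref{Lem3.14}) has a valid complement-form, i.e.\ that replacing $Z(f)$ by $X\setminus Z(f)$ throughout preserves membership in $\mathscr{D}$ and the truth of each set-theoretic identity. Since $\mathscr{D}$ is symmetric under this replacement and every step rests only on disjoint unions and symmetric differences, no essentially new idea is required — the difficulty lies entirely in making the substitution carefully and confirming the atoms produced are genuine.
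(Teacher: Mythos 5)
Your proposal is correct and follows exactly the route the paper intends: the equivalence $(2)\Leftrightarrow(3)$ is Theorem~\ref{Th3.13}, and $(1)\Leftrightarrow(3)$ is obtained by systematically replacing $Z(f)$ with $X\setminus Z(f)$ in that proof and in the supporting lemmas, which is precisely the ``suitable modification'' the paper describes in the remark preceding the theorem. Your verification that the substitution preserves membership in $\mathscr{D}$ and that the resulting sets $A'$ and $B'$ are genuine atoms supplies the bookkeeping the paper leaves implicit.
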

	In the next few theorems we find the eccentricity, diameter and the lengths of possible cycles in $\Gamma'_2(\mathcal{M}(X,\mathcal{A}))$ and determine how far the measure $\mu$ is responsible to make $\Gamma'_2(\mathcal{M}(X,\mathcal{A}))$ triangulated, hypertriangulated and complemented.
	\begin{thm}\label{Th2.6}
		Let $f\in\mathscr{D}(\mathcal{M}(X,\mathcal{A}))$. Then 
		$$ecc(f)=\begin{cases}
			2&\text{ if }Z(f)\text{ is an atom}\\
			3&\text{ otherwise}
		\end{cases}$$
	\end{thm}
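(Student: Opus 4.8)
The plan is to read the eccentricity directly off the distance formula in Theorem~\ref{Th2.3}. Since that result shows $d(f,g)\in\{1,2,3\}$ for every pair of vertices, we automatically have $ecc(f)\le 3$, and the whole problem reduces to determining the largest distance from $f$ that is actually attained. I split into two cases according to whether $Z(f)$ is an atom.

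Suppose first that $Z(f)$ is an atom. I would show $ecc(f)\le 2$ by ruling out any vertex at distance $3$. Assume, for contradiction, that some $g\in\mathscr{D}$ satisfies $d(f,g)=3$; by Theorem~\ref{Th2.3} this means $\mu(Z(f)\cap Z(g))>0$ and $\mu(X\setminus Z(f)\cap X\setminus Z(g))=0$. Writing $Z(f)=(Z(f)\cap Z(g))\sqcup(Z(f)\setminus Z(g))$ and using that $Z(f)$ is an atom with $\mu(Z(f)\cap Z(g))>0$, I get $\mu(Z(f)\setminus Z(g))=0$. On the other hand $\mu(X\setminus Z(f)\cap X\setminus Z(g))=0$ says $\mu(X\setminus(Z(f)\cup Z(g)))=0$, so $X\setminus Z(g)$ agrees up to a null set with $(X\setminus Z(g))\cap(Z(f)\cup Z(g))=Z(f)\setminus Z(g)$, forcing $\mu(X\setminus Z(g))=0$. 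This contradicts $g\in\mathscr{D}$, so $ecc(f)\le 2$. For the reverse inequality, the element $g=2f$ is a distinct vertex with $Z(g)=Z(f)$, so by Lemma~\ref{Lem3.13}(\ref{Lem3.13.1}) it is not adjacent to $f$; thus $d(f,2f)\ge 2$, and in fact $d(f,2f)=2$ since $\mu(Z(f)\cap Z(2f))=\mu(Z(f))>0$ and $\mu(X\setminus Z(f)\cap X\setminus Z(2f))=\mu(X\setminus Z(f))>0$. Therefore $ecc(f)=2$.

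Now suppose $Z(f)$ is not an atom. Since $f\in\mathscr{D}$ we have $\mu(Z(f))>0$, so failing to be an atom means $Z(f)=B\sqcup C$ for some measurable $B,C$ with $\mu(B)>0$ and $\mu(C)>0$. I would take $g=1_C$, so that $Z(g)=X\setminus C$; then $Z(f)\cap Z(g)=Z(f)\setminus C=B$ gives $\mu(Z(f)\cap Z(g))=\mu(B)>0$, while $(X\setminus Z(f))\cap(X\setminus Z(g))=(X\setminus Z(f))\cap C=\emptyset$ because $C\subseteq Z(f)$. As $\mu(X\setminus Z(g))=\mu(C)>0$ and $\mu(Z(g))\ge\mu(B)>0$, the element $g$ lies in $\mathscr{D}$, and Theorem~\ref{Th2.3} then gives $d(f,g)=3$. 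Combined with the bound $ecc(f)\le 3$, this yields $ecc(f)=3$.

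The only genuinely delicate point is the measure-theoretic bookkeeping in the atom case: one must apply the atom property of $Z(f)$ precisely to the partition $Z(f)=(Z(f)\cap Z(g))\sqcup(Z(f)\setminus Z(g))$ and then chase null sets to contradict $\mu(X\setminus Z(g))>0$. Everything else is a direct application of Theorem~\ref{Th2.3} together with the explicit witnesses $2f$ and $1_C$.
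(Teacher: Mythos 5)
Your proof is correct and follows essentially the same route as the paper's: both rest on the distance formula of Theorem~\ref{Th2.3}, apply the atom property of $Z(f)$ to the partition $Z(f)=(Z(f)\cap Z(g))\sqcup(Z(f)\setminus Z(g))$ to exclude distance $3$, and use the characteristic function of one piece of a decomposition of $Z(f)$ as the distance-$3$ witness in the non-atom case. The only differences are cosmetic: you argue the atom case by contradiction where the paper argues directly, and you explicitly exhibit $2f$ as a vertex at distance $2$ (a point the paper leaves implicit), which if anything makes your write-up slightly more complete.
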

	\begin{proof}
		Let $Z(f)$ be an atom and $g\in\mathscr{D}$. Then either $\mu(Z(f)\cap Z(g))=0$ or $\mu(Z(f)\cap X\setminus Z(g))=0$. If $\mu(Z(f)\cap Z(g))=0$, then $f,g$ are adjacent. If $\mu(Z(f)\cap X\setminus Z(g))=0$, then $\mu(X\setminus Z(f)\cap X\setminus Z(g))>0$, for otherwise $\mu(X\setminus Z(g))=0$. By Theorem \ref{Th2.3}, $d(f,g)=2$. Thus $ecc(f)=2$. Conversely, let $Z(f)$ be not an atom i.e., there exist $A,B\in\mathcal{A}$ with $\mu(A),\mu(B)>0$ such that $Z(f)=A\sqcup B$. Clearly, $1_A\in\mathscr{D}$, $Z(f)\cap Z(1_A)=B$ and $X\setminus Z(f)\cap X\setminus Z(1_A)=\emptyset$ i.e., $\mu(Z(f)\cap Z(1_A))>0$ and $\mu(X\setminus Z(f)\cap X\setminus Z(1_A)=0$. By Theorem \ref{Th2.3}, $d(f,1_A)=3\implies ecc(f)=3$.
	\end{proof}
	\begin{cor}
		The diameter of $\Gamma'_2(\mathcal{M}(X,\mathcal{A}))$ is $$diam(\Gamma'_2(\mathcal{M}(X,\mathcal{A})))=\begin{cases}
			2&\text{ if }X\text{ is partitioned into two atoms}\\
			3&\text{ otherwise}
		\end{cases}$$
	\end{cor}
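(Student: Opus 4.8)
The plan is to read the diameter straight off the eccentricity computation of Theorem~\ref{Th2.6}, using the fact that the diameter of a graph equals the largest value taken by the eccentricity function, i.e. $diam(\Gamma'_2)=\sup_{f\in\mathscr{D}}ecc(f)$, where $ecc(f)$ is the greatest distance from $f$ to any other vertex (this is exactly the quantity computed in the proof of Theorem~\ref{Th2.6}). Since that theorem asserts $ecc(f)\in\{2,3\}$ for every $f\in\mathscr{D}$, we get $diam(\Gamma'_2)\le 3$ unconditionally, and the graph has at least two vertices because every eccentricity is at least $2$. It then remains only to decide, in each of the two cases, whether every vertex has eccentricity $2$ or whether some vertex has eccentricity $3$.

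For the first case I would assume $X=A\sqcup B$ with $A,B$ atoms and show that $Z(f)$ is an atom for \emph{every} $f\in\mathscr{D}$. By Lemma~\ref{Lem3.14}, each such $f$ lies in one of the two classes, so either $\mu(Z(f)\triangle A)=0$ or $\mu(Z(f)\triangle B)=0$; in either case $Z(f)$ differs from an atom by a null set and is therefore itself an atom (recall from the preliminaries that $A\setminus N$ and $A\cup N$ are atoms whenever $A$ is an atom and $\mu(N)=0$). Hence $ecc(f)=2$ for all $f$ by Theorem~\ref{Th2.6}, and so $diam(\Gamma'_2)=2$. This is precisely the conclusion recorded as the Corollary following Theorem~\ref{Th3.13}, which I would cite if a shorter route is preferred.

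For the second case I would produce a single vertex of eccentricity $3$, which together with $diam(\Gamma'_2)\le 3$ forces $diam(\Gamma'_2)=3$. The Corollary following Theorem~\ref{Th3.13} tells us that if $X$ is not partitioned into two atoms, then there exists $f\in\mathscr{D}$ for which at least one of $Z(f)$, $X\setminus Z(f)$ fails to be an atom. If $Z(f)$ is not an atom, Theorem~\ref{Th2.6} immediately gives $ecc(f)=3$. If instead only $X\setminus Z(f)$ is not an atom, I would pass to $g=1_{Z(f)}$: then $g\in\mathscr{D}$ since $\mu(Z(g))=\mu(X\setminus Z(f))>0$ and $\mu(X\setminus Z(g))=\mu(Z(f))>0$, and $Z(g)=X\setminus Z(f)$ is not an atom, so $ecc(g)=3$ again by Theorem~\ref{Th2.6}. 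Either way a vertex of eccentricity $3$ exists.

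The only delicate step is the second case, where one must convert the purely set-theoretic hypothesis ``$X$ is not partitioned into two atoms'' into the existence of a \emph{concrete} vertex whose zero set is a non-atom; the characterization in the Corollary following Theorem~\ref{Th3.13} does this, and the switch from $f$ to $g=1_{Z(f)}$ is the small extra move needed to cover the possibility that the defect lies in the complement $X\setminus Z(f)$ rather than in $Z(f)$ itself. Everything else is a direct appeal to Theorem~\ref{Th2.6}.
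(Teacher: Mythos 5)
Your proposal is correct and follows essentially the same route as the paper: the paper likewise gets the bound from Theorem~\ref{Th2.6} and, in the non-partitioned case, exhibits a vertex of eccentricity $3$ by passing from $f$ to $1_{Z(f)}$ when the defect lies in $X\setminus Z(f)$. The only cosmetic difference is in the first case, where the paper invokes complete bipartiteness (Theorem~\ref{Th3.13}) to get diameter $2$, while you deduce it from Lemma~\ref{Lem3.14} and the stability of atoms under null-set modification; both are valid one-line arguments.
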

	\begin{proof}
		If $X$ is partitioned into two atoms, then by Theorem \ref{Th3.13}, $\Gamma'_2$ is a complete bipartite graph and hence $diam(\Gamma'_2)=2$. If $X$ can not be partitioned into two atoms, then for any $f\in\mathscr{D}$, either $Z(f)$ or $X\setminus Z(f)$ is not an atom. By Theorem \ref{Th2.6}, $ecc(f)=3$, if $Z(f)$ is not an atom and $ecc(1_{Z(f)})=3$, if $X\setminus Z(f)$ is not an atom. In any case, the diameter of $\Gamma'_2$ is $3$.
	\end{proof}
	
	\begin{thm}\label{Th2.9}
		$f\in\mathscr{D}(\mathcal{M}(X,\mathcal{A}))$ is a vertex of a triangle in $\Gamma'_2(\mathcal{M}(X,\mathcal{A}))$ if and only if $X\setminus Z(f)$ is not an atom.
	\end{thm}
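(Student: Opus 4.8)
The plan is to translate both directions through the zero-set criterion of Theorem~\ref{thm}: two vertices $u,v$ are adjacent in $\Gamma'_2$ exactly when $\mu(Z(u)\cap Z(v))=0$. Consequently $f$ is a vertex of a triangle if and only if there exist $g,h\in\mathscr{D}$ with the three intersections $Z(f)\cap Z(g)$, $Z(f)\cap Z(h)$ and $Z(g)\cap Z(h)$ all null. The organising observation is that every neighbour $g$ of $f$ has its zero set essentially housed in $X\setminus Z(f)$: from $\mu(Z(f)\cap Z(g))=0$ and $g\in\mathscr{D}$ one reads off $\mu(Z(g)\cap(X\setminus Z(f)))=\mu(Z(g))>0$. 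So the existence of a triangle hinges on whether $X\setminus Z(f)$ can accommodate two positive-measure pieces with null overlap, which is precisely the atom/non-atom dichotomy.

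For the ``if'' direction I would suppose $X\setminus Z(f)$ is not an atom and split it as $X\setminus Z(f)=A\sqcup B$ with $A,B\in\mathcal{A}$ of positive measure. Taking $g=1_{Z(f)\cup A}$ and $h=1_{Z(f)\cup B}$ gives $Z(g)=B$ and $Z(h)=A$; both are zero-divisors since $A$, $B$ and their complements all have positive measure. A direct check yields $Z(f)\cap Z(g)=Z(f)\cap Z(h)=Z(g)\cap Z(h)=\emptyset$, so by Theorem~\ref{thm} the vertices $f,g,h$ are pairwise adjacent and distinct, i.e.\ $f-g-h-f$ is a triangle.

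For the ``only if'' direction I would argue the contrapositive: assume $X\setminus Z(f)$ is an atom and show $f$ lies on no triangle. Let $g,h$ be any two neighbours of $f$. By the organising observation $\mu(Z(g)\cap(X\setminus Z(f)))>0$ and $\mu(Z(h)\cap(X\setminus Z(f)))>0$; since $X\setminus Z(f)$ is an atom, each of these positive-measure subsets must exhaust $X\setminus Z(f)$ up to a null set, i.e.\ $\mu((X\setminus Z(f))\setminus Z(g))=0$ and $\mu((X\setminus Z(f))\setminus Z(h))=0$. Hence $X\setminus Z(f)\subseteq Z(g)\cap Z(h)$ modulo a null set, giving $\mu(Z(g)\cap Z(h))\ge\mu(X\setminus Z(f))>0$. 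By Theorem~\ref{thm}, $g$ and $h$ are not adjacent, so no triangle through $f$ can exist.

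The step I expect to require the most care is the atom argument in the ``only if'' direction, namely passing from ``$Z(g)\cap(X\setminus Z(f))$ has positive measure'' to ``$(X\setminus Z(f))\setminus Z(g)$ is null''. This rests on the defining property of an atom $E$ that any measurable $S\subseteq E$ has either $\mu(S)=0$ or $\mu(E\setminus S)=0$; applying it to $S=Z(g)\cap(X\setminus Z(f))$ forces the complement within $E=X\setminus Z(f)$ to be null. The remaining manipulations of $Z(f),Z(g),Z(h)$ are routine set algebra.
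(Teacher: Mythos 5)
Your proof is correct and follows essentially the same route as the paper's: the forward direction uses the identical decomposition $X\setminus Z(f)=A\sqcup B$ with the same characteristic-function witnesses (your $g=1_{Z(f)\cup A}$, $h=1_{Z(f)\cup B}$ are the paper's $1_{X\setminus B}$, $1_{X\setminus A}$ up to relabelling), and the converse rests on the same key observation that any neighbour of $f$ has its zero set of full measure inside $X\setminus Z(f)$. The only cosmetic difference is that you run the converse as a contrapositive via the atom dichotomy, whereas the paper directly extracts the two positive-measure pieces $(X\setminus Z(f))\cap Z(g)$ and $(X\setminus Z(f))\cap(X\setminus Z(g))$ from a given triangle; both arguments are sound.
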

	\begin{proof}
		If $X\setminus Z(f)$ is not an atom, then there exist $A,B\in\mathcal{A}$ with $\mu(A),\mu(B)>0$ and $X\setminus Z(f)=A\sqcup B$. Let $g=1_{X\setminus A}$ and $h=1_{X\setminus B}$. Then $g,h\in\mathscr{D}$ and $Z(g)=A, Z(h)=B\implies Z(f)\cap Z(g)=\emptyset=Z(g)\cap Z(h)=Z(h)\cap Z(f)$. Thus $f-g-h-f$ is a triangle in $\Gamma'_2$. Conversely if $f$ is a vertex of a triangle in $\Gamma'_2$, then there exists $g,h\in\mathscr{D}$ such that $f-g-h-f$ is a triangle in $\Gamma'_2$. Since $f,g$ are adjacent, $\mu(Z(f)\cap Z(g))=0\implies\mu(X\setminus Z(f)\cap Z(g))>0$, for otherwise $\mu(Z(g))=0$. Similarly the adjacency of $f,h$ implies $\mu(X\setminus Z(f)\cap Z(h))>0$. Let $A=X\setminus Z(f)\cap Z(g)$ and $B=X\setminus Z(f)\cap X\setminus Z(g)$. Then $\mu(A)>0$ and $X\setminus Z(f)=A\sqcup B$. Since $g,h$ are adjacent, $\mu(Z(g)\cap Z(h))=0\implies\mu(A\cap Z(h))=0$. Now $X\setminus Z(f)\cap Z(h)=(A\cap Z(h))\sqcup(B\cap Z(h))\implies \mu(B\cap Z(h))=\mu(X\setminus Z(f)\cap Z(h))>0\implies\mu(B)>0$. Thus $X\setminus Z(f)=A\sqcup B$ and $\mu(A),\mu(B)>0$, which prove that $X\setminus Z(f)$ is not an atom.
	\end{proof}
	\begin{cor}
		The girth of $\Gamma'_2(\mathcal{M}(X,\mathcal{A}))$ is $$gr(\Gamma'_2(\mathcal{M}(X,\mathcal{A})))=\begin{cases}
			4&\text{ if }X\text{ is partitioned into two atoms}\\
			3&\text{ otherwise}
		\end{cases}$$.
	\end{cor}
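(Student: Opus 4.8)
The plan is to pin the girth down to exactly one of the two values $3$ or $4$ and then separate the cases according to whether $X$ splits into two atoms. Since $\Gamma'_2$ is a simple graph, any cycle has length at least $3$, and the corollary preceding Theorem~\ref{Th2.3} already gives $gr(\Gamma'_2(\mathcal{M}(X,\mathcal{A})))\leq 4$; hence the girth is either $3$ or $4$, and the whole argument reduces to deciding exactly when a triangle is present.

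First I would suppose $X$ is partitioned into two atoms. By Theorem~\ref{Th3.13} the graph $\Gamma'_2$ is then complete bipartite, so it contains no odd cycle and in particular no triangle; thus $gr(\Gamma'_2)>3$. To see that the girth is genuinely $4$ (and not larger) I would exhibit a square: writing $A,B$ for the two atoms and $V_1,V_2$ for the corresponding bipartition classes from Lemma~\ref{Lem3.14}, each class contains at least two distinct vertices (for instance $1_B$ and $2\cdot 1_B$ both lie in $V_1$, since they share the zero set $A$), so choosing distinct $f_1,f_2\in V_1$ and distinct $g_1,g_2\in V_2$ yields the $4$-cycle $f_1-g_1-f_2-g_2-f_1$. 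Combined with $gr(\Gamma'_2)>3$ this forces $gr(\Gamma'_2)=4$.

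Next I would suppose $X$ is \emph{not} partitioned into two atoms. By the corollary to Theorem~\ref{Th3.13}, there must then exist some $f\in\mathscr{D}$ for which at least one of $Z(f)$, $X\setminus Z(f)$ fails to be an atom. If $X\setminus Z(f)$ is not an atom, Theorem~\ref{Th2.9} immediately places $f$ on a triangle. If instead $Z(f)$ is not an atom, I would pass to $h=1_{Z(f)}$, which lies in $\mathscr{D}$ (since $\mu(Z(h))=\mu(X\setminus Z(f))>0$ and $\mu(X\setminus Z(h))=\mu(Z(f))>0$) and satisfies $X\setminus Z(h)=Z(f)$, so that $X\setminus Z(h)$ is not an atom; Theorem~\ref{Th2.9} then puts $h$ on a triangle. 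In either case $\Gamma'_2$ contains a triangle, giving $gr(\Gamma'_2)=3$.

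The bulk of the work is already carried by Theorems~\ref{Th3.13} and~\ref{Th2.9} together with the two preceding corollaries, so I expect no serious obstacle; the only point needing a little care is the reduction in the second case, namely ensuring that the failure of either $Z(f)$ or $X\setminus Z(f)$ to be an atom can always be converted, via the substitution $f\mapsto 1_{Z(f)}$, into the single hypothesis of Theorem~\ref{Th2.9}.
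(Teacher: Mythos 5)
Your proposal is correct and follows essentially the same route as the paper: Theorem~\ref{Th3.13} handles the complete bipartite case, and Theorem~\ref{Th2.9} together with the substitution $f\mapsto 1_{Z(f)}$ produces a triangle when $X$ is not partitioned into two atoms. The only difference is that you explicitly verify each bipartition class contains at least two vertices so that a $4$-cycle genuinely exists, a point the paper leaves implicit; this is a welcome bit of extra care but not a different argument.
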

	\begin{proof}
		If $X$ is partitioned into two atoms, then by Theorem \ref{Th3.13}, $\Gamma'_2$ is a complete bipartite graph and hence $gr(\Gamma'_2)=4$. If $X$ can not be partitioned into two atoms, then for each $f\in\mathscr{D}$, either $Z(f)$ or $X\setminus Z(f)$ is not an atom. Consequently, by Theorem \ref{Th2.9}, $1_{Z(f)}$ or $f$ is a vertex of a triangle in $\Gamma'_2$. So, $gr(\Gamma'_2)=3$. 
	\end{proof}
	\begin{thm}\label{Th3.22}
		$\Gamma'_2(\mathcal{M}(X,\mathcal{A}))$ is triangulated if and only if $\mu$ is non-atomic.
	\end{thm}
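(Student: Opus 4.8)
The plan is to reduce everything to Theorem~\ref{Th2.9}, which already characterizes exactly which vertices lie on a triangle. Since $\Gamma'_2(\mathcal{M}(X,\mathcal{A}))$ is triangulated precisely when every vertex $f\in\mathscr{D}(\mathcal{M}(X,\mathcal{A}))$ is a vertex of some triangle, Theorem~\ref{Th2.9} tells us that $\Gamma'_2(\mathcal{M}(X,\mathcal{A}))$ is triangulated if and only if $X\setminus Z(f)$ fails to be an atom for \emph{every} $f\in\mathscr{D}(\mathcal{M}(X,\mathcal{A}))$. The whole proof then amounts to translating this zero-set condition into the non-atomicity of $\mu$.

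For the sufficiency direction I would assume $\mu$ is non-atomic, so by definition $\mathcal{A}$ contains no atom at all. Then for any vertex $f$, the measurable set $X\setminus Z(f)$ is in particular not an atom, and Theorem~\ref{Th2.9} immediately gives that $f$ is a vertex of a triangle. Hence $\Gamma'_2(\mathcal{M}(X,\mathcal{A}))$ is triangulated.

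For necessity I would argue by contraposition: suppose $\mu$ is not non-atomic, i.e. some $A\in\mathcal{A}$ is an atom. The key move is to exhibit a single vertex violating the triangle condition, and the natural candidate is $f=1_A$. Then $Z(f)=X\setminus A$, so $X\setminus Z(f)=A$. The standing convention recorded after Theorem~\ref{Th1} that $X$ itself is not an atom guarantees $\mu(X\setminus A)>0$, while $\mu(A)>0$ since $A$ is an atom; together these give $f=1_A\in\mathscr{D}(\mathcal{M}(X,\mathcal{A}))$, so $f$ is a genuine vertex. Because $X\setminus Z(f)=A$ is an atom, Theorem~\ref{Th2.9} says $f$ is not a vertex of any triangle, whence $\Gamma'_2(\mathcal{M}(X,\mathcal{A}))$ is not triangulated.

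I do not expect any real obstacle here, since the substantive work was already carried out in Theorem~\ref{Th2.9}. The only point requiring genuine care is verifying that the candidate $1_A$ is actually a vertex of $\Gamma'_2(\mathcal{M}(X,\mathcal{A}))$, i.e. an element of $\mathscr{D}(\mathcal{M}(X,\mathcal{A}))$; this is exactly where the standing assumption that $X$ is not an atom is indispensable, for without $\mu(X\setminus A)>0$ the function $1_A$ could be a $\mu$-unit and the contrapositive argument would break down.
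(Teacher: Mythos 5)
Your proposal is correct and coincides with the paper's own proof: both directions reduce to Theorem~\ref{Th2.9}, with the forward direction immediate from the absence of atoms and the converse using $1_A$ for an atom $A$, together with the standing assumption that $X$ is not an atom to certify $1_A\in\mathscr{D}(\mathcal{M}(X,\mathcal{A}))$. No gaps.
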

	\begin{proof}
		If $\mu$ is non-atomic, then $X\setminus Z(f)$ is not an atom for all $f\in\mathscr{D}$. By Theorem \ref{Th2.9}, every $f\in\mathscr{D}$ is a vertex of a triangle. In other words, $\Gamma'_2$ is triangulated. Conversely let $X$ contain an atom, say $A$. Then $\mu(X\setminus A)>0$, for otherwise, $X$ will be an atom. Thus $1_A\in\mathscr{D}$ and $X\setminus Z(1_A)=A$ is an atom. Hence by Theorem \ref{Th2.9}, $1_A$ is not a vertex of a triangle; i.e., $\Gamma'_2$ is not triangulated.
	\end{proof}
	\begin{thm}
		Given $f\in\mathscr{D}(\mathcal{M}(X,\mathcal{A}))$, there exists an edge containing $f$ which is not an edge of any triangle in $\Gamma'_2(\mathcal{M}(X,\mathcal{A}))$.
	\end{thm}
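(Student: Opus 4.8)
The plan is to exhibit, for a given $f\in\mathscr{D}$, an explicit neighbour $g$ of $f$ so that the pair $\{f,g\}$ has no common neighbour; by Theorem~\ref{Lem2.1} the absence of a common neighbour is exactly the condition that the edge $f-g$ lies on no triangle. So the whole statement reduces to choosing $g$ cleverly and then verifying two measure-theoretic equalities, namely $\mu(Z(f)\cap Z(g))=0$ (adjacency) and $\mu(X\setminus Z(f)\cap X\setminus Z(g))=0$ (no common neighbour).

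The key choice is $g=1_{Z(f)}$. First I would check that this $g$ is a legitimate vertex, i.e.\ $g\in\mathscr{D}$: since $Z(g)=X\setminus Z(f)$, we have $\mu(Z(g))=\mu(X\setminus Z(f))>0$ and $\mu(X\setminus Z(g))=\mu(Z(f))>0$, both positive precisely because $f\in\mathscr{D}$. Next, to see that $f$ and $g$ are adjacent, I would invoke Theorem~\ref{thm}: here $Z(f)\cap Z(g)=Z(f)\cap(X\setminus Z(f))=\emptyset$, so $\mu(Z(f)\cap Z(g))=0$ and adjacency follows.

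Finally, for the crucial point that the edge $f-g$ is on no triangle, I would compute $X\setminus Z(f)\cap X\setminus Z(g)$. Because $X\setminus Z(g)=Z(f)$, this set equals $(X\setminus Z(f))\cap Z(f)=\emptyset$, whence $\mu(X\setminus Z(f)\cap X\setminus Z(g))=0$. By Theorem~\ref{Lem2.1}, no vertex of $\mathscr{D}$ is adjacent to both $f$ and $g$, so the edge $f-g$ cannot be an edge of any triangle (indeed, by the corollary following Theorem~\ref{Lem2.1}, it is instead an edge of a square).

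There is essentially no serious obstacle here; the entire content of the argument is the single well-chosen witness $g=1_{Z(f)}$, whose zero set is the complement $X\setminus Z(f)$. Once that choice is made, both required equalities are immediate set-theoretic computations, and the two quoted results (Theorem~\ref{thm} for adjacency and Theorem~\ref{Lem2.1} for the triangle-free property) finish the proof. The only thing to be careful about is confirming $g\in\mathscr{D}$, which is where the standing hypothesis $f\in\mathscr{D}$ (giving positivity of both $\mu(Z(f))$ and $\mu(X\setminus Z(f))$) is used.
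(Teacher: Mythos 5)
Your proposal is correct and is essentially identical to the paper's own proof: both choose the witness $g=1_{Z(f)}$, note adjacency from $Z(f)\cap Z(g)=\emptyset$, and apply Theorem~\ref{Lem2.1} with $\mu(X\setminus Z(f)\cap X\setminus Z(g))=0$ to conclude the edge $f-g$ lies on no triangle. Your write-up is slightly more detailed in verifying $g\in\mathscr{D}$, but the argument is the same.
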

	\begin{proof}
		For each $f\in\mathscr{D}$, consider $g=1_{Z(f)}\in\mathscr{D}$. Then $f,g$ are adjacent and $\mu(X\setminus Z(f)\cap X\setminus Z(g))=0$. By Theorem \ref{Lem2.1}, there does not exist any vertex in $\mathscr{D}$ adjacent to both $f$ and $g$. Therefore, the edge $f-g$ is not an edge of any triangle in $\Gamma'_2$.
	\end{proof}
	\begin{cor}\label{NotHyp}
		$\Gamma'_2(\mathcal{M}(X,\mathcal{A}))$ is not hypertriangulated.
	\end{cor}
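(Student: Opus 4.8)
The plan is to deduce the corollary immediately from the theorem that directly precedes it, since hypertriangulation is a property that a single bad edge already destroys. Recall from the preliminaries that $\Gamma'_2(\mathcal{M}(X,\mathcal{A}))$ is hypertriangulated precisely when \emph{every} edge is an edge of some triangle; consequently, to refute hypertriangulation it suffices to exhibit one edge that lies in no triangle at all.

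First I would invoke the standing assumption that $X$ is not an atom, which by Theorem~\ref{Th1} guarantees $\mathscr{D}(\mathcal{M}(X,\mathcal{A}))\neq\emptyset$, so that $\Gamma'_2(\mathcal{M}(X,\mathcal{A}))$ genuinely has vertices and edges to speak of. Fixing any vertex $f\in\mathscr{D}(\mathcal{M}(X,\mathcal{A}))$, I would then apply the immediately preceding theorem, which produces an explicit edge $f-g$ with $g=1_{Z(f)}$ that is not an edge of any triangle in $\Gamma'_2(\mathcal{M}(X,\mathcal{A}))$. The mechanism behind that construction is twofold: the equality $\mu(Z(f)\cap Z(g))=0$ yields adjacency of $f$ and $g$ via Theorem~\ref{thm}, while $\mu(X\setminus Z(f)\cap X\setminus Z(g))=0$ forces, through Theorem~\ref{Lem2.1}, that no vertex can be adjacent to both $f$ and $g$, so the edge $f-g$ admits no completing third vertex.

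The existence of this single triangle-free edge directly contradicts the defining requirement of a hypertriangulated graph, and the corollary follows at once. I do not anticipate any real obstacle here, as the statement is a purely logical consequence of the preceding theorem; the only fine point worth recording is that the nonemptiness of $\mathscr{D}(\mathcal{M}(X,\mathcal{A}))$ must be secured beforehand so that an actual edge exists to witness the failure of hypertriangulation.
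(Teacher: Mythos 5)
Your proposal is correct and follows exactly the paper's route: the corollary is deduced immediately from the preceding theorem, whose proof uses the very witness $g=1_{Z(f)}$ you describe, with adjacency coming from $\mu(Z(f)\cap Z(g))=0$ and the absence of a common neighbour from $\mu(X\setminus Z(f)\cap X\setminus Z(g))=0$ via Theorem~\ref{Lem2.1}. The remark about securing nonemptiness of $\mathscr{D}(\mathcal{M}(X,\mathcal{A}))$ via the standing assumption that $X$ is not an atom is a sensible, if minor, addition.
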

	If $X$ is partitioned into two atoms, then by Theorem \ref{Th3.13}, $\Gamma'_2$ is a complete bipartite graph and hence $c(f,g)=4$ for all $f,g\in\mathscr{D}(\mathcal{M}(X,\mathcal{A}))$. We now calculate $c(f,g)$ for $f,g\in\mathscr{D}(\mathcal{M}(X,\mathcal{A}))$, when $X$ is not partitioned into two atoms.
	\begin{thm}
		Suppose $X$ can not be partitioned into two atoms and $f,g\in\mathscr{D}(\mathcal{M}(X,\mathcal{A}))$. Then 
		$$c(f,g)=\begin{cases}
			3&\text{ if }\mu(Z(f)\cap Z(g))=0\text{ and }\mu(X\setminus Z(f)\cap X\setminus Z(g))>0\\
			4&\text{ if }\mu(Z(f)\cap Z(g))=0\text{ and }\mu(X\setminus Z(f)\cap X\setminus Z(g))=0\\
			&\text{ or if }\mu(Z(f)\cap Z(g))>0\text{ and }\mu(X\setminus Z(f)\cap X\setminus Z(g))>0\\
			6&\text{ if }\mu(Z(f)\cap Z(g))>0\text{ and }\mu(X\setminus Z(f)\cap X\setminus Z(g))=0
		\end{cases}$$
	\end{thm}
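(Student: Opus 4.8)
The plan is to separate the argument according to the two dichotomies $\mu(Z(f)\cap Z(g))=0$ versus $>0$ and $\mu(X\setminus Z(f)\cap X\setminus Z(g))=0$ versus $>0$, which together exhaust the three listed cases and, by Theorem~\ref{Th2.3}, fix the value of $d(f,g)\in\{1,2,3\}$. In every case I would bound $c(f,g)$ from below using the distance, and from above by producing an explicit cycle assembled from $f$, $g$, their scalar doubles $2f,2g$, the indicator functions $1_{Z(f)},1_{Z(g)}$, and the doubles of these indicators; each such function lies in $\mathscr{D}$ whenever $f,g$ do, since $f\in\mathscr{D}$ guarantees $\mu(Z(f))>0$ and $\mu(X\setminus Z(f))>0$.

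For the lower bounds, the key observation is that any cycle through the two vertices $f$ and $g$ is split by them into two internally disjoint $f$--$g$ paths, each of length at least $d(f,g)$. Hence $c(f,g)\ge 2\,d(f,g)$ when $f,g$ are non-adjacent, while for adjacent $f,g$ a cycle through both is a triangle (length $3$) precisely when they have a common neighbour and has length at least $4$ otherwise. Combined with Theorem~\ref{Lem2.1}, which says $f,g$ admit a common neighbour exactly when $\mu(X\setminus Z(f)\cap X\setminus Z(g))>0$, this forces the stated minima in each regime.

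For the matching upper bounds I would exhibit explicit cycles. When $\mu(Z(f)\cap Z(g))=0$ and $\mu(X\setminus Z(f)\cap X\setminus Z(g))>0$, the triangle $f-g-h-f$ with $h=1_{Z(f)\cup Z(g)}$ (a common neighbour by Theorem~\ref{Lem2.1}) gives $c(f,g)=3$. When $\mu(Z(f)\cap Z(g))=0$ and $\mu(X\setminus Z(f)\cap X\setminus Z(g))=0$, the square $f-g-2f-2g-f$ (as in the corollary following Theorem~\ref{Lem2.1}) gives $c(f,g)=4$; here the four vertices are distinct because $f=2g$ would force $Z(f)=Z(g)$ and hence $\mu(Z(f)\cap Z(g))>0$. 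When $\mu(Z(f)\cap Z(g))>0$ and $\mu(X\setminus Z(f)\cap X\setminus Z(g))>0$, I would use the square $f-h-g-2h-f$ with $h=1_{Z(f)\cup Z(g)}$ and $2h$ two distinct common neighbours, distinct because $\mu(X\setminus Z(h))=\mu(Z(f)\cup Z(g))>0$ forces $h\not\equiv 0$. Finally, when $\mu(Z(f)\cap Z(g))>0$ and $\mu(X\setminus Z(f)\cap X\setminus Z(g))=0$, the hexagon $f-1_{Z(f)}-1_{Z(g)}-g-2\cdot 1_{Z(g)}-2\cdot 1_{Z(f)}-f$ works, all six consecutive adjacencies following from Theorem~\ref{thm} since each relevant intersection of zero sets is either empty or equal to $(X\setminus Z(f))\cap(X\setminus Z(g))$, which is null.

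The delicate point, which I expect to be the main obstacle, is this last case. I must verify that the six vertices of the hexagon are pairwise distinct, so that it is a genuine $6$-cycle, and that the lower bound is sharp. Distinctness I would establish by comparing zero sets: $\mu(Z(f)\cap Z(g))>0$ rules out $Z(f)=Z(g)$, and $\mu(X\setminus Z(f)\cap X\setminus Z(g))=0$ rules out coincidences such as $f=1_{Z(g)}$ or $g=1_{Z(f)}$, while a double like $2\cdot 1_{Z(f)}$ differs from $1_{Z(f)}$ because $Z(f)$ has positive measure. The lower bound $c(f,g)\ge 6$ then follows since $d(f,g)=3$ prevents either arc of a cycle through $f,g$ from having length less than $3$. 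The standing hypothesis that $X$ is not partitioned into two atoms is needed only to place us outside the complete bipartite regime of Theorem~\ref{Th3.13}, where $c\equiv 4$; within the case analysis itself the three measure-theoretic conditions drive every conclusion.
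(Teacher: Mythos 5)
Your proposal is correct and follows essentially the same route as the paper: lower bounds via the adjacency and common-neighbour criteria (Theorems~\ref{thm} and~\ref{Lem2.1}) together with the distance formula of Theorem~\ref{Th2.3}, and matching upper bounds from the very same explicit cycles, including the hexagon $f-1_{Z(f)}-1_{Z(g)}-g-2\cdot 1_{Z(g)}-2\cdot 1_{Z(f)}-f$. Your extra checks that the vertices of each cycle are pairwise distinct are a welcome refinement that the paper leaves implicit, but they do not change the argument.
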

	\begin{proof}
		It is clear that $c(f,g)=3$ if and only if $f-g$ is an edge of a triangle. So, $c(f,g)=3$  if and only if $f, g$ are adjacent in $\Gamma'_2$ and there exists a vertex in $\Gamma'_2$ adjacent to both $f$ and $g$;  i.e., by Theorem \ref{thm} and Theorem \ref{Lem2.1}, $\mu(Z(f)\cap Z(g))=0$ and $\mu(X\setminus Z(f)\cap X\setminus Z(g))>0$.\\
		Let $c(f,g)=4$. If $f,g$ are adjacent  in $\Gamma'_2$, then there exists no vertex adjacent to both $f$ and $g$, for otherwise $c(f,g)=3$. Therefore, $\mu(Z(f)\cap Z(g))=0$ and $\mu(X\setminus Z(f)\cap X\setminus Z(g))=0$. If $f,g$ are not adjacent, then there exists vertex adjacent to both $f,g$ in $\Gamma'_2$, because $c(f,g)=4$. Therefore, $\mu(Z(f)\cap Z(g))>0$ and $\mu(X\setminus Z(f)\cap X\setminus Z(g))>0$. Conversely let $\mu(Z(f)\cap Z(g))=0$ and $\mu(X\setminus Z(f)\cap X\setminus Z(g))=0$. Then $f,g$ are adjacent and there does not exist any vertex adjacent to both $f,g\implies c(f,g)>3$. Clearly $f-g-2.f-2.g-f$ is a $4$-cycle in $\Gamma'_2$ which implies $c(f,g)=4$. Again let $\mu(Z(f)\cap Z(g))>0$ and $\mu(X\setminus Z(f)\cap X\setminus Z(g))>0$. Then $f,g$ are non-adjacent and there exists $h\in\mathscr{D}$ adjacent to both $f,g$ in $\Gamma'_2$. Clearly, $c(f,g)>3$ and $f-h-g-2.h-f$ is a $4$-cycle in $\Gamma'_2\implies c(f,g)=4$.\\
		If $c(f,g)=6$, then it follows from the previous two cases that $\mu(Z(f)\cap Z(g))>0$ and $\mu(X\setminus Z(f)\cap X\setminus Z(g))=0$. Conversely let $\mu(Z(f)\cap Z(g))>0$ and $\mu(X\setminus Z(f)\cap X\setminus Z(g))=0$. So, $f,g$ are not adjacent and there does not exist any vertex adjacent to both $f,g$. Hence there does not exist any $5$-cycle in $\Gamma'_2$ containing $f,g$ as vertices. In other words, $c(f,g)>5$. By Theorem \ref{Th2.3}, $d(f,g)=3$;  i.e., there exist $h_1,h_2\in\mathscr{D}$ such that $f-h_1-h_2-g$ is a path in $\Gamma'_2$. Consequently, $f-h_1-h_2-g-2.h_2-2.h_1-f$ is a $6$-cycle in $\Gamma'_2\implies c(f,g)=6$. 
	\end{proof}
	The visual representation of the above Theorem is exhibited as follows:\\
	\begin{center}
		\begin{tikzpicture}
			\draw (0,0)--(2,2)--(4,0)--(0,0) (7,0)--(7,2)--(11,2)--(11,0)--(7,0);
			\filldraw[black] (0,0) circle (2pt) node[anchor=east]{$f$};
			\filldraw[black] (4,0) circle (2pt) node[anchor=west]{$g$};
			\filldraw[black] (7,0) circle (2pt) node[anchor=east]{$f$};
			\filldraw[black] (11,0) circle (2pt) node[anchor=west]{$g$};
			\node (start) at (2,-1) {$\mu(X\setminus Z(f)\cap X\setminus Z(g))>0$};
			\node (start) at (2,-1.5) {$\text{ and }f,g\text{ are adjacent}$};
			\node (start) at (9,-1) {$\mu(X\setminus Z(f)\cap X\setminus Z(g))=0$};
			\node (start) at (9,-1.5) {$\text{ and }f,g\text{ are adjacent}$};
			\draw (0,-4)--(2,-3)--(4,-4)--(2,-5)--(0,-4) (7,-4)--(8,-3)--(10,-3)--(11,-4)--(10,-5)--(8,-5)--(7,-4) (8,-3)--(10,-5) (10,-3)--(8,-5);
			\filldraw[black] (0,-4) circle (2pt) node[anchor=east]{$f$};
			\filldraw[black] (4,-4) circle (2pt) node[anchor=west]{$g$};
			\filldraw[black] (7,-4) circle (2pt) node[anchor=east]{$f$};
			\filldraw[black] (11,-4) circle (2pt) node[anchor=west]{$g$};
			\node (start) at (2,-6) {$\mu(X\setminus Z(f)\cap X\setminus Z(g))>0$};
			\node (start) at (2,-6.5) {$\text{ and }f,g\text{ are non-adjacent}$};
			\node (start) at (9,-6) {$\mu(X\setminus Z(f)\cap X\setminus Z(g))=0$};
			\node (start) at (9,-6.5) {$\text{ and }f,g\text{ are non-adjacent}$};
		\end{tikzpicture}
	\end{center}
	From the definition of orthogonality and from Theorem \ref{Lem2.1}, it naturally follows that for $f,g\in\mathscr{D}(\mathcal{M}(X,\mathcal{A}))$, $f\perp g$ in $\Gamma'_2(\mathcal{M}(X,\mathcal{A}))$ if and only if $\mu(Z(f)\cap Z(g))=0=\mu(X\setminus Z(f)\cap X\setminus Z(g))$. As a result, for each $f\in \mathscr{D}(\mathcal{M}(X,\mathcal{A}))$, $f\perp 1_{Z(f)}$, where $1_{Z(f)} \in \mathscr{D}(\mathcal{M}(X,\mathcal{A}))$. This observation leads to the following fact.
	\begin{thm}\label{ComaxComp}
		$\Gamma'_2(\mathcal{M}(X,\mathcal{A}))$ is a complemented graph.
	\end{thm}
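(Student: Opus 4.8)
The plan is to produce, for each vertex $f\in\mathscr{D}(\mathcal{M}(X,\mathcal{A}))$, an explicit orthogonal partner and to verify orthogonality directly from the characterization stated just before the theorem, namely that $f\perp g$ holds in $\Gamma'_2(\mathcal{M}(X,\mathcal{A}))$ precisely when $\mu(Z(f)\cap Z(g))=0$ and $\mu(X\setminus Z(f)\cap X\setminus Z(g))=0$. The natural candidate for the partner of $f$ is $g=1_{Z(f)}$, the characteristic function of the zero set of $f$.

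First I would confirm that this candidate is actually a vertex, i.e. that $g\in\mathscr{D}(\mathcal{M}(X,\mathcal{A}))$. Since $Z(g)=X\setminus Z(f)$, we have $\mu(Z(g))=\mu(X\setminus Z(f))>0$ and $\mu(X\setminus Z(g))=\mu(Z(f))>0$, both positivities being exactly the two conditions that describe $f$ as a zero-divisor. Hence $g$ qualifies as a vertex of $\Gamma'_2(\mathcal{M}(X,\mathcal{A}))$.

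Next I would check the two measure-zero intersection conditions. Because $Z(g)=X\setminus Z(f)$, one gets $Z(f)\cap Z(g)=Z(f)\cap(X\setminus Z(f))=\emptyset$ and $(X\setminus Z(f))\cap(X\setminus Z(g))=(X\setminus Z(f))\cap Z(f)=\emptyset$. Both intersections are empty, so in particular each has measure zero, and the characterization of orthogonality immediately yields $f\perp 1_{Z(f)}$. As this holds for every $f\in\mathscr{D}(\mathcal{M}(X,\mathcal{A}))$, the graph is complemented by definition.

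The only thing worth emphasising is that there is essentially no obstacle here: the entire difficulty has already been absorbed into the preceding computation of $c(f,g)$ and the resulting description of orthogonality, so the remaining argument is merely the observation that $1_{Z(f)}$ splits the positive-measure parts of $Z(f)$ and $X\setminus Z(f)$ in a complementary way. If one wished to avoid quoting the orthogonality characterization, one could instead verify directly that $f$ and $1_{Z(f)}$ are adjacent (their zero sets meet in a null set, by Theorem~\ref{thm}) while no common neighbour exists (by Theorem~\ref{Lem2.1}, since $\mu(X\setminus Z(f)\cap X\setminus Z(1_{Z(f)}))=0$), forcing $c(f,1_{Z(f)})>3$.
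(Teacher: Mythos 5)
Your proposal is correct and coincides with the paper's own argument: the paper likewise records that $f\perp g$ in $\Gamma'_2(\mathcal{M}(X,\mathcal{A}))$ exactly when $\mu(Z(f)\cap Z(g))=0=\mu(X\setminus Z(f)\cap X\setminus Z(g))$ and then exhibits $1_{Z(f)}$ as the orthogonal partner of $f$. Your verification that $1_{Z(f)}$ is a genuine vertex and that both relevant intersections are actually empty is exactly the intended (and here omitted) routine check.
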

	\begin{lem}\label{Lem2.17}
		Let $f,g\in\mathscr{D}(\mathcal{M}(X,\mathcal{A}))$. Then $f,g$ are adjacent to the same set of vertices in $\Gamma'_2(\mathcal{M}(X,\mathcal{A}))$ if and only if $\mu(Z(f)\triangle Z(g))=0$.
	\end{lem}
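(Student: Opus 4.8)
The plan is to reduce everything to the zero-set criterion for adjacency supplied by Theorem~\ref{thm}, according to which two vertices $u,v\in\mathscr{D}(\mathcal{M}(X,\mathcal{A}))$ are adjacent in $\Gamma'_2$ precisely when $\mu(Z(u)\cap Z(v))=0$. Thus the statement ``$f,g$ are adjacent to the same set of vertices'' translates into a comparison of $\mu(Z(f)\cap Z(h))$ with $\mu(Z(g)\cap Z(h))$ as $h$ ranges over $\mathscr{D}(\mathcal{M}(X,\mathcal{A}))$, and I expect the whole equivalence to fall out of elementary manipulations of symmetric differences of zero sets.

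For the sufficiency direction I would start from $\mu(Z(f)\triangle Z(g))=0$. First note that Lemma~\ref{Lem3.13}(\ref{Lem3.13.1}) forces $f,g$ to be non-adjacent, so neither lies in the neighbourhood of the other and it is enough to compare the two neighbourhoods on the remaining vertices. For any $h\in\mathscr{D}(\mathcal{M}(X,\mathcal{A}))$ the containment $Z(g)\cap Z(h)\subset(Z(f)\cap Z(h))\cup(Z(f)\triangle Z(g))$, together with its symmetric counterpart, shows that $\mu(Z(f)\cap Z(h))=0$ if and only if $\mu(Z(g)\cap Z(h))=0$; that is, $h$ is adjacent to $f$ if and only if $h$ is adjacent to $g$. (This is also immediate from Lemma~\ref{Lem3.13}(\ref{Lem3.13.3}).) Hence $f$ and $g$ are adjacent to exactly the same vertices.

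For the necessity direction I would argue by contraposition, producing a single witness that separates the two neighbourhoods whenever $\mu(Z(f)\triangle Z(g))>0$. Writing $Z(f)\triangle Z(g)=(Z(f)\setminus Z(g))\sqcup(Z(g)\setminus Z(f))$, at least one piece has positive measure; say $\mu(Z(f)\setminus Z(g))>0$, the other case being symmetric. The natural candidate is $h=1_{Z(g)}$, so that $Z(h)=X\setminus Z(g)$. One verifies $h\in\mathscr{D}(\mathcal{M}(X,\mathcal{A}))$, since $\mu(Z(h))=\mu(X\setminus Z(g))>0$ and $\mu(X\setminus Z(h))=\mu(Z(g))>0$, both positive because $g\in\mathscr{D}(\mathcal{M}(X,\mathcal{A}))$. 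Then $\mu(Z(g)\cap Z(h))=\mu(Z(g)\cap(X\setminus Z(g)))=0$, so $h$ is adjacent to $g$, whereas $\mu(Z(f)\cap Z(h))=\mu(Z(f)\setminus Z(g))>0$, so $h$ is not adjacent to $f$. This vertex therefore lies in the neighbourhood of $g$ but not of $f$, contradicting the assumption that $f,g$ have the same neighbourhoods.

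The argument is essentially bookkeeping with symmetric differences, so I do not foresee a serious obstacle; the only point that requires a moment's care is confirming that the witness $h=1_{Z(g)}$ is a genuine third vertex distinct from $f$ and $g$. This is automatic: $h$ is adjacent to $g$, so $h\neq g$, and since $f$ is not adjacent to $g$ while $h$ is, also $h\neq f$. Consequently the separating vertex is legitimate under either reading of ``adjacent to the same set of vertices'', and the stated equivalence follows.
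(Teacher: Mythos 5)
Your proof is correct and follows essentially the same route as the paper: the forward direction via the containment $Z(g)\cap Z(h)\subset (Z(f)\cap Z(h))\cup(Z(f)\triangle Z(g))$, and the converse by exhibiting a separating witness. The only difference is cosmetic — the paper's witness is $1_{X\setminus Z(f)\cup Z(g)}$ (whose zero set is $Z(f)\setminus Z(g)$) while yours is $1_{Z(g)}$ (whose zero set is $X\setminus Z(g)$); both work, and your care about the witness being distinct from $f$ and $g$ is a small point the paper leaves implicit.
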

	\begin{proof}
		Let $\mu(Z(f)\triangle Z(g))=0$. Then $\mu(Z(f)\cap X\setminus Z(g))=0=\mu(X\setminus Z(f)\cap Z(g))$. Let $h\in\mathscr{D}$ be adjacent to $f$, i.e., $\mu(Z(f)\cap Z(h))=0$. Now $Z(g)=(Z(g)\cap Z(f))\sqcup(Z(g)\cap X\setminus Z(f))\implies Z(g)\cap Z(h)\subset [Z(g)\cap X\setminus Z(f)]\cup[Z(f)\cap Z(h)]\implies\mu(Z(g)\cap Z(h))=0$; i.e., $g,h$ are adjacent. Similarly, if $h\in\mathscr{D}$ is adjacent to $g$, then it can also be adjacent to $f$. Therefore, $f,g$ are adjacent to the same set of vertices in $\Gamma'_2$. Conversely let $\mu(Z(f)\triangle Z(g))>0$. Then either $\mu(Z(f)\cap X\setminus Z(g))>0$ or $\mu(X\setminus Z(f)\cap Z(g))>0$. Without loss of generality let $\mu(Z(f)\cap X\setminus Z(g))>0$. Choose $h=1_{X\setminus Z(f)\cup Z(g)} \in\mathscr{D}$. Then $Z(h)=Z(f)\cap X\setminus Z(g)\implies Z(h)\cap Z(g)=\emptyset$ and $Z(h)\subset Z(f)\implies \mu(Z(h)\cap Z(g))=0$ and $\mu(Z(h)\cap Z(f))=\mu(Z(h))>0$. Hence $h$ is adjacent to $g$ but $h$ is not adjacent to $f$.
	\end{proof}
	\begin{thm}
		$\Gamma'_2(\mathcal{M}(X,\mathcal{A}))$ is a uniquely complemented graph.
	\end{thm}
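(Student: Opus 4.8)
The plan is to build directly on the two facts established immediately before this statement: Theorem~\ref{ComaxComp} already guarantees that $\Gamma'_2(\mathcal{M}(X,\mathcal{A}))$ is complemented, so it remains only to verify the uniqueness clause. Recall the running characterization of orthogonality: for $f,g\in\mathscr{D}(\mathcal{M}(X,\mathcal{A}))$, $f\perp g$ holds if and only if $\mu(Z(f)\cap Z(g))=0=\mu(X\setminus Z(f)\cap X\setminus Z(g))$. Thus I would fix $f,g,h\in\mathscr{D}(\mathcal{M}(X,\mathcal{A}))$ with $f\perp g$ and $f\perp h$, and aim to show that $g$ and $h$ are adjacent to the same set of vertices. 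By Lemma~\ref{Lem2.17}, this is equivalent to proving $\mu(Z(g)\triangle Z(h))=0$, which is the target I would reduce everything to.

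The crux of the argument is to observe that orthogonality to $f$ pins down the zero set up to measure zero. First I would unpack $f\perp g$ into its two measure conditions. From $\mu(Z(f)\cap Z(g))=0$ we get that $Z(g)$ sits almost entirely outside $Z(f)$, namely $\mu\bigl(Z(g)\setminus(X\setminus Z(f))\bigr)=\mu(Z(g)\cap Z(f))=0$. From $\mu(X\setminus Z(f)\cap X\setminus Z(g))=0$ we get that $X\setminus Z(f)$ sits almost entirely inside $Z(g)$, namely $\mu\bigl((X\setminus Z(f))\setminus Z(g)\bigr)=\mu\bigl((X\setminus Z(f))\cap(X\setminus Z(g))\bigr)=0$. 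Adding these two gives $\mu\bigl(Z(g)\triangle(X\setminus Z(f))\bigr)=0$. Running the identical computation with $h$ in place of $g$ yields $\mu\bigl(Z(h)\triangle(X\setminus Z(f))\bigr)=0$.

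Finally I would combine these two near-equalities using subadditivity of the measure along the set inclusion $Z(g)\triangle Z(h)\subseteq\bigl(Z(g)\triangle(X\setminus Z(f))\bigr)\cup\bigl((X\setminus Z(f))\triangle Z(h)\bigr)$, which forces $\mu(Z(g)\triangle Z(h))=0$. An application of Lemma~\ref{Lem2.17} then shows that $g$ and $h$ are adjacent to the same set of vertices, completing the verification that the complement is unique. I do not anticipate a genuine obstacle here; the one conceptual step worth highlighting is the translation of the pair of orthogonality conditions into the single almost-equality $Z(g)\equiv X\setminus Z(f)$ (and likewise for $h$), after which the triangle-type inequality for symmetric differences does all the remaining work automatically.
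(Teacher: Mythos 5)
Your proposal is correct, and it reaches the key claim $\mu(Z(g)\triangle Z(h))=0$ by a genuinely different route than the paper. The paper argues by contradiction with an explicit witness: assuming $\mu(Z(g)\cap(X\setminus Z(h)))>0$, it constructs $k=1_{(X\setminus Z(g))\cup Z(h)}$, checks that $k$ is adjacent to both $f$ and $h$ (using only the half-condition $\mu(Z(f)\cap Z(g))=0$ from $f\perp g$), and derives a contradiction with $c(f,h)>3$; the symmetric case handles $\mu((X\setminus Z(g))\cap Z(h))$. You instead exploit the full measure-theoretic characterization of orthogonality stated just before Theorem~\ref{ComaxComp} to show directly that $\mu\bigl(Z(g)\triangle(X\setminus Z(f))\bigr)=0=\mu\bigl(Z(h)\triangle(X\setminus Z(f))\bigr)$, and then finish with the triangle inequality $Z(g)\triangle Z(h)\subseteq\bigl(Z(g)\triangle(X\setminus Z(f))\bigr)\cup\bigl((X\setminus Z(f))\triangle Z(h)\bigr)$. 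Both proofs conclude identically via Lemma~\ref{Lem2.17}. Your version is arguably cleaner: it is a direct computation, needs no auxiliary characteristic function, and makes transparent the underlying reason for uniqueness, namely that any orthogonal complement of $f$ has zero set almost equal to $X\setminus Z(f)$. The paper's witness construction, on the other hand, uses less of the orthogonality hypothesis (only the adjacency half for the pair $f,g$ and the no-common-neighbour consequence for $f,h$) and mirrors the style of the surrounding arguments in the section. All set-theoretic identities you invoke check out, so there is no gap.
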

	\begin{proof}
		Let $f,g,h\in\mathscr{D}$ be such that $f\perp g$ and $f\perp h$ in $\Gamma'_2$. We claim that $\mu(Z(g)\triangle Z(h))=0$. If possible let $\mu(Z(g)\cap X\setminus Z(h))>0$, then $k=1_{X\setminus Z(g)\cup Z(h)}\in\mathscr{D}$. Now $Z(k)=Z(g)\cap X\setminus Z(h)$ and so $Z(k)\cap Z(h)=\emptyset\implies\mu(Z(k)\cap Z(h))=0$ and also $Z(k)\cap Z(f)\subset Z(g)\cap Z(f)\implies\mu(Z(k)\cap Z(f))=0$, as $f\perp g$. Therefore $k$ is adjacent to both $f,h$, which contradicts that $f\perp h$. Therefore, $\mu(Z(g)\cap X\setminus Z(h))=0$ and similarly $\mu(X\setminus Z(g)\cap Z(h))=0$. Hence $\mu(Z(g)\triangle Z(h))=0$. By Lemma \ref{Lem2.17}, $g,h$ are adjacent to the same set of vertices in $\Gamma'_2$. Consequently, $\Gamma'_2$ is uniquely complemented.
	\end{proof}
	
	We now introduce special subgraphs of $\Gamma(\mathcal{M}(X,\mathcal{A}))$ and $\Gamma'_2(\mathcal{M}(X,\mathcal{A}))$ which enable us to find conditions under which $\Gamma(\mathcal{M}(X,\mathcal{A}))$ and $\Gamma'_2(\mathcal{M}(X,\mathcal{A}))$ become isomorphic. On $\mathscr{D}(\mathcal{M}(X,\mathcal{A}))$, we define a relation `` $f\sim g$ if and only if $\mu(Z(f)\triangle Z(g))=0$''. It is easy to see that $\sim$ is an equivalence relation on $\mathscr{D}(\mathcal{M}(X,\mathcal{A}))$ and hence it partitions $\mathscr{D}(\mathcal{M}(X,\mathcal{A}))$ into disjoint equivalence classes, denoted by$[f]$, for each $f\in\mathscr{D}(\mathcal{M}(X,\mathcal{A}))$. Certainly, for each $f\in\mathscr{D}(\mathcal{M}(X,\mathcal{A}))$, $f\sim 1_{X\setminus Z(f)}$. We choose a subset $V$ of $\mathscr{D}(\mathcal{M}(X,\mathcal{A}))$ obeying the following conditions:
	\begin{enumerate}
		\item every element of $V$ is of the form $1_A$ where $\mu(A)>0$ and $\mu(X\setminus A)>0$.
		\item $1_A,1_B$ are distinct elements in $V$ if and only if $\mu(A\triangle B)>0$.
	\end{enumerate}
	Then $V$ is a collection of distinct class representatives corresponding to the equivalence relation $\sim$ on $\mathscr{D}(\mathcal{M}(X,\mathcal{A}))$. Let $G_2$ be the induced subgraph of $\Gamma'_2(\mathcal{M}(X,\mathcal{A}))$ whose set of vertices is $V$. We first observe the following:
	\begin{obv}\label{Obv4.1}
		\begin{enumerate}
			\item $[1_A]$ is a stable set in $\Gamma'_2(\mathcal{M}(X,\mathcal{A}))$,  for each $1_A\in V$. (Follows from Lemma \ref{Lem3.13}(\ref{Lem3.13.1})).
			\item For any two distinct stable sets $[1_A],[1_B]$ in $\Gamma'_2(\mathcal{M}(X,\mathcal{A}))$, either $[1_A]\sqcup [1_B]$ is a stable set or $[1_A]\sqcup[1_B]$ forms a complete bipartite subgraph of $\Gamma'_2(\mathcal{M}(X,\mathcal{A}))$ (Follows from Lemma \ref{Lem3.13}(\ref{Lem3.13.2})).
		\end{enumerate}
	\end{obv}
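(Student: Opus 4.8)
The plan is to handle the two parts separately, reducing both to a single principle: under the relation $\sim$, adjacency in $\Gamma'_2(\mathcal{M}(X,\mathcal{A}))$ is a class invariant, which is exactly the content of Lemma~\ref{Lem3.13}. Since $[1_A]$ and $[1_B]$ are, by construction, the $\sim$-equivalence classes of the chosen representatives, the whole statement should reduce to the behaviour of the two representatives $1_A,1_B$ together with this invariance.

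For (1), I would argue straight from the definition of the equivalence class. If $f,g\in[1_A]$, then $f\sim 1_A\sim g$, so by transitivity of $\sim$ we get $\mu(Z(f)\triangle Z(g))=0$; Lemma~\ref{Lem3.13}(\ref{Lem3.13.1}) then forces $f,g$ to be non-adjacent in $\Gamma'_2$. As $f,g$ are arbitrary, $[1_A]$ is a stable set. This part is immediate.

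For (2), the key step is to show that the adjacency behaviour between $[1_A]$ and $[1_B]$ is an all-or-nothing phenomenon, dictated solely by whether $1_A$ and $1_B$ are adjacent. I would first record, via Theorem~\ref{thm}, that $1_A,1_B$ are adjacent precisely when $\mu(X\setminus(A\cup B))=0$. If they are adjacent, then for arbitrary $f\in[1_A]$ and $g\in[1_B]$ we have $\mu(Z(f)\triangle Z(1_A))=0$ and $\mu(Z(g)\triangle Z(1_B))=0$, so Lemma~\ref{Lem3.13}(\ref{Lem3.13.3}) makes $f,g$ adjacent; combined with part (1) this yields that $[1_A]\sqcup[1_B]$ is a complete bipartite subgraph. (The same cross-adjacency can be obtained directly from Lemma~\ref{Lem3.13}(\ref{Lem3.13.2}) by taking the set $Z(1_A)=X\setminus A$, for which $\mu(Z(f)\setminus(X\setminus A))=0$ and $\mu(Z(g)\cap(X\setminus A))=\mu(X\setminus(A\cup B))=0$.) If instead $1_A,1_B$ are not adjacent, then no cross pair can be an edge: were some $f\in[1_A]$ adjacent to some $g\in[1_B]$, applying Lemma~\ref{Lem3.13}(\ref{Lem3.13.3}) with $f_1=1_A$ and $g_1=1_B$ would make $1_A,1_B$ adjacent, a contradiction; hence, using part (1) again, $[1_A]\sqcup[1_B]$ is a stable set.

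The only delicate point---and the main, if modest, obstacle---will be verifying that these two cases are genuinely exhaustive, i.e.\ that no mixed configuration (some cross pairs adjacent, others not) can arise. This is precisely what the invariance recorded in Lemma~\ref{Lem3.13}(\ref{Lem3.13.3}) rules out, so once that lemma is invoked the dichotomy follows with no further computation.
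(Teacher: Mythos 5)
Your proof is correct and follows essentially the same route the paper intends: part (1) from Lemma~\ref{Lem3.13}(\ref{Lem3.13.1}) via transitivity of $\sim$, and part (2) from the class-invariance of adjacency, with the dichotomy settled by whether the representatives $1_A,1_B$ are adjacent. The only cosmetic difference is that the paper attributes part (2) to Lemma~\ref{Lem3.13}(\ref{Lem3.13.2}) while you lean mainly on Lemma~\ref{Lem3.13}(\ref{Lem3.13.3}); since you also verify the \ref{Lem3.13.2}-based route in your parenthetical and need \ref{Lem3.13.3} anyway to exclude mixed configurations, the two arguments coincide in substance.
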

	Thus $\Gamma'_2(\mathcal{M}(X,\mathcal{A}))$ is a $|V|$-partite graph such that for any two stable sets $V_1,V_2$, either $V_1\sqcup V_2$ is a stable set or $V_1\sqcup V_2$ is a complete bipartite subgraph of $\Gamma'_2(\mathcal{M}(X,\mathcal{A}))$. Therefore, $G_2$ is a subgraph of $\Gamma'_2(\mathcal{M}(X,\mathcal{A}))$ where two vertices $1_A$, $1_B$ are adjacent if and only if the corresponding two stable sets $[1_A]$ and $[1_B]$ make a complete bipartite subgraph. We jot down in the next theorem some of the features highlighting the behaviour of $G_2$ as a graph and observe the analogy with its parent graph:
	\begin{thm}
		\hspace{3cm}
		\begin{enumerate}
			\item Let $1_A,1_B\in V$. There is a vertex in $V$ adjacent to both $1_A,1_B$ in $G_2$ if and only if $\mu(A\cap B)>0$.
			\item The distance between two vertices $1_A,1_B$ in $G_2$ is given by 
			$$d_{G_2}(1_A,1_B)=\begin{cases}
				1&\text{ if }\mu(X\setminus A\cap X\setminus B)=0\\
				2&\text{ if }\mu(X\setminus A\cap X\setminus B)>0\text{ and }\mu(A\cap B)>0\\
				3&\text{ if }\mu(X\setminus A\cap X\setminus B)>0\text{ and }\mu(A\cap B)=0
			\end{cases}$$
			\item $G_2$ is a connected graph.
			\item $|V|=2$ i.e., $G_2=K_2$ if and only if $X$ can be partitioned into two atoms.
			\item The diameter of $G_2$ is $diam(G_2)=\begin{cases}
				2&\text{ if }|V|=2\\
				3&\text{ otherwise}
			\end{cases}$
			\item The eccentricity of $1_A\in V$ in $G_2$ is $ecc_{G_2}(1_A)=\begin{cases}
				2&\text{ if }X\setminus A\text{ is an atom}\\
				3&\text{ otherwise}
			\end{cases}$
			\item A vertex $1_A\in V$ is a vertex of a triangle in $G_2$ if and only $A$ is not an atom.
			\item The girth of $G_2$ is $=\begin{cases}
				\infty&\text{ if }|V|=2\\
				3&\text{ otherwise}
			\end{cases}$
			\item $G_2$ is triangulated if and only if $\mu$ is a non-atomic measure.
			\item $G_2$ is not hypertriangulated.
			\item $1_A,1_B\in V$ are orthogonal in $G_2$ if and only if $\mu(A\cap B)=0=\mu(X\setminus A\cap X\setminus B)$.
			\item For every $1_A\in V$ there exists unique $1_B\in V$ such that $1_A\perp_{G_2} 1_B$ [uniqueness follows from Lemma \ref{Lem2.17}].
			\item $G_2$ is uniquely complemented.
		\end{enumerate}
	\end{thm}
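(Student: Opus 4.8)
The unifying principle is that, by its very construction, $G_2$ is the subgraph of $\Gamma'_2(\mathcal{M}(X,\mathcal{A}))$ induced on $V$, and each vertex of $V$ is a characteristic function $1_A$ for which $Z(1_A)=X\setminus A$ and $X\setminus Z(1_A)=A$. Consequently every clause of the theorem is the image, under the dictionary $Z(f)\rightsquigarrow X\setminus A$ and $X\setminus Z(f)\rightsquigarrow A$, of a statement already established for $\Gamma'_2(\mathcal{M}(X,\mathcal{A}))$. So the first thing I would record is the adjacency rule for $G_2$: by Theorem~\ref{thm}, vertices $1_A,1_B\in V$ are adjacent precisely when $\mu\big((X\setminus A)\cap(X\setminus B)\big)=0$. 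The one point that needs genuine checking, and which legitimises the transfer from $\Gamma'_2$ to the induced subgraph $G_2$, is that the auxiliary vertices used in the $\Gamma'_2$-proofs can always be taken inside $V$. This is true because those witnesses are themselves characteristic functions $1_C$ with $\mu(C)>0$ and $\mu(X\setminus C)>0$, hence their $\sim$-classes are represented in $V$; and by Lemma~\ref{Lem3.13}(\ref{Lem3.13.3}) adjacency is constant on $\sim$-classes, so replacing a witness by its representative changes nothing.

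With this in hand most clauses are immediate. For (1) the common neighbour of $1_A,1_B$ is $1_{X\setminus(A\cap B)}$, which lies in $V$ since $X\setminus(A\cap B)\supseteq X\setminus A$, and Theorem~\ref{Lem2.1} gives the criterion $\mu(A\cap B)>0$. For (2) the distance $1$ and $2$ cases read off from the adjacency rule and from (1); for distance $3$ I would exhibit the path $1_A-1_{X\setminus A}-1_{X\setminus B}-1_B$, all of whose vertices lie in $V$, and note that these upper bounds are matched by the lower bounds coming from non-adjacency and the absence of a common neighbour. Clause (3) is then immediate, and (5),(6) are obtained by maximising the distances of (2) and by transcribing the eccentricity computation of Theorem~\ref{Th2.6} ($Z(f)$ an atom becomes $X\setminus A$ an atom). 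Clause (7) is Theorem~\ref{Th2.9} transcribed via $X\setminus Z(1_A)=A$, the triangle $1_A-1_{X\setminus A_1}-1_{X\setminus A_2}-1_A$ arising from a splitting $A=A_1\sqcup A_2$ and living in $V$; then (8),(9),(10) follow from (7) exactly as the girth corollary, Theorem~\ref{Th3.22} and Corollary~\ref{NotHyp} follow from Theorem~\ref{Th2.9}. For orthogonality (11) I would use that $1_A\perp 1_B$ means adjacency together with the nonexistence of a common neighbour, which by the adjacency rule and (1) is exactly $\mu((X\setminus A)\cap(X\setminus B))=0=\mu(A\cap B)$. Clauses (12) and (13) then follow by taking $1_{X\setminus A}$ (which satisfies $1_A\perp 1_{X\setminus A}$) and invoking Lemma~\ref{Lem2.17}: if $1_A\perp 1_B$ and $1_A\perp 1_{B'}$ then $\mu((X\setminus B)\triangle(X\setminus B'))=\mu(B\triangle B')=0$, so $1_B$ and $1_{B'}$ coincide as elements of $V$.

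The clause carrying the real content, and which I expect to be the main obstacle, is (4); unlike the others it is not a formal restatement of a $\Gamma'_2$-fact. The forward implication is short: if $X=A\sqcup B$ with $A,B$ atoms, then Lemma~\ref{Lem3.14} shows $\mathscr{D}$ splits into exactly the two $\sim$-classes $[1_A]$ and $[1_B]$, so $|V|=2$, and these two vertices are adjacent by the adjacency rule, giving $G_2=K_2$. The converse is delicate. Assuming $V=\{1_A,1_B\}$ with $\mu(A\triangle B)>0$, I would first show $A$ is an atom: were $A=A_1\sqcup A_2$ with both parts of positive measure, each $1_{A_i}\in\mathscr{D}$ would have to represent $[1_A]$ or $[1_B]$; the class $[1_A]$ is excluded since $\mu(A_i\triangle A)>0$, forcing $\mu(A_1\triangle B)=0=\mu(A_2\triangle B)$ and hence $\mu(A_1\triangle A_2)=0$, contradicting disjointness and positivity. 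Symmetrically $B$ is an atom, and atomicity forces $\mu(A\cap B)=0$. Finally I would rule out a leftover $D=X\setminus(A\cup B)$ of positive measure by testing $1_{A\cup D}\in\mathscr{D}$, whose class can be neither $[1_A]$ (since $\mu((A\cup D)\triangle A)=\mu(D)>0$) nor $[1_B]$, a contradiction; hence $X=A\sqcup B$ up to a null set. Keeping every test function inside $V$ and squeezing out the null leftover $D$ is the crux of the argument; the remaining bookkeeping in the degenerate case $|V|=2$, where $G_2=K_2$, is routine.
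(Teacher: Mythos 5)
The paper states this theorem without proof, treating each clause as a transcription of the corresponding $\Gamma'_2$-result under the dictionary $Z(1_A)=X\setminus A$, and your proposal carries out exactly that intended transcription: you correctly isolate the one point needing care (that the auxiliary witnesses, all characteristic functions, may be replaced by their class representatives in $V$ via Lemma~\ref{Lem3.13}(\ref{Lem3.13.3})), and you supply a correct, genuinely new argument for clause (4), which is the only clause that is not a formal restatement of a $\Gamma'_2$-fact. One small point that your own method of ``maximising the distances of (2)'' actually exposes: in the degenerate case $|V|=2$ the graph $G_2=K_2$ has diameter and eccentricities equal to $1$, so the value $2$ printed in clauses (5) and (6) is inherited from the parent graph $\Gamma'_2$ (where each stable set $[1_A]$ has more than one element) and does not survive the passage to $G_2$.
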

	To compare the clique number of $\Gamma'_2(\mathcal{M}(X,\mathcal{A}))$ with that of its subgraph $G_2$, we need the following lemma.
	\begin{lem}\label{Lem4.1}
		If $M$ is a complete subgraph of $\Gamma'_2(\mathcal{M}(X,\mathcal{A}))$, then there exists a complete subgraph $M'$ of $G_2$ such that $|M|=|M'|$.
	\end{lem}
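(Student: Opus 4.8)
The plan is to exploit the ``blow-up'' structure of $\Gamma'_2(\mathcal{M}(X,\mathcal{A}))$ over $G_2$ recorded in Observation~\ref{Obv4.1}: each $\sim$-class $[1_A]$ is a stable set, and between two distinct classes the induced edges are either all absent or all present. The crucial first step is to observe that a clique can meet each class at most once. Indeed, if two distinct vertices $f,g$ of a complete subgraph $M$ were equivalent, i.e.\ $\mu(Z(f)\triangle Z(g))=0$, then by Lemma~\ref{Lem3.13}(\ref{Lem3.13.1}) they would fail to be adjacent in $\Gamma'_2$, contradicting the completeness of $M$. Hence the vertices of $M$ lie in pairwise distinct $\sim$-classes.

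Next I would transport $M$ into $G_2$ by replacing each vertex with its class representative. For $f\in M$, let $r(f)\in V$ denote the unique representative of the class $[f]$. Because the classes $\{[f]:f\in M\}$ are pairwise distinct by the first step, the map $r\colon M\to V$ is injective; setting $M'=r(M)$ gives $|M'|=|M|$ (the argument is a cardinality bijection, so it is valid whether $M$ is finite or infinite).

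It then remains to check that $M'$ is a complete subgraph of $G_2$. Given distinct $r(f),r(g)\in M'$ arising from distinct $f,g\in M$, completeness of $M$ forces $f,g$ to be adjacent in $\Gamma'_2$, so the two distinct stable sets $[f]=[r(f)]$ and $[g]=[r(g)]$ are joined by at least one edge. By the dichotomy of Observation~\ref{Obv4.1}(2) they must therefore form a complete bipartite subgraph, which is exactly the condition for $r(f),r(g)$ to be adjacent in $G_2$. Alternatively one may invoke Lemma~\ref{Lem3.13}(\ref{Lem3.13.3}) directly: from $f\sim r(f)$, $g\sim r(g)$, and the adjacency of $f,g$ one concludes the adjacency of $r(f),r(g)$. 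Either way every pair of vertices of $M'$ is adjacent in $G_2$, so $M'$ is a clique with $|M'|=|M|$, as required.

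I do not expect a genuine obstacle here: once one recognises that every class is stable (so a clique visits each class at most once), the statement is essentially a translation through the complete-bipartite dichotomy of Observation~\ref{Obv4.1}. The only point that warrants a little care is the well-definedness and injectivity of the representative map $r$, and both rest squarely on the first step, that the vertices of $M$ occupy distinct $\sim$-classes.
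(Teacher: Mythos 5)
Your proposal is correct and follows essentially the same route as the paper: map each vertex of $M$ to its class representative in $V$, use the stability of the classes (Lemma~\ref{Lem3.13}(\ref{Lem3.13.1})) to get injectivity, and use Lemma~\ref{Lem3.13}(\ref{Lem3.13.3}) (equivalently, Observation~\ref{Obv4.1}(2)) to transfer adjacency to the representatives. The only cosmetic difference is that the paper deduces distinctness of the representatives from their adjacency rather than arguing it separately beforehand.
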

	\begin{proof}
		Let $M$ be a complete subgraph of $\Gamma'_2$. For each vertex $f$ in $M$, there is a vertex $1_A\in V$ such that $f\sim 1_A$. Let $f,g$ be distinct vertices in $M$ and $1_A,1_B\in V$ are such that $1_A\sim f$ and $1_B\sim g$. Since $M$ is a complete subgraph of $\Gamma'_2$, $f,g$ are adjacent in $\Gamma'_2$. By Lemma \ref{Lem3.13}(\ref{Lem3.13.3}), $1_A,1_B$ are adjacent. Consequently, $1_A,1_B$ are distinct vertices in $G_2$ and they are adjacent in $G_2$. Let $M'$ be the subgraph of $G_2$ whose vertex set is $\{1_A\in V:1_A\sim f\text{ for some vertex }f\text{ in }M\}$. Clearly, $M'$ is a complete subgraph of $G_2$ and $|M|=|M'|$.
	\end{proof}
	Since $G_2$ is a subgraph of $\Gamma'_2$, $cl(G_2)\leq cl(\Gamma'_2)$. So, using Lemma \ref{Lem4.1}, we get the following:
	\begin{thm}\label{Th4.3}
		The clique number of $G_2$ and $\Gamma'_2(\mathcal{M}(X,\mathcal{A}))$ are the same.
	\end{thm}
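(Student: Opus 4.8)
The plan is to prove the two inequalities $cl(G_2)\leq cl(\Gamma'_2)$ and $cl(\Gamma'_2)\leq cl(G_2)$ separately, and conclude equality. The first inequality is immediate and requires no work: since $G_2$ is an induced subgraph of $\Gamma'_2$ by construction, any complete subgraph of $G_2$ is automatically a complete subgraph of $\Gamma'_2$, so $cl(G_2)\leq cl(\Gamma'_2)$ follows directly from the definition of the clique number as the supremum of the cardinalities of complete subgraphs.

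For the reverse inequality $cl(\Gamma'_2)\leq cl(G_2)$, I would invoke Lemma~\ref{Lem4.1}, which has been proved just above. The content of that lemma is precisely that every complete subgraph $M$ of $\Gamma'_2$ admits a complete subgraph $M'$ of $G_2$ with $|M|=|M'|$. The mechanism is the equivalence relation $\sim$: each vertex $f$ of $M$ is replaced by the unique class representative $1_A\in V$ with $f\sim 1_A$, and Lemma~\ref{Lem3.13}(\ref{Lem3.13.3}) guarantees that adjacency is preserved under this replacement (if $f,g$ are adjacent and $f\sim 1_A$, $g\sim 1_B$, then $1_A,1_B$ are adjacent), while distinctness of the representatives comes from the fact that adjacent vertices in $\Gamma'_2$ cannot be $\sim$-equivalent (Lemma~\ref{Lem3.13}(\ref{Lem3.13.1})). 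Thus every complete subgraph of $\Gamma'_2$ transfers to an equinumerous complete subgraph of $G_2$, which forces $cl(\Gamma'_2)\leq cl(G_2)$.

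Combining the two inequalities yields $cl(G_2)=cl(\Gamma'_2)$, completing the proof. The whole argument is genuinely short because the substantive step—verifying that the class-representative substitution sends complete subgraphs to complete subgraphs of the same cardinality—has already been isolated and dispatched in Lemma~\ref{Lem4.1}. I do not anticipate any real obstacle here; the only point worth stating carefully is that $G_2$ being a (vertex-induced) subgraph of $\Gamma'_2$ gives the easy inequality, and that one must pass through the cardinality-preserving map of Lemma~\ref{Lem4.1}, rather than trying to embed $G_2$ into $\Gamma'_2$ the other way, to obtain the harder inequality. Hence the two clique numbers coincide.
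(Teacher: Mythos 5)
Your proof is correct and follows exactly the paper's route: the easy inequality $cl(G_2)\leq cl(\Gamma'_2)$ from $G_2$ being an induced subgraph, and the reverse inequality via Lemma~\ref{Lem4.1}, which transfers any complete subgraph of $\Gamma'_2(\mathcal{M}(X,\mathcal{A}))$ to an equinumerous complete subgraph of $G_2$ by passing to class representatives. Nothing is missing.
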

	As $\Gamma'_2(\mathcal{M}(X,\mathcal{A}))$ is a $|V|$-partite graph, $\Gamma'_2(\mathcal{M}(X,\mathcal{A}))$ is $|V|$-colorable and consequently $\chi(\Gamma'_2(\mathcal{M}(X,\mathcal{A})))$$\leq |V|=|G_2|$. We conclude that the chromatic number of both $\Gamma'_2(\mathcal{M}(X,\mathcal{A}))$ and $G_2$ are equal from the following lemma.
	\begin{lem}\label{Lem4.4}
		Let $\alpha$ be a cardinal number. If $G_2$ is $\alpha$-colorable, then we can color $\Gamma'_2(\mathcal{M}(X,\mathcal{A}))$ by using $\alpha$-many colors.
	\end{lem}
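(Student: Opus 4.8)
The plan is to lift any $\alpha$-colouring of $G_2$ to an $\alpha$-colouring of $\Gamma'_2(\mathcal{M}(X,\mathcal{A}))$ by giving each vertex the colour assigned to the representative of its $\sim$-class. The conceptual heart of the argument is that adjacency in $\Gamma'_2(\mathcal{M}(X,\mathcal{A}))$ depends only on the $\sim$-classes of the two endpoints, so a colouring that separates the representatives in $G_2$ automatically separates every adjacent pair in the parent graph.

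First I would record that $V$ is a complete set of class representatives for $\sim$, so that for each $f\in\mathscr{D}(\mathcal{M}(X,\mathcal{A}))$ there is a unique $1_{A_f}\in V$ with $f\sim 1_{A_f}$. Concretely one may take $A_f=X\setminus Z(f)$, since $f\sim 1_{X\setminus Z(f)}$ (as noted in the text) and $\mu(X\setminus Z(f))>0$, $\mu(Z(f))>0$ because $f\in\mathscr{D}(\mathcal{M}(X,\mathcal{A}))$, making $1_{A_f}$ a legitimate vertex of $V$. Given a proper colouring $\psi\colon V\to[0,\alpha]$ of $G_2$, I would then define $\Psi\colon\mathscr{D}(\mathcal{M}(X,\mathcal{A}))\to[0,\alpha]$ by $\Psi(f)=\psi(1_{A_f})$. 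Since $\Psi$ takes values only in the range of $\psi$, at most $\alpha$-many colours are used, so it suffices to check that $\Psi$ is proper.

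To verify properness, suppose $f,g$ are adjacent in $\Gamma'_2(\mathcal{M}(X,\mathcal{A}))$. Because each class $[1_A]$ is a stable set in $\Gamma'_2(\mathcal{M}(X,\mathcal{A}))$ by Observation~\ref{Obv4.1}(1), adjacent vertices cannot belong to the same class; hence $1_{A_f}\neq 1_{A_g}$ in $V$. Now I would apply Lemma~\ref{Lem3.13}(\ref{Lem3.13.3}) with $f_1=1_{A_f}$ and $g_1=1_{A_g}$, which is permissible since $\mu(Z(1_{A_f})\triangle Z(f))=0$ and $\mu(Z(1_{A_g})\triangle Z(g))=0$: it gives that $1_{A_f},1_{A_g}$ are adjacent in $\Gamma'_2(\mathcal{M}(X,\mathcal{A}))$, and therefore adjacent in the induced subgraph $G_2$. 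Since $\psi$ is a proper colouring of $G_2$, we get $\psi(1_{A_f})\neq\psi(1_{A_g})$, that is $\Psi(f)\neq\Psi(g)$, as required.

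The only genuine obstacle is this final descent step, namely ensuring that the adjacency relation is invariant under $\sim$; everything else is bookkeeping. Once Lemma~\ref{Lem3.13}(\ref{Lem3.13.3}) and Observation~\ref{Obv4.1} are invoked, the verification is immediate. Combined with the already-noted bound $\chi(\Gamma'_2(\mathcal{M}(X,\mathcal{A})))\le|V|=|G_2|$ and the trivial inequality $\chi(G_2)\le\chi(\Gamma'_2(\mathcal{M}(X,\mathcal{A})))$ coming from $G_2$ being a subgraph, this lemma yields the desired equality of the chromatic numbers of $G_2$ and $\Gamma'_2(\mathcal{M}(X,\mathcal{A}))$.
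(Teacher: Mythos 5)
Your proposal is correct and follows the same strategy as the paper: colour each $f$ by the colour of its class representative $1_{A_f}\in V$ and then check that this lifted colouring is proper using the stability of the classes and Lemma~\ref{Lem3.13}(\ref{Lem3.13.3}). In fact your properness check is slightly more complete than the paper's: the paper only verifies that two vertices receiving ``the colour of $1_A$'' are non-adjacent (being in the stable set $[1_A]$), which implicitly overlooks the possibility that two \emph{distinct} representatives $1_{A_f}\neq 1_{A_g}$ were assigned the same colour by the colouring of $G_2$, whereas your contrapositive argument --- adjacency of $f,g$ forces adjacency of $1_{A_f},1_{A_g}$ in $G_2$ and hence distinct colours --- handles all cases at once.
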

	\begin{proof}
		For each $f\in\mathscr{D}$, there exists $1_A\in V$ such that $f\sim 1_A$. Since $G_2$ is $\alpha$-colorable, $1_A$ already gets a color. We color each $f\in[1_A]$ by the color of $1_A\in V$. As a consequence, each $f\in \mathscr{D}$ gets a color from the available $\alpha$ many colors . It only remains to show that this coloring on $\Gamma'_2$ is consistent. Let $f,g\in\mathscr{D}$ be colored by the same color, say the color of $1_A$. By our method of coloring of $\Gamma'_2$, $f\sim 1_A$  and $g\sim 1_A$. Since $[1_A]$ is a stable set in $\Gamma'_2$, $f,g$ are non-adjacent in $\Gamma'_2$. Consequently, $\Gamma'_2$ is $\alpha$-colorable.
	\end{proof}
	Since $G_2$ is a subgraph of $\Gamma'_2$, $\chi(G_2)\leq\chi(\Gamma'_2)$. From Lemma \ref{Lem4.4}, we get the following result.
	\begin{thm}\label{Th4.5}
		The chromatic number of $G_2$ and $\Gamma'_2(\mathcal{M}(X,\mathcal{A}))$ are equal.
	\end{thm}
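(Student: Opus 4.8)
The plan is to sandwich the two quantities by establishing both inequalities $\chi(G_2) \le \chi(\Gamma'_2)$ and $\chi(\Gamma'_2) \le \chi(G_2)$, and the remark immediately preceding the statement already records that $G_2$ is a subgraph of $\Gamma'_2$, so one half is essentially free. For this easy direction I would use only that $G_2$ is the induced subgraph of $\Gamma'_2$ on the vertex set $V$: given any proper $\alpha$-coloring $\psi \colon \mathscr{D} \to [0,\alpha]$ of $\Gamma'_2$, its restriction $\psi|_V$ is a proper $\alpha$-coloring of $G_2$, because whenever $1_A,1_B \in V$ are adjacent in $G_2$ they are adjacent in $\Gamma'_2$, forcing $\psi(1_A) \ne \psi(1_B)$. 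Hence every coloring of $\Gamma'_2$ descends to a coloring of $G_2$ with the same palette, which gives $\chi(G_2) \le \chi(\Gamma'_2)$.

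For the reverse inequality I would invoke Lemma~\ref{Lem4.4} directly. Put $\alpha = \chi(G_2)$; since $\chi$ is defined as a minimum over cardinal numbers and the cardinals are well-ordered, this minimum is attained, so $G_2$ is genuinely $\chi(G_2)$-colorable. Lemma~\ref{Lem4.4} then upgrades this to a $\chi(G_2)$-coloring of the entire graph $\Gamma'_2(\mathcal{M}(X,\mathcal{A}))$, whence $\chi(\Gamma'_2) \le \chi(G_2)$. Combining the two inequalities yields the asserted equality $\chi(G_2) = \chi(\Gamma'_2(\mathcal{M}(X,\mathcal{A})))$.

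There is no genuine obstacle once Lemma~\ref{Lem4.4} is available, since that lemma already carries the whole weight of the argument: it transports a coloring from the class representatives in $V$ to every zero-divisor by painting each equivalence class $[1_A]$ with the color of its representative, and this is consistent precisely because each $[1_A]$ is a stable set (Observation~\ref{Obv4.1}(1)), so two identically colored vertices of $\Gamma'_2$ are never adjacent. The single point deserving a moment's care is the treatment of cardinal-valued chromatic numbers in the possibly infinite setting, but the well-ordering of the cardinals ensures the minimum in the definition of $\chi$ is realized, so taking $\alpha = \chi(G_2)$ is legitimate even when $G_2$ requires infinitely many colors, and the argument goes through verbatim.
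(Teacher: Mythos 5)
Your proof is correct and follows the paper's own route exactly: the inequality $\chi(G_2)\le\chi(\Gamma'_2)$ comes from $G_2$ being a subgraph, and the reverse inequality is Lemma~\ref{Lem4.4} applied to a minimal coloring of $G_2$. Your added remark on the attainment of the minimum over cardinals is a sensible precaution but does not change the argument.
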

	
	\begin{thm}\label{Th4.6}
		$dt(G_2)\leq dt(\Gamma'_2(\mathcal{M}(X,\mathcal{A})))$.
	\end{thm}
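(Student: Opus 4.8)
The plan is to take a minimum dominating set of $\Gamma'_2(\mathcal{M}(X,\mathcal{A}))$ and ``project'' it onto the class representatives in $V$, producing a dominating set of $G_2$ of no larger cardinality. First I would fix a dominating set $D\subseteq\mathscr{D}(\mathcal{M}(X,\mathcal{A}))$ of $\Gamma'_2(\mathcal{M}(X,\mathcal{A}))$ with $|D|=dt(\Gamma'_2(\mathcal{M}(X,\mathcal{A})))$. For each $f\in D$ there is a unique $1_{A_f}\in V$ with $f\sim 1_{A_f}$, i.e.\ $\mu(Z(f)\triangle Z(1_{A_f}))=0$; I set $D'=\{1_{A_f}:f\in D\}\subseteq V$. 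Since the assignment $f\mapsto 1_{A_f}$ may collapse several $\sim$-equivalent members of $D$ to the same representative, we immediately have $|D'|\leq|D|$.

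Next I would verify that $D'$ dominates $G_2$. Take any vertex $1_B\in V\setminus D'$. The key preliminary observation is that $1_B\notin D$: indeed, if $1_B$ belonged to $D$, then, as $1_B\in V$ is its own $\sim$-class representative, we would get $1_B\in D'$, contradicting the choice of $1_B$. Hence $1_B\in\mathscr{D}(\mathcal{M}(X,\mathcal{A}))\setminus D$, and since $D$ is a dominating set of $\Gamma'_2(\mathcal{M}(X,\mathcal{A}))$, there is some $f\in D$ adjacent to $1_B$ in $\Gamma'_2(\mathcal{M}(X,\mathcal{A}))$. Now I would invoke Lemma~\ref{Lem3.13}(\ref{Lem3.13.3}): since $f$ and $1_B$ are adjacent, $\mu(Z(1_{A_f})\triangle Z(f))=0$, and trivially $\mu(Z(1_B)\triangle Z(1_B))=0$, the vertices $1_{A_f}$ and $1_B$ are also adjacent in $\Gamma'_2(\mathcal{M}(X,\mathcal{A}))$. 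As $1_{A_f}\in D'$ and $1_B\in V\setminus D'$ are distinct elements of $V$, and $G_2$ is the \emph{induced} subgraph of $\Gamma'_2(\mathcal{M}(X,\mathcal{A}))$ on $V$, adjacency in $\Gamma'_2(\mathcal{M}(X,\mathcal{A}))$ transfers verbatim to adjacency in $G_2$. Thus every vertex of $V\setminus D'$ is adjacent in $G_2$ to a member of $D'$, so $D'$ is a dominating set of $G_2$.

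Finally, chaining the two cardinality facts gives $dt(G_2)\leq|D'|\leq|D|=dt(\Gamma'_2(\mathcal{M}(X,\mathcal{A})))$, which is the claim. The only delicate point is the adjacency transfer in the middle step — passing from the adjacency of $f$ with $1_B$ to that of the representative $1_{A_f}$ with $1_B$ — and this is precisely the content of Lemma~\ref{Lem3.13}(\ref{Lem3.13.3}); everything else is routine bookkeeping about the $\sim$-classes together with the fact that $G_2$ is an induced subgraph, so that no spurious or missing edges arise when we restrict to $V$.
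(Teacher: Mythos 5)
Your proposal is correct and follows essentially the same route as the paper: project the dominating set $D$ onto class representatives, note $|D'|\le|D|$, observe that any $1_B\in V\setminus D'$ cannot lie in $D$, and transfer adjacency from $f$ to its representative via Lemma~\ref{Lem3.13}(\ref{Lem3.13.3}) (the paper cites Observation~\ref{Obv4.1}, which is the same fact). No gaps.
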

	\begin{proof}
		Let $D\subset\mathscr{D}$ be a dominating set in $\Gamma'_2$. Let $V'=\{1_A\in V:1_A\sim f\text{ for some }f\in D\}$. Clearly, $|V'|\leq |D|$ and $D\cap V\subset V'$. To show that $V'$ is a dominating set in $G_2$, let $1_B\in V\setminus V'$. If $1_B\in D$, then $1_B\in V'$, which is not. So, $1_B\in\mathscr{D}\setminus D$. Since $D$ is a dominating set in $\Gamma'_2$, there exists $f\in D$ such that $f,1_B$ are adjacent in $\Gamma'_2$. Let $1_A\in V'$ such that $1_A\sim f$. By Observation \ref{Obv4.1}, $1_A,1_B$ are adjacent in $\Gamma'_2$ and hence they are adjacent in $G_2$. Therefore, $V'$ is a dominating set in $G_2$. Now $dt(G_2)\leq |V'|\leq |D|$ and this holds for every dominating set $D$ in $\Gamma'_2$. Consequently, $dt(G_2)\leq dt(\Gamma'_2)$.
	\end{proof}
	\begin{lem}\label{Lem4.7}
		Let $D\subset\mathscr{D}$.
		\begin{enumerate}
			\item Every total dominating set in $G_2$ is also a total dominating set in $\Gamma'_2(\mathcal{M}(X,\mathcal{A}))$.\label{Lem4.7.1}
			\item If $D$ is a total dominating set in $\Gamma'_2(\mathcal{M}(X,\mathcal{A}))$, then there exists a total dominating set $D'$ in $G_2$ such that $|D| \geq|D'|$.\label{Lem4.7.2}
		\end{enumerate}
	\end{lem}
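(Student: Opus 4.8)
The plan is to handle the two parts separately, in both cases exploiting Lemma~\ref{Lem3.13}(\ref{Lem3.13.3}), which says that adjacency in $\Gamma'_2$ is invariant under replacing a vertex by any $\sim$-equivalent vertex, together with the fact that $G_2$ is an \emph{induced} subgraph of $\Gamma'_2$, so that two representatives are adjacent in $G_2$ precisely when they are adjacent in $\Gamma'_2$. The construction mirrors the one already used for ordinary domination in Theorem~\ref{Th4.6}.

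For part~(\ref{Lem4.7.1}), I would let $D'$ be a total dominating set in $G_2$ and take an arbitrary $f\in\mathscr{D}$. I pick the unique $1_A\in V$ with $f\sim 1_A$. Since $D'$ totally dominates $G_2$, there is some $1_B\in D'$ adjacent to $1_A$ in $G_2$, hence adjacent to $1_A$ in $\Gamma'_2$. Because $\mu(Z(f)\triangle Z(1_A))=0$, Lemma~\ref{Lem3.13}(\ref{Lem3.13.3}) upgrades this to adjacency of $f$ and $1_B$ in $\Gamma'_2$; the resulting adjacency condition $\mu(Z(f)\cap Z(1_B))=0$ automatically forces $f\neq 1_B$, since otherwise $\mu(Z(f))=0$, contradicting $f\in\mathscr{D}$. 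Thus every $f\in\mathscr{D}$ has a neighbour in $D'$, so $D'$ is a total dominating set of $\Gamma'_2$.

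For part~(\ref{Lem4.7.2}), I would set $D'=\{1_A\in V : 1_A\sim f \text{ for some } f\in D\}$. The assignment $f\mapsto 1_A$ (with $f\sim 1_A$) maps $D$ onto $D'$, so $|D'|\leq|D|$. To verify total domination in $G_2$, take any $1_C\in V$; since $D$ totally dominates $\Gamma'_2$, there is $f\in D$ adjacent to $1_C$ in $\Gamma'_2$, and I let $1_A\in D'$ be the representative with $f\sim 1_A$. Applying Lemma~\ref{Lem3.13}(\ref{Lem3.13.3}) to the adjacent pair $1_C,f$ with $f$ replaced by $1_A$ yields adjacency of $1_C$ and $1_A$ in $\Gamma'_2$, hence in $G_2$. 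Since $1_C,f$ are adjacent they cannot be $\sim$-equivalent (Lemma~\ref{Lem3.13}(\ref{Lem3.13.1})), so $1_A\neq 1_C$ and $1_A$ is a genuine neighbour of $1_C$ inside $D'$.

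The argument is essentially routine once the invariance lemma is in hand; the only point requiring care is the distinctness checks demanded by \emph{total} domination (as opposed to ordinary domination in Theorem~\ref{Th4.6}), namely that the dominating representative is never the vertex it is meant to dominate. In part~(\ref{Lem4.7.1}) this is free from the zero-intersection condition, and in part~(\ref{Lem4.7.2}) it follows from Lemma~\ref{Lem3.13}(\ref{Lem3.13.1}), since an adjacent pair cannot have symmetric difference of measure zero.
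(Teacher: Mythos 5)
Your proposal is correct and follows essentially the same route as the paper: both parts rest on the representative map $f\mapsto 1_A$ and the invariance of adjacency under $\sim$ from Lemma~\ref{Lem3.13}(\ref{Lem3.13.3}), exactly as in the paper's proof. The only difference is that you make the distinctness checks explicit (which the paper leaves tacit); these are harmless and correct, so nothing further is needed.
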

	\begin{proof}
		\hspace{3cm}
		\begin{enumerate}
			\item Let $V'$ be a total dominating set in $G_2$ and $f\in\mathscr{D}$. Let $1_A\in V$ such that $f\sim 1_A$. Since $V'$ is a total dominating set in $G_2$, there exists $1_B\in V'$ such that $1_A,1_B$ are adjacent in $G_2$. By Lemma \ref{Lem3.13}(\ref{Lem3.13.3}), $f,1_B$ are adjacent in $\Gamma'_2$. Thus $V'$ be a total dominating set in $\Gamma'_2$.
			\item Let $D$ be a total dominating set in $\Gamma'_2$ and $D'=\{1_A\in V:1_A\sim f\text{ for some }f\in D\}$. Clearly, $|D'|\leq |D|$ and $D\cap V\subset D'$. Let $1_A\in V$. Then $1_A\in\mathscr{D}$ and hence there exists $f\in D$ such that $f,1_A$ are adjacent in $\Gamma'_2$. Consequently, $1_A$ is adjacent to $1_B$ in $G_2$ where $f\sim 1_B\in D'$. Hence, $D'$ is a a total dominating set in $G_2$.
		\end{enumerate}
	\end{proof}
	From  Lemma \ref{Lem4.7} the following result is immediate.
	\begin{thm}
		The total dominating number of $G_2$ and $\Gamma'_2(\mathcal{M}(X,\mathcal{A}))$ are equal.
	\end{thm}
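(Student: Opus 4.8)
The plan is to extract the two opposite inequalities $dt_t(\Gamma'_2(\mathcal{M}(X,\mathcal{A})))\leq dt_t(G_2)$ and $dt_t(G_2)\leq dt_t(\Gamma'_2(\mathcal{M}(X,\mathcal{A})))$ directly from the two parts of Lemma \ref{Lem4.7}, and then conclude equality. Since the preceding lemma already does all the combinatorial work, the argument here is purely a matter of chasing cardinalities of minimal total dominating sets, so I expect no genuine obstacle.

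First I would handle the inequality $dt_t(\Gamma'_2(\mathcal{M}(X,\mathcal{A})))\leq dt_t(G_2)$ using Lemma \ref{Lem4.7}(\ref{Lem4.7.1}). Choose a total dominating set $V'$ in $G_2$ realizing the minimum, so that $|V'|=dt_t(G_2)$. By Lemma \ref{Lem4.7}(\ref{Lem4.7.1}), this same set $V'$ is a total dominating set in $\Gamma'_2(\mathcal{M}(X,\mathcal{A}))$, whence $dt_t(\Gamma'_2(\mathcal{M}(X,\mathcal{A})))\leq |V'|=dt_t(G_2)$.

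For the reverse inequality I would invoke Lemma \ref{Lem4.7}(\ref{Lem4.7.2}). Let $D$ be any total dominating set in $\Gamma'_2(\mathcal{M}(X,\mathcal{A}))$. The lemma produces a total dominating set $D'$ in $G_2$ with $|D'|\leq |D|$, and therefore $dt_t(G_2)\leq |D'|\leq |D|$. Since $D$ was an arbitrary total dominating set in $\Gamma'_2(\mathcal{M}(X,\mathcal{A}))$, taking the infimum over all such $D$ gives $dt_t(G_2)\leq dt_t(\Gamma'_2(\mathcal{M}(X,\mathcal{A})))$.

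Combining the two displayed inequalities yields $dt_t(G_2)=dt_t(\Gamma'_2(\mathcal{M}(X,\mathcal{A})))$, which is the assertion. The only point worth a moment's care — though it is not a real difficulty — is ensuring that the minima are actually attained (or, if one prefers to argue with infima over possibly infinite families of total dominating sets, that the chain of inequalities survives passing to the infimum); both directions go through verbatim since each inequality holds for the relevant \emph{arbitrary} total dominating set before minimizing.
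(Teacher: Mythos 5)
Your proof is correct and is exactly the argument the paper intends: the paper simply states that the result is immediate from Lemma \ref{Lem4.7}, and your two inequalities (part (\ref{Lem4.7.1}) giving $dt_t(\Gamma'_2)\leq dt_t(G_2)$ and part (\ref{Lem4.7.2}) giving the reverse) are precisely how that deduction goes. Your remark about taking the infimum over arbitrary total dominating sets is a reasonable extra precaution but does not change the substance.
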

	
	For any two subsets $A, B$ of $X$, as $A \triangle B = (X\setminus A) \triangle (X\setminus B)$ holds, we get a similar induced subgraph $G$ of the zero-divisor graph $\Gamma(\mathcal{M}(X,\mathcal{A}))$, arising from the same equivalence relation $\sim$ on $\mathscr{D}$ and considering the same set of vertices $V$ as in the case of comaximal graph. In this case too, each equivalence class $[1_A]$ is a stable set in $\Gamma(\mathcal{M}(X,\mathcal{A}))$ and for any two distinct equivalence classes $[1_A]$ and $[1_B]$, either $[1_A]\sqcup [1_B]$ is a stable set in $\Gamma(\mathcal{M}(X,\mathcal{A}))$ or they make a complete bipartite subgraph of $\Gamma(\mathcal{M}(X,\mathcal{A}))$. Hence, $\Gamma(\mathcal{M}(X,\mathcal{A}))$ is also a $|V|$-partite graph. This inherent similarity between the induced subgraphs $G$ and $G_2$ leads to a more stronger conclusion, as seen in the next proposition.
	
	\begin{thm}\label{Th5.2.1}
		$G$ and $G_2$ are graph isomorphic.
	\end{thm}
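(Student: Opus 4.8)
The plan is to exhibit an explicit bijection between the common vertex set $V$ of $G$ and $G_2$ that interchanges the two adjacency relations, namely the complementation map. First I would record the two adjacency relations in terms of the underlying sets. For a vertex $1_A\in V$ one has $Z(1_A)=X\setminus A$ and $X\setminus Z(1_A)=A$; hence by Theorem~\ref{thm} two vertices $1_A,1_B$ are adjacent in $G_2$ precisely when $\mu((X\setminus A)\cap(X\setminus B))=0$, while by the adjacency relation defining $\Gamma(\mathcal{M}(X,\mathcal{A}))$ they are adjacent in $G$ precisely when $\mu(A\cap B)=0$. Thus the two graphs differ only by replacing each set $A$ by its complement $X\setminus A$ inside the defining condition, which suggests complementation as the natural isomorphism.

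Next I would define the candidate isomorphism $\psi:V\to V$ by sending $1_A$ to the unique class representative in $V$ of the equivalence class $[1_{X\setminus A}]$. This is legitimate because $1_A\in V$ forces $\mu(A)>0$ and $\mu(X\setminus A)>0$, whence $1_{X\setminus A}\in\mathscr{D}(\mathcal{M}(X,\mathcal{A}))$ and its $\sim$-class meets $V$ in exactly one vertex. Using the identity $A\triangle B=(X\setminus A)\triangle(X\setminus B)$ one verifies that $\psi$ is an involution: if $\psi(1_A)=1_C$ with $\mu(C\triangle(X\setminus A))=0$, then $\mu((X\setminus C)\triangle A)=0$, so $[1_{X\setminus C}]=[1_A]$ and therefore $\psi(1_C)=1_A$. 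An involution on a set is automatically a bijection, supplying the required vertex bijection $V\to V$.

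Finally I would verify that $\psi$ both preserves and reflects adjacency. Writing $\psi(1_A)=1_C$ and $\psi(1_B)=1_D$ with $\mu(C\triangle(X\setminus A))=0=\mu(D\triangle(X\setminus B))$, the symmetric difference $(C\cap D)\triangle\bigl((X\setminus A)\cap(X\setminus B)\bigr)$ is contained in $(C\triangle(X\setminus A))\cup(D\triangle(X\setminus B))$ and so is $\mu$-null. Hence $\mu(C\cap D)=0$ if and only if $\mu((X\setminus A)\cap(X\setminus B))=0$. By the two adjacency descriptions from the first step, this is exactly the statement that $\psi(1_A),\psi(1_B)$ are adjacent in $G$ if and only if $1_A,1_B$ are adjacent in $G_2$, which completes the proof that $\psi$ is a graph isomorphism.

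I expect the only delicate point to be the bookkeeping of class representatives, that is, confirming that complementation descends to a genuinely well-defined bijection of the chosen representative set $V$ rather than merely a map on $\mathscr{D}(\mathcal{M}(X,\mathcal{A}))$. Once the null-set invariance of the symmetric difference is in hand this is routine, and the adjacency-preservation computation is then immediate; no atomicity hypotheses on $\mu$ are needed, which is consistent with the assertion that $G$ and $G_2$ are \emph{always} isomorphic.
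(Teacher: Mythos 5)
Your proposal is correct and follows essentially the same route as the paper: both use the complementation map $\psi(1_A)=1_{\psi(A)}$ with $1_{\psi(A)}\sim 1_{X\setminus A}$ on the representative set $V$, and both reduce adjacency preservation to the null-set invariance of the adjacency conditions under $\sim$. The only cosmetic differences are that you establish bijectivity by observing $\psi$ is an involution (the paper checks injectivity and surjectivity separately) and you verify adjacency via the direct containment of $(C\cap D)\triangle\bigl((X\setminus A)\cap(X\setminus B)\bigr)$ in the union of the two symmetric differences, where the paper instead cites Lemma~\ref{Lem3.13}(\ref{Lem3.13.3}).
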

	\begin{proof}
		Let us define a function $\psi:V\to V$ as follows $\psi(1_A)=1_{\psi(A)}$ where $1_{\psi(A)}\sim 1_{X\setminus A}$. Since for $1_A\in V\implies \mu(A),\mu(X\setminus A)>0$, there exists $1_B\in V$ such that $1_B\sim 1_{X\setminus A}$. Also if $1_A$ and $1_B$ are two distinct elements in $V$, then $\mu(A\triangle B)>0\implies\mu(X\setminus A\triangle X\setminus B)>0$. As $1_{\psi(A)}\sim 1_{X\setminus A}$ and $1_{\psi(B)}\sim 1_{X\setminus B}$, $\mu(\psi(A)\triangle \psi(B))>0$ i.e., $1_{\psi(A)}$ and $1_{\psi(B)}$ are distinct in $V$. Consequently, $\psi$ is well-defined and injective. It is also clear that, for each $1_B\in V$, $\psi(1_{A})=1_B$, where $1_A\sim 1_{X\setminus B}$. Thus $\psi$ is surjective and hence $\psi$ is a bijection on $V$. We now claim that, $\psi$ preserves the adjacency relation between $G$ and $G_2$. Let $1_A,1_B\in V$ be adjacent in $G$; i.e., $\mu(A\cap B)=0$. It suffices to show that $1_{\psi(A)},1_{\psi(B)}$ are adjacent in $G_2$. We have $1_{\psi(A)}\sim 1_{X\setminus A}$ and $1_{\psi(B)}\sim 1_{X\setminus B}$. Since $\mu(A\cap B)=0$, $1_{X\setminus A}, 1_{X\setminus B}$ are adjacent in $G_2$. By Lemma \ref{Lem3.13}(\ref{Lem3.13.3}), $\psi(1_A),\psi(1_B)$ are adjacent in $G_2$. Similarly if $1_A,1_B$ are adjacent in $G_2$, then $\psi^{-1}(1_A),\psi^{-1}(1_B)$ are adjacent in $G$. Hence $\psi$ is an isomorphism between the two graphs $G$ and $G_2$.
	\end{proof}
	Theorem \ref{Th5.2.1} prompted us to suspect that $\Gamma(\mathcal{M}(X,\mathcal{A}))$ and $\Gamma'_2(\mathcal{M}(X,\mathcal{A}))$ are isomorphic. But, in the last section of this paper we could find an example of a measure space on which these two graphs are not isomorphic. However, we establish a sufficient condition for these two graphs to be isomorphic in general, as given below.\\
	If $|[1_A]|=|[1_{X\setminus A}]|$ for each $1_A\in V$, then there exists a bijection between $[1_A]$ and $[1_{\psi(A)}]$, because $[1_{X\setminus A}]=[1_{\psi(A)}]$. For each $1_A\in V$, let $\phi_A:[1_A]\to[1_{\psi(A)}]$ be a bijection. Define $\phi:\mathscr{D} (\mathcal{M}(X,\mathcal{A}))\to\mathscr{D}(\mathcal{M}(X,\mathcal{A}))$ by $\phi(f)=\phi_A(f)$ whenever $f\sim 1_A$. Since each $\phi_A$ is a bijection and $V$ is the collection of the class representatives under the equivalence relation $\sim$, $\phi$ is a bijective map. Let $f,g\in \mathscr{D}(\mathcal{M}(X,\mathcal{A}))$ be adjacent in $\Gamma(\mathcal{M}(X,\mathcal{A}))$. Let $1_A,1_B\in V$ be such that $f\sim 1_A$ and $g\sim 1_B$. Since $f,g$ are adjacent, it can be proved easily that $1_A,1_B$ are adjacent in $G$. Therefore $\psi(1_A), \psi(1_B)$ are adjacent in $G_2$. By the definition of $\phi$, $\phi(f)\sim\psi(1_A)$ and $\phi(g)\sim\psi(1_B)$. Therefore $\phi(f),\phi(g)$ are adjacent in $\Gamma'_2(\mathcal{M}(X,\mathcal{A}))$. Similarly if $f,g$ are adjacent in $\Gamma'_2(\mathcal{M}(X,\mathcal{A}))$, then $\phi^{-1}(f),\phi^{-1}(g)$ are adjacent in $\Gamma(\mathcal{M}(X,\mathcal{A}))$ (follows from Lemma \ref{Lem3.13}(\ref{Lem3.13.3})). Hence, $\Gamma(\mathcal{M}(X,\mathcal{A}))$ and $\Gamma'_2(\mathcal{M}(X,\mathcal{A}))$ are graph isomorphic.\\ 
	We record this fact in the form of a theorem.
	\begin{thm}\label{Iso}
		If $|[1_A]|=|[1_{X\setminus A}]|$ for each $1_A\in V$, then $\Gamma(\mathcal{M}(X,\mathcal{A}))$ and $\Gamma'_2(\mathcal{M}(X,\mathcal{A}))$ are graph isomorphic.
	\end{thm}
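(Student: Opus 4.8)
The plan is to build an explicit bijection $\phi:\mathscr{D}(\mathcal{M}(X,\mathcal{A}))\to\mathscr{D}(\mathcal{M}(X,\mathcal{A}))$ that extends the isomorphism $\psi:V\to V$ of Theorem~\ref{Th5.2.1} from the class representatives to the full vertex set, and then to verify that $\phi$ carries the adjacency of $\Gamma(\mathcal{M}(X,\mathcal{A}))$ to that of $\Gamma'_2(\mathcal{M}(X,\mathcal{A}))$. The starting observation is that the classes $\{[1_A]:1_A\in V\}$ partition $\mathscr{D}$, and that $\psi$ permutes these classes since $[1_{\psi(A)}]=[1_{X\setminus A}]$. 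The hypothesis $|[1_A]|=|[1_{X\setminus A}]|$ for every $1_A\in V$ is exactly what is needed to fix, for each class, a bijection $\phi_A:[1_A]\to[1_{\psi(A)}]$.

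First I would define $\phi$ class-by-class: for $f\in\mathscr{D}$ take the unique $1_A\in V$ with $f\sim 1_A$ and set $\phi(f)=\phi_A(f)$. Well-definedness is immediate because the classes are disjoint and cover $\mathscr{D}$. For bijectivity I would note that each $\phi_A$ maps $[1_A]$ bijectively onto $[1_{\psi(A)}]$, and that since $\psi$ is a bijection on $V$ the target classes $\{[1_{\psi(A)}]:1_A\in V\}$ again exhaust all equivalence classes without repetition; hence $\phi$ is a bijection of $\mathscr{D}$ onto itself.

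The heart of the argument is adjacency preservation. Given $f,g\in\mathscr{D}$ with $f\sim 1_A$ and $g\sim 1_B$, I would use the fact---available for $\Gamma'_2$ in Lemma~\ref{Lem3.13}(\ref{Lem3.13.3}) and valid verbatim for $\Gamma$ because adjacency in either graph depends only on the zero sets up to null sets---that adjacency is constant on $\sim$-classes. Thus $f,g$ adjacent in $\Gamma$ forces $1_A,1_B$ adjacent in the induced subgraph $G$; applying the isomorphism $\psi$ of Theorem~\ref{Th5.2.1} gives $\psi(1_A),\psi(1_B)$ adjacent in $G_2\subset\Gamma'_2$; and since $\phi(f)\sim\psi(1_A)$ and $\phi(g)\sim\psi(1_B)$, invoking Lemma~\ref{Lem3.13}(\ref{Lem3.13.3}) once more promotes this to adjacency of $\phi(f),\phi(g)$ in $\Gamma'_2$. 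Running the identical chain through $\psi^{-1}$ and $\phi^{-1}$ yields the converse implication, so $\phi$ is a graph isomorphism.

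I expect the only genuine obstacle to be bookkeeping rather than mathematics: one must confirm that ``adjacency is a class invariant'' is legitimately available for the zero-divisor graph $\Gamma$, not merely for $\Gamma'_2$, and take care not to conflate the two directions of the adjacency equivalence. Both points rest on the symmetric roles of $Z(f)$ and $X\setminus Z(f)$ together with $A\triangle B=(X\setminus A)\triangle(X\setminus B)$, which is precisely what made $G$ and $G_2$ isomorphic in Theorem~\ref{Th5.2.1}; consequently no fresh case analysis on atoms is required.
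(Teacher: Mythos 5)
Your proposal is correct and follows essentially the same route as the paper: the paper likewise assembles $\phi$ from class-wise bijections $\phi_A:[1_A]\to[1_{\psi(A)}]$ (using $[1_{X\setminus A}]=[1_{\psi(A)}]$), transfers adjacency down to $G$, across via $\psi$ from Theorem~\ref{Th5.2.1}, and back up via Lemma~\ref{Lem3.13}(\ref{Lem3.13.3}). Your explicit flagging of the point that class-invariance of adjacency must also be checked for $\Gamma$ (the paper only says ``it can be proved easily'') is a welcome precision, but it does not change the argument.
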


	\section{\bf The annihilator graph $AG(\mathcal{M}(X,\mathcal{A}))$ of $\mathcal{M}(X,\mathcal{A})$}\label{Sec5}
	We redefine the annihilator graph of the ring $\mathcal{M}(X,\mathcal{A})$ as follows:
	\begin{defn}
		The annihilator graph $AG(\mathcal{M}(X,\mathcal{A}))$ (in short $AG$) of $\mathcal{M}(X,\mathcal{A})$ is a simple graph whose set of vertices is $\mathscr{D}(\mathcal{M}(X,\mathcal{A}))$ and the adjacency relation is given by the following rule : $f,g$ are adjacent if and only if $ann(f)\cup ann(g)\subsetneqq ann(f.g)$.
	\end{defn} 
	It is quite clear that for any two $f, g \in \mathcal{M}(X, \mathcal{A})$, $ann(f) \cup ann(g) \subseteq ann(f.g)$. So, two vertices $f, g$ are non-adjacent when and only when $ann(f) \cup ann(g) = ann(f.g)$. We first observe that the adjacency of two vertices of $AG(\mathcal{M}(X,\mathcal{A}))$ can be determined by measuring the differences between their corresponding zero sets.
	
	\begin{thm}\label{Th0.5}
		$f,g \in \mathscr{D}(\mathcal{M}(X,\mathcal{A}))$ are adjacent in $AG(\mathcal{M}(X,\mathcal{A}))$ if and only if $\mu(Z(f)\setminus Z(g))>0$ and $\mu(Z(g)\setminus Z(f))>0$.
	\end{thm}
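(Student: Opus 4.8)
The plan is to reduce the adjacency condition to a statement about symmetric differences of zero sets, relying on Theorem~\ref{Lem0.4} together with the identity $Z(f.g)=Z(f)\cup Z(g)$. First I would record that $ann(f)\cup ann(g)\subseteq ann(f.g)$ always holds: since $Z(f),Z(g)\subseteq Z(f.g)$, the quantities $\mu(Z(f)\setminus Z(f.g))$ and $\mu(Z(g)\setminus Z(f.g))$ both vanish, so Theorem~\ref{Lem0.4} gives $ann(f),ann(g)\subseteq ann(f.g)$. Consequently $f,g$ are adjacent precisely when this inclusion is proper, i.e. when there is some $h\in ann(f.g)$ lying outside both $ann(f)$ and $ann(g)$. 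Throughout it is convenient to use the reformulation coming from the definition of $ann$, namely that $h\in ann(\varphi)$ iff $\mu((X\setminus Z(\varphi))\cap(X\setminus Z(h)))=0$.

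For the sufficiency direction, assume $\mu(Z(f)\setminus Z(g))>0$ and $\mu(Z(g)\setminus Z(f))>0$, and I would exhibit the explicit witness $h=1_{Z(f)\triangle Z(g)}$, so that $X\setminus Z(h)=(Z(f)\setminus Z(g))\sqcup(Z(g)\setminus Z(f))$. A short computation then gives $(X\setminus Z(f))\cap(X\setminus Z(h))=Z(g)\setminus Z(f)$ and $(X\setminus Z(g))\cap(X\setminus Z(h))=Z(f)\setminus Z(g)$, both of positive measure, so $h\notin ann(f)\cup ann(g)$; while $(X\setminus Z(f.g))\cap(X\setminus Z(h))=\emptyset$, because each half of the symmetric difference sits inside one of $Z(f),Z(g)$. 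Hence $h\in ann(f.g)$, which produces the required proper inclusion and the edge $f-g$.

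For the necessity direction I would argue by contrapositive: suppose one of the two differences is null, say $\mu(Z(f)\setminus Z(g))=0$. By Theorem~\ref{Lem0.4} this gives $ann(f)\subseteq ann(g)$, so $ann(f)\cup ann(g)=ann(g)$. Moreover $Z(f.g)\setminus Z(g)=(Z(f)\cup Z(g))\setminus Z(g)=Z(f)\setminus Z(g)$ is null, so a second application of Theorem~\ref{Lem0.4} yields $ann(f.g)\subseteq ann(g)$, whence $ann(f.g)=ann(g)=ann(f)\cup ann(g)$. Thus the inclusion fails to be proper and $f,g$ are non-adjacent; the case $\mu(Z(g)\setminus Z(f))=0$ is symmetric.

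I do not expect a genuine obstacle once adjacency has been rephrased as proper inclusion, since from that point everything is governed by Theorem~\ref{Lem0.4}. The one step deserving care is the verification that the candidate $h=1_{Z(f)\triangle Z(g)}$ simultaneously fails to annihilate $f$ and $g$ yet annihilates $f.g$; this amounts to tracking which half of the symmetric difference meets which cozero set. Note also that $h$ need only be an element of $\mathcal{M}(X,\mathcal{A})$ in order to witness the strict inclusion, so there is no need to check that $h$ is itself a zero-divisor.
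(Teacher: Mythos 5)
Your proof is correct, and it takes a partly different route from the paper's. Both arguments hinge on Theorem~\ref{Lem0.4} and on rephrasing adjacency as properness of the inclusion $ann(f)\cup ann(g)\subseteq ann(f.g)$, and your treatment of the ``one difference is null $\Rightarrow$ non-adjacent'' direction is essentially the paper's converse, only cleaner: where the paper verifies $ann(f.g)\subseteq ann(g)$ by chasing an arbitrary $h\in ann(f.g)$ through cozero-set intersections, you get it in one line from $Z(f.g)=Z(f)\cup Z(g)$ and a second application of Theorem~\ref{Lem0.4}. The real divergence is in the other direction. The paper argues contrapositively from non-adjacency: $ann(f)\cup ann(g)=ann(f.g)$ is an ideal, and a union of two ideals is an ideal only when one contains the other, so Theorem~\ref{Lem0.4} forces one of the differences to be null. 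You instead prove the direct implication by exhibiting the explicit witness $h=1_{Z(f)\triangle Z(g)}\in ann(f.g)\setminus(ann(f)\cup ann(g))$, and your verification of the three intersection computations is correct (as is your remark that $h$ need not itself be a zero-divisor). The paper's argument is shorter but leans on the algebraic fact about unions of ideals; yours is constructive and stays entirely inside the measure-theoretic dictionary, at the cost of a slightly longer computation. Both are valid proofs of the equivalence.
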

	\begin{proof}
		Let $f,g$ be not adjacent in $AG$. Then $ann(f)\cup ann(g)=ann(f.g)$. Since union of two ideals is an ideal, either $ann(f)\subset ann(g)$ or $ann(g)\subset ann(f)$. By Theorem \ref{Lem0.4}, either $\mu(Z(f)\setminus Z(g))=0$ or $\mu(Z(g)\setminus Z(f))=0$. Conversely, for definiteness sake, let $\mu(Z(f)\setminus Z(g))=0$. Then by Theorem \ref{Lem0.4}, $ann(f)\subset ann(g)$. Let $h\in ann(f.g)$. Then $\mu(X \setminus Z(h) \cap X \setminus Z(f) \cap X \setminus Z(g)) = 0$. Now,  $X\setminus Z(h)\cap X\setminus Z(g) = (X\setminus Z(h)\cap X\setminus Z(g)  \cap Z(f))\cup (X\setminus Z(h)\cap X\setminus Z(g) \cap (X \setminus Z(f)))\subset (Z(f)\setminus Z(g))\cup (X\setminus Z(h)\cap X\setminus Z(g) \cap X \setminus Z(f))$ implies $\mu(X\setminus Z(h)\cap X\setminus Z(g))=0$. Hence, $h \in ann(g)$. i.e., $ann(g) = ann(f) \cup ann(g)=ann(f.g)$ which proves $f, g$ are non-adjacent in AG. 
	\end{proof}
	The next theorem sets conditions for the existence of a third vertex, adjacent to a given pair of vertices in $AG(\mathcal{M}(X,\mathcal{A}))$.
	\begin{thm}\label{Th5.1}
		Let $f,g\in\mathscr{D}(\mathcal{M}(X,\mathcal{A}))$. 
		\begin{enumerate}
			\item If $\mu(X\setminus Z(f)\cap X\setminus Z(g))>0$ or $\mu(Z(f)\cap Z(g))>0$, then there is a vertex adjacent to both $f$ and $g$ in $AG(\mathcal{M}(X,\mathcal{A}))$.
			\item Let $\mu(X\setminus Z(f)\cap X\setminus Z(g))=0=\mu(Z(f)\cap Z(g))$. Then there exists a vertex adjacent to both $f$ and $g$ if and only if $Z(f)$ and $Z(g)$ are not atoms.
		\end{enumerate}
	\end{thm}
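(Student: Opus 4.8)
The entire statement is governed by Theorem~\ref{Th0.5}, so my plan is to reformulate both parts as a search for a suitable measurable set. A vertex $h\in\mathscr{D}(\mathcal{M}(X,\mathcal{A}))$ is adjacent to both $f$ and $g$ precisely when, writing $R=Z(h)$ (so that $h=1_{X\setminus R}$), all four quantities $\mu(R\setminus Z(f))$, $\mu(Z(f)\setminus R)$, $\mu(R\setminus Z(g))$, $\mu(Z(g)\setminus R)$ are positive, by Theorem~\ref{Th0.5}. Note that membership $h\in\mathscr{D}$ is then automatic, since $\mu(R\setminus Z(f))>0$ forces $\mu(R)>0$ and $\mu(Z(f)\setminus R)>0$ forces $\mu(X\setminus R)>0$. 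Thus the problem reduces entirely to exhibiting, or obstructing, such an $R$.

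For part~(1) I would simply write down the candidate. If $\mu(X\setminus Z(f)\cap X\setminus Z(g))>0$, take $h=1_{Z(f)\cup Z(g)}$, so that $R=Z(h)=X\setminus Z(f)\cap X\setminus Z(g)$ is disjoint from $Z(f)$ and from $Z(g)$; then $R\setminus Z(f)=R$ and $Z(f)\setminus R=Z(f)$ both have positive measure, and symmetrically with $g$, so Theorem~\ref{Th0.5} yields adjacency to both. If instead $\mu(Z(f)\cap Z(g))>0$, the dual choice $h=1_{Z(f)\cap Z(g)}$ works: here $R=Z(h)=X\setminus(Z(f)\cap Z(g))$ contains both $X\setminus Z(f)$ and $X\setminus Z(g)$, whence $R\setminus Z(f)=X\setminus Z(f)$ and $Z(f)\setminus R=Z(f)\cap Z(g)$ are both positive, and likewise for $g$. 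Since the hypothesis is a disjunction, a case split on which inequality holds completes part~(1).

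For part~(2), under $\mu(X\setminus Z(f)\cap X\setminus Z(g))=0=\mu(Z(f)\cap Z(g))$, I would first record the governing identity $\mu(Z(f)\triangle(X\setminus Z(g)))=0$, equivalently $\mu((X\setminus Z(f))\triangle Z(g))=0$; so $Z(g)$ and $X\setminus Z(f)$ agree up to a null set and, by the null-set invariance of atomicity noted in the preliminaries, $Z(g)$ is an atom if and only if $X\setminus Z(f)$ is. For the forward implication, given a common neighbour $h$ with $R=Z(h)$, I read off from the four positivity conditions, after transferring $\mu(R\cap(X\setminus Z(g)))=\mu(R\cap Z(f))$ across the null set, that $Z(f)=(Z(f)\cap R)\sqcup(Z(f)\cap(X\setminus R))$ splits into two positive-measure pieces, so $Z(f)$ is not an atom; the symmetric computation using $X\setminus Z(f)\equiv Z(g)$ shows $Z(g)$ is not an atom. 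For the converse, assuming neither $Z(f)$ nor $X\setminus Z(f)\;(\equiv Z(g))$ is an atom, I decompose $Z(f)=C_1\sqcup C_2$ and $X\setminus Z(f)=D_1\sqcup D_2$ into measurable pieces of positive measure and set $R=C_1\cup D_1$, $h=1_{X\setminus R}$; then $R\setminus Z(f)=D_1$ and $Z(f)\setminus R=C_2$, while $R\setminus Z(g)\equiv R\cap Z(f)=C_1$ and $Z(g)\setminus R\equiv(X\setminus Z(f))\cap(X\setminus R)=D_2$, all of positive measure, so $h$ is the desired common neighbour.

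The only delicate point, and where I expect the care to concentrate, is the null-set bookkeeping in part~(2): adjacency to $g$ is controlled by $Z(g)$, whereas the construction and most set computations are phrased through $X\setminus Z(f)$, so at each positivity step I must invoke $\mu(Z(g)\triangle(X\setminus Z(f)))=0$ to transport the inequality from one set to the other. Apart from this, every step is routine set algebra combined with appeals to Theorem~\ref{Th0.5}.
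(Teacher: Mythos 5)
Your proof is correct and follows essentially the same route as the paper: the same witnesses $1_{Z(f)\cup Z(g)}$ and $1_{Z(f)\cap Z(g)}$ for part (1), and the same split-each-side-into-two-positive-pieces construction and decomposition argument for part (2), all driven by Theorem~\ref{Th0.5}. Your explicit null-set bookkeeping in part (2) is a minor (and welcome) refinement of the paper's ``without loss of generality let $Z(f)=X\setminus Z(g)$.''
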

	\begin{proof}
		It is easy to check that $\mu(X\setminus Z(f)\cap X\setminus Z(g))>0$ or $\mu(Z(f)\cap Z(g))>0$ imply that $1_{Z(f)\cup Z(g)}$ or $1_{Z(f)\cap Z(g)}$ is adjacent to both $f$, $g$ in $AG$ respectively. Let $\mu(X\setminus Z(f)\cap X\setminus Z(g))=0=\mu(Z(f)\cap Z(g))$. Without loss of generality let, $Z(f)=X\setminus Z(g)$. If both $Z(f)$ and $Z(g)$ are not atoms, then there exist $A_1,A_2,B_1,B_2\in\mathcal{A}$ each with positive measure such that $Z(f)=A_1\sqcup A_2$ and $Z(g)=B_1\sqcup B_2$. Consider $h= 1_{A_1\cup B_1}$. Clearly $h\in\mathscr{D}$ and $A_2\cup B_2\subset Z(h)$. Therefore, $\mu(Z(f)\setminus Z(h))>0$ and $\mu(Z(h)\setminus Z(f))>0$ i.e., $f,h$ are adjacent in $AG$. Similarly $g,h$ are adjacent in $AG$. Conversely let $h$ be adjacent to both $f,g$ in $AG$. Then $\mu(Z(f)\setminus Z(h))>0$ and $\mu(Z(h)\setminus Z(g))>0$. Now $Z(f)=(Z(f)\cap Z(h))\sqcup(Z(f)\cap X\setminus Z(h))= (Z(h)\setminus Z(g))\sqcup(Z(f)\setminus Z(h))$, as $Z(f)=X\setminus Z(g)$. Therefore, $Z(f)$ is not an atom and similarly, $Z(g)$ is also not an atom.
	\end{proof}
	\begin{cor}
		Let $f,g\in\mathscr{D}(\mathcal{M}(X,\mathcal{A}))$. Then $$d(f,g)=\begin{cases}
			1&\text{ if }f,g\text{ are adjacent}\\
			2&\text{ otherwise}
		\end{cases}$$
	\end{cor}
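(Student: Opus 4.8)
The plan is to establish that $AG(\mathcal{M}(X,\mathcal{A}))$ has diameter at most $2$ by exhibiting, for every pair of non-adjacent vertices, a common neighbour. The adjacent case is immediate, since then $d(f,g)=1$ by definition, so the whole content resides in the non-adjacent case, where I must produce a path of length $2$.

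First I would invoke the adjacency criterion of Theorem~\ref{Th0.5}: vertices $f,g$ are adjacent exactly when $\mu(Z(f)\setminus Z(g))>0$ and $\mu(Z(g)\setminus Z(f))>0$. Negating this, $f,g$ being non-adjacent means $\mu(Z(f)\setminus Z(g))=0$ or $\mu(Z(g)\setminus Z(f))=0$; by symmetry I may assume $\mu(Z(f)\setminus Z(g))=0$. The crucial step is to observe that this alone forces $\mu(Z(f)\cap Z(g))>0$. Indeed, from the partition $Z(f)=(Z(f)\cap Z(g))\sqcup(Z(f)\setminus Z(g))$ together with $\mu(Z(f)\setminus Z(g))=0$ one gets $\mu(Z(f)\cap Z(g))=\mu(Z(f))$, which is positive because $f\in\mathscr{D}(\mathcal{M}(X,\mathcal{A}))$ guarantees $\mu(Z(f))>0$. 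This places us in the hypothesis of Theorem~\ref{Th5.1}(1), which then supplies a vertex adjacent to both $f$ and $g$, explicitly $h=1_{Z(f)\cap Z(g)}$. The path $f-h-g$ witnesses $d(f,g)\le 2$, and non-adjacency forces $d(f,g)=2$.

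The main point to watch, and the only genuine subtlety, is recognizing that the annihilator non-adjacency condition of Theorem~\ref{Th0.5}, a mere containment of annihilators recorded as a measure-zero set-difference, automatically furnishes the positive-measure intersection $\mu(Z(f)\cap Z(g))>0$ required to apply Theorem~\ref{Th5.1}(1). Once this link is seen, the symmetric alternative $\mu(Z(g)\setminus Z(f))=0$ is dispatched identically. I would also remark that the delicate atomic analysis of Theorem~\ref{Th5.1}(2) never intrudes here: its hypothesis $\mu(Z(f)\cap Z(g))=0$ already forces $\mu(Z(f)\setminus Z(g))=\mu(Z(f))>0$ and $\mu(Z(g)\setminus Z(f))=\mu(Z(g))>0$, hence adjacency by Theorem~\ref{Th0.5}, so that configuration cannot arise for non-adjacent vertices at all. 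Connectedness of $AG(\mathcal{M}(X,\mathcal{A}))$ then follows as a free byproduct.
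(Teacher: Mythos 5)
Your proof is correct and follows exactly the route the paper intends: the corollary is stated as an immediate consequence of Theorem~\ref{Th5.1}, and your observation that non-adjacency (via Theorem~\ref{Th0.5}) forces $\mu(Z(f)\cap Z(g))=\mu(Z(f))>0$, so that part (1) of that theorem always supplies a common neighbour, is precisely the missing link the paper leaves implicit. Your side remark that the atomic case of Theorem~\ref{Th5.1}(2) can only occur for adjacent pairs is also accurate and confirms the corollary holds unconditionally.
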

	\begin{cor}\label{cor4.5}
		$ecc(f)=2$ for all $f\in\mathscr{D}(\mathcal{M}(X,\mathcal{A}))$.
	\end{cor}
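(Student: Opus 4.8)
The plan is to combine the distance dichotomy established in the corollary immediately preceding this one with a cheap construction of a non-neighbour. That corollary already shows $d(f,g)\le 2$ for every pair of distinct vertices, so $ecc(f)\le 2$ holds automatically for each $f\in\mathscr{D}(\mathcal{M}(X,\mathcal{A}))$; the whole content of Corollary~\ref{cor4.5} is therefore the reverse inequality, i.e.\ producing, for each $f$, at least one vertex lying at distance exactly $2$ from $f$. Equivalently, I must exhibit a vertex $g\ne f$ that is \emph{not} adjacent to $f$ in $AG(\mathcal{M}(X,\mathcal{A}))$.

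The key observation is that, by Theorem~\ref{Th0.5}, adjacency in $AG(\mathcal{M}(X,\mathcal{A}))$ depends only on the zero sets: $f$ and $g$ are adjacent exactly when both $\mu(Z(f)\setminus Z(g))>0$ and $\mu(Z(g)\setminus Z(f))>0$. Hence any $g$ whose zero set agrees with that of $f$ up to a null set is forced to be non-adjacent to $f$, since then $\mu(Z(f)\setminus Z(g))=0$. This suggests taking $g=2f$ (any scalar multiple $cf$ with $c\notin\{0,1\}$ would serve equally well): then $Z(g)=Z(f)$, so $\mu(Z(g))=\mu(Z(f))>0$ and $\mu(X\setminus Z(g))=\mu(X\setminus Z(f))>0$ give $g\in\mathscr{D}(\mathcal{M}(X,\mathcal{A}))$, while $g\ne f$ because $f$ and $2f$ differ precisely on $X\setminus Z(f)$, a set of positive measure (as $f\in\mathscr{D}(\mathcal{M}(X,\mathcal{A}))$).

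Putting these together, $g=2f$ is a genuine second vertex, it is non-adjacent to $f$ by Theorem~\ref{Th0.5}, and so the preceding corollary gives $d(f,g)=2$. Combined with the universal bound $d(f,\cdot)\le 2$, this yields $ecc(f)=2$ for every $f\in\mathscr{D}(\mathcal{M}(X,\mathcal{A}))$, which is exactly Corollary~\ref{cor4.5}. I expect essentially no obstacle here; the only points meriting a line of verification are that $2f$ again lies in $\mathscr{D}(\mathcal{M}(X,\mathcal{A}))$ (immediate, since scaling leaves the zero set unchanged) and that it is a distinct vertex (immediate from $\mu(X\setminus Z(f))>0$). The construction is uniform in $f$ and, pleasingly, sidesteps any case split on whether $Z(f)$ or $X\setminus Z(f)$ happens to be an atom.
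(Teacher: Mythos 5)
Your argument is correct and is exactly the one the paper intends: the preceding corollary gives $d(f,g)\le 2$ for all pairs, and the vertex $2f$ (distinct from $f$, still in $\mathscr{D}(\mathcal{M}(X,\mathcal{A}))$, and non-adjacent to $f$ by Theorem~\ref{Th0.5} since $Z(2f)=Z(f)$) witnesses $d(f,2f)=2$. The paper itself uses this same $f$ versus $2f$ non-adjacency in $AG$ later (in the proof of Theorem~\ref{Th4.21}), so your construction matches its approach.
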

	\begin{cor}
		The diameter of $AG(\mathcal{M}(X,\mathcal{A}))$ is $2$.
	\end{cor}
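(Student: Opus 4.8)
The plan is to derive this as an immediate formal consequence of the two results preceding it, since all the substantive work has already been carried out in Theorem~\ref{Th5.1} and its corollaries. Recall that for any connected graph the diameter equals $\max\{ecc(v):v\in V\}$. Corollary~\ref{cor4.5} already records that $ecc(f)=2$ for every vertex $f\in\mathscr{D}(\mathcal{M}(X,\mathcal{A}))$, so the diameter is forced to be exactly $2$ with no further computation. (The graph is connected and nonempty because the preceding distance corollary shows $d(f,g)\in\{1,2\}$ for every pair of distinct vertices, under the standing hypothesis that $X$ is not an atom.)

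Should a self-contained argument be preferred, I would instead split the claim into the two inequalities $\mathrm{diam}(AG)\le 2$ and $\mathrm{diam}(AG)\ge 2$. The upper bound is exactly the content of the distance corollary just above, which guarantees $d(f,g)\le 2$ for all vertices $f,g$. For the lower bound it suffices to exhibit a single pair of distinct, non-adjacent vertices. The cleanest choice is any $f\in\mathscr{D}(\mathcal{M}(X,\mathcal{A}))$ together with $g=2f$: these are distinct as functions since $f$ is non-zero a.e., yet $Z(g)=Z(f)$, so $\mu(Z(f)\setminus Z(g))=0$, whence by Theorem~\ref{Th0.5} the two vertices are non-adjacent and therefore satisfy $d(f,g)=2$.

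I expect no genuine obstacle here; the statement is a bookkeeping corollary. The only point meriting an explicit sentence is confirming that $AG(\mathcal{M}(X,\mathcal{A}))$ is not a complete graph, so that the diameter is $2$ rather than $1$, and this is settled by the non-adjacent pair above.

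\begin{proof}
	Since $\mathrm{diam}(AG(\mathcal{M}(X,\mathcal{A})))=\max\{ecc(f):f\in\mathscr{D}(\mathcal{M}(X,\mathcal{A}))\}$ and, by Corollary~\ref{cor4.5}, $ecc(f)=2$ for every $f\in\mathscr{D}(\mathcal{M}(X,\mathcal{A}))$, it follows at once that $\mathrm{diam}(AG(\mathcal{M}(X,\mathcal{A})))=2$.
\end{proof}
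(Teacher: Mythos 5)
Your argument is correct and is exactly the derivation the paper intends: the corollary is stated without proof as an immediate consequence of Corollary~\ref{cor4.5} (every vertex has eccentricity $2$), and your observation that $f$ and $2f$ are distinct non-adjacent vertices (used later by the paper itself in Theorem~\ref{Th4.21}) properly secures the lower bound. No gaps.
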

	We have already seen that the vertex sets of the comaximal graph and the zero-divisor graph of $\mathcal{M}(X,\mathcal{A})$ are also $\mathscr{D}(\mathcal{M}(X,\mathcal{A}))$; i.e., the same as the vertex set of AG. If $f, g \in \mathscr{D}(\mathcal{M}(X,\mathcal{A}))$ are adjacent in $\Gamma'_2(\mathcal{M}(X,\mathcal{A}))$ then $\mu(Z(f)\cap Z(g))= 0$. So, $(Z(f)\setminus Z(g))\sqcup (Z(f)\cap Z(g)) = Z(f)\implies \mu(Z(f)\setminus Z(g))=\mu(Z(f))>0$. Similarly, $\mu(Z(g)\setminus Z(f)) > 0$. This indicates that $f, g$ are adjacent in $AG(\mathcal{M}(X,\mathcal{A}))$. By an analogous argument we also get that the vertices which are adjacent in $\Gamma(\mathcal{M}(X,\mathcal{A}))$, are adjacent in $AG(\mathcal{M}(X,\mathcal{A}))$. We record these observations in the form of a theorem :
	\begin{thm}\label{Th4.4}
		Both $\Gamma(\mathcal{M}(X,\mathcal{A}))$ and $\Gamma'_2(\mathcal{M}(X,\mathcal{A}))$ are subgraphs of $AG(\mathcal{M}(X,\mathcal{A}))$.
	\end{thm}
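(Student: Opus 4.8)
The plan is to verify two things: that the three graphs share the same vertex set, and that every edge of $\Gamma(\mathcal{M}(X,\mathcal{A}))$ or of $\Gamma'_2(\mathcal{M}(X,\mathcal{A}))$ is also an edge of $AG(\mathcal{M}(X,\mathcal{A}))$. The vertex set issue is already settled, since all three graphs are declared on $\mathscr{D}(\mathcal{M}(X,\mathcal{A}))$. So the whole content lies in comparing the adjacency relations, and the natural tool is Theorem~\ref{Th0.5}, which tells us that $f,g$ are adjacent in $AG(\mathcal{M}(X,\mathcal{A}))$ precisely when $\mu(Z(f)\setminus Z(g))>0$ and $\mu(Z(g)\setminus Z(f))>0$. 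Thus each inclusion reduces to producing these two strict inequalities from the respective adjacency hypothesis.

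For the comaximal graph I would start from an edge $f-g$ in $\Gamma'_2(\mathcal{M}(X,\mathcal{A}))$. By Theorem~\ref{thm} this means $\mu(Z(f)\cap Z(g))=0$. Writing $Z(f)=(Z(f)\setminus Z(g))\sqcup(Z(f)\cap Z(g))$ and using $\mu(Z(f)\cap Z(g))=0$ gives $\mu(Z(f)\setminus Z(g))=\mu(Z(f))$, which is positive because $f\in\mathscr{D}(\mathcal{M}(X,\mathcal{A}))$. The symmetric computation yields $\mu(Z(g)\setminus Z(f))=\mu(Z(g))>0$. Both conditions of Theorem~\ref{Th0.5} then hold, so $f,g$ are adjacent in $AG(\mathcal{M}(X,\mathcal{A}))$.

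For the zero-divisor graph I would start from an edge $f-g$ in $\Gamma(\mathcal{M}(X,\mathcal{A}))$, i.e. $\mu(X\setminus Z(f)\cap X\setminus Z(g))=0$, so that the cozero sets are almost disjoint. Here the roles are reversed: since $X\setminus Z(g)$ is almost contained in $Z(f)$, I would write $Z(f)\setminus Z(g)=Z(f)\cap(X\setminus Z(g))$ and observe that it differs from $X\setminus Z(g)$ only by the null set $X\setminus Z(f)\cap X\setminus Z(g)$, so $\mu(Z(f)\setminus Z(g))=\mu(X\setminus Z(g))>0$, again using $g\in\mathscr{D}(\mathcal{M}(X,\mathcal{A}))$. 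Symmetrically $\mu(Z(g)\setminus Z(f))=\mu(X\setminus Z(f))>0$, and Theorem~\ref{Th0.5} closes the argument.

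There is no genuine obstacle here; the only point requiring care is to invoke the membership $f,g\in\mathscr{D}(\mathcal{M}(X,\mathcal{A}))$ at the right moment, since it is exactly this hypothesis—which guarantees $\mu(Z(f)),\mu(X\setminus Z(f))>0$—that upgrades the set decompositions above from mere equalities of measures to the strict positivity demanded by Theorem~\ref{Th0.5}. I would also remark that both relations are proper subgraph inclusions in general rather than equalities: an edge of $AG(\mathcal{M}(X,\mathcal{A}))$ need not be an edge of $\Gamma(\mathcal{M}(X,\mathcal{A}))$ or of $\Gamma'_2(\mathcal{M}(X,\mathcal{A}))$, so no converse is being asserted.
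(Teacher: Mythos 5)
Your proposal is correct and follows essentially the same route as the paper: both reduce adjacency in $AG(\mathcal{M}(X,\mathcal{A}))$ to the criterion of Theorem~\ref{Th0.5} and obtain $\mu(Z(f)\setminus Z(g))>0$ and $\mu(Z(g)\setminus Z(f))>0$ from the respective adjacency hypotheses via the same disjoint decompositions. The only difference is that you write out the ``analogous argument'' for the zero-divisor graph case, which the paper leaves to the reader.
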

	\begin{thm}\label{Th5.5}
		$AG(\mathcal{M}(X,\mathcal{A}))$ is a complete bipartite graph if and only if $X$ is partitioned into two atoms.
	\end{thm}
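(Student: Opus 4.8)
The plan is to prove the two implications separately, exploiting the adjacency criterion of Theorem~\ref{Th0.5}: two vertices $f,g$ are adjacent in $AG(\mathcal{M}(X,\mathcal{A}))$ precisely when both $\mu(Z(f)\setminus Z(g))>0$ and $\mu(Z(g)\setminus Z(f))>0$.

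For the sufficiency, suppose $X=A\sqcup B$ with $A,B$ atoms. By Lemma~\ref{Lem3.14} the vertex set $\mathscr{D}(\mathcal{M}(X,\mathcal{A}))$ splits as $V_1\sqcup V_2$, where $V_1=\{f:\mu(Z(f)\triangle A)=0\}$ and $V_2=\{f:\mu(Z(f)\triangle B)=0\}$. First I would check that $V_1$ and $V_2$ are stable: if $f,g$ lie in the same class then $\mu(Z(f)\triangle Z(g))=0$, so $\mu(Z(f)\setminus Z(g))=0$, and by Theorem~\ref{Th0.5} they are non-adjacent. Then for $f\in V_1$ and $g\in V_2$ I would use that $Z(f)$ agrees with $A$ and $Z(g)$ with $B=X\setminus A$ up to null sets to compute $\mu(Z(f)\setminus Z(g))=\mu(A)>0$ and $\mu(Z(g)\setminus Z(f))=\mu(B)>0$, whence $f,g$ are adjacent. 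This exhibits $AG(\mathcal{M}(X,\mathcal{A}))$ as complete bipartite with parts $V_1,V_2$.

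For the necessity I would argue by contraposition and produce a triangle, since a complete bipartite graph is bipartite and hence triangle-free. The key observation is the purely measure-theoretic fact that $X$ fails to be partitioned into two atoms if and only if $X$ can be written as a disjoint union $C_1\sqcup C_2\sqcup C_3$ of three measurable sets of positive measure. Indeed, $X$ is not an atom, so $X=A_1\sqcup A_2$ with $\mu(A_1),\mu(A_2)>0$; if no two-atom partition exists then at least one piece, say $A_1$, is not an atom, so $A_1=B_1\sqcup B_2$ with both parts of positive measure, giving the required three-piece decomposition. Setting $f=1_{C_2\cup C_3}$, $g=1_{C_1\cup C_3}$, $h=1_{C_1\cup C_2}$, one gets $Z(f)=C_1$, $Z(g)=C_2$, $Z(h)=C_3$, all in $\mathscr{D}(\mathcal{M}(X,\mathcal{A}))$, and Theorem~\ref{Th0.5} shows each pair is adjacent because the relevant set differences are exactly the $C_i$ (for instance $Z(f)\setminus Z(g)=C_1$ and $Z(g)\setminus Z(f)=C_2$). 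Thus $f,g,h$ form a triangle, so $AG(\mathcal{M}(X,\mathcal{A}))$ is not bipartite and therefore not complete bipartite.

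I expect the main obstacle to be the necessity direction, and within it the equivalence between the non-existence of a two-atom partition and the existence of a three-part positive-measure decomposition; once that observation is in place, the triangle construction and the appeal to triangle-freeness of bipartite graphs are routine. As a cross-check for sufficiency one could instead note, via Theorem~\ref{Th4.4} and Theorem~\ref{CompBip}, that under a two-atom partition the edges of $AG(\mathcal{M}(X,\mathcal{A}))$ coincide with those of $\Gamma'_2(\mathcal{M}(X,\mathcal{A}))$, yielding an alternative route to the same conclusion.
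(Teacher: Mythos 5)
Your proof is correct, and the necessity direction takes a genuinely different route from the paper. For sufficiency you do essentially what the paper does: invoke Lemma~\ref{Lem3.14} for the bipartition, check stability of each class via Theorem~\ref{Th0.5}, and verify cross-adjacency (the paper routes the last step through adjacency in $\Gamma'_2(\mathcal{M}(X,\mathcal{A}))$ and Theorem~\ref{Th4.4}, whereas you compute $\mu(Z(f)\setminus Z(g))$ directly; both work, and you even note the paper's route as a cross-check). For necessity, the paper argues directly: it fixes $f\in V_1$, $g\in V_2$ in the assumed bipartition, applies Theorem~\ref{Th5.1} to get $\mu(Z(f)\cap Z(g))=0=\mu(X\setminus Z(f)\cap X\setminus Z(g))$ together with one of $Z(f),Z(g)$ being an atom, upgrades this to both being atoms by a contradiction argument, and then exhibits the two atoms explicitly as $Z(f)\setminus Z(g)$ and $Z(g)\cup(X\setminus Z(f))$. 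You instead argue by contraposition: if no two-atom partition exists, then (since $X$ is not an atom, the paper's standing assumption) $X$ splits into three disjoint sets $C_1,C_2,C_3$ of positive measure, and the characteristic functions $1_{C_2\cup C_3},1_{C_1\cup C_3},1_{C_1\cup C_2}$ form a triangle by Theorem~\ref{Th0.5}, which no bipartite graph can contain. Your argument is shorter and avoids Theorem~\ref{Th5.1} entirely, at the cost of being non-constructive: the paper's version actually identifies the two atoms from the graph data, which is the kind of information reused elsewhere (e.g.\ in the corollary following Theorem~\ref{Th3.13}). Both the three-piece decomposition claim and the triangle computation check out.
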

	\begin{proof}
		Let $AG$ be complete bipartite by bipartion $V_1, V_2$. Since $V_1\neq\emptyset$ and $V_2\neq\emptyset$, fix $f\in V_1$ and $g\in V_2$. Since $AG$ is a complete bipartite graph, there does not exit any vertex adjacent to both $f,g$ and hence by Theorem \ref{Th5.1}, $\mu(X\setminus Z(f)\cap X\setminus Z(g))=0=\mu(Z(f)\cap Z(g))$ and either $Z(f)$ or $Z(g)$ is an atom. Suppose $Z(f)$ is an atom. We claim that $Z(g)$ is also an atom. If possible let $Z(g)=A_1\sqcup A_2$ where $A_1,A_2\in\mathcal{A}$ with $\mu(A_1),\mu(A_2)>0$. Consider $h=1_{X\setminus(Z(f)\cup A_1)}$. Then $h\in\mathscr{D}$ and $Z(h)=Z(f)\cup A_1$. Clearly, $h$ is adjacent to both $f,g$, which is a contradiction. Thus $Z(g)$ is an atom. Let $A=Z(f)\setminus Z(g)=Z(f)\setminus (Z(f)\cap Z(g))$ and $B=Z(g)\cup(X\setminus Z(f))=Z(g)\cup((X\setminus Z(f))\cap (X\setminus Z(g)))$. Thus $A,B$ are atoms and $X=A\sqcup B$. Conversely let $A,B$ be two atoms of $X$ such that $X=A\sqcup B$. Then by Lemma \ref{Lem3.14}, $\mathscr{D}$ is partitioned into two sets, say $V_1,V_2$, where $V_1=\{f\in\mathscr{D}: \mu(A\triangle Z(f))=0\}$ and $V_2=\{f\in\mathscr{D}:\mu(B\triangle Z(f))=0\}$. Let $f_1,f_2\in V_1$. Then $\mu(Z(f_1)\setminus A)=0= \mu(A\setminus Z(f_2))$. Now $Z(f_1)\subset A\sqcup Z(f_1)\setminus A\implies Z(f_1)\setminus Z(f_2)\subset A\setminus Z(f_2)\sqcup Z(f_1)\setminus A\implies\mu(Z(f_1)\setminus Z(f_2))=0$ i.e., $f_1,f_2$ are non-adjacent in $AG$. Consequently $V_1$ is a stable set in $AG$ and similarly $V_2$ is a stable set. Now let $f\in V_1$ and $g\in V_2$. Then $\mu(Z(f)\setminus A)=0=\mu(Z(g)\setminus B)$. Now, $Z(f)\subset A\sqcup Z(f)\setminus A$ and $Z(g)\subset B\sqcup Z(g)\setminus B$. Therefore, $Z(f)\cap Z(g)\subset (A\sqcup Z(f)\setminus A)\cap(B\sqcup Z(g)\setminus B)\subset (A\cap B)\cup(Z(f)\setminus A)\cup(Z(g)\setminus B)\implies\mu(Z(f)\cap Z(g))=0$. i.e., $f,g$ are adjacent in $\Gamma'_2$. By Theorem \ref{Th4.4}, $f,g$ are adjacent in $AG$ and hence $AG$ is a complete bipartite graph.
	\end{proof}
	Combining Theorem~\ref{CompBip} and Theorem~\ref{Th5.5}, we get the following result.
	\begin{cor}
		The following statements are equivalent:
		\begin{enumerate}
			\item $\Gamma(\mathcal{M}(X,\mathcal{A}))$ is a complete bipartite graph.
			\item $\Gamma'_2(\mathcal{M}(X,\mathcal{A}))$ is a complete bipartite graph.
			\item $AG(\mathcal{M}(X,\mathcal{A}))$ is a complete bipartite graph.
			\item $X$ is partitioned into two atoms.
		\end{enumerate}
	\end{cor}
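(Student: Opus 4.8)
The plan is to recognize that this corollary is a direct amalgamation of two already-established equivalences, so no new graph-theoretic or measure-theoretic work is required; the task reduces to chaining the two results through their common pivot, namely the condition ``$X$ is partitioned into two atoms.''

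First I would invoke Theorem~\ref{CompBip}, which already asserts the equivalence of statements (1), (2) and (4) of the present corollary [with (4) here playing the role of its condition ``$X$ is partitioned into two atoms'']. Thus $(1)\Leftrightarrow(2)\Leftrightarrow(4)$ is inherited verbatim. Next I would invoke Theorem~\ref{Th5.5}, which states precisely that $AG(\mathcal{M}(X,\mathcal{A}))$ is a complete bipartite graph if and only if $X$ is partitioned into two atoms; this gives $(3)\Leftrightarrow(4)$.

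Combining these, every one of (1), (2), (3) is equivalent to (4), and hence all four statements are pairwise equivalent. Concretely, one writes the cycle $(1)\Rightarrow(4)$ and $(4)\Rightarrow(1)$ from Theorem~\ref{CompBip}, similarly $(2)\Leftrightarrow(4)$ from the same theorem, and $(3)\Leftrightarrow(4)$ from Theorem~\ref{Th5.5}; the transitivity of logical equivalence then closes the argument.

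There is essentially no obstacle here: the only content has been absorbed into the two cited theorems, and the corollary is a bookkeeping statement collecting them into a single chain. The sole point meriting a line of care is to confirm that the ``two atoms'' hypothesis appearing in Theorem~\ref{CompBip} and the one in Theorem~\ref{Th5.5} are literally the same condition (they are, being statement (4)), so that the pivot is shared and the equivalences genuinely concatenate rather than merely resemble one another.
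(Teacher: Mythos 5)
Your proposal is correct and matches the paper exactly: the paper derives this corollary by combining Theorem~\ref{CompBip} (which gives the equivalence of (1), (2) and (4)) with Theorem~\ref{Th5.5} (which gives $(3)\Leftrightarrow(4)$), precisely the chaining through the common pivot that you describe.
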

	
	\begin{thm}\label{Th5.10}
		The following statements are equivalent:
		\begin{enumerate}
			\item $\Gamma(\mathcal{M}(X,\mathcal{A}))= AG(\mathcal{M}(X,\mathcal{A}))$.
			\item $\Gamma'_2(\mathcal{M}(X,\mathcal{A}))= AG(\mathcal{M}(X,\mathcal{A}))$.
			\item $X$ is partitioned into two atoms.
		\end{enumerate}
	\end{thm}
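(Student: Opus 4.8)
The plan is to establish the cycle of implications $(3)\Rightarrow(1)$, $(3)\Rightarrow(2)$, $(1)\Rightarrow(3)$ and $(2)\Rightarrow(3)$. Throughout I would lean on Theorem~\ref{Th4.4}, which already places $\Gamma(\mathcal{M}(X,\mathcal{A}))$ and $\Gamma'_2(\mathcal{M}(X,\mathcal{A}))$ as subgraphs of $AG(\mathcal{M}(X,\mathcal{A}))$ on the common vertex set $\mathscr{D}(\mathcal{M}(X,\mathcal{A}))$. Hence each claimed equality reduces to the reverse edge containment: for $(1)$ I must show every $AG$-edge is a $\Gamma$-edge, and for $(2)$ every $AG$-edge is a $\Gamma'_2$-edge. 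I would also use freely the adjacency criteria already in hand: $f,g$ are $AG$-adjacent iff $\mu(Z(f)\setminus Z(g))>0$ and $\mu(Z(g)\setminus Z(f))>0$ (Theorem~\ref{Th0.5}); they are $\Gamma'_2$-adjacent iff $\mu(Z(f)\cap Z(g))=0$ (Theorem~\ref{thm}); and they are $\Gamma$-adjacent iff $\mu(X\setminus Z(f)\cap X\setminus Z(g))=0$.

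For the forward direction $(3)\Rightarrow(1)\wedge(2)$, suppose $X=A\sqcup B$ with $A,B$ atoms. By Lemma~\ref{Lem3.14} every $f\in\mathscr{D}$ satisfies $\mu(Z(f)\triangle A)=0$ or $\mu(Z(f)\triangle B)=0$. Given an $AG$-edge $f-g$, if both zero sets fell in the same class then $Z(f)\setminus Z(g)$ would be null, contradicting $AG$-adjacency; so one of the two is null-equal to $A$ and the other to $B$. Then $Z(f)\cap Z(g)$ is null, giving $\Gamma'_2$-adjacency, while $X\setminus(Z(f)\cup Z(g))$ is null, giving $\Gamma$-adjacency. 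This yields $AG\subseteq\Gamma$ and $AG\subseteq\Gamma'_2$, and together with Theorem~\ref{Th4.4} forces both equalities.

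For the converses I would argue contrapositively, and the crux is a single measure-theoretic lemma: if $X$ is not partitioned into two atoms, then $X$ contains three pairwise disjoint measurable sets $P,Q,R$ of positive measure. Indeed, since $X$ is not an atom (our standing assumption), write $X=X_1\sqcup X_2$ with $\mu(X_1),\mu(X_2)>0$; the failure of $(3)$ prevents $X_1,X_2$ from both being atoms, so after relabelling we may assume $X_1$ is not an atom and write $X_1=Y_1\sqcup Y_2$ with positive measures, whence $\{Y_1,Y_2,X_2\}$ is the desired triple. With $P,Q,R$ in hand I would exhibit the witnessing non-edges explicitly. Taking $f=1_{X\setminus P}$ and $g=1_{X\setminus Q}$ gives $Z(f)=P$, $Z(g)=Q$; these are $AG$-adjacent but, since $R\subseteq X\setminus(Z(f)\cup Z(g))$ has positive measure, not $\Gamma$-adjacent, so $\Gamma\ne AG$ and $(1)\Rightarrow(3)$ follows. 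Taking $f=1_{X\setminus(P\cup Q)}$ and $g=1_{X\setminus(Q\cup R)}$ gives overlapping zero sets $Z(f)=P\cup Q$, $Z(g)=Q\cup R$; these are again $AG$-adjacent but, as $\mu(Z(f)\cap Z(g))=\mu(Q)>0$, not $\Gamma'_2$-adjacent, so $\Gamma'_2\ne AG$ and $(2)\Rightarrow(3)$ follows.

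The step I expect to require the most care is this three-disjoint-sets lemma together with the accompanying bookkeeping: I must confirm that the negation of $(3)$ combined with the standing hypothesis that $X$ is not itself an atom genuinely forces a refinement into three positive-measure pieces rather than merely two, and then verify in each construction that both $Z(f)$ and $X\setminus Z(f)$ (and likewise for $g$) have positive measure, so that $f,g$ really are vertices of all three graphs and the witnessing edges are legitimate.
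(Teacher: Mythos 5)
Your proposal is correct and follows essentially the same route as the paper: the forward direction reduces to the complete-bipartite structure forced by a two-atom partition (via Lemma~\ref{Lem3.14}), and the converse extracts three pairwise disjoint positive-measure sets from the failure of $(3)$ and exhibits the same witnessing characteristic functions (your $1_{X\setminus P},1_{X\setminus Q}$ and $1_{X\setminus(P\cup Q)},1_{X\setminus(Q\cup R)}$ are the paper's $f_1,f_2$ and $g_1,g_2$ up to relabelling).
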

	\begin{proof}
		If $X$ is partitioned into two atoms, then all the three graphs are complete bipartite with same bipartion and hence they are equal. Suppose $X$ can not be partitioned into two atoms. Consider an $A\in\mathcal{A}$ such that $\mu(A),\mu(X\setminus A)>0$. Then either $A$ or $X\setminus A$ is not an atom. Suppose $X\setminus A$ is not an atom. Then there exist $A_1,A_2\in\mathcal{A}$ with $\mu(A_1),\mu(A_2)>0$ such that $X\setminus A=A_1\sqcup A_2$. Let $f_1=1_{A\sqcup A_1}\in\mathscr{D}$ and $f_2=1_{A\sqcup A_2}\in \mathscr{D}$. Then $X\setminus Z(f_1)\cap X\setminus Z(f_2)=A$ and $Z(f_1)\cap Z(f_2)=\emptyset$. Therefore, $f_1,f_2$ are not adjacent in $\Gamma$, but they are adjacent in $\Gamma'_2$ and hence they are adjacent in $AG$. Therefore, $\Gamma\neq AG$. Again let $g_1=1_{A_1}\in\mathscr{D}$ and $g_2=1_{A_2}\in\mathscr{D}$. Then $Z(g_1)\cap Z(g_2)=A$ and $X\setminus Z(g_1)\cap X\setminus Z(g_2)=\emptyset$. Therefore, $g_1,g_2$ are not adjacent in $\Gamma'_2$, but are adjacent in $\Gamma$ and hence they are adjacent in $AG$. Therefore, $\Gamma'_2\neq AG$.
	\end{proof}
	\begin{thm}
		A vertex $f$ in $AG(\mathcal{M}(X,\mathcal{A}))$ is a vertex of a triangle if and only if either $Z(f)$ or $X\setminus Z(f)$ is not an atom.
	\end{thm}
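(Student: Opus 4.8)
The plan is to prove the two implications separately, getting the forward direction almost for free from the complete-bipartite structure already established, and doing the reverse direction by an explicit construction of two neighbours of $f$ that are also adjacent to each other.

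For the ``only if'' direction I would argue by contraposition. Suppose both $Z(f)$ and $X\setminus Z(f)$ are atoms. Since $f\in\mathscr{D}(\mathcal{M}(X,\mathcal{A}))$, both of these sets have positive measure, so $X=Z(f)\sqcup(X\setminus Z(f))$ exhibits $X$ as a partition into two atoms. By Theorem~\ref{Th5.5}, $AG(\mathcal{M}(X,\mathcal{A}))$ is then a complete bipartite graph, and a bipartite graph contains no odd cycle, hence no triangle at all. In particular $f$ cannot be a vertex of a triangle. This settles the forward implication with essentially no computation.

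For the ``if'' direction, assume at least one of $Z(f)$, $X\setminus Z(f)$ fails to be an atom, and split into two cases, using the adjacency criterion of Theorem~\ref{Th0.5}: $f,g$ are adjacent in $AG(\mathcal{M}(X,\mathcal{A}))$ precisely when $\mu(Z(f)\setminus Z(g))>0$ and $\mu(Z(g)\setminus Z(f))>0$. Write $P=Z(f)$ and $Q=X\setminus Z(f)$. If $P$ is not an atom, decompose $P=B_1\sqcup B_2$ with $\mu(B_1),\mu(B_2)>0$ and set $g=1_{B_1}$, $h=1_{B_2}$, so that $Z(g)=B_2\sqcup Q$ and $Z(h)=B_1\sqcup Q$; a direct check gives $P\setminus Z(g)=B_1$, $Z(g)\setminus P=Q$, and the analogous nonnull differences for the other two pairs, so $f-g-h-f$ is a triangle. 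If instead $Q$ is not an atom, decompose $Q=A_1\sqcup A_2$ and take $g=1_{X\setminus A_1}$, $h=1_{X\setminus A_2}$, whence $Z(g)=A_1$, $Z(h)=A_2$ and again all six relevant set differences are nonnull, producing the triangle. This second case can alternatively be deduced at once from Theorem~\ref{Th2.9} together with the inclusion $\Gamma'_2(\mathcal{M}(X,\mathcal{A}))\subseteq AG(\mathcal{M}(X,\mathcal{A}))$ of Theorem~\ref{Th4.4}, since any triangle of $\Gamma'_2$ is a triangle of the supergraph $AG$.

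In each construction one must check that $g,h$ genuinely lie in $\mathscr{D}(\mathcal{M}(X,\mathcal{A}))$, i.e. that both their zero sets and their cozero sets have positive measure; this is immediate from the positivity of $\mu(B_i)$, $\mu(A_i)$, $\mu(P)$ and $\mu(Q)$. Distinctness of the three vertices is automatic, because pairwise adjacency in the simple graph $AG(\mathcal{M}(X,\mathcal{A}))$ already forces them to be distinct. The only mild subtlety, and the step I would be most careful about, is the bookkeeping of the six set-difference conditions needed for the three edges in each case: one must ensure that the atom hypothesis is invoked exactly where a set genuinely splits and that no difference is inadvertently null. Once the partition $X=P\sqcup Q$ and the chosen characteristic functions are fixed, all of these reduce to elementary disjointness observations, so I expect no serious obstacle beyond this verification.
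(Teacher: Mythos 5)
Your proposal is correct and follows essentially the same route as the paper: the converse via Theorem~\ref{Th5.5} (both sets atoms forces $AG$ complete bipartite, hence triangle-free), and the forward direction via the same characteristic-function constructions on a splitting of $Z(f)$ or of $X\setminus Z(f)$. The only cosmetic difference is that you verify the adjacencies directly from the criterion of Theorem~\ref{Th0.5}, whereas the paper exhibits the triangles inside $\Gamma(\mathcal{M}(X,\mathcal{A}))$ and $\Gamma'_2(\mathcal{M}(X,\mathcal{A}))$ and invokes the subgraph inclusion of Theorem~\ref{Th4.4} --- a route you also note as an alternative.
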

	\begin{proof}
		If $X\setminus Z(f)$ is not an atom, then by Theorem \ref{Th2.9}, $f$ is a vertex of triangle in $\Gamma'_2$. By Theorem \ref{Th4.4}, $f$ is a vertex of a triangle in $AG$. Again if $Z(f)$ is not an atom, then there exists $A_1,A_2\in\mathcal{A}$ with $\mu(A_1),\mu(A_2)>0$ such that $Z(f)=A_1\sqcup A_2$. Clearly $1_{A_1},1_{A_2}\in\mathscr{D}$ and $f.1_{A_1}\equiv 0\equiv 1_{A_1}.1_{A_2}\equiv 1_{A_2}.f$ on $X$. i.e., $f-1_{A_1}-1_{A_2}-f$ is a triangle in $\Gamma$. By Theorem \ref{Th4.4}, $f-1_{A_1}-1_{A_2}-f$ is a triangle in $AG$. Let both $Z(f)$ and $X\setminus Z(f)$ be atoms. Then by Theorem \ref{Th5.5}, $AG$ is complete bipartite. Hence $f$ can not be a vertex of a triangle.
	\end{proof}
	\begin{cor}
		The girth of $AG(\mathcal{M}(X,\mathcal{A}))$ is given by $$gr(AG(\mathcal{M}(X,\mathcal{A})))=\begin{cases}
			4&\text{ if }X\text{ is partitioned into two atoms}\\
			3&\text{ otherwise}
		\end{cases}$$.
	\end{cor}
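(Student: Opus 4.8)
The plan is to split into the two cases dictated by the statement and in each case appeal to a result already established. First I would dispose of the case in which $X$ is partitioned into two atoms. By Theorem~\ref{Th5.5}, $AG(\mathcal{M}(X,\mathcal{A}))$ is then a complete bipartite graph; being bipartite, it contains no odd cycle, so its girth cannot be $3$. It remains only to exhibit a $4$-cycle. The two parts of the bipartition are the stable sets $V_1=\{f\in\mathscr{D}:\mu(Z(f)\triangle A)=0\}$ and $V_2=\{f\in\mathscr{D}:\mu(Z(f)\triangle B)=0\}$ furnished by Lemma~\ref{Lem3.14}; each of these is infinite, since for every nonzero scalar $c$ the function $c\cdot 1_B$ has zero set $A$ and hence lies in $V_1$ (and symmetrically for $V_2$). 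Picking two distinct vertices from each part yields a $4$-cycle, so $gr(AG)=4$ in this case.

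Next I would treat the case where $X$ cannot be partitioned into two atoms. Since $X$ is assumed throughout not to be an atom, $\mathscr{D}(\mathcal{M}(X,\mathcal{A}))$ is non-empty; fix any $f\in\mathscr{D}$. As $X=Z(f)\sqcup(X\setminus Z(f))$ and this decomposition is, by hypothesis, not a partition of $X$ into two atoms, at least one of $Z(f)$ and $X\setminus Z(f)$ fails to be an atom. By the preceding theorem characterizing the vertices of triangles in $AG(\mathcal{M}(X,\mathcal{A}))$, $f$ is then a vertex of a triangle. A triangle is a $3$-cycle and no shorter cycle exists in a simple graph, whence $gr(AG)=3$.

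The argument is essentially a bookkeeping of the two results just obtained, so I anticipate no genuine obstacle. The only point requiring a moment's care is the complete bipartite case, where one must verify that a $4$-cycle actually exists, i.e.\ that each side of the bipartition has at least two vertices; this is immediate from the fact that the equivalence classes under $\sim$ are infinite. Everything else follows directly by invoking the cited results, so I would keep the write-up short, mirroring the proof of the corresponding girth corollary for $\Gamma'_2(\mathcal{M}(X,\mathcal{A}))$.
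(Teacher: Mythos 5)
Your proof is correct and follows essentially the same route as the paper: the complete-bipartite case via Theorem~\ref{Th5.5} forces girth $4$, and otherwise every $f\in\mathscr{D}$ has $Z(f)$ or $X\setminus Z(f)$ non-atomic, so the triangle-vertex theorem gives girth $3$. Your extra check that each side of the bipartition contains at least two vertices (e.g.\ $1_B$ and $2\cdot 1_B$) is a small but legitimate point of care that the paper leaves implicit.
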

	\begin{cor}
		$AG(\mathcal{M}(X,\mathcal{A}))$ is triangulated if and only if $X$ is not partitioned into two atoms.
	\end{cor}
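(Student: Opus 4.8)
The plan is to reduce everything to the immediately preceding theorem, which states that a vertex $f$ lies on a triangle in $AG(\mathcal{M}(X,\mathcal{A}))$ precisely when at least one of $Z(f)$, $X\setminus Z(f)$ fails to be an atom. Since $AG(\mathcal{M}(X,\mathcal{A}))$ is triangulated exactly when every vertex lies on a triangle, the statement to prove amounts to the equivalence: $X$ is not partitioned into two atoms if and only if for every $f\in\mathscr{D}(\mathcal{M}(X,\mathcal{A}))$ at least one of $Z(f)$, $X\setminus Z(f)$ is not an atom.

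For the forward implication I would argue contrapositively. Suppose $X$ is partitioned into two atoms, say $X=A\sqcup B$ with $A,B$ atoms. Since $\mu(A)>0$ and $\mu(B)=\mu(X\setminus A)>0$, the function $f=1_A$ is a vertex of $AG(\mathcal{M}(X,\mathcal{A}))$, and here $Z(f)=B$ while $X\setminus Z(f)=A$, both atoms. By the preceding theorem $f$ is then not a vertex of any triangle, so $AG(\mathcal{M}(X,\mathcal{A}))$ fails to be triangulated.

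For the converse, suppose $X$ cannot be partitioned into two atoms, and take an arbitrary vertex $f\in\mathscr{D}(\mathcal{M}(X,\mathcal{A}))$. By the description of $\mathscr{D}(\mathcal{M}(X,\mathcal{A}))$ we have $\mu(Z(f))>0$ and $\mu(X\setminus Z(f))>0$, so $X=Z(f)\sqcup(X\setminus Z(f))$ is a partition of $X$ into two measurable sets of positive measure. Were both $Z(f)$ and $X\setminus Z(f)$ atoms, this would exhibit $X$ as partitioned into two atoms, contrary to hypothesis; hence at least one of them is not an atom. The preceding theorem then places $f$ on a triangle, and since $f$ was arbitrary, $AG(\mathcal{M}(X,\mathcal{A}))$ is triangulated.

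The argument is short because all the combinatorial work has already been absorbed into the triangle-vertex characterization. The only point worth flagging is the logical bridge between the two sides: the triangle criterion quantifies over \emph{every} vertex, whereas ``partitioned into two atoms'' refers to a single decomposition, and the two are reconciled by observing that each vertex $f$ already supplies the canonical candidate partition $X=Z(f)\sqcup(X\setminus Z(f))$. Thus I anticipate no genuine obstacle; the result is essentially a restatement of the preceding theorem together with the corollary following Theorem~\ref{Th3.13}.
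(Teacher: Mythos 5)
Your argument is correct and is precisely the route the paper intends: the corollary is stated without proof as an immediate consequence of the preceding theorem characterizing triangle vertices of $AG(\mathcal{M}(X,\mathcal{A}))$, and your observation that each vertex $f$ supplies the candidate partition $X=Z(f)\sqcup(X\setminus Z(f))$ is exactly the bridge needed. Nothing is missing.
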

	The next result directly follows from Theorem \ref{Th5.1}.
	\begin{thm}\label{Th5.4}
		Let $f,g\in\mathscr{D}(\mathcal{M}(X,\mathcal{A}))$. Then $f\perp g$ in $AG(\mathcal{M}(X,\mathcal{A}))$ if and only if $\mu(X\setminus Z(f)\cap X\setminus Z(g))=0=\mu(Z(f)\cap Z(g))$ and either $Z(f)$ or $Z(g)$ is an atom.
	\end{thm}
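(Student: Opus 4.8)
The plan is to unwind the definition of orthogonality and then read everything off from Theorem~\ref{Th5.1}. Recall that $f\perp g$ in $AG$ means $f,g$ are adjacent and $c(f,g)>3$. Since the only cycles of length $3$ are triangles, and any triangle through the edge $f-g$ has the form $f-g-h-f$ with $h$ adjacent to both $f$ and $g$, the condition $c(f,g)>3$ (given that $f,g$ are already adjacent) is equivalent to the assertion that \emph{no} vertex of $AG$ is adjacent to both $f$ and $g$. Thus the whole statement reduces to translating ``$f,g$ adjacent and without a common neighbour'' into the stated measure-theoretic conditions, and this translation is exactly what Theorem~\ref{Th0.5} and Theorem~\ref{Th5.1} supply.

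For the forward implication, I would suppose $f\perp g$, so that $f,g$ are adjacent and have no common neighbour. By Theorem~\ref{Th5.1}(1), a common neighbour is guaranteed whenever $\mu(X\setminus Z(f)\cap X\setminus Z(g))>0$ or $\mu(Z(f)\cap Z(g))>0$; hence the absence of a common neighbour forces $\mu(X\setminus Z(f)\cap X\setminus Z(g))=0=\mu(Z(f)\cap Z(g))$. With both quantities zero, Theorem~\ref{Th5.1}(2) applies, and since there is no common neighbour it cannot be that both $Z(f)$ and $Z(g)$ fail to be atoms; therefore at least one of $Z(f),Z(g)$ is an atom.

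For the converse, I would assume $\mu(X\setminus Z(f)\cap X\setminus Z(g))=0=\mu(Z(f)\cap Z(g))$ and that one of $Z(f),Z(g)$ is an atom. The one genuine preliminary step is to verify that $f,g$ are actually adjacent, since orthogonality demands adjacency: because $\mu(Z(f)\cap Z(g))=0$ we have $\mu(Z(f)\setminus Z(g))=\mu(Z(f))>0$ and $\mu(Z(g)\setminus Z(f))=\mu(Z(g))>0$ (both positive as $f,g\in\mathscr{D}$), so Theorem~\ref{Th0.5} gives adjacency. Now Theorem~\ref{Th5.1}(2) says that, under the two vanishing conditions, a common neighbour exists precisely when both $Z(f)$ and $Z(g)$ are non-atoms; since one of them is an atom, no common neighbour exists, whence $c(f,g)>3$ and $f\perp g$.

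The argument is essentially mechanical once the reduction in the first paragraph is made, so I do not anticipate a serious obstacle; the only point requiring a moment's care is the implicit adjacency in the converse direction, which is why I would single it out rather than treat it as automatic. In short, the theorem is a direct corollary of Theorem~\ref{Th5.1}, combined with the characterisation of $AG$-adjacency in Theorem~\ref{Th0.5}.
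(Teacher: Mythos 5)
Your proposal is correct and follows exactly the route the paper intends: the paper states this result without proof, remarking only that it ``directly follows from Theorem~\ref{Th5.1},'' and your argument is precisely that deduction spelled out, with the adjacency check via Theorem~\ref{Th0.5} (which the paper leaves implicit) handled correctly.
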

	\begin{cor}\label{Cor5.5}
		An edge $f-g$ in $AG(\mathcal{M}(X,\mathcal{A}))$ is an edge of a triangle if and only if $f\not\perp g$.
	\end{cor}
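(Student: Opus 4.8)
The plan is to reduce the statement to the definition of orthogonality, using the characterisation of common neighbours already available. First I would unwind what it means for an edge $f-g$ to be an edge of a triangle: it means precisely that some vertex $h\in\mathscr{D}(\mathcal{M}(X,\mathcal{A}))$ is adjacent to both $f$ and $g$, which is the same as saying $c(f,g)=3$. Since $f-g$ is assumed to be an edge, $f$ and $g$ are adjacent, and any cycle through an edge has length at least $3$ in a simple graph, so $c(f,g)\geq 3$ holds throughout.

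Next I would invoke the definition directly: $f\perp g$ means that $f,g$ are adjacent and $c(f,g)>3$. Because adjacency is already part of the hypothesis (we are looking at an edge), the negation $f\not\perp g$ collapses to the single inequality $c(f,g)\leq 3$, which together with $c(f,g)\geq 3$ forces $c(f,g)=3$. Hence $f\not\perp g$ is equivalent to the existence of a common neighbour of $f$ and $g$, i.e.\ to the edge $f-g$ lying on a triangle. This delivers both implications simultaneously.

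To keep the corollary anchored at the measure-theoretic level, I would also cross-check against Theorem~\ref{Th5.1} and Theorem~\ref{Th5.4}. By Theorem~\ref{Th5.1}, for an edge $f-g$ there is no third vertex adjacent to both $f$ and $g$ exactly when $\mu(X\setminus Z(f)\cap X\setminus Z(g))=0=\mu(Z(f)\cap Z(g))$ and at least one of $Z(f),Z(g)$ is an atom; by Theorem~\ref{Th5.4} this is precisely the condition characterising $f\perp g$. Comparing the two descriptions again yields that the edge $f-g$ is an edge of a triangle if and only if $f\not\perp g$.

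The only point demanding care is that, in general, $f\not\perp g$ also encompasses non-adjacent pairs, so the argument genuinely relies on the standing hypothesis that $f-g$ is an edge; once adjacency is fixed the equivalence is purely definitional and I expect no real obstacle beyond faithfully tracking the meanings of $c(f,g)$ and $\perp$.
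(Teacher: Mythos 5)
Your proposal is correct. The paper states this corollary without proof, positioning it as a consequence of Theorem~\ref{Th5.4} (the measure-theoretic characterisation of $f\perp g$) combined with Theorem~\ref{Th5.1} (the characterisation of when a common neighbour exists); your third paragraph reproduces exactly that route. Your primary argument, however, is genuinely different and more economical: you observe that for an edge $u-v$ in \emph{any} simple graph, lying on a triangle is equivalent to $c(u,v)=3$, while $u\perp v$ is by definition adjacency plus $c(u,v)>3$, so once adjacency is fixed the two conditions are formal negations of each other. This makes the corollary a tautology of the definitions, independent of the measure space, of $AG(\mathcal{M}(X,\mathcal{A}))$, and of Theorems~\ref{Th5.1} and~\ref{Th5.4} altogether. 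What the paper's implicit route buys is an explicit measure-theoretic description of \emph{which} edges fail to lie on a triangle (namely those with $\mu(X\setminus Z(f)\cap X\setminus Z(g))=0=\mu(Z(f)\cap Z(g))$ and $Z(f)$ or $Z(g)$ an atom), which is what is actually used later in Theorem~\ref{Th5.16}; what your argument buys is the recognition that the stated equivalence itself carries no content beyond the definitions. Your closing caveat --- that $f\not\perp g$ also covers non-adjacent pairs, so the standing hypothesis that $f-g$ is an edge is essential --- is exactly the right point to flag, and your handling of it is correct.
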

	If $X$ can be partitioned into two atoms, then $AG(\mathcal{M}(X,\mathcal{A}))$ is never hypertriangulated. On the contrary, if $X$ can not be partitioned into two atoms, then for any $A\in\mathcal{A}$ with $\mu(A)>0$ and $\mu(X\setminus A)>0$, either $A$ or $X\setminus A$ is not an atom. This condition helps in determining exactly when the graph $AG(\mathcal{M}(X,\mathcal{A}))$ is hypertriangulated.
	\begin{thm}\label{Th5.16}
		$AG(\mathcal{M}(X,\mathcal{A}))$ is hypertriangulated if and only if $\mu$ is non-atomic.
	\end{thm}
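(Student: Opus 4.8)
The plan is to reduce hypertriangulation of $AG(\mathcal{M}(X,\mathcal{A}))$ entirely to the (non-)existence of orthogonal edges, which the earlier results already link to the presence of atoms. By Corollary~\ref{Cor5.5}, an edge $f-g$ fails to lie on a triangle precisely when $f\perp g$. Hence $AG(\mathcal{M}(X,\mathcal{A}))$ is hypertriangulated if and only if it contains no orthogonal pair of vertices. The whole statement will therefore follow once I show that $AG(\mathcal{M}(X,\mathcal{A}))$ admits an orthogonal pair if and only if $\mu$ is atomic.

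For the implication \emph{hypertriangulated $\Rightarrow$ non-atomic}, I would argue contrapositively: assume $\mu$ is atomic, so some atom $A\in\mathcal{A}$ exists. Using the standing assumption that $X$ is not an atom, we have $\mu(X\setminus A)>0$, whence both $f=1_{X\setminus A}$ and $g=1_A$ lie in $\mathscr{D}(\mathcal{M}(X,\mathcal{A}))$, with $Z(f)=A$ and $Z(g)=X\setminus A$. Then $Z(f)\cap Z(g)=\emptyset$ and $(X\setminus Z(f))\cap(X\setminus Z(g))=\emptyset$, while $Z(f)=A$ is an atom, so Theorem~\ref{Th5.4} yields $f\perp g$. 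By Corollary~\ref{Cor5.5}, the edge $f-g$ lies on no triangle, so $AG(\mathcal{M}(X,\mathcal{A}))$ is not hypertriangulated.

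For the reverse implication, suppose $\mu$ is non-atomic, so $\mathcal{A}$ contains no atom. Theorem~\ref{Th5.4} tells us that every orthogonal pair $f\perp g$ forces $Z(f)$ or $Z(g)$ to be an atom; since no atom exists, $AG(\mathcal{M}(X,\mathcal{A}))$ admits no orthogonal pair. Then Corollary~\ref{Cor5.5} guarantees that every edge lies on a triangle, i.e., $AG(\mathcal{M}(X,\mathcal{A}))$ is hypertriangulated.

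The argument is short because the heavy lifting already resides in Theorem~\ref{Th5.4} and Corollary~\ref{Cor5.5}; the only genuine care needed is in the atomic case, where I must exhibit an explicit orthogonal pair and verify that both functions are bona fide zero-divisors. This is exactly where the standing hypothesis that $X$ is not an atom (guaranteeing $\mu(X\setminus A)>0$) is indispensable, and it is the main — indeed essentially the only — obstacle in the proof.
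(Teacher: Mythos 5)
Your proof is correct and follows essentially the same route as the paper: both directions reduce to Theorem~\ref{Th5.4} and Corollary~\ref{Cor5.5}, with the pair $1_A$, $1_{X\setminus A}$ furnishing the orthogonal edge when an atom $A$ exists. One small terminological note: the negation of ``non-atomic'' here is merely ``$\mathcal{A}$ contains an atom,'' not ``$\mu$ is atomic'' in the paper's sense; your argument only uses the former, so nothing is lost.
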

	\begin{proof}
		If $\mu$ is non-atomic, then for any pair of vertices $f,g$ in $AG$, $f\not\perp g$ (by Theorem \ref{Th5.4}), because $Z(f)$ and $Z(g)$ are not atoms. Consequently, by Corollary \ref{Cor5.5}, $AG$ is hypertriangulated. Conversely, let $A\in\mathcal{A}$ be an atom. Then, $\mu(X\setminus A)>0$, as $X$ is not an atom. Thus, $1_A, 1_{X\setminus A}\in\mathscr{D}$. By Theorem \ref{Th5.4}, $1_A\perp 1_{X\setminus A}$. Therefore, $1_A-1_{X\setminus A}$ is not an edge of a triangle in $AG$.
	\end{proof}
	\begin{cor}
		$AG(\mathcal{M}(X,\mathcal{A}))$ is hypertriangulated if and only if $\Gamma'_2(\mathcal{M}(X,\mathcal{A}))$ is triangulated.
	\end{cor}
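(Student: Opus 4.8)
The plan is to recognize this statement as an immediate corollary obtained by chaining together two equivalences that have already been proved in the paper, both of which pivot on a single measure-theoretic condition: the non-atomicity of $\mu$. The two results I would invoke are Theorem~\ref{Th3.22}, which asserts that $\Gamma'_2(\mathcal{M}(X,\mathcal{A}))$ is triangulated if and only if $\mu$ is non-atomic, and Theorem~\ref{Th5.16}, which asserts that $AG(\mathcal{M}(X,\mathcal{A}))$ is hypertriangulated if and only if $\mu$ is non-atomic.

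The first step is simply to observe that both graph-theoretic properties have been characterized by the \emph{same} intrinsic property of the underlying measure space. The second step is to conclude by transitivity of biconditionals: since ``$AG(\mathcal{M}(X,\mathcal{A}))$ is hypertriangulated'' is equivalent to ``$\mu$ is non-atomic,'' which in turn is equivalent to ``$\Gamma'_2(\mathcal{M}(X,\mathcal{A}))$ is triangulated,'' the two displayed properties are equivalent to each other. Concretely, if $AG(\mathcal{M}(X,\mathcal{A}))$ is hypertriangulated, then by Theorem~\ref{Th5.16} the measure $\mu$ is non-atomic, whence by Theorem~\ref{Th3.22} the comaximal graph $\Gamma'_2(\mathcal{M}(X,\mathcal{A}))$ is triangulated; the reverse implication runs through the same two theorems read in the opposite direction.

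There is no genuine obstacle to overcome at this stage, precisely because all of the analytic and combinatorial work—namely the production of triangles around each vertex of $\Gamma'_2$ when $\mu$ is non-atomic, and the exhibition of an orthogonal edge $1_A \perp 1_{X\setminus A}$ blocking hypertriangulation of $AG$ whenever an atom $A$ exists—has already been carried out in the proofs of Theorems~\ref{Th3.22} and \ref{Th5.16}. The only point worth a moment's care is to present the argument as a clean two-directional biconditional rather than attempting to exhibit a direct structural transformation between the two graphs, since the linkage here is semantic (both equivalent to non-atomicity) rather than a constructed graph map. Accordingly, the proof reduces to a single sentence citing the two theorems, and I would write it exactly in that spirit.
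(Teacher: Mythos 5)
Your proof is correct and matches the paper's intent exactly: the corollary is stated immediately after Theorem~\ref{Th5.16} with no written proof precisely because it follows by chaining that theorem with Theorem~\ref{Th3.22}, both of which characterize their respective graph property by the non-atomicity of $\mu$. Nothing further is needed.
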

	
	\begin{thm}
		Let $f,g\in\mathscr{D}(\mathcal{M}(X,\mathcal{A}))$. In $AG(\mathcal{M}(X,\mathcal{A}))$,
		$$c(f,g)=\begin{cases}
			3&f,g\text{ are adjacent and } f\not\perp g\\
			4&\text{ either }f,g\text{ are not adjacent or }f\perp g
		\end{cases}$$
	\end{thm}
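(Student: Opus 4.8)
The plan is to treat the two rows of the formula separately, relying on the general principle that $c(f,g)=3$ precisely when $f$ and $g$ lie on a common triangle, whereas establishing the value $4$ requires both ruling out a triangle and exhibiting an explicit quadrilateral. The decisive device throughout will be that passing from a vertex $h$ to its scalar multiple $2h$ leaves the zero set unchanged, so that by Theorem~\ref{Th0.5} the vertex $2h$ has exactly the same neighbours as $h$ in $AG(\mathcal{M}(X,\mathcal{A}))$, while $2h\neq h$ because $h$ is non-zero a.e.; this manufactures the extra distinct vertices needed to close up $4$-cycles.

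For the first row I would observe that any triangle through $f$ and $g$ must use the edge $f-g$, since in a triangle all three pairs are adjacent. Hence $c(f,g)=3$ holds if and only if $f,g$ are adjacent and the edge $f-g$ is an edge of some triangle, and by Corollary~\ref{Cor5.5} the latter is equivalent to $f\not\perp g$. This disposes of the case $c(f,g)=3$ with essentially no computation.

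For the second row I would split into the two disjoint possibilities recorded in the statement. If $f\perp g$, then by definition $f,g$ are adjacent and $c(f,g)>3$, so it remains only to produce a single $4$-cycle; here I would take $f-g-2f-2g-f$, each consecutive pair being adjacent because adjacency in $AG(\mathcal{M}(X,\mathcal{A}))$ depends only on the zero sets and $Z(2f)=Z(f)$, $Z(2g)=Z(g)$. The four vertices are distinct, since $f=2g$ or $g=2f$ would force $Z(f)=Z(g)$, contradicting the condition $\mu(Z(f)\setminus Z(g))>0$ guaranteed by adjacency through Theorem~\ref{Th0.5}; hence $c(f,g)=4$. If instead $f,g$ are not adjacent, then there is no edge $f-g$, so $c(f,g)\geq 4$; on the other hand the corollary to Theorem~\ref{Th5.1} gives $d(f,g)=2$, furnishing a common neighbour $h$, and then $f-h-g-2h-f$ is a $4$-cycle (with $2h=f$ or $2h=g$ excluded because either would render $f,g$ adjacent). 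Thus $c(f,g)=4$ in both subcases.

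The only genuinely delicate point, and the step I would verify most carefully, is the distinctness of the four vertices in each quadrilateral: the argument must exclude the degenerate coincidences $2h=f$, $2h=g$, $f=2g$ and $g=2f$, and these are exactly the places where the non-vanishing conditions defining $\mathscr{D}(\mathcal{M}(X,\mathcal{A}))$ and the adjacency criterion of Theorem~\ref{Th0.5} are invoked. Once distinctness is secured, each constructed closed walk is a genuine cycle, and the two bounds $c(f,g)\ge 4$ and $c(f,g)\le 4$ combine to give the asserted equality.
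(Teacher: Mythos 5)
Your proposal is correct and follows essentially the same route as the paper: the first row via Corollary~\ref{Cor5.5}, and the second row via the same two quadrilaterals $f-g-2f-2g-f$ (when $f\perp g$) and $f-h-g-2h-f$ (when $f,g$ are non-adjacent, $h$ a common neighbour obtained from Theorem~\ref{Th5.1}). Your explicit verification that the four vertices of each quadrilateral are distinct is a point the paper passes over silently, and it is handled correctly.
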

	\begin{proof}
		Let $f,g\in\mathscr{D}$. If $f,g$ are adjacent and $f\not\perp g$, then there exists $h\in\mathscr{D}$ adjacent to both $f,g$; i.e., $f-g-h-f$ is a triangle in $AG$. Thus $c(f,g)=3$. If $f,g$ are not adjacent in $AG$, then they are not adjacent in $\Gamma'_2$ also, because $\Gamma'_2$ is a subgraph of $AG$. So, $\mu(Z(f)\cap Z(g))>0$. By Theorem \ref{Th5.1}, there exists $h\in\mathscr{D}$ adjacent to both $f,g$. Thus $f-h-g-2.h-f$ is a square in $AG\implies c(f,g)\leq 4$. Since $f,g$ are not adjacent in $AG$, $c(f,g)=4$. If $f\perp g$, then $f-g-2.f-2.g-f$ is a square in $AG$. Hence, $c(f,g)\leq 4$. Since $f\perp g$, no vertex is adjacent to both $f,g$; i.e., $c(f,g)>3$. Therefore, $c(f,g)=4$. 
	\end{proof}
	The visual representation of the above Theorem is exhibited as follows:\\
	\begin{center}
		\begin{tikzpicture}
			\draw (0,0)--(1.5,1.5)--(3,0)--(0,0) (5,0)--(5,1.5)--(7,1.5)--(7,0)--(5,0) (9,0.75)--(10.5,1.5)--(12,0.75)--(10.5,0)--(9,0.75);
			\filldraw[black] (0,0) circle (2pt) node[anchor=east]{$f$};
			\filldraw[black] (3,0) circle (2pt) node[anchor=west]{$g$};
			\filldraw[black] (5,0) circle (2pt) node[anchor=east]{$f$};
			\filldraw[black] (7,0) circle (2pt) node[anchor=west]{$g$};
			\filldraw[black] (9,0.75) circle (2pt) node[anchor=east]{$f$};
			\filldraw[black] (12,0.75) circle (2pt) node[anchor=west]{$g$};
			\node (start) at (1.5,-0.5) {$f,g\text{ are adjacent}$};
			\node (start) at (1.5,-1) {$\text{ and }f\not\perp g$};
			\node (start) at (6,-0.5) {$f,g\text{ are adjacent}$};
			\node (start) at (6,-1) {$\text{ and }f\perp g$};
			\node (start) at (10.5,-0.5) {$f,g\text{ are non-adjacent}$};
		\end{tikzpicture}
	\end{center}
	
	\begin{thm}\label{Th5.15}
		$f\in\mathscr{D}(\mathcal{M}(X,\mathcal{A}))$ has an orthogonal complement if and only if either $Z(f)$ or $X\setminus Z(f)$ is an atom.
	\end{thm}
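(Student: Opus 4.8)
The plan is to decode the condition ``$f$ has an orthogonal complement'' entirely through the characterization of orthogonality in $AG(\mathcal{M}(X,\mathcal{A}))$ supplied by Theorem~\ref{Th5.4}. By that theorem, $f\perp g$ precisely when $\mu(X\setminus Z(f)\cap X\setminus Z(g))=0=\mu(Z(f)\cap Z(g))$ and either $Z(f)$ or $Z(g)$ is an atom. So the whole question reduces to understanding what the two measure-zero constraints force on $Z(g)$, and then matching them against the atom condition.

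The key observation is that the two constraints together pin down $Z(g)$ up to a null set. Since $X\setminus Z(f)\cap X\setminus Z(g)=X\setminus(Z(f)\cup Z(g))$, the first equation says $Z(f)\cup Z(g)$ is almost all of $X$, while the second says $Z(f)$ and $Z(g)$ are almost disjoint. Combining these, I would compute
$$Z(g)\triangle(X\setminus Z(f))=(Z(g)\cap Z(f))\cup\big((X\setminus Z(f))\cap(X\setminus Z(g))\big),$$
both summands of which are null, whence $\mu(Z(g)\triangle(X\setminus Z(f)))=0$. Because being an atom is invariant under modification by a null set (recall that $A\setminus B$ and $A\cup B$ are atoms whenever $A$ is an atom and $\mu(B)=0$), it follows that $Z(g)$ is an atom if and only if $X\setminus Z(f)$ is. This is exactly the bridge that converts the clause ``$Z(f)$ or $Z(g)$ is an atom'' into the statement ``$Z(f)$ or $X\setminus Z(f)$ is an atom'' demanded by the theorem, and it settles the forward implication at once.

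For the converse I would exhibit an explicit complement. Assuming $Z(f)$ or $X\setminus Z(f)$ is an atom, set $g=1_{Z(f)}$; since $f\in\mathscr{D}(\mathcal{M}(X,\mathcal{A}))$ forces $\mu(Z(f))>0$ and $\mu(X\setminus Z(f))>0$, we have $g\in\mathscr{D}(\mathcal{M}(X,\mathcal{A}))$ with $Z(g)=X\setminus Z(f)$. Then $Z(f)\cap Z(g)=\emptyset=X\setminus Z(f)\cap X\setminus Z(g)$, so both measure conditions hold trivially, and the hypothesis says either $Z(f)$ or $Z(g)=X\setminus Z(f)$ is an atom. Theorem~\ref{Th5.4} then yields $f\perp g$, so $f$ has an orthogonal complement.

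I do not anticipate a serious obstacle: once Theorem~\ref{Th5.4} is in hand the argument is essentially bookkeeping with symmetric differences. The one point that genuinely needs care is the invariance of the atom property under null modifications, which is what legitimizes replacing $Z(g)$ by $X\setminus Z(f)$; I would flag this step explicitly rather than let it pass silently, since it is the hinge of the forward direction.
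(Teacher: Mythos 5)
Your proposal is correct and follows essentially the same route as the paper: both directions rest on Theorem~\ref{Th5.4}, the complement exhibited in the converse is the same witness $1_{Z(f)}$, and your symmetric-difference computation showing $\mu(Z(g)\triangle(X\setminus Z(f)))=0$ is just a repackaging of the paper's identity $X\setminus Z(f)=[X\setminus Z(f)\cap X\setminus Z(g)]\cup\bigl(Z(g)\setminus[Z(f)\cap Z(g)]\bigr)$ combined with the null-modification invariance of atoms. Your explicit flagging of that invariance is a welcome clarification, but the argument is the same.
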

	\begin{proof}
		If one of $Z(f)$ and $ X\setminus Z(f)$ is an atom, then by Theorem \ref{Th5.4}, $f\perp 1_{Z(f)}$ in $\text{AG}$. Conversely let $f\perp g$ for some $g\in\mathscr{D}$. Then $\mu(Z(f)\cap Z(g))=0=\mu(X\setminus Z(f)\cap X\setminus Z(g))$ and either $Z(f)$ or $Z(g)$ is an atom. If $Z(g)$ is an atom, then $X\setminus Z(f)$ is an atom, because $X\setminus Z(f)= [X\setminus Z(f)\cap X\setminus Z(g)]\cup Z(g)\setminus[Z(f)\cap Z(g)]$. Therefore either $Z(f)$ or $X\setminus Z(f)$ is an atom.
	\end{proof}
	
	\begin{lem}\label{Lem5.16}
		If $X$ is partitioned into three atoms, then for each $f\in\mathscr{D}(\mathcal{M}(X,\mathcal{A}))$, either $Z(f)$ or $X\setminus Z(f)$ is an atom.
	\end{lem}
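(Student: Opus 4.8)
The plan is to use the basic dichotomy satisfied by an atom: if $A$ is an atom and $B\in\mathcal{A}$ is arbitrary, then the disjoint decomposition $A=(A\cap B)\sqcup(A\setminus B)$ together with the definition of an atom forces \emph{exactly} one of $\mu(A\cap B)=0$ or $\mu(A\setminus B)=0$ to hold (both cannot fail, since that would split the atom into two sets of positive measure, and both cannot hold at once since $\mu(A)=\mu(A\cap B)+\mu(A\setminus B)>0$). First I would fix a partition $X=A_1\sqcup A_2\sqcup A_3$ into atoms and an arbitrary $f\in\mathscr{D}(\mathcal{M}(X,\mathcal{A}))$, and apply this dichotomy with $B=Z(f)$ to each $A_i$ in turn. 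This classifies each index $i\in\{1,2,3\}$ according as $A_i$ sits almost entirely inside $Z(f)$ (i.e. $\mu(A_i\setminus Z(f))=0$) or almost entirely inside $X\setminus Z(f)$ (i.e. $\mu(A_i\cap Z(f))=0$).

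Next I would set $I=\{i:\mu(A_i\setminus Z(f))=0\}$ and $J=\{i:\mu(A_i\cap Z(f))=0\}$; by the dichotomy these two sets partition $\{1,2,3\}$. Since $f\in\mathscr{D}$ gives both $\mu(Z(f))>0$ and $\mu(X\setminus Z(f))>0$, and since $Z(f)=\bigsqcup_i(A_i\cap Z(f))$ while $X\setminus Z(f)=\bigsqcup_i(A_i\setminus Z(f))$, neither $I$ nor $J$ can be empty (emptiness of $I$ would force $\mu(Z(f))=0$, and emptiness of $J$ would force $\mu(X\setminus Z(f))=0$). As $|I|+|J|=3$, the pigeonhole principle forces $\{|I|,|J|\}=\{1,2\}$; in particular at least one of the two index sets is a singleton.

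Finally I would read off the conclusion. If $I=\{k\}$ is a singleton, then $\mu(A_i\cap Z(f))=0$ for the two indices $i\in J$ together with $\mu(A_k\setminus Z(f))=0$ yields $\mu(Z(f)\triangle A_k)=0$, so $Z(f)$ differs from the atom $A_k$ by a null set; symmetrically, if $J=\{k\}$ is a singleton then $\mu\big((X\setminus Z(f))\triangle A_k\big)=0$. The only mildly delicate point, and the one I would write out with care, is the passage from such an ``almost equal to an atom'' statement to the exact assertion that the set in question \emph{is} an atom: this is a small strengthening of the remark in the preliminaries (that modifying an atom by a null set keeps it an atom), and it is exactly what converts the approximate information produced by the atom dichotomy into the desired structural conclusion. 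With that implication in hand, either case shows that one of $Z(f)$, $X\setminus Z(f)$ is an atom, and the lemma follows.
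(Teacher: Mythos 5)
Your proof is correct, but it proceeds by a genuinely different route from the paper's. The paper argues by contradiction: assuming both $Z(f)$ and $X\setminus Z(f)$ split into two sets of positive measure, it obtains a partition of $X$ into \emph{four} positive-measure pieces alongside the given partition into \emph{three} atoms; by pigeonhole some atom $E$ meets two of the four pieces $F_1,F_2$ in positive measure, and the decomposition $E=(E\cap F_1)\sqcup(E\setminus F_1)$ with $E\cap F_2\subset E\setminus F_1$ then splits the atom $E$, a contradiction. You instead argue directly: the atom dichotomy sorts the three atoms into those lying a.e. inside $Z(f)$ and those lying a.e. inside $X\setminus Z(f)$, both classes are nonempty because $f\in\mathscr{D}$, so one class is a singleton $\{A_k\}$, and then $Z(f)$ (or its complement) agrees with $A_k$ up to a null set and is therefore itself an atom. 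The one step you rightly flag as delicate --- upgrading ``$\mu(Z(f)\triangle A_k)=0$ with $A_k$ an atom'' to ``$Z(f)$ is an atom'' --- is exactly the null-modification remark from the paper's preliminaries (an atom remains an atom after deleting and adjoining null sets, via $Z(f)=\bigl(A_k\setminus(A_k\setminus Z(f))\bigr)\cup\bigl(Z(f)\setminus A_k\bigr)$), and the paper itself uses this manoeuvre elsewhere, so it is legitimate here. Your version is slightly longer but more informative: it identifies \emph{which} atom $Z(f)$ or its complement coincides with almost everywhere, in the spirit of Lemma~\ref{Lem3.14}, whereas the paper's contradiction argument needs nothing beyond the bare definition of an atom and yields no such identification.
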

	\begin{proof}
		Let $X=A\sqcup B\sqcup C$, where $A,B,C$ are atoms. If possible let there be some $f\in\mathscr{D}$ such that both $Z(f), X\setminus Z(f)$ are not atoms. So there are $A_1, A_2, B_1, B_2 \in \mathcal{A}$, such that $Z(f)=A_1\sqcup A_2$, $X\setminus Z(f)=B_1 \sqcup B_2$ and $\mu(A_1),\mu(A_2),\mu(B_1),\mu(B_2)>0$. Then $\mathscr{B}=\{A,B,C\}$ and $\mathscr{B}'=\{A_1,A_2,B_1,B_2\}$ constitute partitions of $X$ by sets with positive measure. Hence there exist $E\in\mathscr{B}$ and $F_1,F_2\in\mathscr{B}'$ such that $\mu(E\cap F_1),\mu(E\cap F_2)>0$. Now $E=(E\cap F_1)\sqcup(E\setminus F_1)$. Since, $F_1,F_2$ are disjoint, $E\cap F_2\subset E\setminus F_1$ and hence $\mu(E\setminus F_1)>0$. This contradicts that $E$ is an atom.
	\end{proof}
	\begin{thm}\label{Th5.20}
		$AG(\mathcal{M}(X,\mathcal{A}))$ is complemented if and only if $X$ is partitioned into either two or three atoms.
	\end{thm}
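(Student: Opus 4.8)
The plan is to recast the statement in terms of the existence of orthogonal complements, for which Theorem~\ref{Th5.15} is tailor-made. Since a graph is complemented exactly when every vertex admits an orthogonal complement, Theorem~\ref{Th5.15} tells us that $AG(\mathcal{M}(X,\mathcal{A}))$ is complemented if and only if for every $f\in\mathscr{D}(\mathcal{M}(X,\mathcal{A}))$ at least one of $Z(f)$ and $X\setminus Z(f)$ is an atom. Reformulating via characteristic functions, this reads: for every $E\in\mathcal{A}$ with $\mu(E)>0$ and $\mu(X\setminus E)>0$, either $E$ or $X\setminus E$ is an atom (take $f=1_{X\setminus E}$, so that $Z(f)=E$, and note every such $E$ arises this way). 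I would prove both implications against this reformulated condition.

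For the reverse implication I would simply invoke earlier results. If $X$ is partitioned into two atoms, then by the corollary following Theorem~\ref{Th3.13} both $Z(f)$ and $X\setminus Z(f)$ are atoms for every $f\in\mathscr{D}(\mathcal{M}(X,\mathcal{A}))$; if $X$ is partitioned into three atoms, then Lemma~\ref{Lem5.16} guarantees that one of $Z(f),X\setminus Z(f)$ is an atom. In either case the reformulated condition holds, so by Theorem~\ref{Th5.15} every vertex has an orthogonal complement and $AG(\mathcal{M}(X,\mathcal{A}))$ is complemented.

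The substance lies in the forward implication, where I assume the reformulated condition and must produce a partition of $X$ into two or three atoms. First I would rule out the non-atomic (and mixed) situation: $X$ must contain an atom, since otherwise choosing any $E$ with $\mu(E),\mu(X\setminus E)>0$ (which exists because $X$ is not an atom) would yield two pieces, each further splittable, violating the condition. Fix such an atom $A$; by the standing convention $\mu(X\setminus A)>0$. If $X\setminus A$ is an atom we are done, as $X=A\sqcup(X\setminus A)$ is a partition into two atoms. Otherwise write $X\setminus A=B\sqcup C$ with $\mu(B),\mu(C)>0$. The key move is to apply the condition to the composite set $A\sqcup B$: since $A\sqcup B$ splits into the positive-measure pieces $A$ and $B$ it is not an atom, so its complement $C$ must be an atom. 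Symmetrically, applying the condition to $A\sqcup C$ forces $B$ to be an atom. Hence $X=A\sqcup B\sqcup C$ is a partition into three atoms.

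I expect the main obstacle to be precisely this forward direction, specifically the recognition that the condition should be tested not on $A$, $B$, $C$ individually but on the composite sets $A\sqcup B$ and $A\sqcup C$; once this idea is in place the argument is short. A secondary point requiring care is confirming that $A\sqcup B$ and $A\sqcup C$ genuinely fail to be atoms while their complements $C$ and $B$ retain positive measure, so that the reformulated condition legitimately applies and delivers that $B$ and $C$ are atoms. It is also worth remarking, as a consistency check, that a putative partition into four atoms $A\sqcup B\sqcup C\sqcup D$ is automatically excluded by this mechanism, since the condition applied to $A\sqcup B$ would demand that $C\sqcup D$ be an atom, which it is not.
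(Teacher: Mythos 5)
Your proof is correct, and the ``only if'' direction is genuinely different from the paper's. The paper argues by contrapositive: assuming $X$ is \emph{not} partitioned into two or three atoms, it splits into the subcases ``no atom exists'' and ``an atom $A$ exists'', and in the latter it explicitly constructs a vertex with no orthogonal complement by decomposing $X\setminus A=B_1\sqcup B_2$, further splitting a non-atom $B_1=C_1\sqcup C_2$, and exhibiting $f$ with $Z(f)=C_1\sqcup B_2$ so that neither $Z(f)$ nor $X\setminus Z(f)=C_2\sqcup A$ is an atom (Theorem~\ref{Th5.15} then finishes). You instead work forward from complementedness, reformulated via Theorem~\ref{Th5.15} as ``for every $E$ with $\mu(E),\mu(X\setminus E)>0$, either $E$ or $X\setminus E$ is an atom'', and the key move of testing this condition on the composite sets $A\sqcup B$ and $A\sqcup C$ directly forces $C$ and $B$ to be atoms. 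Your route is shorter and avoids the case analysis on whether a further piece is or is not an atom; the paper's route has the minor virtue of producing a concrete witness vertex lacking a complement. The reverse direction is essentially identical in both (two atoms via complete bipartiteness or the corollary to Theorem~\ref{Th3.13}; three atoms via Lemma~\ref{Lem5.16} and Theorem~\ref{Th5.15}), and all the hypotheses you need when applying the reformulated condition to $A\sqcup B$ (positive measure of both the set and its complement, failure to be an atom) are verified, so there is no gap.
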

	\begin{proof}
		We prove this in three cases.\\
		Case I: $X=A\sqcup B$, where $A,B$ are atoms.\\ 
		By Theorem \ref{Th5.5}, $AG$ is a complete bipartite graph and hence $AG$ is complemented.\\
		Case II: $X=A\sqcup B\sqcup C$, where $A,B,C$ are atoms.\\
		Then by Lemma \ref{Lem5.16}, either $Z(f)$ or $X\setminus Z(f)$ is an atom for each $f\in\mathscr{D}$. By Theorem \ref{Th5.15}, each $f\in\mathscr{D}$ has an orthogonal complement. Consequently, $AG$ is complemented.\\
		Case III: Neither Case I nor Case II.\\
		Subcase I: $X$ contains no atom. Then for any pair $f,g\in\mathscr{D}$, $f\not\perp g$. Hence, $AG$ is not complemented.\\
		Subcase II: $X$ contains an atom, say $A\in\mathcal{A}$. Then $\mu(X\setminus A)>0$, for otherwise $X$ will be an atom. By our hypothesis, $X\setminus A$ is not an atom. Then $X\setminus A=B_1\sqcup B_2$, for some $B_1, B_2 \in \mathcal{A}$ with $\mu(B_1),\mu(B_2)>0$. Then again, one of $B_1,B_2$ is not an atom. Without loss of generality, let $B_1$ be not an atom. Then $B_1=C_1\sqcup C_2$,  for some $C_1, C_2 \in \mathcal{A}$ with $\mu(C_1),\mu(C_2)>0$. Consider $f\in\mathscr{D}$ such that $Z(f)=C_1\sqcup B_2$. Then $X\setminus Z(f)=C_2\sqcup A$. Therefore, both of $Z(f)$ and $X\setminus Z(f)$ are not atoms. By Theorem \ref{Th5.15}, $f\in\mathscr{D}$ has no orthogonal complement. Consequently, $AG$ is not complemented.
	\end{proof}
	\begin{lem}\label{Lem4.19}
		Let $f,g,h\in\mathscr{D}(\mathcal{M}(X,\mathcal{A}))$ be such that $f\perp g$ and $f\perp h$ in $AG$. Then $g,h$ are adjacent to the same set of vertices.
	\end{lem}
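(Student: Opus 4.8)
I would prove this by showing that orthogonality to a common vertex $f$ forces $Z(g)$ and $Z(h)$ to coincide almost everywhere, after which the adjacency characterisation of $AG$ makes the conclusion automatic.

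First I would translate the two hypotheses into measure-theoretic statements using Theorem~\ref{Th5.4}. From $f\perp g$ we get $\mu(Z(f)\cap Z(g))=0$ and $\mu(X\setminus Z(f)\cap X\setminus Z(g))=0$. These two facts together express that $Z(g)$ differs from $X\setminus Z(f)$ only on a null set: indeed,
$$Z(g)\triangle(X\setminus Z(f))=\bigl(Z(g)\cap Z(f)\bigr)\cup\bigl((X\setminus Z(f))\cap(X\setminus Z(g))\bigr),$$
and each of the two sets on the right has measure zero, so $\mu\bigl(Z(g)\triangle(X\setminus Z(f))\bigr)=0$. By the identical argument applied to $f\perp h$, we also obtain $\mu\bigl(Z(h)\triangle(X\setminus Z(f))\bigr)=0$.

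Next I would combine these two estimates. Using the elementary inclusion $Z(g)\triangle Z(h)\subset \bigl(Z(g)\triangle(X\setminus Z(f))\bigr)\cup\bigl((X\setminus Z(f))\triangle Z(h)\bigr)$, subadditivity of $\mu$ yields $\mu(Z(g)\triangle Z(h))=0$. This is the heart of the argument; everything else is bookkeeping. (Equivalently, by Corollary~\ref{Lem6.1} this already gives $ann(g)=ann(h)$.)

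Finally I would deduce that $g$ and $h$ have the same neighbourhood. Let $k\in\mathscr{D}(\mathcal{M}(X,\mathcal{A}))$ be any vertex. By Theorem~\ref{Th0.5}, $k$ is adjacent to $g$ in $AG$ exactly when $\mu(Z(g)\setminus Z(k))>0$ and $\mu(Z(k)\setminus Z(g))>0$. Since $Z(g)$ and $Z(h)$ differ only on a null set, the sets $Z(g)\setminus Z(k)$ and $Z(h)\setminus Z(k)$ differ by a subset of $Z(g)\triangle Z(h)$, whence $\mu(Z(g)\setminus Z(k))=\mu(Z(h)\setminus Z(k))$; the same reasoning gives $\mu(Z(k)\setminus Z(g))=\mu(Z(k)\setminus Z(h))$. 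Therefore $k$ is adjacent to $g$ if and only if $k$ is adjacent to $h$, and since $k$ was arbitrary, $g$ and $h$ are adjacent to the same set of vertices. The only subtlety to watch is ensuring the equivalence is stated for every admissible $k$ (in particular, adjacency is considered between distinct vertices), but no genuine obstacle arises, as the reduction to $\mu(Z(g)\triangle Z(h))=0$ makes the adjacency criterion insensitive to the choice between $g$ and $h$.
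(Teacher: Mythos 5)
Your proposal is correct and follows essentially the same route as the paper's proof: both extract $\mu(Z(g)\triangle(X\setminus Z(f)))=0=\mu(Z(h)\triangle(X\setminus Z(f)))$ from Theorem~\ref{Th5.4}, deduce $\mu(Z(g)\triangle Z(h))=0$, and then transfer the adjacency criterion of Theorem~\ref{Th0.5} between $g$ and $h$. The only difference is presentational — you invoke the general principle that sets differing by a null set have equal measure, where the paper writes out the explicit disjoint decomposition of $Z(g)\setminus Z(k)$ — so no substantive comparison is needed.
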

	\begin{proof}
		By Theorem \ref{Th5.4}, $\mu(Z(f)\cap Z(g))=0=\mu(X\setminus Z(f)\cap X\setminus Z(g))$ and $\mu(Z(f)\cap Z(h))=0=\mu(X\setminus Z(f)\cap X\setminus Z(h))$. i.e., $\mu(Z(g)\triangle (X\setminus Z(f)))=0=\mu( Z(h)\triangle (X\setminus Z(f)))$. This implies $\mu(Z(g)\triangle Z(h))=0$. Let $k\in\mathscr{D}$ be adjacent to $g$; i.e., $\mu(Z(k)\setminus Z(g))>0$ and $\mu(Z(g)\setminus Z(k))>0$. Now $Z(g)\setminus Z(k)= (Z(g)\cap Z(h)\cap X\setminus Z(k))\sqcup(Z(g)\setminus Z(h)\cap X\setminus Z(k))$. Since $\mu(Z(g)\setminus Z(h))=0$, $\mu(Z(g)\cap Z(h)\cap X\setminus Z(k))=\mu(Z(g)\setminus Z(k))>0\implies\mu(Z(h)\setminus Z(k))\geq\mu(Z(g)\cap Z(h)\cap X\setminus Z(k))>0$ i.e., $\mu(Z(h)\setminus Z(k))>0$. Similarly, $\mu(Z(k)\setminus Z(h))>0$. Thus $k$ is adjacent to $h$. Likewise if $k$ is adjacent to $h$, then it is adjacent to $g$ also. Hence $g,h$ are adjacent to the same set of vertices.
	\end{proof}
	As a consequence of this lemma, we get the following theorem.
	\begin{thm}
		$AG(\mathcal{M}(X,\mathcal{A}))$ is complemented if and only if $AG(\mathcal{M}(X,\mathcal{A}))$ is uniquely complemented.
	\end{thm}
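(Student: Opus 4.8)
The plan is to recognize that one implication is vacuous and the other is precisely the content of Lemma~\ref{Lem4.19}, so the theorem is a formal consequence of that lemma together with the definitions recalled in Section~\ref{Sec2}. First, for the implication that a uniquely complemented graph is complemented: this requires no argument, since in Section~\ref{Sec2} a uniquely complemented graph is \emph{defined} to be a complemented graph that additionally satisfies the uniqueness condition on orthogonal complements. Thus this direction holds by definition.

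For the substantive direction, I would assume that $AG(\mathcal{M}(X,\mathcal{A}))$ is complemented and verify the defining property of unique complementation directly, namely that whenever $f\perp g$ and $f\perp h$ for $f,g,h\in\mathscr{D}(\mathcal{M}(X,\mathcal{A}))$, the vertices $g$ and $h$ are adjacent to the same set of vertices in $AG(\mathcal{M}(X,\mathcal{A}))$. But this is exactly the conclusion of Lemma~\ref{Lem4.19}. Crucially, that lemma is stated for \emph{any} triple $f,g,h$ satisfying $f\perp g$ and $f\perp h$, with no standing hypothesis that the graph be complemented, so its conclusion applies verbatim to witness the uniqueness condition. Combining the standing assumption of complementedness with the uniqueness supplied by Lemma~\ref{Lem4.19} yields that $AG(\mathcal{M}(X,\mathcal{A}))$ is uniquely complemented.

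Since both implications follow with essentially no computation, there is no genuine obstacle to overcome; the only point deserving attention is the observation that Lemma~\ref{Lem4.19} already encodes the uniqueness clause of the definition, so that the passage from ``complemented'' to ``uniquely complemented'' is automatic. Consequently the two properties are equivalent for $AG(\mathcal{M}(X,\mathcal{A}))$, and the proof reduces to citing Lemma~\ref{Lem4.19} and the definition of unique complementation.
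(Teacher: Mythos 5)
Your proposal is correct and matches the paper's argument exactly: the paper states this theorem as an immediate consequence of Lemma~\ref{Lem4.19}, with the converse direction holding by the definition of a uniquely complemented graph. Nothing further is needed.
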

	
	The fact that $\Gamma'_2(\mathcal{M}(X,\mathcal{A}))$ and $\Gamma(\mathcal{M}(X,\mathcal{A}))$ are subgraphs of $AG(\mathcal{M}(X,\mathcal{A}))$ helps us to determine the dominating sets in $AG(\mathcal{M}(X,\mathcal{A}))$, as seen in the next result.
	\begin{thm}\label{Th4.21}
		$dt(AG(\mathcal{M}(X,\mathcal{A})))=2$. 
	\end{thm}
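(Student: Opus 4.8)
The plan is to establish the two inequalities $dt(AG(\mathcal{M}(X,\mathcal{A})))\geq 2$ and $dt(AG(\mathcal{M}(X,\mathcal{A})))\leq 2$ separately, throughout exploiting the adjacency criterion of Theorem~\ref{Th0.5}: two vertices $f,g$ are adjacent in $AG(\mathcal{M}(X,\mathcal{A}))$ precisely when $\mu(Z(f)\setminus Z(g))>0$ and $\mu(Z(g)\setminus Z(f))>0$.

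For the lower bound I would rule out any dominating set of size one. The key point is that a vertex can never dominate a vertex with the same zero set: given $f\in\mathscr{D}(\mathcal{M}(X,\mathcal{A}))$, the function $2f$ lies in $\mathscr{D}(\mathcal{M}(X,\mathcal{A}))$ since $Z(2f)=Z(f)$, it is distinct from $f$ as a function (because $f$ is non-zero on the positive-measure set $X\setminus Z(f)$), yet $\mu(Z(2f)\setminus Z(f))=0$, so $2f$ is not adjacent to $f$. Hence $\{f\}$ fails to dominate $2f$ for every choice of $f$, which forces $dt(AG(\mathcal{M}(X,\mathcal{A})))\geq 2$.

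For the upper bound I would produce an explicit dominating pair. Using the standing assumption that $X$ is not an atom, fix $A\in\mathcal{A}$ with $\mu(A)>0$ and $\mu(X\setminus A)>0$, and set $V_1=\{1_A,\,1_{X\setminus A}\}$, so that $Z(1_A)=X\setminus A$ and $Z(1_{X\setminus A})=A$ form an exact partition of $X$; note both functions genuinely belong to $\mathscr{D}(\mathcal{M}(X,\mathcal{A}))$. Given an arbitrary $h\in\mathscr{D}(\mathcal{M}(X,\mathcal{A}))$ with $W=Z(h)$, I would record the four overlap quantities $\alpha=\mu(W\cap A)$, $\beta=\mu(W\cap(X\setminus A))$, $\gamma=\mu((X\setminus W)\cap A)$, $\delta=\mu((X\setminus W)\cap(X\setminus A))$. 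By Theorem~\ref{Th0.5}, non-adjacency of $h$ to $1_A$ forces $\alpha=0$ or $\delta=0$, while non-adjacency to $1_{X\setminus A}$ forces $\beta=0$ or $\gamma=0$. The four resulting combinations make, respectively, $\mu(W)=\alpha+\beta$, $\mu(A)=\alpha+\gamma$, $\mu(X\setminus A)=\beta+\delta$, or $\mu(X\setminus W)=\gamma+\delta$ equal to zero, each contradicting either $h\in\mathscr{D}(\mathcal{M}(X,\mathcal{A}))$ or the choice of $A$. Thus every $h$ is adjacent to $1_A$ or to $1_{X\setminus A}$, so $V_1$ is dominating and $dt(AG(\mathcal{M}(X,\mathcal{A})))\leq 2$.

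The argument is mostly bookkeeping once the right pair is selected, so the only genuine decision is the upper-bound construction. The conceptual reason a \emph{complementary} pair of characteristic functions works is that, after Theorem~\ref{Th0.5}, a vertex can avoid both $1_A$ and $1_{X\setminus A}$ only if its zero set is $\mu$-contained in $(X\setminus A)\cap A=\emptyset$ or $\mu$-contains $(X\setminus A)\cup A=X$; taking the two zero sets to be exact complements collapses both escape routes to null-set conditions, which no admissible vertex can satisfy. I anticipate no serious obstacle beyond verifying that $1_A$ and $1_{X\setminus A}$ are admissible vertices (immediate from $\mu(A),\mu(X\setminus A)>0$) and are themselves adjacent, which they evidently are.
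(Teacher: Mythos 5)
Your proposal is correct and follows essentially the same route as the paper: the same dominating pair $\{1_A,1_{X\setminus A}\}$ for the upper bound and the same witness $2f$ (non-adjacent to $f$ since $Z(2f)=Z(f)$) to rule out singleton dominating sets. The only cosmetic difference is that the paper deduces adjacency to $1_A$ by passing through Theorem~\ref{Th4.4} (adjacency in $\Gamma'_2$ or $\Gamma$ implies adjacency in $AG$), whereas you verify it directly from the criterion of Theorem~\ref{Th0.5} via the four-case measure bookkeeping; both computations are the same in substance.
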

	\begin{proof}
		Fix an $A\in\mathcal{A}$ with $\mu(A),\mu(X\setminus A)>0$. We claim that $\{1_A,1_{X\setminus A}\}$ is a dominating set in $AG$. Suppose $f\in\mathscr{D}$ is not adjacent to $1_{X\setminus A}$ in $AG$; i.e., either $\mu(Z(f)\setminus A)=0$ or $\mu(A\setminus Z(f))=0$. If $\mu(Z(f)\setminus A)=0$, then $\mu(Z(f)\cap X\setminus A)=0$. So $f,1_A$ are adjacent in $\Gamma'_2$ and hence by Theorem \ref{Th4.4}, $f,1_A$ are adjacent in $AG$. If $\mu(A\setminus Z(f))=0$, then $\mu(X\setminus Z(f)\cap A)=0$. Thus $f,1_A$ are adjacent in $\Gamma$. Again by Theorem \ref{Th4.4}, $f,1_A$ are adjacent in $AG$. Hence, in any case, $f$ is adjacent to $1_A$ in $AG$. Consequently, $\{1_A,1_{X\setminus A}\}$ is a dominating set in $AG$. Also for each $f\in\mathscr{D}$, $f,2.f$ are not adjacent in $AG$ which proves singleton sets are not dominating sets in $AG$. Hence $dt(AG)=2$.
	\end{proof}
	In the above theorem, we note that the dominating set $\{1_A,1_{X\setminus A}\}$ is a total dominating set in $AG(\mathcal{M}(X,\mathcal{A}))$. We record this observation in the following result.
	\begin{cor}\label{Cor4.22}
		$dt_t(AG(\mathcal{M}(X,\mathcal{A})))=2$.
	\end{cor}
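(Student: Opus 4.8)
The plan is to establish the two inequalities $dt_t(AG(\mathcal{M}(X,\mathcal{A}))) \geq 2$ and $dt_t(AG(\mathcal{M}(X,\mathcal{A}))) \leq 2$ separately, leaning heavily on the dominating set already produced in Theorem~\ref{Th4.21}.

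For the lower bound, I would first observe that no singleton can be a total dominating set. Indeed, since $AG(\mathcal{M}(X,\mathcal{A}))$ is a simple graph, in particular it has no self-loops, a vertex is never adjacent to itself; so if $\{v\}$ were a total dominating set, then $v \in \mathscr{D}(\mathcal{M}(X,\mathcal{A}))$ would have to be adjacent to some vertex of $\{v\}$, namely to $v$ itself, which is impossible. As the vertex set $\mathscr{D}(\mathcal{M}(X,\mathcal{A}))$ is non-empty under our standing assumption that $X$ is not an atom, this forces $dt_t \geq 2$.

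For the upper bound, I would exhibit an explicit total dominating set of cardinality two. Fix $A \in \mathcal{A}$ with $\mu(A) > 0$ and $\mu(X \setminus A) > 0$, which is possible since $X$ is not an atom, and take the pair $\{1_A, 1_{X\setminus A}\}$ already used in Theorem~\ref{Th4.21}. The proof of that theorem shows that \emph{every} $f \in \mathscr{D}(\mathcal{M}(X,\mathcal{A}))$ is adjacent in $AG$ to at least one of $1_A$ and $1_{X\setminus A}$, which already covers all vertices lying outside the set. The one extra point that total domination demands, and which distinguishes it from ordinary domination, is that the two members of the set be dominated by the set as well; for this it suffices to check that $1_A$ and $1_{X\setminus A}$ are adjacent to each other. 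Since $Z(1_A) = X \setminus A$ and $Z(1_{X\setminus A}) = A$, we have $\mu(Z(1_A) \setminus Z(1_{X\setminus A})) = \mu(X \setminus A) > 0$ and $\mu(Z(1_{X\setminus A}) \setminus Z(1_A)) = \mu(A) > 0$, so by Theorem~\ref{Th0.5} the vertices $1_A$ and $1_{X\setminus A}$ are indeed adjacent in $AG$. Hence each of them is adjacent to the other member of the set, and $\{1_A, 1_{X\setminus A}\}$ is a total dominating set of size two, giving $dt_t \leq 2$.

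Combining the two bounds yields $dt_t(AG(\mathcal{M}(X,\mathcal{A}))) = 2$. There is no serious obstacle here: the entire content beyond Theorem~\ref{Th4.21} is the mild but essential remark that, in contrast with ordinary domination, the dominating vertices must themselves be dominated, which reduces to verifying the single adjacency $1_A - 1_{X\setminus A}$ through Theorem~\ref{Th0.5}.
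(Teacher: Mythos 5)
Your proof is correct and takes essentially the same approach as the paper, which records this corollary as an observation that the dominating set $\{1_A,1_{X\setminus A}\}$ from Theorem~\ref{Th4.21} is in fact a \emph{total} dominating set; your explicit verification via Theorem~\ref{Th0.5} that $1_A$ and $1_{X\setminus A}$ are adjacent to each other is exactly the detail that observation rests on. The lower bound is likewise immediate, whether obtained from $dt_t\geq dt=2$ or, as you do, from the absence of self-loops.
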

	In Theorem~\ref{Th5.10}, we have seen that $\Gamma'_2(\mathcal{M}(X,\mathcal{A})) = AG(\mathcal{M}(X,\mathcal{A}))$  if and only if $X$ is partition into two atoms. We now show that these two graphs are identical if and only if they are isomorphic.
	
	\begin{thm}
		The graphs $\Gamma'_2(\mathcal{M}(X,\mathcal{A}))$ and $AG(\mathcal{M}(X,\mathcal{A}))$ are isomorphic if and only if $X$ is partitioned into two atoms.
	\end{thm}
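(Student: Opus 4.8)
The plan is to prove this equivalence by exploiting the fact that the diameter of a graph is a complete isomorphism invariant. The backward implication is immediate: if $X$ is partitioned into two atoms, then by Theorem~\ref{Th5.10} the two graphs $\Gamma'_2(\mathcal{M}(X,\mathcal{A}))$ and $AG(\mathcal{M}(X,\mathcal{A}))$ are in fact \emph{equal} (both being complete bipartite on $\mathscr{D}(\mathcal{M}(X,\mathcal{A}))$ with the same bipartition), hence trivially isomorphic.

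For the forward implication I would argue by comparing diameters. First I would record the two diameters already computed in the paper: by Corollary~\ref{cor4.5} every vertex of $AG(\mathcal{M}(X,\mathcal{A}))$ has eccentricity $2$, so $\mathrm{diam}(AG(\mathcal{M}(X,\mathcal{A})))=2$ unconditionally; and by the corollary to Theorem~\ref{Th2.6}, $\mathrm{diam}(\Gamma'_2(\mathcal{M}(X,\mathcal{A})))=2$ precisely when $X$ is partitioned into two atoms and equals $3$ otherwise. Now suppose $\psi\colon\mathscr{D}(\mathcal{M}(X,\mathcal{A}))\to\mathscr{D}(\mathcal{M}(X,\mathcal{A}))$ is a graph isomorphism from $\Gamma'_2(\mathcal{M}(X,\mathcal{A}))$ onto $AG(\mathcal{M}(X,\mathcal{A}))$. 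Since $\psi$ preserves adjacency in both directions, it preserves path lengths and hence distances, so $\mathrm{diam}(\Gamma'_2(\mathcal{M}(X,\mathcal{A})))=\mathrm{diam}(AG(\mathcal{M}(X,\mathcal{A})))=2$.

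It then remains only to invoke the diameter characterization in the reverse direction: from $\mathrm{diam}(\Gamma'_2(\mathcal{M}(X,\mathcal{A})))=2$ and the corollary to Theorem~\ref{Th2.6}, I conclude that $X$ must be partitioned into two atoms, completing the proof. The conceptual point worth emphasizing is that although $\Gamma'_2(\mathcal{M}(X,\mathcal{A}))$ sits inside $AG(\mathcal{M}(X,\mathcal{A}))$ as a proper subgraph whenever $X$ is not partitioned into two atoms (Theorem~\ref{Th5.10}), the \emph{global} metric structure already detects this gap: the supergraph $AG(\mathcal{M}(X,\mathcal{A}))$ is always of diameter $2$, whereas the subgraph acquires a vertex of eccentricity $3$ as soon as some $Z(f)$ fails to be an atom. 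I do not anticipate a genuine obstacle here; the only care needed is to be sure one cites the diameter results in both directions, since the forward implication relies on the \emph{converse} part of the corollary to Theorem~\ref{Th2.6} (that diameter $2$ forces a two-atom partition), which is exactly the content encoded there.
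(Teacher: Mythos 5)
Your proof is correct, and the forward direction is genuinely cleaner than the paper's. The paper also handles the backward direction via Theorem~\ref{Th5.10} exactly as you do, but for the forward direction it splits into two cases: when $X$ is partitioned into three atoms it argues as you do in spirit (a vertex $1_A$ of eccentricity $3$ in $\Gamma'_2(\mathcal{M}(X,\mathcal{A}))$ cannot map to a vertex of $AG(\mathcal{M}(X,\mathcal{A}))$, where every eccentricity is $2$ by Corollary~\ref{cor4.5}), but when $X$ is partitioned into neither two nor three atoms it switches to a different invariant, namely complementedness ($\Gamma'_2(\mathcal{M}(X,\mathcal{A}))$ is always complemented by Theorem~\ref{ComaxComp}, while $AG(\mathcal{M}(X,\mathcal{A}))$ fails to be complemented there by Theorem~\ref{Th5.20}). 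Your single appeal to the diameter dichotomy --- $\mathrm{diam}(AG(\mathcal{M}(X,\mathcal{A})))=2$ unconditionally versus $\mathrm{diam}(\Gamma'_2(\mathcal{M}(X,\mathcal{A})))=3$ whenever $X$ is not partitioned into two atoms --- absorbs both cases at once and avoids invoking the complementedness machinery entirely; the paper's case split is, in retrospect, unnecessary. Two small points of care: diameter is an isomorphism \emph{invariant} but not a \emph{complete} invariant (your argument only needs the former, so nothing breaks, but the phrasing should be fixed), and you should note explicitly that $\mathrm{diam}(AG(\mathcal{M}(X,\mathcal{A})))$ really is $2$ rather than $1$, i.e.\ that $AG(\mathcal{M}(X,\mathcal{A}))$ has a non-adjacent pair such as $f$ and $2f$ --- which is exactly what the paper's corollary records.
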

	\begin{proof}
		If $X$ is partitioned into two atoms, then by Theorem~\ref{Th5.10}, these two graph are equal and so, they are isomorphic.\\
		If $X$ is not partitioned into two atoms then two cases may arise; either $X$ is partitioned into three atoms or $X$ is $X$ is not partitioned into three atoms. First we assume that $X$ is partitioned into three atoms, say $X=A\sqcup B\sqcup C$ where $A,B,C$ are atoms. If possible let $\psi$ be a graph isomorphism between the graphs $\Gamma'_2(\mathcal{M}(X,\mathcal{A}))$ and $AG(\mathcal{M}(X,\mathcal{A}))$. Now $1_A$ is a vertex in $\Gamma'_2(\mathcal{M}(X,\mathcal{A}))$ and $ecc(1_A)=3$ in $\Gamma'_2(\mathcal{M}(X,\mathcal{A}))$ (by Theorem~\ref{Th2.6}), because $Z(1_A)=B\sqcup C$, is not an atom. Since $\psi$ is a graph isomorphism, $ecc(\psi(1_A))=3$ in $AG(\mathcal{M}(X,\mathcal{A}))$, which contradicts the fact that $ecc(f)=2$ for every vertex $f$ in $AG(\mathcal{M}(X,\mathcal{A}))$ (Corollary \ref{cor4.5}). Therefore, there does not exist any graph isomorphism between these two graphs.\\
		Finally, let $X$ be neither partitioned into two atoms nor into three atoms. Then $\Gamma'_2(\mathcal{M}(X,\mathcal{A}))$ is a complemented graph (by Theorem~\ref{ComaxComp}), though $AG(\mathcal{M}(X,\mathcal{A}))$ is not complemented. Thus, they are not isomorphic as graphs.
	\end{proof}
	\begin{cor}
		$\Gamma(\mathcal{M}(X,\mathcal{A}))$ and $AG(\mathcal{M}(X,\mathcal{A}))$ are isomorphic if and only if $X$ is partitioned into two atoms.
	\end{cor}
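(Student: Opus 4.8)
The plan is to prove both implications by exploiting the fact that the adjacency relations of $\Gamma(\mathcal{M}(X,\mathcal{A}))$ and $\Gamma'_2(\mathcal{M}(X,\mathcal{A}))$ are interchanged under the complementation $Z(f)\leftrightarrow X\setminus Z(f)$ of zero sets: two vertices $f,g$ are adjacent in $\Gamma$ precisely when $\mu((X\setminus Z(f))\cap(X\setminus Z(g)))=0$, whereas they are adjacent in $\Gamma'_2$ precisely when $\mu(Z(f)\cap Z(g))=0$. Consequently, every distance, eccentricity and common-neighbour computation carried out for $\Gamma'_2$ in Section~\ref{Sec3} (Theorems~\ref{Th2.3},~\ref{Lem2.1} and~\ref{Th2.6}) has an exact analogue for $\Gamma$ obtained by swapping the roles of $Z(f)$ and $X\setminus Z(f)$. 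In particular the $\Gamma$-version of Theorem~\ref{Th2.6} reads: $ecc_\Gamma(f)=2$ if $X\setminus Z(f)$ is an atom, and $ecc_\Gamma(f)=3$ otherwise.

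For the sufficiency, if $X$ is partitioned into two atoms then Theorem~\ref{Th5.10} gives $\Gamma(\mathcal{M}(X,\mathcal{A}))=AG(\mathcal{M}(X,\mathcal{A}))$, and equal graphs are trivially isomorphic; this half requires nothing further.

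For the necessity I would argue by contraposition, using the diameter as an isomorphism invariant. Assume $X$ is not partitioned into two atoms. I would first observe that there must then exist a measurable set $A$ with $\mu(A)>0$ and $\mu(X\setminus A)>0$ which is \emph{not} an atom: otherwise, writing $X=P\sqcup Q$ with $\mu(P),\mu(Q)>0$ (possible since $X$ is not an atom), both $P$ and $Q$ would have positive measure and positive co-measure, hence be atoms, forcing a two-atom partition. Taking $f=1_A$, we have $X\setminus Z(f)=A$, which is not an atom, so the $\Gamma$-analogue of Theorem~\ref{Th2.6} yields $ecc_\Gamma(1_A)=3$ and thus $diam(\Gamma(\mathcal{M}(X,\mathcal{A})))=3$. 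Since $ecc(f)=2$ for every vertex of $AG(\mathcal{M}(X,\mathcal{A}))$ by Corollary~\ref{cor4.5}, we have $diam(AG(\mathcal{M}(X,\mathcal{A})))=2$. As isomorphic graphs have equal diameters, $\Gamma$ and $AG$ cannot be isomorphic, completing the contrapositive. To keep the eccentricity claim self-contained one can exhibit a witness directly: split $A=A_1\sqcup A_2$ into sets of positive measure and set $g=1_{X\setminus A_1}$; then $\mu((X\setminus Z(f))\cap(X\setminus Z(g)))=\mu(A_2)>0$, so $f,g$ are non-adjacent in $\Gamma$, while $\mu(Z(f)\cap Z(g))=\mu((X\setminus A)\cap A_1)=0$, so by the $\Gamma$-form of Theorem~\ref{Lem2.1} no vertex is adjacent to both. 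Hence $d_\Gamma(1_A,1_{X\setminus A_1})>2$, which already forces $diam(\Gamma)>2$.

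The main obstacle is that the excerpt develops eccentricity, distance and complementedness only for $\Gamma'_2$ and $AG$, not for $\Gamma$, so the crux is to legitimately transport those facts to $\Gamma$. The complementation symmetry above supplies exactly this, and the one point needing care is that this symmetry is a statement about measures of zero sets (it delivers the $\Gamma$-analogues of the distance formulas) and not a literal automorphism of vertices, since the map $f\mapsto 1_{Z(f)}$ is far from injective on $\mathscr{D}(\mathcal{M}(X,\mathcal{A}))$. I note that the same conclusion can alternatively be reached by mirroring the preceding theorem case by case (a three-atom partition produces a vertex of eccentricity $3$ in $\Gamma$ against all eccentricities $2$ in $AG$, while the remaining case contrasts the complementedness of $\Gamma$ with the non-complementedness of $AG$ from Theorem~\ref{Th5.20}); the diameter argument simply packages both cases into a single invariant.
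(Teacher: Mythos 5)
Your argument is correct. The sufficiency half coincides with the paper's (equality of the graphs via Theorem~\ref{Th5.10}), but your necessity argument takes a genuinely different route: the paper proves the preceding theorem (and implicitly this corollary, which is stated without proof) by splitting into two cases --- a three-atom partition, handled by comparing the eccentricity $3$ of a suitable vertex against Corollary~\ref{cor4.5}, and the remaining case, handled by contrasting complementedness of the comaximal/zero-divisor graph with the non-complementedness of $AG(\mathcal{M}(X,\mathcal{A}))$ from Theorem~\ref{Th5.20} --- whereas you collapse both cases into the single invariant $diam(\Gamma)=3>2=diam(AG)$, using the existence of a non-atom $A$ with $\mu(A)>0$ and $\mu(X\setminus A)>0$ whenever $X$ is not partitioned into two atoms. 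Your version is more economical and avoids invoking Theorem~\ref{Th5.20} altogether; the paper's version isolates \emph{which} structural property fails in each regime, which is of some independent interest. Two points you handle well and that do need the care you give them: the $\Gamma$-analogues of Theorem~\ref{Lem2.1} and Theorem~\ref{Th2.6} are not literally in the paper and must be re-derived by swapping $Z(f)$ with $X\setminus Z(f)$ (your explicit witness $g=1_{X\setminus A_1}$ with $\mu(Z(f)\cap Z(g))=0$ and non-adjacency makes the only direction actually needed, $d_\Gamma(1_A,g)\geq 3$, self-contained); and the existence of the required non-atom $A$ does rely on the paper's standing assumption that $X$ itself is not an atom, which you correctly use.
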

	
	\section{\bf The weakly zero-divisor graph $W\Gamma(\mathcal{M}(X,\mathcal{A}))$ of $\mathcal{M}(X,\mathcal{A})$}\label{Sec6}
	The weakly zero-divisor graph $W\Gamma(\mathcal{M}(X,\mathcal{A}))$ (in short $W\Gamma$) of $\mathcal{M}(X,\mathcal{A})$, is defined as a graph with $\mathscr{D}(\mathcal{M}(X,\mathcal{A}))$ as the set of vertices and two vertices $f,g$ are adjacent if there exist $h_1\in ann(f)\cap\mathscr{D} (\mathcal{M}(X,\mathcal{A}))$ and $h_2\in ann(g)\cap \mathscr{D}(\mathcal{M}(X,\mathcal{A}))$ such that $h_1.h_2\equiv 0$ a.e. on $X$. \\
	
	The adjacency relation in $W\Gamma(\mathcal{M}(X,\mathcal{A}))$ is interpreted via the measure $\mu$ as follows:
	
	\begin{thm}\label{Th6.2}
		Let $f,g\in\mathscr{D}(\mathcal{M}(X,\mathcal{A}))$. In $W\Gamma(\mathcal{M}(X,\mathcal{A}))$,
		\begin{enumerate}
			\item if $\mu(Z(f)\triangle Z(g))>0$, then $f,g$ are adjacent.\label{6.2.1}
			\item if $\mu(Z(f)\triangle Z(g))=0$ and $Z(f)$ is not an atom, then $f,g$ are adjacent.
			\item if $\mu(Z(f)\triangle Z(g))=0$ and $Z(f)$ is an atom, then $f,g$ are not adjacent.\label{6.2.3}
		\end{enumerate}
	\end{thm}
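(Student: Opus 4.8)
The plan is to first translate the graph-theoretic adjacency into a purely measure-theoretic statement about zero sets, and then dispatch the three cases by exhibiting (or forbidding) suitable witnesses $h_1,h_2$.

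First I would record the following reformulation. Writing $S(h)=X\setminus Z(h)$ for the essential support of $h$, the definition of the annihilator gives $h\in ann(f)$ if and only if $\mu(S(f)\cap S(h))=0$, i.e. $\mu(S(h)\setminus Z(f))=0$, so that $S(h)$ sits inside $Z(f)$ up to a null set. Since $f\in\mathscr{D}$ forces $\mu(S(f))>0$, any such $h$ automatically satisfies $\mu(Z(h))\geq\mu(S(f))>0$; consequently $h\in ann(f)\cap\mathscr{D}$ holds precisely when $S(h)$ is a positive-measure subset of $Z(f)$ modulo null sets. Hence $f,g$ are adjacent in $W\Gamma(\mathcal{M}(X,\mathcal{A}))$ exactly when there exist measurable $T_1\subseteq Z(f)$ and $T_2\subseteq Z(g)$ (up to null sets) with $\mu(T_1),\mu(T_2)>0$ and $\mu(T_1\cap T_2)=0$: taking $h_i=1_{T_i}$ realises the witnesses, and conversely the supports of any witnesses provide such $T_1,T_2$.

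With this reformulation, parts (\ref{6.2.1}) and (2) become explicit constructions. For (\ref{6.2.1}), since $\mu(Z(f)\triangle Z(g))>0$, at least one of $Z(f)\setminus Z(g)$ and $Z(g)\setminus Z(f)$ has positive measure; say $\mu(Z(f)\setminus Z(g))>0$. Then $T_1=Z(f)\setminus Z(g)\subseteq Z(f)$ and $T_2=Z(g)$ are disjoint and of positive measure (the latter because $g\in\mathscr{D}$), so $h_1=1_{Z(f)\setminus Z(g)}$ and $h_2=1_{Z(g)}$ witness adjacency; the other subcase is symmetric. For (2), $\mu(Z(f)\triangle Z(g))=0$ means $Z(f)=Z(g)$ up to a null set, and $Z(f)$ not being an atom furnishes a splitting $Z(f)=A_1\sqcup A_2$ with $\mu(A_1),\mu(A_2)>0$. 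Then $T_1=A_1\subseteq Z(f)$ and $T_2=A_2\subseteq Z(g)$ (the second inclusion up to null sets, using $\mu(Z(f)\setminus Z(g))=0$) are disjoint of positive measure, so $h_1=1_{A_1}$, $h_2=1_{A_2}$ witness adjacency.

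For part (\ref{6.2.3}) I would argue by contradiction, and this is where the atom hypothesis does the work. Suppose $f,g$ were adjacent; by the reformulation there are positive-measure sets $T_1\subseteq Z(f)$ and $T_2\subseteq Z(g)$ with $\mu(T_1\cap T_2)=0$, and since $\mu(Z(f)\triangle Z(g))=0$ both $T_1,T_2$ lie inside $Z(f)$ up to null sets. As $Z(f)$ is an atom, every positive-measure measurable subset of $Z(f)$ is co-null in it, so $\mu(Z(f)\setminus T_1)=0=\mu(Z(f)\setminus T_2)$; hence $\mu(Z(f)\setminus(T_1\cap T_2))=0$ and therefore $\mu(T_1\cap T_2)\geq\mu(Z(f))>0$, contradicting $\mu(T_1\cap T_2)=0$. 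Thus no witnesses exist and $f,g$ are non-adjacent. The main obstacle, I expect, is not any single deep step but the disciplined bookkeeping ``up to null sets'': in (2) and (\ref{6.2.3}) one must transport the support inclusions from $Z(g)$ to $Z(f)$ via $\mu(Z(f)\triangle Z(g))=0$, and in (\ref{6.2.3}) one must invoke the correct form of the atom property. Once the support reformulation of the first step is in hand, the three cases are short.
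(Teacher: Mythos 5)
Your proposal is correct and follows essentially the same route as the paper: the witnesses in parts (1) and (2) are the very characteristic functions the paper uses ($1_{Z(f)\setminus Z(g)}$, $1_{Z(g)}$ and $1_{A_1}$, $1_{A_2}$), and part (3) is the paper's contradiction argument via the supports of the witnesses, merely repackaged through your upfront ``support reformulation'' of adjacency (the paper instead splits $Z(f)$ explicitly into two positive-measure pieces, while you note both supports must be co-null in the atom and hence overlap). The reformulation is a clean organizing device but not a different proof.
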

	\begin{proof}
		\hspace*{3cm}
		\begin{enumerate}
			\item Since $\mu(Z(f)\triangle Z(g))>0$, either $\mu(Z(f)\setminus Z(g))>0$ or $\mu(Z(g)\setminus Z(f))>0$. Without loss of generality let $\mu(Z(f)\setminus Z(g))>0$. Consider $h_1=1_{Z(f)\setminus Z(g)}$ and $h_2=1_{Z(g)}$. Then $h_1,h_2\in\mathscr{D}$ and $h_1.f=0= h_2.g$ on $X$. So, $h_1\in ann(f)\cap\mathscr{D}$ and $h_2\in ann(g)\cap\mathscr{D}$. Also $h_1.h_2 = 0$ on $X$. Hence, $f,g$ are adjacent in $W\Gamma$.
			\item Since $Z(f)$ is not an atom, $Z(f)=A\sqcup B$ for some $A,B\in\mathcal{A}$ with $\mu(A),\mu(B)>0$. Clearly $1_A,1_B\in\mathscr{D}$ and $1_A.f= 0= 1_B.f$ on $X$ ; i.e., $1_A,1_B\in ann(f)$. Since $\mu(Z(f)\triangle Z(g))=0$,  by Corollary \ref{Lem6.1}, $ann(f)=ann(g)\implies 1_B\in ann(g)$. Also, $1_A.1_B= 0$ on $X\implies f,g$ are adjacent in $W\Gamma$.
			\item If possible let $f$ and $g$ be adjacent. Then there exist $h_1\in ann(f)\cap\mathscr{D}$ and $h_2\in ann(g)\cap \mathscr{D}$ such that $\mu(X\setminus Z(h_1)\cap X\setminus Z(h_2))=0$. Since $\mu(Z(f)\triangle Z(g))=0$, by Lemma \ref{Lem6.1}, $ann(f)=ann(g)$. Therefore $h_1,h_2\in ann(f)$; i.e., there exists a zero measurable set $E\in\mathcal{A}$ such that $X\setminus Z(h_1)\cap X\setminus E \subset Z(f)$ and $X\setminus Z(h_2) \cap X \setminus E\subset Z(f)$. Let $A=X\setminus Z(h_1)\cap X\setminus E$ and $B=Z(f)\setminus A$. Clearly, $A,B\in\mathcal{A}$ and $Z(f)=A\sqcup B$. Since $h_1\in\mathscr{D}$ and $\mu(E)=0$, $\mu(A)>0$. Again, $(X\setminus E\cap X\setminus Z(h_2))\setminus(X\setminus Z(h_1)\cap X\setminus Z(h_2))\subset B$. Since $h_2\in\mathscr{D}$ and $\mu(X\setminus Z(h_1)\cap X\setminus Z(h_2))=0=\mu(E)$, $\mu(B)>0$. This contradicts the hypothesis that $Z(f)$ is an atom. 
		\end{enumerate}
	\end{proof}
	From Theorem \ref{Th6.2}, it follows that $f,f$ are adjacent if and only if $Z(f)$ is not an atom. In other words,
	\begin{cor}
		$f\in\mathscr{D}(\mathcal{M}(X,\mathcal{A}))$ is self-adjacent in $W\Gamma(\mathcal{M}(X,\mathcal{A}))$ if and only if $Z(f)$ is not an atom.
	\end{cor}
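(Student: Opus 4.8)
The plan is to deduce this directly from Theorem~\ref{Th6.2} by specializing to the case $g=f$. The key observation is that $Z(f)\triangle Z(f)=\emptyset$, so the hypothesis $\mu(Z(f)\triangle Z(g))=0$ is automatically satisfied when $g=f$; consequently only parts~(\ref{6.2.1}') and~(\ref{6.2.3}) of Theorem~\ref{Th6.2}, namely the two cases with $\mu(Z(f)\triangle Z(g))=0$, are relevant to the question of whether $f$ is adjacent to itself in $W\Gamma(\mathcal{M}(X,\mathcal{A}))$.

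First I would handle the forward direction by contraposition, or equivalently establish both implications at once from the dichotomy in Theorem~\ref{Th6.2}. If $Z(f)$ is not an atom, then applying part~(2) of Theorem~\ref{Th6.2} with $g=f$ shows that $f,f$ are adjacent; concretely, writing $Z(f)=A\sqcup B$ with $\mu(A),\mu(B)>0$ gives $1_A,1_B\in ann(f)\cap\mathscr{D}(\mathcal{M}(X,\mathcal{A}))$ with $1_A\cdot 1_B\equiv 0$, which is precisely the witness required for self-adjacency. Conversely, if $Z(f)$ is an atom, then part~(3) of Theorem~\ref{Th6.2} with $g=f$ yields that $f,f$ are not adjacent, so $f$ is not self-adjacent.

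Putting these two cases together establishes the stated equivalence. I do not expect any genuine obstacle here, since all the substantive work has already been carried out in Theorem~\ref{Th6.2}; the corollary is merely the diagonal instance $g=f$ of that theorem. The only point worth flagging is a conceptual one, namely that the adjacency condition defining $W\Gamma(\mathcal{M}(X,\mathcal{A}))$ remains meaningful when the two vertices coincide, because $h_1$ and $h_2$ are drawn from $ann(f)\cap\mathscr{D}(\mathcal{M}(X,\mathcal{A}))$ and need not be distinct from one another or from $f$; this is exactly what makes self-loops a sensible notion in this graph and justifies speaking of $f$ being self-adjacent.
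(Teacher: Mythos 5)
Your proposal is correct and matches the paper's own derivation: the paper obtains this corollary precisely by specializing Theorem~\ref{Th6.2} to the diagonal case $g=f$, where $\mu(Z(f)\triangle Z(f))=0$ holds trivially and parts (2) and (3) give the two directions of the equivalence. Nothing further is needed.
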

	
	To make $W\Gamma(\mathcal{M}(X,\mathcal{A}))$ a simple graph, we redefine the vertex set of the graph as $\mathscr{D}'(\mathcal{M}(X,\mathcal{A}))= \{f\in\mathscr{D}(\mathcal{M}(X,\mathcal{A})):Z(f)\text{ is an atom}\}$. If $\mu$ is non-atomic, then $\mathcal{A}$ does not contain any atom and hence $W\Gamma(\mathcal{M}(X,\mathcal{A}))$ is an empty graph. For the non-emptiness of $W\Gamma(\mathcal{M}(X,\mathcal{A}))$, we assume that $\mathcal{A}$ contains atleast one atom $A$ with $\mu(X\setminus A)>0$. Then the adjacency relation between two vertices in $\mathscr{D}'(\mathcal{M}(X,\mathcal{A}))$ takes the form as described in the next theorem.
	\begin{thm}\label{Th6.4}
		Let $f,g\in\mathscr{D}'(\mathcal{M}(X,\mathcal{A}))$. Then $f,g$ are adjacent in $W\Gamma(\mathcal{M}(X,\mathcal{A}))$ if and only if $\mu(Z(f)\triangle Z(g))>0$. 
	\end{thm}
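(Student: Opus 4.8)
The plan is to obtain this statement as an immediate specialization of Theorem~\ref{Th6.2} to the restricted vertex set $\mathscr{D}'(\mathcal{M}(X,\mathcal{A}))$, on which, by its very definition, every vertex has an atomic zero set. The three-way classification of Theorem~\ref{Th6.2} should collapse into the clean dichotomy asserted here once we discard the intermediate case.

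First I would dispose of the sufficiency. If $\mu(Z(f)\triangle Z(g))>0$, then Theorem~\ref{Th6.2}(\ref{6.2.1}) already guarantees that $f,g$ are adjacent in $W\Gamma(\mathcal{M}(X,\mathcal{A}))$, without any hypothesis on whether the zero sets are atoms. Hence this direction requires no additional argument beyond citing the earlier result.

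For the necessity I would argue by contraposition. Suppose $\mu(Z(f)\triangle Z(g))=0$. Since $f\in\mathscr{D}'(\mathcal{M}(X,\mathcal{A}))$, the zero set $Z(f)$ is an atom, so the hypotheses of Theorem~\ref{Th6.2}(\ref{6.2.3}) are exactly met, and that part yields that $f,g$ are \emph{not} adjacent in $W\Gamma(\mathcal{M}(X,\mathcal{A}))$. Therefore adjacency of $f,g$ forces $\mu(Z(f)\triangle Z(g))>0$, which is precisely the contrapositive of what we need.

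I do not anticipate any genuine obstacle here. The one point worth recording explicitly is that the middle case of Theorem~\ref{Th6.2}, namely part (2), can never occur for vertices of $W\Gamma(\mathcal{M}(X,\mathcal{A}))$: it would require $\mu(Z(f)\triangle Z(g))=0$ together with $Z(f)$ \emph{not} an atom, contradicting $f\in\mathscr{D}'(\mathcal{M}(X,\mathcal{A}))$. Thus restricting to $\mathscr{D}'(\mathcal{M}(X,\mathcal{A}))$ removes exactly the ambiguous case and leaves the equivalence $f\sim g \iff \mu(Z(f)\triangle Z(g))>0$.
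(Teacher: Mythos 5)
Your proposal is correct and is exactly the argument the paper intends: Theorem~\ref{Th6.4} is stated there without proof as an immediate consequence of Theorem~\ref{Th6.2}, with part (1) giving sufficiency, part (3) giving necessity by contraposition, and part (2) ruled out on $\mathscr{D}'(\mathcal{M}(X,\mathcal{A}))$ since every vertex there has an atomic zero set.
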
 
	Consider the equivalence relation $\sim$ on $\mathscr{D}'(\mathcal{M}(X,\mathcal{A}))$ given by: ``$f\sim g$ if and only if $\mu(Z(f)\triangle Z(g))=0$''. For each $f\in \mathscr{D}'(\mathcal{M}(X,\mathcal{A}))$, let $[f]$ denote the equivalence class of $f$ under the relation $\sim$ on $\mathscr{D}'(\mathcal{M}(X,\mathcal{A}))$. From Theorem \ref{Th6.4}, it follows that 
	\begin{enumerate}
		\item for each $f\in\mathscr{D}'(\mathcal{M}(X,\mathcal{A}))$, $[f]$ is a stable set in $W\Gamma(\mathcal{M}(X,\mathcal{A}))$.
		\item if $[f],[g]$ are distinct classes, then for all $f_1\in[f]$ and for all $g_1\in[g]$, $f_1,g_1$ are adjacent in $W\Gamma(\mathcal{M}(X,\mathcal{A}))$.
	\end{enumerate}
	If $W$ is the collection of distinct class representatives under $\sim$ on $\mathscr{D}'(\mathcal{M}(X,\mathcal{A}))$, then we get the following theorem.
	\begin{thm}\label{Th6.5}
		$W\Gamma(\mathcal{M}(X,\mathcal{A}))$ is a complete $|W|$-partite graph. 
	\end{thm}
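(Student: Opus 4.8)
The plan is to read off the conclusion directly from the definition of a complete $\alpha$-partite graph together with the two structural consequences of Theorem~\ref{Th6.4} recorded just above the statement. Recall that a graph is complete $\alpha$-partite precisely when its vertex set decomposes into $\alpha$ pairwise disjoint stable sets such that any two vertices lying in two \emph{different} stable sets are adjacent. Thus it suffices to exhibit such a decomposition of $\mathscr{D}'(\mathcal{M}(X,\mathcal{A}))$ indexed by $W$.

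First I would observe that $\sim$ is an equivalence relation on $\mathscr{D}'(\mathcal{M}(X,\mathcal{A}))$, so it partitions the vertex set into its equivalence classes. Since $W$ is, by construction, a collection of distinct class representatives, the family $\{[w] : w \in W\}$ consists of exactly $|W|$ classes, they are pairwise disjoint, and their union is all of $\mathscr{D}'(\mathcal{M}(X,\mathcal{A}))$. This furnishes a partition of the vertex set into $|W|$ blocks.

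Next I would invoke the first consequence of Theorem~\ref{Th6.4}, namely that each class $[w]$ is a stable set in $W\Gamma(\mathcal{M}(X,\mathcal{A}))$; this shows that the partition above is a partition into stable sets, so $W\Gamma(\mathcal{M}(X,\mathcal{A}))$ is $|W|$-partite. Finally I would apply the second consequence: whenever $[f]$ and $[g]$ are distinct classes, every $f_1 \in [f]$ is adjacent to every $g_1 \in [g]$. Since any two vertices drawn from two different blocks of the partition necessarily lie in two distinct classes, they are adjacent, which is exactly the completeness condition. Hence $W\Gamma(\mathcal{M}(X,\mathcal{A}))$ is complete $|W|$-partite.

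There is essentially no serious obstacle here: the whole content has been front-loaded into Theorem~\ref{Th6.4} and the two itemized observations following it, so the argument is a bookkeeping verification of the definition. The only point demanding a little care is ensuring that $W$ indexes the classes bijectively, that is, that distinct representatives give distinct and therefore disjoint classes, so that the number of blocks is genuinely $|W|$ and no two blocks coincide; this is immediate from $W$ being a set of \emph{distinct} class representatives under $\sim$.
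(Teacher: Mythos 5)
Your proposal is correct and coincides with the paper's own (implicit) argument: the theorem is stated as an immediate consequence of the two itemized facts following Theorem~\ref{Th6.4}, namely that each class $[f]$ is stable and that any two vertices from distinct classes are adjacent, which is exactly the verification you carry out.
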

	It is known that if $A\in\mathcal{A}$ is an atom, then $\mu(X\setminus A)>0$. Moreover, two atoms $A,B\in\mathcal{A}$ are said to be distinct if $\mu(A\triangle B)>0$. So, $|W|=$ number of distinct atoms in $\mathcal{A}$. This observation leads to the following theorem. 
	\begin{thm}
		$W\Gamma(\mathcal{M}(X,\mathcal{A}))$ is a complete bipartite graph if and only if $\mathcal{A}$ contains exactly two distinct atoms.
	\end{thm}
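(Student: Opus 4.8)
The plan is to reduce the entire statement to counting the parts $|W|$ of the complete multipartite structure already obtained in Theorem~\ref{Th6.5}. By the observation recorded immediately before the statement, $|W|$ equals the number of distinct atoms of $\mathcal{A}$ (where two atoms $A,B$ are distinct iff $\mu(A\triangle B)>0$). Hence the theorem is equivalent to the purely graph-theoretic assertion that the complete $|W|$-partite graph $W\Gamma(\mathcal{M}(X,\mathcal{A}))$ is complete bipartite precisely when $|W|=2$. So after invoking Theorem~\ref{Th6.5} and the observation, everything comes down to showing ``complete $|W|$-partite is complete bipartite $\iff |W|=2$''.

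For the sufficiency I would argue directly: if $\mathcal{A}$ has exactly two distinct atoms then $|W|=2$, and a complete $2$-partite graph is by definition a complete bipartite graph. Concretely, writing $W=\{f_1,f_2\}$, the two classes $[f_1],[f_2]$ are nonempty, each is a stable set (by Theorem~\ref{Th6.4}, vertices in the same class have symmetric difference of measure zero, so are non-adjacent), and every cross pair $f\in[f_1]$, $g\in[f_2]$ satisfies $\mu(Z(f)\triangle Z(g))>0$ and is therefore adjacent by Theorem~\ref{Th6.4}. This is exactly a complete bipartite graph.

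For the necessity, suppose $W\Gamma(\mathcal{M}(X,\mathcal{A}))$ is complete bipartite. First I would exclude $|W|=1$: since $\mathcal{A}$ contains an atom $A$ with $\mu(X\setminus A)>0$, the function $1_{X\setminus A}$ has zero set $A$, an atom, so $1_{X\setminus A}\in\mathscr{D}'(\mathcal{M}(X,\mathcal{A}))$ and $W\Gamma$ is nonempty; but a complete bipartite graph (with both parts nonempty) has at least one edge, whereas $|W|=1$ would make $W\Gamma$ a single stable set with no edges. Next I would exclude $|W|\ge 3$: choosing representatives $f_1,f_2,f_3$ of three distinct classes, Theorem~\ref{Th6.4} forces $f_1,f_2,f_3$ to be pairwise adjacent (their zero sets are pairwise non-equivalent atoms), producing a triangle, which is impossible in a bipartite graph. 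Thus $|W|=2$, i.e.\ $\mathcal{A}$ has exactly two distinct atoms.

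The only delicate point—the ``hard part'', such as it is—is the bookkeeping translating the measure-theoretic notion of \emph{distinct atoms} (positive-measure symmetric difference) into the combinatorial notion of \emph{distinct parts}. This dictionary is precisely supplied by Theorem~\ref{Th6.4} together with the ensuing observation, which identify non-adjacency in $W\Gamma(\mathcal{M}(X,\mathcal{A}))$ with the equivalence relation $\sim$ and the parts with the atoms of $\mathcal{A}$. Once that identification is in hand, the remaining content is the standard fact that a complete multipartite graph is complete bipartite exactly when it has two parts, and no genuinely new estimate is required.
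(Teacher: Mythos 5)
Your proposal is correct and follows essentially the same route as the paper: the paper derives this theorem directly from Theorem~\ref{Th6.5} together with the observation that $|W|$ equals the number of distinct atoms, which is exactly the reduction you make. Your explicit handling of the cases $|W|=1$ (no edges) and $|W|\geq 3$ (a triangle) just fills in details the paper leaves implicit.
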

	\begin{thm}
		If $\mathcal{A}$ contains atleast three distinct atoms, then the following hold:
		\begin{enumerate}
			\item $W\Gamma(\mathcal{M}(X,\mathcal{A}))$ is triangulated.
			\item $W\Gamma(\mathcal{M}(X,\mathcal{A}))$ is hypertriangulated.
			\item The girth of $W\Gamma(\mathcal{M}(X,\mathcal{A}))$ is $3$.
			\item No pair of vertices in $\mathscr{D}'(\mathcal{M}(X,\mathcal{A}))$ are orthogonal in $W\Gamma(\mathcal{M}(X,\mathcal{A}))$.
			\item $W\Gamma(\mathcal{M}(X,\mathcal{A}))$ is not a complemented graph.
		\end{enumerate}
	\end{thm}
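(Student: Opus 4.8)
The plan is to exploit the structural description of $W\Gamma(\mathcal{M}(X,\mathcal{A}))$ as a complete $|W|$-partite graph furnished by Theorem~\ref{Th6.5}, together with the adjacency criterion of Theorem~\ref{Th6.4}. Under the hypothesis $\mathcal{A}$ has at least three distinct atoms, so $|W|\geq 3$; each part is a stable class $[f]$ indexed by the atom equivalent to $Z(f)$, and two vertices are adjacent precisely when $\mu(Z(f)\triangle Z(g))>0$, that is, when their zero sets lie in different parts. All five assertions should then reduce to elementary manipulations with three pairwise distinct atoms, and indeed the underlying reason behind every item is simply that a complete $k$-partite graph with $k\geq 3$ contains a triangle through every vertex and along every edge.

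For items (1) and (2) I would manufacture a vertex in a third part. Given a vertex $f$ whose zero set is equivalent to an atom $A_1$, I would pick two further distinct atoms $A_2,A_3$ and set $g=1_{X\setminus A_2}$ and $h=1_{X\setminus A_3}$, so that $Z(g)=A_2$ and $Z(h)=A_3$. The point to verify is that $g,h\in\mathscr{D}'(\mathcal{M}(X,\mathcal{A}))$: their zero sets are atoms by construction, and they are genuine zero-divisors because $\mu(A_i)>0$ while $\mu(X\setminus A_i)>0$, the latter using the standing assumption that $X$ is not an atom. Since $A_1,A_2,A_3$ are pairwise distinct, the symmetric differences have positive measure, so by Theorem~\ref{Th6.4} the triple $f,g,h$ is pairwise adjacent; thus $f-g-h-f$ is a triangle through $f$, proving (1). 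For (2), given an arbitrary edge $f-g$, the parts of $f$ and $g$ are distinct atoms $A_1,A_2$; choosing a third distinct atom $A_3$ and $h=1_{X\setminus A_3}$ produces a vertex adjacent to both endpoints, so the edge lies on a triangle and the graph is hypertriangulated.

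Item (3) is then immediate: the graph is simple, so its girth is at least $3$, while the triangles produced in (1) force the girth to be at most $3$; hence $gr(W\Gamma(\mathcal{M}(X,\mathcal{A})))=3$. For (4) I would recall that $f\perp g$ means $f,g$ are adjacent with $c(f,g)>3$; but (2) shows every edge sits on a triangle, so $c(f,g)=3$ for every adjacent pair, and no two vertices are orthogonal. Finally (5) follows directly from (4): a complemented graph requires each vertex to admit an orthogonal partner, which is impossible once the graph (non-empty, since there are at least three atoms) contains no orthogonal pairs at all.

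I do not anticipate a serious obstacle here. The only points demanding genuine care are confirming that each auxiliary function $1_{X\setminus A_i}$ really belongs to the vertex set $\mathscr{D}'(\mathcal{M}(X,\mathcal{A}))$ and that distinctness of the chosen atoms yields positive measure of the relevant symmetric differences; both are routine given the conventions of Section~\ref{Sec2}. The main conceptual work has already been done in Theorems~\ref{Th6.4} and~\ref{Th6.5}, so the remainder is a clean deduction from the complete multipartite structure.
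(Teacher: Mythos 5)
Your proof is correct and follows exactly the route the paper intends: the theorem is presented as an immediate consequence of the complete $|W|$-partite structure from Theorems~\ref{Th6.4} and~\ref{Th6.5} with $|W|\geq 3$, which is precisely what you exploit. The details you flag as needing care (membership of $1_{X\setminus A_i}$ in $\mathscr{D}'(\mathcal{M}(X,\mathcal{A}))$, positivity of the symmetric differences, and non-emptiness of the graph for item (5)) are handled correctly.
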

	We list the values of certain important graph parameters of $W\Gamma(\mathcal{M}(X,\mathcal{A}))$ in the following theorem:
	\begin{thm}
		\hspace*{3cm}
		\begin{enumerate}
			\item The dominating number of  $W\Gamma(\mathcal{M}(X,\mathcal{A}))$ is $2$.
			\item The chromatic number of $W\Gamma(\mathcal{M}(X,\mathcal{A}))$ is $|W|$.
			\item The clique number of $W\Gamma(\mathcal{M}(X,\mathcal{A}))$ is $|W|$. 
		\end{enumerate}
	\end{thm}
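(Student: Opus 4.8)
The plan is to read off all three parameters from the structural description already established, rather than argue from scratch. By Theorem~\ref{Th6.5}, $W\Gamma(\mathcal{M}(X,\mathcal{A}))$ is a complete $|W|$-partite graph whose parts are the equivalence classes $[f]$, $f\in\mathscr{D}'(\mathcal{M}(X,\mathcal{A}))$, under $f\sim g\iff\mu(Z(f)\triangle Z(g))=0$; by Theorem~\ref{Th6.4}, two functions in the same part are non-adjacent and two functions in different parts are adjacent. The only extra structural fact I would record first is that every part contains at least two vertices: if $Z(f)=A$ is an atom, then $c\cdot 1_{X\setminus A}\in\mathscr{D}'(\mathcal{M}(X,\mathcal{A}))$ for every non-zero scalar $c$ (its zero set is exactly $A$, with $\mu(A),\mu(X\setminus A)>0$), and these all lie in $[f]$ and are pairwise distinct as functions, so each part is in fact infinite.

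For the clique number, I would observe that any complete subgraph meets each part in at most one vertex, since distinct vertices of a single part are non-adjacent; hence $cl(W\Gamma(\mathcal{M}(X,\mathcal{A})))\le|W|$. Conversely, choosing one representative from each of the $|W|$ parts yields a family of pairwise-adjacent vertices (any two lie in distinct parts), i.e.\ a complete subgraph of cardinality $|W|$, giving $cl(W\Gamma(\mathcal{M}(X,\mathcal{A})))=|W|$. The chromatic number follows from the same two bounds: colouring every vertex of a part with the single colour assigned to that part is a proper colouring using exactly $|W|$ colours (adjacent vertices always lie in different parts), so $\chi\le|W|$; and the clique of size $|W|$ just produced forces $\chi\ge|W|$. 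Hence $\chi(W\Gamma(\mathcal{M}(X,\mathcal{A})))=|W|$. These bounds work verbatim when $|W|$ is an infinite cardinal.

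For the dominating number I would first rule out singletons: given any vertex $f$, its part $[f]$ contains a second vertex $h\ne f$ with $h\sim f$, so $h$ is non-adjacent to $f$, whence $\{f\}$ fails to dominate $h$ and $dt\ge2$. For the reverse inequality I would fix vertices $f,g$ lying in two distinct parts $[f]\ne[g]$; any further vertex $h$ lies in some part, and if $h\in[f]$ it is adjacent to $g$, if $h\in[g]$ it is adjacent to $f$, and if $h$ lies in a third part it is adjacent to both. Thus $\{f,g\}$ is a dominating set and $dt(W\Gamma(\mathcal{M}(X,\mathcal{A})))=2$.

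The whole argument is the elementary combinatorics of complete multipartite graphs once Theorems~\ref{Th6.4} and~\ref{Th6.5} are invoked, so I do not expect any genuinely hard step. The one point demanding care is the lower bound $dt\ge2$, which relies on each part having at least two vertices, together with the tacit hypothesis $|W|\ge2$: the degenerate case $|W|=1$ yields an edgeless graph whose dominating number is its entire (infinite) vertex set, so the assertion $dt=2$ must be read under the standing assumption that $\mathcal{A}$ carries at least two distinct atoms, which is exactly the regime in which $W\Gamma(\mathcal{M}(X,\mathcal{A}))$ has edges.
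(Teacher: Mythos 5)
Your proposal is correct and is exactly the elementary complete-multipartite argument that the paper leaves implicit (it states this theorem with no proof, as an immediate consequence of Theorems~\ref{Th6.4} and~\ref{Th6.5}): one vertex per part gives the clique and the lower bound for $\chi$, colouring by parts gives the upper bound, and two representatives of distinct parts dominate while no singleton does because each class $[f]$ contains $f$ and $2f$. Your observation that part~(1) tacitly requires $|W|\ge 2$ (the paper only assumes $\mathcal{A}$ contains at least one atom, and an edgeless $W\Gamma$ has no two-element dominating set) is a legitimate caveat the paper does not address, not a gap in your argument.
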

	\section{\bf Illustration via two familiar Measure Spaces}\label{Sec7}
	\counterwithin{thm}{subsection}
	\subsection{\bf On the Counting Measure Space}
	Let $X$ be a non-empty set, $\mathcal{A}=\mathscr{P}(X)$, the power set of $X$ and $\mu$, the counting measure on $X$, be defined as follows: $$\mu(E)=\begin{cases}
		|E|\text{ if }E\text{ is finite}\\
		\infty\text{ otherwise}
	\end{cases}$$ 
	Then $\mathcal{M}(X,\mathcal{A})=\mathbb{R}^X$ and $\mathscr{D}(\mathcal{M}(X,\mathcal{A}))=\{f\in\mathcal{M}(X,\mathcal{A})
	:Z(f)\neq\emptyset,X\}$. For the non-emptiness of $\mathscr{D}(\mathcal{M}(X,\mathcal{A}))$, we always consider $|X|>1$. Clearly, each singleton set is an atom in $(X,\mathcal{A},\mu)$. \\
	
	Choosing $(X, \mathcal{A}, \mu)$ as the counting measure space, the following two theorems list the important features of the comaximal graph and the annihilator graph of $\mathcal{M}(X, \mathcal{A})$.
	\begin{thm}\label{Th7.1.1}
		Consider the comaximal graph $\Gamma'_2(\mathcal{M}(X,\mathcal{A}))$ of $\mathcal{M}(X, \mathcal{A})$ and $f,g\in\mathscr{D}(\mathcal{M}(X,\mathcal{A}))$.
		\begin{enumerate}
			\item There exists a vertex adjacent to both $f$ and $g$ if and only if $Z(f)\cup Z(g)\neq X$.
			\item $d(f,g)=\begin{cases}
				1&Z(f)\cap Z(g)= \emptyset\\
				2&Z(f)\cap Z(g)\neq \emptyset\text{ and }Z(f)\cup Z(g)\neq X\\
				3&Z(f)\cap Z(g)\neq \emptyset\text{ and }Z(f)\cup Z(g)=X
			\end{cases}$
			\item $ecc(f)=\begin{cases}
				2&\text{ if }|Z(f)|=1\\
				3&\text{ otherwise}
			\end{cases}$\label{7.1.1.3}
			\item $\Gamma'_2(\mathcal{M}(X,\mathcal{A}))$ is complete bipartite if and only if $|X|=2$.
			\item The diameter of $\Gamma'_2(\mathcal{M}(X,\mathcal{A}))$ is $\begin{cases}
				2&|X|=2\\
				3&|X|\geq 3
			\end{cases}$
			\item The girth of $\Gamma'_2(\mathcal{M}(X,\mathcal{A}))$ is $\begin{cases}
				4&|X|=2\\
				3&|X|\geq 3
			\end{cases}$
			\item $f$ is a vertex of a triangle if and only if $|X\setminus Z(f)|>1$.
			\item $\Gamma'_2(\mathcal{M}(X,\mathcal{A}))$ is never triangulated and never hypertriangulated.
			\item $f\perp g$ if and only if $Z(f)=X\setminus Z(g)$.
		\end{enumerate}
	\end{thm}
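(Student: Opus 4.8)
The plan is to treat this theorem as a wholesale specialization of the general results of Section~\ref{Sec3} to the counting measure, the entire proof resting on a single translation principle. The key observation is that for the counting measure, $\mu(E)=0$ if and only if $E=\emptyset$; consequently $\mu(E)>0$ is equivalent to $E\neq\emptyset$, and ``equality a.e.'' collapses to ordinary equality. A second, equally important dictionary entry is that a set $A\in\mathcal{A}$ is an atom precisely when $|A|=1$: indeed $\mu(A)>0$ forces $A\neq\emptyset$, and such an $A$ admits no splitting into two disjoint sets of positive measure exactly when $A$ is a singleton. With these two equivalences in hand, each clause becomes a mechanical substitution into the corresponding general statement. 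I would record the equivalence ``$|A|=1\Leftrightarrow A$ is an atom'' explicitly at the outset, since every clause feeds off it.

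Next I would dispatch the clauses that do not involve partitions. Clause (1) is Theorem~\ref{Lem2.1} with the condition $\mu(X\setminus Z(f)\cap X\setminus Z(g))>0$ rewritten as $X\setminus(Z(f)\cup Z(g))\neq\emptyset$, i.e. $Z(f)\cup Z(g)\neq X$. Clause (2) is the three-case formula of Theorem~\ref{Th2.3} with each measure condition translated through $\mu(\,\cdot\,)=0\Leftrightarrow\,\cdot\,=\emptyset$. Clause (3) is Theorem~\ref{Th2.6} together with ``$Z(f)$ is an atom $\Leftrightarrow |Z(f)|=1$''. Clause (9) follows from the orthogonality characterization preceding Theorem~\ref{ComaxComp}: $f\perp g$ means $Z(f)\cap Z(g)=\emptyset$ and $Z(f)\cup Z(g)=X$ simultaneously, which is precisely $Z(f)=X\setminus Z(g)$.

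The remaining clauses hinge on recognizing when $X$ is partitioned into two atoms. Since atoms are singletons, such a partition exists if and only if $|X|=2$. Granting this, clause (4) is Theorem~\ref{Th3.13}, clauses (5) and (6) are the diameter and girth corollaries following it, and clause (7) is Theorem~\ref{Th2.9} with ``$X\setminus Z(f)$ is not an atom'' rewritten as $|X\setminus Z(f)|>1$ (the value $0$ being excluded because $f\in\mathscr{D}$ forces $Z(f)\neq X$, whence $X\setminus Z(f)\neq\emptyset$). Finally, clause (8) observes that the counting measure is always atomic---each singleton is an atom---so Theorem~\ref{Th3.22} denies triangulation, while Corollary~\ref{NotHyp} already rules out hypertriangulation in full generality.

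I do not anticipate a genuine obstacle here: the mathematical content is entirely contained in the general theory already developed, and the only care required is the bookkeeping of translating each measure-theoretic hypothesis into its finite-set counterpart and confirming that ``atom'' and ``singleton'' are interchangeable in this setting. The single spot deserving explicit justification is the equivalence between atoms and singletons, since it is the hinge on which every clause turns.
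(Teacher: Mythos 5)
Your proposal is correct and is exactly the argument the paper intends: Theorem~\ref{Th7.1.1} is stated without proof as a direct specialization of the Section~\ref{Sec3} results, using precisely your two dictionary entries ($\mu(E)=0\Leftrightarrow E=\emptyset$ and atom $\Leftrightarrow$ singleton). Each clause maps to the general theorem you cite, so nothing is missing.
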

	\begin{thm}
		Consider the annihilator graph $AG(\mathcal{M}(X,\mathcal{A}))$ of $\mathcal{M}(X,\mathcal{A})$ and $f,g\in\mathscr{D}(\mathcal{M}(X,\mathcal{A}))$.
		\begin{enumerate}
			\item There exists a vertex adjacent to both $f,g$ if and only if either $Z(f)\cup Z(g)\neq X$ or $Z(f)\cap Z(g)\neq\emptyset$ or $|Z(f)|,|Z(g)|\geq 2$.
			\item $d(f,g)=\begin{cases}
				1&Z(f)\setminus Z(g)\neq\emptyset\text{ and }Z(g)\setminus Z(f) \neq\emptyset\\
				2&\text{ otherwise}
			\end{cases}$
			\item $ecc(f)=2$
			\item $AG(\mathcal{M}(X,\mathcal{A}))$ is complete bipartite if and only if $|X|=2$.
			\item $\Gamma'_2(\mathcal{M}(X,\mathcal{A}))= AG(\mathcal{M}(X,\mathcal{A}))$ if and only if $|X|=2$ if and only if $\Gamma(\mathcal{M}(X,\mathcal{A})) = AG(\mathcal{M}(X,\mathcal{A}))$.
			\item The girth of $AG(\mathcal{M}(X,\mathcal{A}))$ is $\begin{cases}
				4&|X|=2\\
				3&|X|\geq 3
			\end{cases}$
			\item $AG(\mathcal{M}(X,\mathcal{A}))$ is not hypertriangulated.
			\item $f$ has an orthogonal complement in $AG(\mathcal{M}(X,\mathcal{A}))$ if and only if either $|Z(f)|=1$ or $|X\setminus Z(f)|=1$.
			\item $AG(\mathcal{M}(X,\mathcal{A}))$ is uniquely complemented if and only if $|X|\leq 3$.
		\end{enumerate}
	\end{thm}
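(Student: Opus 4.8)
The plan is to derive every clause as a specialization of the general results established in Section~\ref{Sec5}, via the elementary dictionary supplied by the counting measure. The first step is to record this dictionary: since $\mu(E)=0$ forces $E=\emptyset$, for the counting measure space one has $\mu(Z(f)\cap Z(g))>0\iff Z(f)\cap Z(g)\neq\emptyset$, $\mu(X\setminus Z(f)\cap X\setminus Z(g))>0\iff Z(f)\cup Z(g)\neq X$, and $\mu(Z(f)\setminus Z(g))>0\iff Z(f)\setminus Z(g)\neq\emptyset$. Moreover a measurable set is an atom exactly when it is a singleton, so the phrase ``$X$ is partitioned into two (respectively three) atoms'' becomes $|X|=2$ (respectively $|X|=3$), while $\mu$ is atomic and hence never non-atomic. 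Throughout I keep in mind the standing assumption $|X|>1$, equivalently that $X$ is not an atom.

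With the dictionary fixed, most clauses are immediate translations, which I would present in the following order. Clause~(3) is Corollary~\ref{cor4.5} verbatim, and clause~(2) follows by combining the adjacency criterion of Theorem~\ref{Th0.5} with the fact, established in Section~\ref{Sec5}, that $AG$ is connected of diameter $2$, so that $d(f,g)=1$ precisely when $f,g$ are adjacent and $d(f,g)=2$ otherwise. Clauses~(4) and~(5) are the readings of Theorem~\ref{Th5.5} and Theorem~\ref{Th5.10} under ``two atoms $\iff|X|=2$''; clause~(6) is the counting-measure form of the girth computation for $AG$ (``two atoms $\iff|X|=2$'', and otherwise $|X|\geq3$); clause~(7) is Theorem~\ref{Th5.16} together with the observation that the counting measure is atomic; and clause~(8) is Theorem~\ref{Th5.15} with ``$Z(f)$ is an atom $\iff|Z(f)|=1$'' and likewise for $X\setminus Z(f)$.

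The two clauses that merit a short argument rather than a one-line translation are~(1) and~(9). For~(1) I would combine the two parts of Theorem~\ref{Th5.1}: the forward direction of part~(1) of that theorem shows that each of ``$Z(f)\cup Z(g)\neq X$'' and ``$Z(f)\cap Z(g)\neq\emptyset$'' produces a common neighbour, while if $|Z(f)|,|Z(g)|\geq2$ then $Z(f),Z(g)$ are non-atoms and part~(2) supplies a common neighbour in the remaining case $Z(f)\cup Z(g)=X$, $Z(f)\cap Z(g)=\emptyset$; conversely, if none of the three conditions holds then $Z(f)\cup Z(g)=X$, $Z(f)\cap Z(g)=\emptyset$ and at least one of $Z(f),Z(g)$ is a singleton, so part~(2) of Theorem~\ref{Th5.1} rules out a common neighbour. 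For~(9) I would chain Theorem~\ref{Th5.20} (complemented $\iff$ $X$ partitions into two or three atoms, i.e.\ $|X|\in\{2,3\}$) with the equivalence ``$AG$ complemented $\iff AG$ uniquely complemented'' proved in Section~\ref{Sec5}, and then use $|X|>1$ to rewrite $|X|\in\{2,3\}$ as $|X|\leq3$. No genuine obstacle arises; the only care needed is bookkeeping of the hypothesis $|X|>1$ so that the small cardinalities are correctly identified, and---in clause~(1)---tracking which half of Theorem~\ref{Th5.1} is responsible for each case.
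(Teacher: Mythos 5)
Your proposal is correct and follows exactly the route the paper intends: the theorem is stated there without proof precisely because each clause is the specialization of the corresponding general result of Section~\ref{Sec5} under the dictionary ``$\mu(E)>0\iff E\neq\emptyset$'' and ``atom $\iff$ singleton,'' which is what you carry out. Your extra care in clauses~(1) and~(9) (combining both parts of Theorem~\ref{Th5.1}, and chaining Theorem~\ref{Th5.20} with the complemented-iff-uniquely-complemented equivalence) is accurate and fills in the only steps that are not literal one-line translations.
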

	For this measure space, the Theorem~\ref{Th6.5} takes the following form:
	\begin{thm}
		$W\Gamma(\mathcal{M}(X,\mathcal{A}))$ is a complete $|X|$-partite graph where the stable sets are $W_x=\{f\in\mathcal{M}(X,\mathcal{A}):Z(f)=\{x\}\}$, $x\in X$.
	\end{thm}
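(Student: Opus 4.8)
The plan is to specialize the general structure theorem for the weakly zero-divisor graph, namely Theorem~\ref{Th6.5} of Section~\ref{Sec6}, to the counting measure space, translating each measure-theoretic notion into its set-theoretic counterpart.

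First I would record the elementary but crucial fact that for the counting measure a set has measure zero if and only if it is empty. Two consequences follow immediately: the atoms of $\mathcal{A}=\mathscr{P}(X)$ are precisely the singletons $\{x\}$, $x\in X$, and hence the redefined vertex set $\mathscr{D}'(\mathcal{M}(X,\mathcal{A}))$ consists exactly of those $f\in\mathbb{R}^X$ for which $Z(f)=\{x\}$ for some $x\in X$. Moreover, the defining equivalence relation $\sim$ of Section~\ref{Sec6}, given by $\mu(Z(f)\triangle Z(g))=0$, reduces here to the condition $Z(f)=Z(g)$.

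Next I would identify the equivalence classes. Since $f\sim g$ amounts to $Z(f)=Z(g)$, the class of a vertex $f$ with $Z(f)=\{x\}$ is exactly $W_x=\{f\in\mathcal{M}(X,\mathcal{A}):Z(f)=\{x\}\}$, so the sets $W_x$ are precisely the blocks of the partition of $\mathscr{D}'(\mathcal{M}(X,\mathcal{A}))$ induced by $\sim$. Each $W_x$ is non-empty, because $|X|>1$ guarantees $1_{X\setminus\{x\}}\in\mathscr{D}'(\mathcal{M}(X,\mathcal{A}))$ with zero set $\{x\}$, and distinct points $x\neq y$ give disjoint classes $W_x\neq W_y$. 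Therefore the collection $W$ of distinct class representatives is in bijection with $X$, giving $|W|=|X|$.

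Finally I would invoke Theorem~\ref{Th6.5}, which asserts that $W\Gamma(\mathcal{M}(X,\mathcal{A}))$ is a complete $|W|$-partite graph whose stable sets are exactly the equivalence classes under $\sim$; substituting $|W|=|X|$ together with the explicit description $W_x$ of the classes yields the claim. The argument is essentially a dictionary translation from the measure-theoretic setting to the combinatorial one, so I do not anticipate a genuine obstacle. The only points demanding a little care are confirming that every singleton $\{x\}$ actually arises as the zero set of some vertex, so that each $W_x$ is non-empty, and verifying that the count of distinct atoms is exactly $|X|$ rather than fewer; both are settled by the explicit functions $1_{X\setminus\{x\}}$ under the hypothesis $|X|>1$.
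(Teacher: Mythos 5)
Your proposal is correct and follows exactly the route the paper intends: the paper states this result without a separate proof, presenting it explicitly as the specialization of Theorem~\ref{Th6.5} to the counting measure, where atoms are singletons, the relation $\sim$ collapses to $Z(f)=Z(g)$, and the classes are the sets $W_x$. Your added checks that each $W_x$ is non-empty (via $1_{X\setminus\{x\}}$ under $|X|>1$) and that $|W|=|X|$ are precisely the details one needs to make the specialization rigorous.
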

	In the counting measure space, the equivalence relation described before, reads as ``$f\sim g$ if and only if $Z(f)=Z(g)$'' and therefore $[f]=\{g\in\mathcal{M}(X,\mathcal{A}):Z(f)=Z(g)\}$. Hence, the vertex set of the induced graph $G_2$ of $\Gamma'_2(\mathcal{M}(X,\mathcal{A}))$ becomes $V=\{1_A:A\neq\emptyset,X\}$. In $G_2$, two distinct vertices $1_A,1_B$ are adjacent if and only if $X\setminus A\cap X\setminus B=\emptyset$. \\
	
	In the next few results we observe that in the counting measure space, the chromatic number and clique number of $\Gamma'_2(\mathcal{M}(X,\mathcal{A}))$ solely depend on the cardinality of the underlying space $X$. We prove these results using the induced graph $G_2$ of $\Gamma'_2(\mathcal{M}(X,\mathcal{A}))$ and Theorem~\ref{Th4.5} and Theorem~\ref{Th4.3} of Section \ref{Sec3}.
	\begin{thm}\label{Th7.1}
		The chromatic number of $G_2$ is $|X|$.
	\end{thm}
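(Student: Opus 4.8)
The plan is to compute $\chi(G_2)$ directly from the explicit description of $G_2$ in the counting measure space, squeezing it between a clique lower bound and an explicit colouring upper bound. Recall that the vertex set is $V=\{1_A:\emptyset\neq A\subsetneqq X\}$, and that two distinct vertices $1_A,1_B$ are adjacent precisely when $(X\setminus A)\cap(X\setminus B)=\emptyset$; equivalently, adjacency of $1_A$ and $1_B$ means that the complements $X\setminus A$ and $X\setminus B$ are disjoint. I will work with this disjointness reformulation throughout.

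For the lower bound, I would exhibit a clique of cardinality $|X|$. For each $x\in X$ set $A_x=X\setminus\{x\}$; since $|X|>1$, every $A_x$ is a nonempty proper subset, so $1_{A_x}\in V$, and distinct $x$ give distinct vertices. For $x\neq y$ the complements $\{x\}$ and $\{y\}$ are disjoint, so $1_{A_x}$ and $1_{A_y}$ are adjacent. Hence $\{1_{A_x}:x\in X\}$ is a clique of size $|X|$, and since the chromatic number of any graph is at least its clique number, we obtain $\chi(G_2)\geq|X|$.

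For the upper bound, I would produce a proper colouring of $G_2$ with at most $|X|$ colours. Fix a well-ordering of $X$ (equivalently a choice function) and define $\psi(1_A)$ to be the least element of the nonempty set $X\setminus A$. If $1_A,1_B$ are adjacent then $(X\setminus A)\cap(X\setminus B)=\emptyset$, so the element $\psi(1_A)\in X\setminus A$ cannot equal $\psi(1_B)\in X\setminus B$; thus $\psi$ assigns distinct colours to adjacent vertices. As $\psi$ takes its values in $X$, it uses at most $|X|$ colours, giving $\chi(G_2)\leq|X|$. Combining the two inequalities yields $\chi(G_2)=|X|$.

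The only delicate point is the upper-bound colouring when $X$ is infinite: there one must invoke a well-ordering (equivalently the axiom of choice) to select the value $\psi(1_A)\in X\setminus A$ uniformly, after which the verification that $\psi$ is proper is immediate from the disjointness characterisation of adjacency. For finite $X$ this subtlety disappears and the whole argument is elementary. I expect no other obstacle, since both the clique and the colouring are read off directly from the combinatorial structure of $G_2$.
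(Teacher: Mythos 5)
Your argument is correct and is essentially the paper's own proof: the lower bound uses the same clique $\{1_{X\setminus\{x\}}:x\in X\}$, and the upper bound is the same colouring (the paper assigns $1_A$ the colour of $1_{X\setminus\{x\}}$ for some chosen $x\in X\setminus A$, which is exactly your choice-function colouring, with the same verification that a shared colour forces a common point of the two complements and hence non-adjacency). Your explicit mention of the well-ordering only makes precise a choice the paper leaves implicit.
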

	\begin{proof}
		Clearly, $\{1_{X\setminus\{x\}}:x\in X\}$ is a complete subgraph of $G_2$. Therefore, $\chi(G_2)\geq|X|$. We begin with coloring each $1_{X\setminus\{x\}}\in V$ by using distinct colors. Let $1_A\in V\setminus\{1_{X\setminus\{x\}}:x\in X\}$. Then $|X\setminus A|\geq 2$. We color $1_A$ by one of the colors of $1_{X\setminus\{x\}}$, where $x\in X\setminus A$. Suppose $1_A,1_B\in V$ are colored by the same color, say by the color of $1_{X\setminus\{x\}}$. Then by hypothesis, $x\in X\setminus A\cap X\setminus B$. Consequently, $1_A,1_B$ are not adjacent in $G_2$. It gives a consistent coloring of $G_2$ by $|X|$-many colors. Therefore, $\chi(G_2)\leq|X|$.
	\end{proof}
	\begin{cor}
		The chromatic number of $\Gamma'_2(\mathcal{M}(X,\mathcal{A}))$ is $|X|$.
	\end{cor}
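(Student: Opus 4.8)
The plan is to obtain this as an immediate consequence of two results already in hand, with no new argument required. In the general theory of Section~\ref{Sec3}, Theorem~\ref{Th4.5} established that the chromatic numbers of the induced subgraph $G_2$ and of the parent graph $\Gamma'_2(\mathcal{M}(X,\mathcal{A}))$ always coincide, regardless of the underlying measure space. The preceding Theorem~\ref{Th7.1} specializes to the counting measure setting and computes $\chi(G_2) = |X|$ directly, by exhibiting $\{1_{X\setminus\{x\}} : x\in X\}$ as a complete subgraph of $G_2$ (forcing $\chi(G_2) \geq |X|$) and then producing an explicit consistent coloring using $|X|$ colors (forcing $\chi(G_2) \leq |X|$).

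Thus the entire proof consists of a single chain of equalities: combining these two theorems yields
$$\chi(\Gamma'_2(\mathcal{M}(X,\mathcal{A}))) = \chi(G_2) = |X|.$$
First I would invoke Theorem~\ref{Th4.5} to transfer the question from the parent graph to the quotient-like subgraph $G_2$, and then substitute the value $|X|$ supplied by Theorem~\ref{Th7.1}.

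There is no genuine obstacle here; the work has already been done upstream. The only point worth noting is that this is precisely the payoff of the $G_2$ construction: the chromatic number is much more transparent on the representative graph $G_2$ (where one colors equivalence classes directly) than on $\Gamma'_2(\mathcal{M}(X,\mathcal{A}))$ itself, and Theorem~\ref{Th4.5} guarantees that nothing is lost in passing between the two. Hence the corollary is recorded simply as a restatement of $\chi(G_2)=|X|$ for the parent graph.
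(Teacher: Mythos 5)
Your proposal is correct and matches the paper's intended argument exactly: the corollary is obtained by combining Theorem~\ref{Th4.5} ($\chi(G_2)=\chi(\Gamma'_2(\mathcal{M}(X,\mathcal{A})))$) with Theorem~\ref{Th7.1} ($\chi(G_2)=|X|$), precisely as the paper announces before stating these results. Nothing further is needed.
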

	\begin{thm}
		The clique number of $G_2$ is $|X|$.
	\end{thm}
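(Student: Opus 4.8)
The plan is to sandwich $cl(G_2)$ between two copies of $|X|$. First I would recall the adjacency rule for $G_2$ in the counting measure space: two distinct vertices $1_A,1_B$ are adjacent precisely when $(X\setminus A)\cap(X\setminus B)=\emptyset$, that is, when their complements are disjoint (equivalently $A\cup B=X$). Consequently, a complete subgraph of $G_2$ is nothing but a family $\{1_{A_i}\}_{i\in I}$ whose complements $B_i:=X\setminus A_i$ are pairwise disjoint, and each such $B_i$ is non-empty because $A_i\neq X$. Translating the clique problem into this statement about pairwise disjoint non-empty subsets of $X$ is the conceptual heart of the argument, after which both bounds follow quickly.

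For the lower bound I would exhibit the explicit clique $\{1_{X\setminus\{x\}}:x\in X\}$, which already appeared in the proof of Theorem~\ref{Th7.1}. For distinct $x,y\in X$ the complements of $X\setminus\{x\}$ and $X\setminus\{y\}$ are the disjoint singletons $\{x\}$ and $\{y\}$, so these $|X|$ distinct vertices are pairwise adjacent; hence $cl(G_2)\geq|X|$.

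For the upper bound the quickest route is to invoke the general inequality $cl(G_2)\leq\chi(G_2)$ and then apply Theorem~\ref{Th7.1}, which gives $\chi(G_2)=|X|$. Alternatively, and in a self-contained way, given any complete subgraph $\{1_{A_i}\}_{i\in I}$ the sets $B_i=X\setminus A_i$ are non-empty and pairwise disjoint, so a choice function $i\mapsto x_i\in B_i$ is injective into $X$ and forces $|I|\leq|X|$. Either way $cl(G_2)\leq|X|$, and combining with the previous paragraph yields $cl(G_2)=|X|$. I do not anticipate a genuine obstacle here; the only point requiring slight care is the infinite-cardinality case in the upper bound, but the choice-function injection handles all cardinals uniformly.
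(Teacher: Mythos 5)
Your proof is correct and follows essentially the same route as the paper: the explicit clique $\{1_{X\setminus\{x\}}:x\in X\}$ for the lower bound, and $cl(G_2)\leq\chi(G_2)=|X|$ via Theorem~\ref{Th7.1} for the upper bound. Your alternative self-contained upper bound (a clique corresponds to a family of pairwise disjoint non-empty complements, so a choice function injects it into $X$) is also valid and slightly more direct, but it is not needed.
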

	\begin{proof}
		Since $\{1_{X\setminus\{x\}}:x\in X\}$ is a complete subgraph of $G$, $cl(G_2)\geq|X|$. For any graph $H$, we know that $cl(H)\leq\chi(H)$. Hence by Theorem \ref{Th7.1}, $cl(G_2)\leq|X|$.
	\end{proof}
	\begin{cor}
		The clique number of $\Gamma'_2(\mathcal{M}(X,\mathcal{A}))$ is $|X|$. 
	\end{cor}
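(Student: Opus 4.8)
The plan is to deduce this directly from the clique-number equality already established for the parent graph and its induced subgraph. Theorem~\ref{Th4.3} asserts that $cl(G_2) = cl(\Gamma'_2(\mathcal{M}(X,\mathcal{A})))$, so the clique number of the comaximal graph is entirely determined by the (combinatorially simpler) induced graph $G_2$. Thus no fresh construction on $\Gamma'_2(\mathcal{M}(X,\mathcal{A}))$ is needed; I would simply transport the value computed on $G_2$.

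Concretely, the immediately preceding theorem computes $cl(G_2) = |X|$ in the counting measure setting (the lower bound coming from the complete subgraph $\{1_{X\setminus\{x\}}:x\in X\}$, and the matching upper bound from $cl(G_2)\le\chi(G_2)=|X|$ via Theorem~\ref{Th7.1}). Chaining this with Theorem~\ref{Th4.3} yields
$$cl(\Gamma'_2(\mathcal{M}(X,\mathcal{A}))) = cl(G_2) = |X|,$$
which is exactly the assertion.

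There is essentially no obstacle here: the corollary is a one-step consequence of a general structural fact (Theorem~\ref{Th4.3}) together with an explicit count just carried out for $G_2$. The only thing worth recording is that Theorem~\ref{Th4.3} is stated for arbitrary measure spaces, so its application in the counting-measure specialization is automatic; all the genuine work—exhibiting the clique $\{1_{X\setminus\{x\}}:x\in X\}$ and bounding the clique number above by the chromatic number—has already been discharged in the proof that $cl(G_2)=|X|$.
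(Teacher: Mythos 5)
Your proposal is correct and matches the paper's (implicit) argument exactly: the corollary follows by combining Theorem~\ref{Th4.3}, which identifies $cl(\Gamma'_2(\mathcal{M}(X,\mathcal{A})))$ with $cl(G_2)$, with the immediately preceding computation $cl(G_2)=|X|$ in the counting measure space. Nothing further is needed.
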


	\begin{rem}
		A graph is said to be weakly perfect if the clique number and the chromatic number are equal. Since $cl(\Gamma'_2(\mathcal{M}(X,\mathcal{A})))=|X|=\chi(\Gamma'_2(\mathcal{M}(X,\mathcal{A})))$, in this measure space, $\Gamma'_2(\mathcal{M}(X,\mathcal{A}))$ is a weakly perfect graph.
	\end{rem}
	We now compare the comaximal graph and the zero-divisor graph of $\mathcal{M}(X,\mathcal{A})$ in the counting measure space. For this, we need the following lemmas.
	\begin{lem}\label{Lem7.1.1}
		For each $f\in\mathcal{M}(X,\mathcal{A})$, $|[f]|=|\mathbb{R}^ {|X\setminus Z(f)|}|$.
	\end{lem}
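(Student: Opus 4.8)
The plan is to first make the equivalence class $[f]$ completely explicit in the counting measure setting. Since $\mu$ is the counting measure, a measurable set has measure zero exactly when it is empty; consequently $\mu(Z(g)\triangle Z(f))=0$ holds if and only if $Z(g)\triangle Z(f)=\emptyset$, i.e. $Z(g)=Z(f)$. Thus the relation $\sim$ collapses to equality of zero sets, and
$$[f]=\{g\in\mathcal{M}(X,\mathcal{A}):Z(g)=Z(f)\}.$$
This is the reformulation I would record first, as everything else is counting the functions in this set.

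Next I would set up an explicit bijection. A function $g$ lies in $[f]$ precisely when $g$ vanishes on $Z(f)$ and is nonzero at every point of $X\setminus Z(f)$. Hence $g$ is completely and freely determined by its restriction to $X\setminus Z(f)$, which may be any function from $X\setminus Z(f)$ into $\mathbb{R}\setminus\{0\}$, while its values on $Z(f)$ are forced to be $0$. The restriction map $g\mapsto g|_{X\setminus Z(f)}$ therefore gives a bijection
$$[f]\longrightarrow(\mathbb{R}\setminus\{0\})^{X\setminus Z(f)},$$
so that $|[f]|=|\mathbb{R}\setminus\{0\}|^{\,|X\setminus Z(f)|}$ as cardinal exponentiation.

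The last step is pure cardinal arithmetic: since $|\mathbb{R}\setminus\{0\}|=|\mathbb{R}|$, raising both sides to the power $|X\setminus Z(f)|$ yields
$$|[f]|=|\mathbb{R}\setminus\{0\}|^{\,|X\setminus Z(f)|}=|\mathbb{R}|^{\,|X\setminus Z(f)|}=\left|\mathbb{R}^{\,|X\setminus Z(f)|}\right|,$$
which is exactly the claim. I do not expect any genuine obstacle here; the only points requiring a little care are that the argument must hold uniformly whether $X\setminus Z(f)$ is finite or infinite (this is why I phrase the conclusion through cardinal exponentiation rather than through a finite power), and the degenerate case $X\setminus Z(f)=\emptyset$, where $[f]=\{0\}$ and both sides equal $1$ via the empty product. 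The essential content is simply the bijection together with the elementary equality $|\mathbb{R}\setminus\{0\}|=|\mathbb{R}|$.
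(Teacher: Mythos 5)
Your proposal is correct and follows essentially the same route as the paper: the paper also observes that $[f]=\{g:Z(g)=Z(f)\}$ in the counting measure space and exhibits the evaluation map $g\mapsto (g(x_\alpha))_{\alpha}$ as a bijection from $[f]$ onto $(\mathbb{R}\setminus\{0\})^{|X\setminus Z(f)|}$, concluding via $|\mathbb{R}\setminus\{0\}|=|\mathbb{R}|$. Your explicit treatment of the degenerate case $X\setminus Z(f)=\emptyset$ is a small bonus the paper omits, but the substance is identical.
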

	\begin{proof}
		Let $X\setminus Z(f)=\{x_\alpha:\alpha\in\Lambda\}$. Define $\eta:[f]\to(\mathbb{R}\setminus\{0\})^{|\Lambda|}$ by $\eta(g)=\Pi_{\alpha\in\Lambda}(g(x_\alpha))$. $\eta$ is well defined, since for all $g\in[f]$, $X\setminus Z(g)=X\setminus Z(f)$. Let $g_1,g_2\in[f]$ be such that $\eta(g_1)=\eta(g_2)$; i.e., $g_1(x_\alpha)= g_2(x_\alpha)$ for all $\alpha\in\Lambda$. Also for all $x\notin X\setminus Z(f)$, $g_1(x)=0=g_2(x)\implies  g_1=g_2$ on $X$. Therefore $\eta$ is injective. Let $y\in(\mathbb{R}\setminus\{0\})^{|\Lambda|}$. Then $y$ can be written as $y=(y_\alpha)_{\alpha\in\Lambda}$, where $y_\alpha\in\mathbb{R}\setminus\{0\}$ for all $\alpha\in\Lambda$. Let $g(x)=\begin{cases}
			y_\alpha&\text{ if }x=x_\alpha\\
			0&\text{ otherwise}
		\end{cases}$. Since $y_\alpha\neq 0$ for all $\alpha\in\Lambda$, $X\setminus Z(g)=\{x_\alpha:\alpha\in\Lambda\}=X\setminus Z(f)\implies g\in[f]$. Clearly $\eta(g)=y$. Consequently $\eta$ is a bijection i.e., $|[f]|= |(\mathbb{R}\setminus\{0\})^{|\Lambda|}|=|\mathbb{R}^ {|X\setminus Z(f)|}|$.
	\end{proof}
	Let $f\in\mathscr{D}(\mathcal{M}(X,\mathcal{A}))$. By Theorem \ref{Th7.1.1}(\ref{7.1.1.3}) it follows that, $ecc(f)=2$ in $\Gamma'_2(\mathcal{M}(X,\mathcal{A}))$ if and only if $Z(f)$ is a singleton set. Similarly, it can be proved that, $ecc(f)=2$ in $\Gamma(\mathcal{M}(X,\mathcal{A}))$ if and only if $X\setminus Z(f)$ is a singleton set. These observations lead to the following important lemma.
	\begin{lem}\label{Lem7.1.2}
		If $\Gamma(\mathcal{M}(X,\mathcal{A}))$ and $\Gamma'_2(\mathcal{M}(X,\mathcal{A}))$ are graph isomorphic, then for each $x\in X$ there exists $y\in X$ such that $|[1_x]|=|[1_{X\setminus\{y\}}]|$.
	\end{lem}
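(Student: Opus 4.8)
The plan is to exploit two invariants that any graph isomorphism must respect, namely the eccentricity of a vertex and the relation of being adjacent to the same set of vertices (the \emph{false-twin} relation), and to observe that in the counting measure space these invariants pin down exactly the two quantities $|[1_x]|$ and $|[1_{X\setminus\{y\}}]|$. Fix $x\in X$ and let $\psi$ be a graph isomorphism of $\Gamma(\mathcal{M}(X,\mathcal{A}))$ onto $\Gamma'_2(\mathcal{M}(X,\mathcal{A}))$ (the existence of one in this direction follows by taking an inverse if necessary). Since $X\setminus Z(1_x)=\{x\}$ is a singleton, the observation recorded just before the statement gives $ecc(1_x)=2$ in $\Gamma(\mathcal{M}(X,\mathcal{A}))$. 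A graph isomorphism preserves distances and hence eccentricities, so $ecc(\psi(1_x))=2$ in $\Gamma'_2(\mathcal{M}(X,\mathcal{A}))$; by Theorem~\ref{Th7.1.1}(\ref{7.1.1.3}) this forces $Z(\psi(1_x))$ to be a singleton, say $Z(\psi(1_x))=\{y\}$, that is, $\psi(1_x)\in[1_{X\setminus\{y\}}]$. This is how I would produce the required $y$.

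Next I would show that $\psi$ restricts to a bijection of $[1_x]$ onto $[1_{X\setminus\{y\}}]$. The crucial point is that, in the counting measure space, for each vertex $f$ the class $[f]=\{g:Z(g)=Z(f)\}$ is precisely the set of vertices sharing the same neighborhood as $f$. For $\Gamma'_2(\mathcal{M}(X,\mathcal{A}))$ this is exactly Lemma~\ref{Lem2.17}; for $\Gamma(\mathcal{M}(X,\mathcal{A}))$ the analogous statement follows from Corollary~\ref{Lem6.1} (here $ann(f)=ann(g)$ iff $Z(f)=Z(g)$) together with the description of the $\Gamma$-neighborhood of $f$ as $(ann(f)\cap\mathscr{D})\setminus\{f\}$, where one checks that $g\notin ann(f)$ whenever $Z(g)=Z(f)$ because the common support is non-empty. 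Since being adjacent to the same set of vertices is preserved under any isomorphism, $\psi$ carries the false-twin class of $1_x$ in $\Gamma(\mathcal{M}(X,\mathcal{A}))$ bijectively onto the false-twin class of $\psi(1_x)$ in $\Gamma'_2(\mathcal{M}(X,\mathcal{A}))$; that is, $\psi([1_x])=[\psi(1_x)]=[1_{X\setminus\{y\}}]$. As $\psi$ is injective, $|[1_x]|=|[1_{X\setminus\{y\}}]|$, which is the assertion.

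The main obstacle is the second step: establishing that the $\sim$-classes coincide with the false-twin classes in the zero-divisor graph $\Gamma(\mathcal{M}(X,\mathcal{A}))$, since the paper only records this (as Lemma~\ref{Lem2.17}) for the comaximal graph. The verification is routine but needs care about the vertex itself, as one must confirm that two functions with the same zero set are non-adjacent in $\Gamma(\mathcal{M}(X,\mathcal{A}))$ so that excluding each vertex from its own neighborhood does not disturb the equality $N(f)=N(g)$; this holds precisely because every vertex of $\mathscr{D}(\mathcal{M}(X,\mathcal{A}))$ has non-empty support. Once this is in place, the remainder is a direct transport of graph-theoretic invariants along $\psi$.
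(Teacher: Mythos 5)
Your argument is correct. The first half --- using the eccentricity characterizations to produce $y$ with $Z(\psi(1_x))=\{y\}$ --- is exactly what the paper does. The second half diverges in mechanism: the paper shows that $\psi$ restricts to a bijection of $[1_x]$ onto $[1_{X\setminus\{y\}}]$ element by element, applying the eccentricity argument once more to an arbitrary $g\in[1_{X\setminus\{y\}}]$ (so that $X\setminus Z(\psi^{-1}(g))$ is some singleton $\{x'\}$) and then ruling out $x'\neq x$ by an adjacency contradiction against $1_x$; you instead transport the whole class at once via the false-twin invariant, identifying $[f]$ with the set of vertices having the same neighbourhood as $f$ --- in $\Gamma'_2(\mathcal{M}(X,\mathcal{A}))$ this is Lemma~\ref{Lem2.17}, and in $\Gamma(\mathcal{M}(X,\mathcal{A}))$ it is the easy analogue you sketch, which the paper never states but which is indeed routine (including the caveat you correctly flag, that two vertices with the same non-empty support are non-adjacent, so deleting each vertex from its own neighbourhood does not spoil the equality). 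Your route is slightly more conceptual and yields the stronger fact $\psi([f])=[\psi(f)]$ for \emph{every} vertex, at the cost of supplying the $\Gamma$-version of Lemma~\ref{Lem2.17}; the paper's route is more elementary, using only the eccentricity computation plus one adjacency check, but is tailored to the eccentricity-$2$ classes. Both are sound.
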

	\begin{proof}
		Let $\psi:\Gamma\to\Gamma'_2$ be a graph isomorphism. Fix an $x\in X$. Let $f\in[1_x]$. Then $X\setminus Z(f)=\{x\}\implies ecc(f)=2$ in $\Gamma$. Since $\psi$ is a graph isomorphism, $ecc(\psi(f))=2$ in $\Gamma'_2\implies Z(\psi(f))$ is a singleton set i.e., $Z(\psi(f))=\{y\}$ for some $y\in X$. Therefore, $\psi(f)\in[1_{X\setminus\{y\}}]$. Conversely let $g\in[1_{X\setminus\{y\}}]$. Then $Z(g)=\{y\}\implies ecc(g)=2$ in $\Gamma'_2\implies ecc(\psi^{-1}(g))=2$ in $\Gamma\implies X\setminus Z(\psi^{-1}(g))$ is a singleton set, i.e., $X\setminus Z(\psi^{-1}(g))=\{x'\}$ for some $x'\in X$. If $x\neq x'$, then $1_x,\psi^{-1}(g)$ are adjacent in $\Gamma$. Therefore their images under $\psi$ are adjacent in $\Gamma'_2$, i.e., $\psi(1_x),g$ are adjacent in $\Gamma'_2$. But $1_x\in [1_x]\implies \psi(1_x)\in[1_{X\setminus\{y\}}]$, by our definition. Therefore, $Z(\psi(1_x))=\{y\}=Z(g)\implies Z(\psi(1_x))\cap Z(g)=\{y\}$. So, $\psi(1_x),g$ are not adjacent in $\Gamma'_2$, which is a contradiction. Therefore, $X\setminus Z(\psi^{-1}(g))=\{x\}\implies\psi^{-1}(g)\in[1_x]$. Hence, the restriction map $\psi:[1_x]\to[1_{X\setminus\{y\}}]$ is a bijection, proving $|[1_x]|=|[1_{X\setminus\{y\}}]|$. 
	\end{proof}
	\begin{thm}\label{Th7.16}
		$\Gamma(\mathcal{M}(X,\mathcal{A}))$ and $\Gamma'_2(\mathcal{M}(X,\mathcal{A}))$ are graph isomorphic if and only if $X$ is atmost countable.
	\end{thm}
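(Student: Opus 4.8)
The plan is to reduce both implications to a single cardinal computation built on the quotient construction of Section~\ref{Sec3}. In the counting measure space the relation $\sim$ collapses to $f\sim g\iff Z(f)=Z(g)$, so for a proper nonempty $A\subseteq X$ the class $[1_A]$ is exactly $\{f:Z(f)=X\setminus A\}$. By Lemma~\ref{Lem7.1.1} its size is $|[1_A]|=|\mathbb{R}^{|X\setminus Z(1_A)|}|=\mathfrak{c}^{|A|}$, and symmetrically $|[1_{X\setminus A}]|=\mathfrak{c}^{|X\setminus A|}$. Thus the hypothesis of Theorem~\ref{Iso} reads $\mathfrak{c}^{|A|}=\mathfrak{c}^{|X\setminus A|}$ for every proper nonempty $A$, and the whole statement turns on the behaviour of $\kappa\mapsto\mathfrak{c}^{\kappa}$, in particular on the fact that $\mathfrak{c}^{\kappa}=\mathfrak{c}$ for $1\le\kappa\le\aleph_0$ while $\mathfrak{c}^{\kappa}>\mathfrak{c}$ for uncountable $\kappa$.

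For sufficiency I would assume $X$ is at most countable. Then for every proper nonempty $A$ both exponents $|A|$ and $|X\setminus A|$ lie in $[1,\aleph_0]$, so $\mathfrak{c}^{|A|}=\mathfrak{c}=\mathfrak{c}^{|X\setminus A|}$, using $\mathfrak{c}^{n}=\mathfrak{c}$ for finite $n\ge1$ together with $\mathfrak{c}^{\aleph_0}=(2^{\aleph_0})^{\aleph_0}=2^{\aleph_0}=\mathfrak{c}$. Hence $|[1_A]|=|[1_{X\setminus A}]|$ for each $1_A\in V$, and Theorem~\ref{Iso} delivers a graph isomorphism between $\Gamma(\mathcal{M}(X,\mathcal{A}))$ and $\Gamma'_2(\mathcal{M}(X,\mathcal{A}))$ directly.

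For necessity I would argue by contraposition, invoking Lemma~\ref{Lem7.1.2}. Suppose $X$ is uncountable and, for contradiction, that the two graphs are isomorphic. Fixing any $x\in X$, Lemma~\ref{Lem7.1.2} produces a $y\in X$ with $|[1_x]|=|[1_{X\setminus\{y\}}]|$; translating via Lemma~\ref{Lem7.1.1} this says $\mathfrak{c}=\mathfrak{c}^{1}=\mathfrak{c}^{|X\setminus\{y\}|}=\mathfrak{c}^{|X|}$, the last step because $|X\setminus\{y\}|=|X|$ for infinite $X$. Since $|X|>\aleph_0$ we have $\mathfrak{c}^{|X|}=2^{|X|}>2^{\aleph_0}=\mathfrak{c}$, which contradicts $\mathfrak{c}=\mathfrak{c}^{|X|}$. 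Therefore no isomorphism exists, and combined with the previous paragraph this establishes the equivalence.

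The main obstacle, and the only genuinely nontrivial ingredient, is the strict inequality $\mathfrak{c}^{|X|}>\mathfrak{c}$ for uncountable $|X|$; everything else is a mechanical application of Lemmas~\ref{Lem7.1.1} and~\ref{Lem7.1.2} and Theorem~\ref{Iso}. I would therefore isolate this separation as a preliminary cardinal-arithmetic observation, namely that $2^{\kappa}$ strictly exceeds $2^{\aleph_0}$ once $\kappa$ is uncountable, and record explicitly that the sufficiency half is unconditional while the necessity half rests on this point. This is also the natural place to remark on the sharpness of the hypothesis, since it is exactly the countability of $X$ that keeps every exponent in the range where $\mathfrak{c}^{\kappa}$ stabilizes at $\mathfrak{c}$.
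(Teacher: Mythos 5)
Your proposal follows the paper's proof essentially step for step: sufficiency via Lemma~\ref{Lem7.1.1} and the computation $\mathfrak{c}^{\kappa}=\mathfrak{c}$ for $1\le\kappa\le\aleph_0$ fed into Theorem~\ref{Iso}, and necessity via Lemma~\ref{Lem7.1.2} together with the claim $\mathfrak{c}^{|X|}>\mathfrak{c}$ for uncountable $X$. Structurally there is nothing separating the two arguments, and the sufficiency half is complete and unconditional.

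The point worth flagging is exactly the one you single out as ``the only genuinely nontrivial ingredient'' and propose to record as a preliminary cardinal-arithmetic observation: the assertion that $2^{\kappa}>2^{\aleph_0}$ for every uncountable $\kappa$ is not a theorem of ZFC. It holds automatically when $\kappa\ge\mathfrak{c}$, since then $2^{\kappa}>\kappa\ge\mathfrak{c}$ by Cantor's theorem, and it holds for all uncountable $\kappa$ under CH; but if CH fails it is consistent that $2^{\aleph_1}=2^{\aleph_0}$ (for instance after adding $\aleph_2$ Cohen reals to a model of GCH, or under Martin's Axiom with $\mathfrak{c}=\aleph_2$). In such a model, taking $|X|=\aleph_1$, the equality $|[1_x]|=\mathfrak{c}=2^{|X|}=|[1_{X\setminus\{y\}}]|$ actually holds, so the contradiction you derive evaporates; and because Lemma~\ref{Lem7.1.2} supplies only a necessary condition for an isomorphism, the failure of the contradiction does not yield a counterexample either --- the status of the theorem for such $X$ is simply left open by this argument. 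The necessity direction is therefore unconditional only for $|X|\ge\mathfrak{c}$, and for $\aleph_0<|X|<\mathfrak{c}$ it needs the extra hypothesis $2^{|X|}>\mathfrak{c}$. This gap is inherited verbatim from the paper's own proof, so it is not a defect of your proposal relative to the paper; but since your plan is to isolate the cardinal-arithmetic step and state it explicitly, it must then be stated with the correct hypothesis rather than asserted for all uncountable $\kappa$.
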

	\begin{proof}
		If $X$ is atmost countable, then for any $f\in\mathscr{D}$, $|X\setminus Z(f)|$ is countable and hence by Lemma \ref{Lem7.1.1}, $|[f]|= |\mathbb{R}^{|X\setminus Z(f)|}|=|\mathbb{R}| =\mathfrak{c}$. So, $|[f]|=|[g]|$ for all $f,g\in\mathscr{D}$. Therefore by Theorem \ref{Iso}, $\Gamma$ and $\Gamma'_2$ are isomorphic. Conversely let $X$ be uncountable. Then for any $x,y\in X$, $X\setminus\{x\}$ is uncountable $\implies|\mathbb{R}^{|X\setminus \{x\}}||>|\mathbb{R}^{|\{y\}|}|$ ; i.e., $|[1_{X\setminus\{x\}}]|> |[1_y]|$. By Lemma \ref{Lem7.1.2}, $\Gamma$ and $\Gamma'_2$ are not graph isomorphic.
	\end{proof}
	\begin{rem}\label{Rem7.17}
		Let $(X,\mathcal{A},\mu)$ be the counting measure space, where $X$ is an uncountable set. Then by Theorem \ref{Th7.16}, the zero-divisor graph and the comaximal graph of $\mathcal{M}(X, \mathcal{A})$ are never isomorphic as graphs. But $G$ and $G_2$ are graph isomorphic.
	\end{rem}
	\subsection{\bf On the Lebesgue Measure Space}
	Let $(\mathbb{R},\mathcal{A},\ell)$ be the Lebesgue measure space on $\mathbb{R}$ and $\mathcal{A}$, the collection of all Lebesgue measurable sets on $\mathbb{R}$. We know that, the Lebesgue measure on $\mathbb{R}$ is a non-atomic measure. So we have the following results:
	\begin{thm}
		\hspace{3cm}
		\begin{enumerate}
			\item The diameter of the comaximal graph $\Gamma'_2(\mathcal{M}(\mathbb{R},\mathcal{A}))$ of $\mathcal{M}(\mathbb{R},\mathcal{A})$ is $3$.
			\item The eccentricity of every vertex in $\Gamma'_2(\mathcal{M}(\mathbb{R},\mathcal{A}))$ is $3$.
			\item The girth of $\Gamma'_2(\mathcal{M}(\mathbb{R},\mathcal{A}))$ is $3$.
			\item $\Gamma'_2(\mathcal{M}(\mathbb{R},\mathcal{A}))$ is always a triangulated graph.
			\item The annihilator graph $AG(\mathcal{M}(\mathbb{R},\mathcal{A}))$ of $\mathcal{M}(\mathbb{R},\mathcal{A})$ is always triangulated as well as hypertriangulated.
			\item $AG(\mathcal{M}(\mathbb{R},\mathcal{A}))$ can never be a complemented graph.
			\item The zero-divisor graph and the comaximal graph of $\mathcal{M}(\mathbb{R},\mathcal{A})$ are not isomorphic to the annihilator graph of $\mathcal{M}(\mathbb{R},\mathcal{A})$.
			\item The weakly zero-divisor graph of $\mathcal{M}(\mathbb{R},\mathcal{A})$ is an empty graph.
		\end{enumerate} 
	\end{thm}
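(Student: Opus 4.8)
The plan is to reduce all eight assertions to the single fact that the Lebesgue measure $\ell$ is non-atomic, and then to quote the general theorems of Sections~\ref{Sec3} and~\ref{Sec5}, each of which translates a graph-theoretic property into the (non)existence of atoms. The only measure-theoretic input I would record at the outset is this: since $\ell$ is non-atomic, $\mathcal{A}$ contains no atom; consequently $\mathbb{R}$ cannot be partitioned into two atoms (nor into three), and for every $f\in\mathscr{D}(\mathcal{M}(\mathbb{R},\mathcal{A}))$ neither $Z(f)$ nor $\mathbb{R}\setminus Z(f)$ is an atom.

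For the comaximal graph, parts (1)--(4) are then immediate. Part (2) is Theorem~\ref{Th2.6}: as $Z(f)$ is never an atom, $ecc(f)=3$ for every vertex $f$, whence part (1) follows at once (the diameter equals the common eccentricity $3$). Part (3) follows from the girth corollary to Theorem~\ref{Th2.9}, since $\mathbb{R}$ not being partitioned into two atoms forces girth $3$; equivalently, $\mathbb{R}\setminus Z(f)$ failing to be an atom puts every vertex on a triangle. Part (4) is precisely Theorem~\ref{Th3.22} read in the non-atomic case.

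For the annihilator graph, part (5) is Theorem~\ref{Th5.16}, which gives hypertriangulation exactly under non-atomicity; triangulation then follows, either because $AG$ has no isolated vertices (its diameter is $2$) so hypertriangulation forces it, or directly from the corollary stating that $AG$ is triangulated whenever $X$ is not partitioned into two atoms. Part (6) is Theorem~\ref{Th5.20}: complementedness would require $\mathbb{R}$ to be a union of two or three atoms, which is impossible here. Part (8) is the observation opening Section~\ref{Sec6}, namely that $W\Gamma$ is empty when $\mu$ is non-atomic, since then $\mathscr{D}'(\mathcal{M}(\mathbb{R},\mathcal{A}))=\emptyset$.

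The one assertion needing a touch more care is part (7), and it is here that I would phrase the argument through a preserved invariant rather than lean on the (unlabeled) isomorphism criteria of Section~\ref{Sec5}. A graph isomorphism preserves the eccentricity of every vertex; by part (2) every vertex of $\Gamma'_2(\mathcal{M}(\mathbb{R},\mathcal{A}))$ has eccentricity $3$, while by Corollary~\ref{cor4.5} every vertex of $AG(\mathcal{M}(\mathbb{R},\mathcal{A}))$ has eccentricity $2$; hence $\Gamma'_2\not\cong AG$. The identical eccentricity obstruction rules out $\Gamma\cong AG$, since in the non-atomic case the vertices of $\Gamma$ also have eccentricity $3$ (by the argument symmetric to Theorem~\ref{Th2.6}). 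I anticipate no genuine difficulty: the theorem is essentially a bookkeeping exercise matching each conclusion to its non-atomic hypothesis, the only subtlety being to keep part (7) self-contained via the eccentricity invariant.
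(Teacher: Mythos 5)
Your proposal is correct and matches the paper's approach: the paper gives no separate proof for this theorem, deriving all eight parts exactly as you do by noting that Lebesgue measure is non-atomic and invoking Theorems~\ref{Th2.6}, \ref{Th2.9}, \ref{Th3.22}, \ref{Th5.16}, \ref{Th5.20} and their corollaries. Your eccentricity-invariant argument for part (7) is a sound (and self-contained) substitute for the paper's general criterion that $\Gamma'_2\cong AG$ only when $X$ is partitioned into two atoms, whose proof in the relevant case instead contrasts complementedness of $\Gamma'_2$ with non-complementedness of $AG$.
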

	As $\mathcal{A}$ contains $2^\mathfrak{c}$-many distinct Lebesgue measurable sets, $|\mathcal{M}(\mathbb{R},\mathcal{A})|= 2^\mathfrak{c}$. We now show that the comaximal graph and the zero-divisor graph of $\mathcal{M}(\mathbb{R},\mathcal{A})$ are isomorphic.
	\begin{lem}\label{Lem7.2.1}
		Let $A\in\mathcal{A}$ be such that $A$ is compact in $\mathbb{R}$ (with respect to the usual topology). Then there exist $a,b\in A$ such that $\ell(A\cap [a,b])=\ell(A)$.
	\end{lem}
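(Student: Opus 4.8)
The plan is to produce $a$ and $b$ explicitly as the smallest and largest points of $A$, and then to observe that the closed interval they span already contains all of $A$, so that intersecting with $[a,b]$ discards no measure whatsoever. In other words, I would take $a := \inf A$ and $b := \sup A$ and show that the intersection $A \cap [a,b]$ is in fact equal to $A$.

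The first step is to invoke the Heine--Borel theorem: since $A$ is compact in $\mathbb{R}$ with the usual topology, $A$ is closed and bounded. We may assume $A \neq \emptyset$, as the lemma is only of interest when $\ell(A)>0$. Boundedness then guarantees that $a = \inf A$ and $b = \sup A$ are finite real numbers. The second step uses closedness to place these extremal values inside $A$: both $\inf A$ and $\sup A$ lie in the closure $\overline{A}$, and since $A$ is closed we have $\overline{A}=A$, so $a,b \in A$, exactly as the statement requires. Finally, by the defining property of infimum and supremum, every $x \in A$ satisfies $a \le x \le b$, hence $A \subseteq [a,b]$ and therefore $A \cap [a,b] = A$. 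Applying $\ell$ to this equality yields $\ell(A \cap [a,b]) = \ell(A)$, which is the desired conclusion.

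There is no genuine obstacle in this argument; the whole content is the elementary fact that a nonempty compact subset of the line attains its infimum and supremum and is sandwiched between them. The only point meriting a line of care is the membership $a,b \in A$, which rests squarely on $A$ being \emph{closed}: if $A$ were merely bounded and measurable, the extremal values need not belong to $A$, and one could not name $a,b$ \emph{inside} $A$ as the statement demands. Thus compactness is used twice, once through boundedness (to make $\inf A,\sup A$ finite) and once through closedness (to make them members of $A$).
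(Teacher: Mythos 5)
Your proof is correct, and it is a genuinely different (and much shorter) argument than the one in the paper. You take $a=\min A$ and $b=\max A$, so that $A\subseteq[a,b]$ and the conclusion $\ell(A\cap[a,b])=\ell(A)$ is immediate; the only content is that a nonempty compact subset of $\mathbb{R}$ attains its infimum and supremum. The paper instead constructs $a$ and $b$ as the \emph{measure-theoretic} endpoints of $A$: it sets $a=\sup\{z:\ell(A\cap[z,y])=\ell(A)\}$ and $b=\inf\{z:\ell(A\cap[a,z])=\ell(A)\}$, and uses closedness of $A$ to show these belong to $A$. That construction proves strictly more than the lemma states: the paper's $a$ and $b$ have the additional property that every set of the form $A\cap[a,a+\delta)$ and $A\cap(b-\delta,b]$ has positive measure, i.e.\ no mass of $A$ is wasted near the endpoints. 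This stronger property is what the authors lean on (implicitly) in the remark following Lemma \ref{Lem7.2.2} and in the trisection argument of Theorem \ref{Th7.2.4}, where the division points obtained from the intermediate-value argument must land inside $A$. For the lemma exactly as stated, your choice is perfectly adequate and cleaner; your caveat that $A$ must be nonempty (which the paper also tacitly assumes, since the lemma is applied only to compact sets of positive measure) is the right one to flag.
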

	\begin{proof}
		Since $A$ is compact in $\mathbb{R}$, $A$ is closed and $A\subset [x,y]$ for some $x,y\in\mathbb{R}$. Clearly, $\ell(A)\leq y-x$, finite. Let $B=\{z\in[a,b]:\ell(A\cap [z,y])=\ell(A)\}$ and  $a=\sup B$. Since $x\in B$, $B\neq\emptyset$ and hence such $a$ exists and $x\leq a<y$. If possible let $a\notin A$, then $a$ is not a limit point of $A$. There exists $\delta>0$ such that $(a-\delta,a+\delta)\cap A=\emptyset$. Therefore $\ell((a-\delta,a+\delta)\cap A)=0$. Since $a$ is the supremum of the set $B$, there exists $a'\in B$ such that $a-\delta<a'<a$. Now $a'\in B\implies\ell(A\cap [a',y])=\ell(A)$. Therefore $\ell(A\cap [x,a'))=0$. This along with the fact $\ell((a-\delta,a+\delta)\cap A)=0$ imply $\ell(A\cap[x,a+\delta))=0\implies\ell(A\cap [a+\delta,y])=\ell(A)$ i.e., $a+\delta\in B$, which contradicts that $a=\sup B$. Therefore $a\in A$. Also $[x,a)=\bigcup\limits_{n=1}^\infty [x,a-\frac{1}{n}]\implies\ell(A\cap[x,a))\leq\sum\limits_{n=1}^{\infty}\ell(A\cap [x,a-\frac{1}{n}])$. Since $a=\sup B$, $\ell(A\cap [x,a-\frac{1}{n}])=0$ for all $n$ and hence $\ell(A\cap[x,a))=0\implies\ell(A\cap[a,y])=\ell(A)$. Similarly if $b=\inf \{z\in[a,y]:\ell(A\cap[a,z])=\ell(A)\}$, then $b\in A$ and $\ell(A\cap[a,b])=\ell(A)$. 
	\end{proof}
	\begin{lem}\label{Lem7.2.2}
		Let $A\in\mathcal{A}$ and $\ell(A)>0$. Then for every $r\in[0,\ell(A)]$, there exists $A_r\in\mathcal{A}$ such that $A_r\subset A$ and $\ell(A_r)=r$.
	\end{lem}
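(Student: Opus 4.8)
The plan is to reduce the arbitrary measurable set $A$ to a compact subset, where Lemma~\ref{Lem7.2.1} is applicable, and then to run an intermediate value argument on a continuous ``partial measure'' function. First I would dispose of the boundary values of $r$: for $r=0$ take $A_r=\emptyset$, and for $r=\ell(A)$ take $A_r=A$ (this also settles $r=\infty$ in case $\ell(A)=\infty$). Hence it suffices to produce $A_r$ for each finite $r$ with $0<r<\ell(A)$.

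Fix such an $r$. By inner regularity of the Lebesgue measure, $\ell(A)=\sup\{\ell(K):K\subset A,\ K\text{ compact}\}$, so I may choose a compact set $K\subset A$ with $\ell(K)>r$. This choice is available uniformly: if $\ell(A)<\infty$ the supremum strictly exceeds $r$, and if $\ell(A)=\infty$ the supremum is $\infty$, so compact subsets of arbitrarily large measure exist. Applying Lemma~\ref{Lem7.2.1} to $K$, I obtain points $a,b\in K$ with $\ell(K\cap[a,b])=\ell(K)$.

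Next I would define $g:[a,b]\to[0,\infty)$ by $g(t)=\ell(K\cap[a,t])$. For $a\le s\le t\le b$ one has $g(t)-g(s)=\ell(K\cap(s,t])\le t-s$, so $g$ is $1$-Lipschitz, hence continuous, with $g(a)=\ell(K\cap\{a\})=0$ and $g(b)=\ell(K\cap[a,b])=\ell(K)>r$. Since $0<r<\ell(K)$, the intermediate value theorem furnishes $t_0\in(a,b)$ with $g(t_0)=r$. Setting $A_r=K\cap[a,t_0]$ then gives a measurable set with $A_r\subset K\subset A$ and $\ell(A_r)=r$, as required.

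I expect the main point requiring care to be the infinite-measure case: one must verify that inner regularity indeed yields a compact subset whose measure exceeds any prescribed finite $r$, and that passing to such a compact $K$ is harmless. Once the problem is transferred to the bounded set $K$, the Lipschitz continuity of $g$ together with the intermediate value theorem make the remainder routine; the purpose of invoking Lemma~\ref{Lem7.2.1} is precisely to anchor the interval of integration at genuine points $a,b\in K$, guaranteeing that $g$ starts at $0$ and climbs all the way to the full value $\ell(K)$.
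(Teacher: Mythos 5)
Your proof is correct, and the core mechanism is the same as the paper's: an intermediate value argument applied to a $1$-Lipschitz ``partial measure'' function of the form $t\mapsto\ell(\,\cdot\cap(-\infty,t])$. Where you differ is in the preliminary reduction. The paper works directly with $A$ itself, defining $\psi(x)=\ell(A\cap(-\infty,x])$ on all of $\mathbb{R}$ and using $\lim_{x\to-\infty}\psi(x)=0$ and $\lim_{x\to\infty}\psi(x)=\ell(A)$; it does not pass to a compact subset and does not invoke Lemma~\ref{Lem7.2.1} at all. You instead first use inner regularity to replace $A$ by a compact $K\subset A$ with $\ell(K)>r$ and then run the IVT on $[a,b]$. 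Your detour buys genuine robustness: the paper's argument as written is problematic when $\ell(A\cap(-\infty,x])=\infty$ for some (hence all larger) $x$ --- e.g.\ $A=\mathbb{R}$, where $\psi\equiv\infty$, the Lipschitz estimate degenerates to $\infty-\infty$, and the limit at $-\infty$ is not $0$ --- whereas your compact $K$ always has finite measure, so the function $g$ is honestly real-valued and Lipschitz. The price is an extra appeal to inner regularity, and your use of Lemma~\ref{Lem7.2.1} is actually dispensable: since $K$ is bounded, $K\subset[c,d]$ for some $c,d$, and $t\mapsto\ell(K\cap(-\infty,t])$ already runs from $0$ at $t=c$ to $\ell(K)$ at $t=d$ without needing the endpoints to lie in $K$. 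So your argument is slightly heavier than necessary but, in the infinite-measure case, more careful than the original.
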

	\begin{proof}
		We consider the function $\psi:\mathbb{R}\to[0,\ell(A)]$ given by $\psi(x)=\ell(A\cap(-\infty,x])$. Since $|\psi(x)-\psi(y)|\leq |x-y|$ for all $x,y\in \mathbb{R}$, $\psi$ is continuous on $\mathbb{R}$. Also $\lim\limits_{x\to -\infty}\psi(x)=0$ and $\lim\limits_{x\to \infty}\psi(x)=\ell(A)$. Therefore $\psi$ takes every value on $[0,\ell(A)]$. i.e., For each $r\in[0,\ell(A)]$, there exists $x_r\in\mathbb{R}$ such that $\psi(x_r)=r$. So, $\ell(A\cap(-\infty,x_r])=r$.
	\end{proof}
	
	In the Lemma \ref{Lem7.2.2}, if $A$ is a compact set, then for some $a,b\in A$, $\psi : [a,b] \rightarrow [0,\ell(A)]$ may be taken as $\psi(x)=\ell(A\cap[a,x])$. Also for each $r\in [0,\ell(A)]$, there exists $y_r\in A\cap [a,b]$ such that $\ell(A\cap[a,y_r])=r$.
	
	\begin{thm}\label{Th7.2.4}
		If $\ell(A)>0$ for some $A\in\mathcal{A}$, then $A$ contains an uncountable set of measure zero.
	\end{thm}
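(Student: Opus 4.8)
The plan is to realise a Cantor-type set inside $A$. Since Lebesgue measure is inner regular, I would first pass to a compact subset: picking $n$ with $\ell(A\cap[-n,n])>0$ and invoking inner regularity, there is a compact set $K\subseteq A$ with $\ell(K)>0$. All subsequent work is carried out inside $K$, and the upshot is that $A$ inherits the uncountable null subset so produced.

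Next I would build a dyadic Cantor scheme $\{K_s : s\in\{0,1\}^{<\omega}\}$ of nonempty compact subsets of $K$ by recursion on the length of $s$, starting with $K_\emptyset=K$. Suppose $K_s$ is compact with $\ell(K_s)>0$. By Lemma~\ref{Lem7.2.1} there are $a_s,b_s\in K_s$ with $\ell(K_s\cap[a_s,b_s])=\ell(K_s)$, and by the compact form of Lemma~\ref{Lem7.2.2} the continuous map $x\mapsto\ell(K_s\cap[a_s,x])$ attains every value in $[0,\ell(K_s)]$ on $[a_s,b_s]$. Choose $y_1<y_2$ in $[a_s,b_s]$ with $\ell(K_s\cap[a_s,y_1])=\tfrac14\ell(K_s)$ and $\ell(K_s\cap[a_s,y_2])=\tfrac34\ell(K_s)$, and set $K_{s0}=K_s\cap[a_s,y_1]$ and $K_{s1}=K_s\cap[y_2,b_s]$. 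These are disjoint compact subsets of $K_s$, each of measure $\tfrac14\ell(K_s)>0$, so the recursion continues. Writing $m_n=\sum_{|s|=n}\ell(K_s)$, the splitting yields $m_{n+1}=\tfrac12 m_n$, whence $m_n=2^{-n}\ell(K)\to 0$.

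Finally I would set $C=\bigcap_{n}\bigcup_{|s|=n}K_s$ and verify the two required properties. For each branch $s\in\{0,1\}^{\omega}$ the sets $K_{s|1}\supseteq K_{s|2}\supseteq\cdots$ are nested nonempty compacts, so by the finite intersection property their intersection contains a point $x_s$; if $s\neq t$, then at the first coordinate where they differ the corresponding sets are disjoint, so $x_s\neq x_t$. Thus $s\mapsto x_s$ injects $\{0,1\}^{\omega}$ into $C$, making $C$ uncountable. On the other hand $C\subseteq\bigcup_{|s|=n}K_s$ for every $n$, so $\ell(C)\le m_n=2^{-n}\ell(K)$ for all $n$, giving $\ell(C)=0$. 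Since $C\subseteq K\subseteq A$, this is the desired uncountable subset of $A$ of measure zero.

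The main obstacle is reconciling the two demands: to force $\ell(C)=0$ one must discard a fixed positive proportion of the measure at each splitting (here one half) rather than bisecting the measure exactly, while to keep $C$ uncountable one must guarantee that \emph{every} dyadic branch survives. Compactness is what makes these compatible, since it is precisely what yields nonempty nested intersections even though the measures of the pieces tend to $0$; without it, nested measurable sets of positive-but-vanishing measure could collapse to the empty set.
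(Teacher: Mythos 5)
Your proposal is correct and follows essentially the same route as the paper: pass to a compact $K\subseteq A$ of positive measure by inner regularity, use Lemma~\ref{Lem7.2.1} and the intermediate-value form of Lemma~\ref{Lem7.2.2} to iteratively split off disjoint compact pieces whose total measure decays geometrically, and take the resulting Cantor-type intersection, which is null yet uncountable. The only cosmetic differences are that you split each piece into two quarters where the paper trisects, and you prove uncountability by injecting $\{0,1\}^{\omega}$ directly rather than by the paper's diagonal argument.
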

	\begin{proof}
		Without loss of generality let $\ell(A\cap [0,1])>0$. By the regularity of $\ell$, there exists a compact set $K$ in $\mathbb{R}$ such that $\ell(K)>0$ and $K\subset A\cap [0,1]$. By Lemma \ref{Lem7.2.1}, there exists $a,b\in K\cap [0,1]$ such that $\ell(K\cap[a,b])=\ell(K)$. By the observation after Lemma~\ref{Lem7.2.2}, there exist $x_{11},x_{12}\in K\cap [a,b]$ such that $\ell(K\cap[a,x_{11}])=\frac{1}{3}\ell(K)$ and $\ell(K\cap[a,x_{12}])=\frac{2}{3}\ell(K)$. Let $I_{11}=K\cap [a,x_{11}]$, $I_{12}=K\cap[x_{11},x_{12}]$ and $I_{13}=K\cap[x_{12},b]$. Then $\ell(I_{11})=\frac{1}{3}\ell(K)=\ell(I_{12})=\ell(I_{13})$; i.e., we trisect $K\cap[a,b]$ into three parts $I_{11},I_{12},I_{13}$ with equal measure. Let $E_1=I_{11}\sqcup I_{13}$. Then $E_1\in\mathcal{A}$ and $\ell(E_1)=\frac{2}{3} \ell(K)$. Similarly we trisect $I_{11}$ into $I_{21}=K\cap[a,x_{21}], I_{22}=K\cap[x_{21},x_{22}]$ and $I_{23}=K\cap[x_{22},x_{11}]$ with equal measure, where $x_{21},x_{22}\in K\cap [a,b]$. We also trisect $I_{13}$ into $I_{24}=K\cap[x_{12},x_{23}], I_{25}=K\cap[x_{23},x_{24}]$ and $I_{26}=K\cap[x_{24},b]$ with equal measure, where $x_{23},x_{24}\in K\cap [a,b]$. Let $E_2=(I_{21}\sqcup I_{23})\sqcup(I_{24}\sqcup I_{26})$. Then $E_2\in\mathcal{A}$ and $\ell(E_2)= (\frac{2}{3})^4\ell(K)$. Continuing this process, at the $n$-th stage, we get $E_n\in\mathcal{A}$ which is the union of $2^n$-many disjoint measurable sets $I_{n1},\ldots, I_{n2^n}$ each of which has measure $\frac{1}{3^n}\ell(K)$. Thus $\ell(E_n)=(\frac{2}{3})^{2^n}\ell(K)$. Let $E=\bigcap\limits_{n=1}^\infty E_n$. Then $E\in\mathcal{A}$ and $\ell(E)=\lim\limits_{n\to\infty}\ell(E_n)=\ell(K).\lim\limits_{n\to\infty}(\frac{2}{3})^{2^n}=0$. For every chain of Lebesgue measurable sets of the form $I_{mn}$, $x_{mn}\in\bigcap I_{mn}$ and $\bigcap I_{mn}\subset\bigcap\limits_{n=1}^\infty E_{n}\subset E\implies E \neq\emptyset$. We claim that $E$ is uncountable. If possible let $E$ be countable, say $E=\{x_n:n\in\mathbb{N} \}$. Since $x_1\in E$, $x_1\in E_1\implies$ there exists $k\in\{1,3\}$ such that $x_1\notin I_{1k_1}$. Similarly $x_2\in E_2\implies x_2\notin I_{2k_2}$ where $I_{2k_2}$ is one of the measurable sets in $E_2$ such that $I_{2k_2}\subset I_{1k_1}$. Thus for each $n\in\mathbb{N}$, there exists some $k_n\in\{1,2,...,2^n\}$ such that $x_n\notin I_{nk_n}$ and $I_{nk_n}\subset I_{(n-1)k_{n-1}}$. Since $I_{nk_n}\subset I_{(n-1)k_{n-1}}$ for all $n\in\mathbb{N}$, $\bigcap\limits_{n=1}^\infty I_{nk_n}\neq\emptyset$, say $x\in\bigcap\limits_{n=1}^\infty I_{nk_n}$. Then $x\in E$ and $x\notin\{x_1,x_2,...,x_n,....\}=E $, a contradiction. Hence, $E$ is an uncountable set.
	\end{proof}
	\begin{cor}\label{Cor7.2.5}
		If $\ell(A)>0$ for some $A\in\mathcal{A}$, then $A$ contains exactly $2^\mathfrak{c}$-many Lebesgue measurable sets.
	\end{cor}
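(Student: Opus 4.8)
The plan is to sandwich the collection of Lebesgue measurable subsets of $A$ between $2^\mathfrak{c}$ from both sides and then invoke the Cantor--Schr\"oder--Bernstein theorem. The upper bound is immediate: as recorded just before Lemma~\ref{Lem7.2.1}, $\mathcal{A}$ contains exactly $2^\mathfrak{c}$-many Lebesgue measurable sets, and the measurable subsets of $A$ form a subfamily of $\mathcal{A}$; hence there are at most $2^\mathfrak{c}$ of them.

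For the lower bound I would begin from Theorem~\ref{Th7.2.4}, which supplies an uncountable set $E\subseteq A$ with $\ell(E)=0$. The first key step is to upgrade ``uncountable'' to ``$|E|=\mathfrak{c}$''. This is exactly where the explicit Cantor-type construction inside the proof of Theorem~\ref{Th7.2.4} pays off: there $E=\bigcap_{n} E_n$, where each $E_n$ is a finite union of sets of the form $K\cap[\,\cdot\,,\,\cdot\,]$ with $K$ compact, so every $E_n$ is closed and $E$ is a compact set. An uncountable closed subset of $\mathbb{R}$ has cardinality $\mathfrak{c}$ by the perfect-set property; alternatively, one reads a bijection between $E$ and $\{0,1\}^{\mathbb{N}}$ directly off the nested tree of sets $\{I_{nk}\}$ by sending each infinite branch to its unique intersection point, again giving $|E|=\mathfrak{c}$.

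Once $|E|=\mathfrak{c}$ is secured, the second key step uses the completeness of Lebesgue measure: for every subset $S\subseteq E$ we have $S\subseteq E$ with $\ell(E)=0$, so $S$ is Lebesgue measurable (with $\ell(S)=0$), and of course $S\subseteq E\subseteq A$. Thus the power set of $E$ embeds into the family of measurable subsets of $A$, producing at least $2^{|E|}=2^\mathfrak{c}$ such sets. Combining this with the upper bound yields that $A$ contains exactly $2^\mathfrak{c}$-many Lebesgue measurable sets.

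I expect the only genuine obstacle to be the passage from mere uncountability to cardinality exactly $\mathfrak{c}$; the upper bound and the completeness argument are routine. The cleanest and most self-contained route is to exploit the explicit nested family $\{I_{nk}\}$ already built in Theorem~\ref{Th7.2.4} to exhibit the bijection with $\{0,1\}^{\mathbb{N}}$, rather than appealing to a general descriptive-set-theoretic result.
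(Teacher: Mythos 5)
Your proposal is correct and follows essentially the same route as the paper: the upper bound comes from $A\subseteq\mathbb{R}$ having at most $2^\mathfrak{c}$ subsets, and the lower bound from the uncountable null set $E\subseteq A$ supplied by Theorem~\ref{Th7.2.4} together with the completeness of $\ell$. The one place you go beyond the paper is in upgrading ``uncountable'' to $|E|=\mathfrak{c}$ (via the closedness of $E$ and the perfect-set property, or the explicit branch-to-point map on the nested tree $\{I_{nk}\}$); the paper's own proof passes directly from ``$E$ is uncountable'' to ``$E$ has $2^\mathfrak{c}$ measurable subsets,'' which without CH requires precisely the cardinality argument you supply, so your extra step is a genuine and needed refinement rather than a detour.
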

	\begin{proof}
		Clearly, $A$ cannot contain more than $2^\mathfrak{c}$-many sets, as $A\subset\mathbb{R}$. By Theorem \ref{Th7.2.4}, $A$ contains an uncountable set of measure zero, say $E$. Since $\ell$ is a complete measure, and $E$ is uncountable, $E$ contains $2^\mathfrak{c}$-many Lebesgue measurable sets. Consequently, $A$ contains $2^\mathfrak{c}$-many Lebesgue measurable sets.
	\end{proof}
	We already made a partition of $\mathscr{D}(\mathcal{M}(\mathbb{R},\mathcal{A}))$ by the equivalence relation ``$\sim$'' given by: ``$f\sim g$ if and only if $\ell(Z(f)\triangle Z(g))=0$''. For each $f\in\mathscr{D}(\mathcal{M}(\mathbb{R},\mathcal{A})) $, we consider $[f]$ as the equivalence class of $f$ under the equivalence relation ``$\sim$'' which is precisely the set $\{g\in\mathscr{D}(\mathcal{M}(\mathbb{R},\mathcal{A})):\ell(Z(f)\triangle Z(g))=0\}$. Clearly, for each $f\in\mathscr{D}(\mathcal{M}(\mathbb{R},\mathcal{A}))$, $|[f]|\leq|\mathcal{M}(\mathbb{R},\mathcal{A})|=2^\mathfrak{c}$. $V$ is the collection of all class representatives of the form $1_A$ under $\sim$, as in all the previous cases.
	\begin{lem}\label{Lem7.2.6}
		For each $f\in\mathscr{D}(\mathcal{M}(\mathbb{R},\mathcal{A}))$, $|[f]|=2^\mathfrak{c}$.
	\end{lem}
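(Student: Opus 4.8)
The plan is to pin down $|[f]|$ by squeezing it between $2^{\mathfrak{c}}$ on both sides. The upper bound $|[f]|\le|\mathcal{M}(\mathbb{R},\mathcal{A})|=2^{\mathfrak{c}}$ is already recorded just before the statement, so the whole task reduces to exhibiting an injection of a set of cardinality $2^{\mathfrak{c}}$ into $[f]$. First I would use that $f\in\mathscr{D}(\mathcal{M}(\mathbb{R},\mathcal{A}))$ forces $\ell(Z(f))>0$ and $\ell(\mathbb{R}\setminus Z(f))>0$, and then apply Theorem~\ref{Th7.2.4} to the positive-measure set $Z(f)$ to extract an uncountable null set $E\subseteq Z(f)$. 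By completeness of $\ell$, every subset of $E$ is measurable with measure zero; moreover, as established in the proof of Corollary~\ref{Cor7.2.5}, such an $E$ carries $2^{\mathfrak{c}}$ many (necessarily null) measurable subsets.

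Next I would turn each such subset into a distinct member of $[f]$. For a measurable $S\subseteq E$ set $g_S=f+1_S$, which lies in $\mathcal{M}(\mathbb{R},\mathcal{A})$ since $1_S$ is measurable. As $S\subseteq E\subseteq Z(f)$, one has $g_S\equiv 1$ on $S$ and $g_S=f$ off $S$, so $Z(g_S)=Z(f)\setminus S$ and hence $Z(f)\triangle Z(g_S)=S$, a null set. Thus $\ell(Z(f)\triangle Z(g_S))=0$, i.e. $g_S\sim f$. Because $\ell(S)=0$, the relations $\ell(Z(g_S))=\ell(Z(f))>0$ and $\ell(\mathbb{R}\setminus Z(g_S))=\ell(\mathbb{R}\setminus Z(f))>0$ are preserved, so $g_S\in\mathscr{D}(\mathcal{M}(\mathbb{R},\mathcal{A}))$ and therefore $g_S\in[f]$.

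Finally I would verify that $S\mapsto g_S$ is injective: if $S\neq S'$, choosing $x\in S\triangle S'$ makes exactly one of $g_S(x),g_{S'}(x)$ equal to $1$ and the other equal to $f(x)=0$, so $g_S$ and $g_{S'}$ differ as functions (it matters here that $[f]$ is a set of genuine functions, not of a.e.-classes). Consequently $|[f]|\ge 2^{\mathfrak{c}}$, and together with the upper bound this yields $|[f]|=2^{\mathfrak{c}}$. The one delicate point, and the step that genuinely invokes the Lebesgue structure through Theorem~\ref{Th7.2.4} and Corollary~\ref{Cor7.2.5}, is ensuring that the modifications can be performed on a null set which is nonetheless large enough to possess $2^{\mathfrak{c}}$ subsets; it is precisely the coexistence of \emph{measure zero} (needed to keep each $g_S$ inside the class $[f]$) with \emph{continuum-many points} (needed to produce $2^{\mathfrak{c}}$ distinct functions) that makes the argument go through.
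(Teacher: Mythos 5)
Your proof is correct. It follows the same overall strategy as the paper's: the upper bound $|[f]|\le|\mathcal{M}(\mathbb{R},\mathcal{A})|=2^{\mathfrak{c}}$ is immediate, and the real work is an injection of a family of $2^{\mathfrak{c}}$-many measurable sets (ultimately supplied by Theorem~\ref{Th7.2.4} and the counting argument of Corollary~\ref{Cor7.2.5}) into $[f]$. The difference lies in where you perturb: the paper applies Corollary~\ref{Cor7.2.5} to the cozero set $\mathbb{R}\setminus Z(f)$, which has positive measure, and builds functions $f_\alpha$ taking the values $0,1,2$ so that $Z(f_\alpha)=Z(f)$ \emph{exactly} --- an argument that would survive even under the finer relation ``$Z(f)=Z(g)$''. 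You instead extract an uncountable null set $E\subseteq Z(f)$ and set $g_S=f+1_S$ for $S\subseteq E$, so that $Z(g_S)=Z(f)\setminus S$; this genuinely uses the a.e.\ slack in $\sim$ and the completeness of $\ell$ (every $S\subseteq E$ is measurable and null), and you correctly supply the extra checks the paper's version does not need, namely that $\ell(Z(g_S))=\ell(Z(f))>0$ and $\ell(\mathbb{R}\setminus Z(g_S))>0$ so that $g_S\in\mathscr{D}(\mathcal{M}(\mathbb{R},\mathcal{A}))$, and that $S\mapsto g_S$ is injective pointwise. Both routes rest on the same nontrivial input --- an uncountable (compact, hence of cardinality $\mathfrak{c}$) null set with $2^{\mathfrak{c}}$ automatically measurable subsets --- so neither is more economical than the other; yours is a touch more delicate but equally valid.
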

	\begin{proof}
		Let $f\in\mathscr{D}(\mathcal{M}(\mathbb{R},\mathcal{A}))$. Then $\ell(X\setminus Z(f))>0$. By Corollary \ref{Cor7.2.5}, $X\setminus Z(f)$ contains exactly $2^\mathfrak{c}$-many Lebesgue measurable sets. Let $\{A_\alpha:\alpha\in\Lambda\}$ be the collection of all Lebesgue measurable subsets of $X\setminus Z(f)$, where $|\Lambda|=2^\mathfrak{c}$. For each $\alpha\in\Lambda$, let $f_\alpha:\mathbb{R}\to\mathbb{R}$ be defined by
		$$f_\alpha(x)=\begin{cases}
			0&\text{ if }x\in Z(f)\\
			1&\text{ if }x\in A_\alpha\\
			2&\text{ otherwise}
		\end{cases}$$
		Clearly $f_\alpha\in[f]$ for all $\alpha\in\Lambda$. Also for distinct $\alpha,\beta\in\Lambda$, $f_\alpha\neq f_\beta$. Therefore $|[f]|\geq 2^\mathfrak{c}$. Hence $|[f]|=2^\mathfrak{c}$.
	\end{proof}
	\begin{thm}\label{Th7.2.7}
		In the Lebesgue measure space $(\mathbb{R},\mathcal{A},\ell)$, the zero-divisor graph and the comaximal graph on $\mathcal{M}(\mathbb{R}, \mathcal{A})$ are isomorphic.
	\end{thm}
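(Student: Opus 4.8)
The plan is to deduce this entirely from the general sufficient condition already recorded in Theorem~\ref{Iso}, namely that if $|[1_A]|=|[1_{X\setminus A}]|$ for every $1_A\in V$, then $\Gamma(\mathcal{M}(X,\mathcal{A}))$ and $\Gamma'_2(\mathcal{M}(X,\mathcal{A}))$ are graph isomorphic. Thus the proof reduces to verifying that this equicardinality of equivalence classes holds in the Lebesgue setting, and the heavy lifting for that has already been done in Lemma~\ref{Lem7.2.6}.

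First I would fix an arbitrary $1_A\in V$. By the defining conditions of $V$, we have $\ell(A)>0$ and $\ell(X\setminus A)>0$, so that both $1_A$ and $1_{X\setminus A}$ belong to $\mathscr{D}(\mathcal{M}(\mathbb{R},\mathcal{A}))$. Applying Lemma~\ref{Lem7.2.6} to each of these two zero-divisors gives $|[1_A]|=2^{\mathfrak{c}}$ and $|[1_{X\setminus A}]|=2^{\mathfrak{c}}$ simultaneously. Hence $|[1_A]|=|[1_{X\setminus A}]|$, and since $1_A\in V$ was arbitrary, the hypothesis of Theorem~\ref{Iso} is satisfied for the Lebesgue measure space.

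Invoking Theorem~\ref{Iso} then yields at once that $\Gamma(\mathcal{M}(\mathbb{R},\mathcal{A}))$ and $\Gamma'_2(\mathcal{M}(\mathbb{R},\mathcal{A}))$ are graph isomorphic, which is the desired conclusion.

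It is worth noting where the genuine difficulty lies: not in this final assembly, which is a one-line application of Theorem~\ref{Iso}, but in establishing that \emph{every} equivalence class has the uniform cardinality $2^{\mathfrak{c}}$. That fact rests on Corollary~\ref{Cor7.2.5} and ultimately on Theorem~\ref{Th7.2.4}, asserting that any measurable set of positive measure contains an uncountable null set, whose completeness-driven power set then supplies $2^{\mathfrak{c}}$ distinct measurable subsets inside $X\setminus Z(f)$. This uniformity is exactly what makes the Lebesgue case succeed where, by contrast, the uncountable counting measure space fails (cf. Remark~\ref{Rem7.17}): there the classes $[1_x]$ and $[1_{X\setminus\{y\}}]$ have different cardinalities, so the hypothesis of Theorem~\ref{Iso} breaks down. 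I expect no further obstacle here, as the crucial cardinality computation is already in place.
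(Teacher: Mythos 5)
Your proposal is correct and follows exactly the paper's own argument: apply Lemma~\ref{Lem7.2.6} to conclude that every equivalence class has cardinality $2^{\mathfrak{c}}$, so in particular $|[1_A]|=|[1_{X\setminus A}]|$ for each $1_A\in V$, and then invoke Theorem~\ref{Iso}. No differences worth noting.
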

	\begin{proof}
		By Lemma \ref{Lem7.2.6}, $|[f]|=2^\mathfrak{c}$ for all $f\in\mathscr{D}(\mathcal{M}(\mathbb{R},\mathcal{A}))$. In particular $|[1_A]|=|[1_{X\setminus A}]|$ for each $1_A\in V$. Hence by Theorem \ref{Iso}, the zero-divisor graph and the comaximal graph on $\mathcal{M}(\mathbb{R},\mathcal{A})$ are isomorphic.
	\end{proof}
	Since the Lebesgue measure on $\mathbb{R}$ is a non-atomic measure, we fell tempted to ask the following question.
	\begin{qs}
		Suppose $(X,\mathcal{A},\mu)$ is a non-atomic measure space. Are the zero-divisor graph and the comaximal graph of $\mathcal{M}(X,\mathcal{A})$ isomorphic?
	\end{qs}
		
\end{document}